\documentclass[11pt]{article}
\usepackage[utf8]{inputenc}
\usepackage[english]{babel}
\usepackage{amsmath}
\usepackage{amsthm}
\usepackage[usenames]{color}
\usepackage{amsfonts}
\usepackage{amssymb}
\usepackage{makeidx}
\usepackage{graphicx}
\usepackage[left=2.6cm,right=2.6cm,top=3.5cm,bottom=2cm]{geometry}
\usepackage{enumerate}
\usepackage{mathrsfs}
\usepackage{multicol}

\usepackage{todonotes}

\newcommand{\growth}{\mathrm{gr}}
\newcommand{\grH}{D_{H;G}}

\usepackage[backref=page]{hyperref}

\newtheorem{theorem}{Theorem}[section]

\newtheorem{thmx}{Theorem}

\newtheorem{prop}[theorem]{Proposition}

\newtheorem{lemma}[theorem]{Lemma}

\newtheorem{corollary}[theorem]{Corollary}

\newtheorem{definition}[theorem]{Definition}

\newtheorem{remark}[theorem]{Remark}

\newtheorem{example}[theorem]{Example}

\providecommand{\abs}[1]{\left|#1\right|}

\renewcommand{\Im}{\mathop{\mathrm{Im}}}

\newcommand{\N}{\mathbb{N}}
\newcommand{\Z}{\mathbb{Z}}
\newcommand{\R}{\mathbb{R}}
\newcommand{\p}{\mathbb{P}}
\newcommand{\E}{\mathbb{E}}
\newcommand{\Prob}{\p}

\newcommand{\Class}{\mathcal{S}}
\newcommand{\StSub}{\mathscr{S}}
\newcommand{\dist}{\mathrm{dist}}
\newcommand{\SL}{\mathrm{SL}}

\newcommand{\mF}{\mathcal{F}}
\newcommand{\Orb}{\mathcal{O}}

\newcommand{\ta}{\widetilde{a}}
\newcommand{\tH}{\widetilde{H}}
\newcommand{\tq}{\widetilde{q}}

\numberwithin{equation}{section}

\newcommand{\GrFn}{\mathcal{V}}
\newcommand{\wT}{\widetilde{T}}
\newcommand{\wL}{\widetilde{L}}
\newcommand{\wtp}{\widetilde{p}}

\newcommand{\eps}{\varepsilon}
\newcommand{\lh}{l}

\newcommand{\wt}{\mathrm{wt}}

\newcommand{\Sc}{\mathbb{S}^1}

\newcommand{\const}{\mathrm{const}}

\newcommand{\Crit}{\mathop{\mathrm{Crit}}\nolimits}
\newcommand{\Diff}{\mathrm{Diff}}
\newcommand{\id}{\mathrm{id}}
\newcommand{\Ker}{\mathop{\mathrm{Ker}}\nolimits}
\newcommand{\Tor}{\mathop{\mathrm{Tor}}\nolimits}
\newcommand{\Hom}{\mathrm{Hom}}
\newcommand{\Homeo}{\mathrm{Homeo}}

\newcommand{\Fix}{\mathop{\mathrm{Fix}}\nolimits}

\newcommand{\Stab}{\mathrm{Stab}}
\newcommand{\osc}{\mathop{\mathrm{osc}}}

\newcommand{\rk}{\mathop{\mathrm{rk}}}

\newcommand{\reg}{\tau}

\newcommand{\bd}{\mathcal{D}}
\newcommand{\xvl}{x_v^-}
\newcommand{\xvr}{x_v^+}

\title{Critical regularity of nilpotent groups acting on\\  one-dimensional compact manifolds} 

\begin{document}

\author{\textsc{Maximiliano Escayola and Victor Kleptsyn }}

\date{}

\maketitle

\begin{abstract} 

Given a finitely generated, torsion-free nilpotent group, we find the maximum possible (\emph{critical}) regularity for its faithful actions by diffeomorphisms of the closed or half-open interval and of the circle. Our result gives an expression for its value in purely algebraic terms (using the relative growth of appropriate subgroups), generalizing many preceding works. As an intermediate step, we generalize the Bass-Guivarc'h formula, obtaining a formula for the relative growth of subgroups of nilpotent groups, as well as for the growth of the corresponding Schreier graphs.

\end{abstract}
\vspace{0.2cm}

\noindent{\bf Keywords:} Nilpotent groups, diffeomorphism groups, H\"older continuity, critical regularity.

\vspace{0.2cm}

\noindent{\bf 2020 Mathematics Subject Classification:} 20F18, 37C05, 37C85, 37E05.

\vspace{0.7cm}



\tableofcontents


\section{Introduction}

\subsection{History of the question}

The present paper is devoted to the study of critical regularity (that is, the supremum of all regularities of admissible actions) for the actions of nilpotent groups on one-dimensional manifolds. The critical regularity of actions have been studied for a long time, with some variations in the context. For instance, one can consider all faithful actions, or restrict oneself to the actions conjugate to a given one, or to a given class of actions. 

The Denjoy theorem~\cite{Denjoy} states that an action of $\Z$ of the circle that is of class $C^2$ cannot admit a Cantor minimal set, while examples, constructed by M.~Herman~\cite{Herman}, show that this is possible in any regularity $C^{1+\tau}$ for any $\tau<1$. 
In~\cite{DKN_acta}, it was shown that the critical regularity for the generalization of the Denjoy theorem to the actions of $\Z^d$ on the circle is equal to $1+1/d$; that is, for every $\tau<1/d$ there are faithful actions of class $C^{1+\tau}$ with a Cantor minimal set, while for any $\tau>1/d$ it is impossible.

For general non abelian nilpotent groups acting on the interval~$[0,1]$, impossibility of a $C^2$-action (and thus an upper bound for the critical regularity) was shown by J.~Plante and W.~Thurston in their work~\cite{pt} (see Theorem~4.5 therein), and a lower bound of the form $1+1/d$, where $d$ is the exponent of the polynomial growth of the group, was established by K.~Parkhe~\cite{parkhe}. This lower bound is not optimal: the critical regularity for $N_4$ (that is, the group of upper-triangular matrices with integer elements and $1$'s on the diagonal) was shown by E.~Jorquera, A.~Navas, C.~Rivas~\cite{JNR} to be equal to~$1+1/2$, higher than the lower bound mentioned above. The question of determining possible regularity for nilpotent group actions appears in~\cite[Section 4]{na-quest} (see also \cite[pp.~990-991]{navas-intermediate}).

Regularity of actions of metabelian groups was studied in~\cite{ER}, where the actions of any regularity less than $1+1/k$, where $k=\rk G/A$, and $A<G$ a maximal abelian subgroup, were constructed. This regularity was shown to be critical, in particular, for the generalised $(2n+1)$-dimensional Heisenberg groups $H_{2n+1}$. Also, the regularity of actions of nilpotent groups~$N_k$ was studied in works of B.~Farb and J.~Franks~\cite{ff} and of G.~Castro, E.~Jorquera, A.~Navas~\cite{int4}, and it was shown by A.~Navas~\cite{na-crit} that the critical regularity of Farb--Franks actions cannot be attained. Groups of intermediate growth were shown not to have any $C^{1+\alpha}$-actions on $[0,1]$ for any $\alpha>0$ by A.~Navas~\cite{navas-intermediate}, together with an example of such a group that can be realised by $C^1$-diffeomorphisms.

The construction of examples of group actions of a given regularity~$C^{1+\tau}$ was first done using a technique introduced by M.~Herman in his thesis~\cite{Herman}. It was then succeeded by a technique, going back to the work of T.~Tsuboi~\cite{tsuboi}, that is nowadays a standard tool of constructing one-dimensional actions (and that we will be using in the present paper). 


In their famous paper~\cite{kk}, S. H. Kim and T. Koberda have constructed examples of groups of arbitrary critical regularity $\alpha=k+\tau$ for actions on the interval $[0,1]$ and on the circle. Later, K.~Mann and M.~Wolff in~\cite{mann-wolff} have used different techniques for the actions on the interval $[0,1]$, circle $\R/\Z$ and line $\R$. In particular, such regularities (being arbitrary real numbers) might not admit any algebraic expression, let alone being rational numbers.

There have been many other recent papers devoted to the study of critical regularity in different contexts: \cite{bkk,kk-artin}, where the right-angled Artin groups were studied; \cite{rivas-triestino}, constructing actions of Higman's group by homeomorphisms of the line and showing that there are no nontrivial actions by $C^1$-diffeomorphisms; \cite{kkr}, studying the actions of the lamplighter group; \cite{KMBST}, showing the $C^1$-smoothability of one-dimensional actions of groups of subexponential growth, and recent monograph~\cite{BMRT}, where, in particular, non-smoothability of actions of some piecewise-linear groups have been established (see Section~7.3 therein); and many others. 
We refer the reader to the book~\cite{kk_book} for a more detailed survey. 

\subsection{Description of results}

Our paper is devoted to the study of critical regularity for actions of finitely generated nilpotent groups on one-dimensional manifolds: closed interval $[0,1]$, half-interval $(0,1]$ (or, what is the same, half-line $\R_{\le 0}$), circle $\Sc$. Our first main result, Theorem~\ref{thm crit interval} below, provides an algebraic description for the critical regularity for any such group on the closed interval~$[0,1]$. Namely, we show, that the critical regularity of such actions is equal to~$1+1/d$, where $d$ is an integer number (described in terms of a minimax of relative growth of so-called \emph{stabilizer subgroups}, see Section~\ref{s:p-stab} for the details). 

We then consider the case of the half-open interval and the circle. It could have happened that in one of these cases, the critical regularity would be higher than the one for the closed interval~$[0,1]$. Indeed, for the case of the half-interval (or closed half-line), the local H\"older constant for the derivative of a diffeomorphism might grow unboundedly as the point approaches the removed endpoint (respectively, tends to infinity). For the case of the circle, there are more actions than on the interval. However, it turns out (see Theorems~\ref{t:half-open} and~\ref{t:circle} below) that in both of these cases the critical regularity stays the same as for the case of the closed interval.

On the contrary, passing to the open interval, when both endpoints are removed (or, what is the same, considering the actions on~$\R$), might lead to the higher value of the critical regularity. For instance, for the case of the group $N_4$ its critical regularity for the action on~$[0,1]$ was shown in~\cite{JNR} to be equal to~$1+1/2$. Meanwhile, as we show in Example~\ref{ex-N4} below, this group admits $C^{1+\alpha}$-actions on~$\R$ for any~$\alpha<1$. Though, the general description of critical regularity for the actions on~$\R$ becomes more complicated, so we are going to address it in another paper.

%

We also study the topologically free actions on the interval (that is, the sets of fixed points of maps have empty interior). It turns out that the critical regularity restricted to such actions also might be different from the critical regularity for general faithful actions. We show it for the case of the product of two Heisenberg groups $G=N_3\times N_3$ in Example~\ref{ex:N3-twice}. Though again, to limit the technical part of this paper, we are going to address the critical regularity for topologically free actions in another paper.


\subsection{Plan of the paper}

In Section~\ref{s:prelim}, we recall and introduce necessary preliminaries. An experienced reader can safely jump over almost all of it, looking only at Section~\ref{s:p-stab} for the notion of the stabilizer subgroup and details related to it.

We state our main results for the actions in Section~\ref{s:main}. We start with Theorem~\ref{t:A}, addressing a weaker class of actions where only a given central element is required to act nontrivially. However, as is suffices to check the faithfulness of an action of a nilpotent group on central elements only (see Lemma~\ref{l:on-centre}), this already allows us to handle the case of nilpotent groups with \emph{cyclic} center~$Z(G)$, see Corollary~\ref{rk:cyclic}. We then pass to stating our results for the actions of general nilpotent groups on the closed interval~$[0,1]$ (Theorem~\ref{thm crit interval}), on half-open interval (Theorem~\ref{t:half-open}) and on the circle (Theorem~\ref{t:circle}). 

Finally, in Section~\ref{s:gen-Bass} we state our last main result, a generalisation of Bass-Guivarc'h formula, Theorem~\ref{t.B-G}, that provides a formula for the growth of the quotient $G/H$ used in the statements of previous main results. It is proved in Section~\ref{s:proof-E}.

We then proceed to the proof of the regularity estimates, starting with the estimates from above, that are addressed in Section~\ref{s:upper}. 
The key idea here is the construction (in the general case of a nilpotent group) of random process in the group that allows to ensure control of the distortion. For the case of a compact~$M$, this is done in Section~\ref{s:upper-compact}: the process is constructed in Proposition~\ref{p:process}, and it is applied in Proposition~\ref{l:alpha-conv} to obtain the distortion control, then used to prove the upper bound for the regularity via the Generalized Kopell Lemma (Proposition~\ref{p:Kopell}). Then, in Section~\ref{s:half-int} we modify this technique for the case of a half-open interval. Namely,  the modified process (allowing to ensure that the image intervals stay in the compact domain) is constructed in Proposition~\ref{p:process-new}, and Lemma~\ref{l:alpha-conv-new} applies it to obtain a bound for the sum of the $\alpha$-powers of the intermediate lengths (providing the control for the distortion). Finally, we modify the arguments to show that the critical regularity cannot be attained in Section~\ref{s:critical}: a uniform control for the distortion in then replaced by a slowly degrading one (see Proposition~\ref{p:process-critical}), but this already suffices to obtain a contradiction (see Lemma~\ref{l:no-sublinear}).

In Section~\ref{s:lower}, we establish a lower bound for the critical regularity by constructing actions of a class $C^{1+(\frac{1}{d}-\eps)}$. We start by recalling the Pixton-Tsuboi strategy in Section~\ref{s:P-T} and the Tsuboi family of maps, used for its realisation, in Section~\ref{s:T-family}. The next step of the implementation of the strategy is the choice of lengths of the intervals; we provide a general framework for this choice (having an advantage that the resulting maps are, roughly speaking, as regular as they can be) in Section~\ref{s:Choice-and-control}. We then state and prove the realisation statement for a prescribed stabiliser subgroup, Proposition~\ref{p:lower-K}, in Section~\ref{s:concluding-realisation}. We conclude this section by proving in Section~\ref{s:technical} the technical estimates for the H\"older constants, that were postponed from Section~\ref{s:Choice-and-control}.

In Section~\ref{s:main-proofs}, we complete the proofs of the main theorems. The upper bounds are deduced from Proposition~\ref{thm upper bound}, while the lower bounds are deduced from Proposition~\ref{p:lower-K} (with an optimisation over the stabiliser subgroup, joined with gluing actions to ensure the faithfulness).

Finally, in Section~\ref{s:examples} we provide two examples of classes where the critical regularity differs from the one in Theorem~\ref{t:A}. Namely, in Section~\ref{s:open-interval}, we see that the regularity for an action on the open interval (or on the real line) might be higher than for the actions on~$[0,1]$. Then, in Section~\ref{s:top-free} we consider topologically free actions on $[0,1]$, and show that for these actions the critical regularity might be lower than for general faithful actions.

\subsection{Acknowledgements}

The authors are very grateful to Michele Triestino, Nicol\'as Matte Bon, Andr\'es Navas, Sang-hyun Kim, Crist\'obal Rivas for the fruitful discussions and the interest to the paper. We are specially thankful to Andr\'es Navas for proposing the tools to study the attainability of the critical regularity and motivating us to work on it, and to Michele Triestino for his many remarks.

V.K. was supported in part by ANR Gromeov project (ANR-19-CE40-0007) and by Centre Henri Lebesgue (ANR-11-LABX-0020-01).  M.E. was supported in part by ANR Gromeov project (ANR-19-CE40-0007) and by KIAS individual grant no.~MG107601 of the Korea Institute for Advanced Study.

\section{Preliminaries}\label{s:prelim}

\subsection{Critical regularity}

\begin{definition}
Let $\Class\subset \Hom(G,\Homeo_+(M))$ be a (non-empty) class of orientation-preserving actions of a group $G$ on some manifold~$M$. 
The \emph{critical regularity} of the class $\Class$ is defined as 
\[
\Crit(\Class):=\sup \{\alpha \mid \exists \phi\in \Class : \, \phi:G\to \Diff^{\alpha}(M)\}.
\]
Here for $\alpha=k+\tau$, where $k\in \N\cup \{0\}$, $\tau\in [0,1)$, the set $\Diff^{\alpha}(M)$ is defined as the set of $C^k$-diffeomorphisms of $M$ whose $k$-th order derivatives are $\tau$-H\"older. If $M$ is not compact, the H\"older property is required only locally; that is, the increments of $D^k f$ between points of a compact set satisfy a $\tau$-H\"older upper bound, but the H\"older constant in this bound may depend on the set.
\end{definition}

The most classical notion of the critical regularity is the one of faithful actions: 
\begin{definition}
For any manifold $M$ and a group $G$, denote:
\[
\Crit_M(G) := \Crit (\left\{\phi \in \Hom(G,\Homeo_+(M)) \mid \Ker \phi = \{e\} \right\}).
\]
\end{definition}
However, one can also consider other classes of actions: for instance, those conjugate or semi-conjugate to some prescribed action, topologically free actions, etc. We will use this definition for several classes of actions throughout this paper.

\subsection{Generalized Kopell's Lemma and the control of distortion}

Our main tool for establishing the upper bounds for the regularity of an action is 
a generalization of the classical Kopell's lemma~\cite{kopell}, that has appeared in the work of 
Deroin, Kleptsyn and Navas~\cite{DKN_acta}:

\begin{prop}[Generalized Kopell's lemma] \label{p:Kopell}
Let $f_1,\ldots,f_k$ be $C^1$-diffeomorphisms of compact one-dimensional $M$ that commute with a $C^1$-diffeomorphism $g$. Assume that $I_g$ is 
an interval, fixed by $g$, such that $g|_{I_g}\neq \id$. Assume moreover that
for a certain $0 < \reg < 1$ and a sequence of indices $i_j\in \{1,\ldots,k\}$, we have a finite sum
\begin{equation}\label{eq:L-sum}
L_\reg=\sum_{j\geq 0}\abs{f_{i_j}\cdots f_{i_1}(I_g)}^\reg <\infty. 
\end{equation}
Then $f_1,\ldots, f_k$ cannot be all of class $C^{1+\reg}$.  
\end{prop}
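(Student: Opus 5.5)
We argue by contradiction: assume all $f_i$ are $C^{1+\reg}$, and use the summability~\eqref{eq:L-sum} to control the distortion of the compositions $F_n := f_{i_n}\cdots f_{i_1}$ on $I_g$, exactly as in the classical Kopell lemma. Write $I_j := F_j(I_g)$, with $I_0=I_g$. Let $C$ be a common $\reg$-H\"older constant for $\log Df_i$; this is possible because $M$ is compact, the $f_i$ are finitely many, and $Df_i$ is bounded away from zero. For $x,y\in I_g$ the chain rule gives
\[
\bigl|\log DF_n(x)-\log DF_n(y)\bigr| \le \sum_{j=0}^{n-1} \bigl|\log Df_{i_{j+1}}(F_j x)-\log Df_{i_{j+1}}(F_j y)\bigr| \le C\sum_{j\ge 0}|I_j|^\reg = C L_\reg .
\]
So the distortion of every $F_n$ on $I_g$ is bounded by $e^{CL_\reg}$, uniformly in~$n$.

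Next we use the commutation. Since $g$ commutes with each $f_i$, it commutes with $F_n$, so $F_n\circ g\circ F_n^{-1}=g$ on $I_n$. Pick a point $x\in I_g$ (interior) that is not fixed by $g$; such a point exists because $g|_{I_g}\ne\id$. Differentiating $F_n\circ g = g\circ F_n$ at suitable points does not immediately help. Instead, following the standard Kopell argument, we compare the displacement of $g$ at $x$ with its displacement at $F_n(x)$:
\[
|g(F_n x)-F_n x| = |F_n(gx)-F_n(x)| \ge e^{-CL_\reg}\,\frac{|I_n|}{|I_g|}\,|gx-x|.
\]
This ratio statement is the key step. We also need the derivative of $g$ at $F_n x$ to stay away from~$1$, which we obtain as follows. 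By the mean value theorem there is $\xi\in[x,gx]$ (inside $I_g$, which is $g$-invariant) with
\[
Dg(F_n\xi') = \frac{DF_n(g\xi)\,Dg(\xi)}{DF_n(\xi)},
\]
and more simply $Dg(F_n(y)) = Dg(y)\cdot DF_n(y)/DF_n(g y)$ for all $y\in I_g$. Bounded distortion shows that $Dg$ on $I_n$ stays within a factor $e^{\pm CL_\reg}$ of $Dg$ on $I_g$. That alone does not yet give a contradiction, so we use the displacement instead.

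Since $L_\reg<\infty$, we have $|I_n|\to0$. Also $\sum|I_n|\le\sum|I_n|^\reg<\infty$ for $|I_n|\le1$, but what we need is only that $|I_n|\to 0$. The intervals $I_n$ are $g$-invariant, because $gF_n(I_g)=F_n g(I_g)=F_n(I_g)$. Hence $g$ fixes the endpoints of $I_n$, and these endpoints accumulate somewhere. Choose $x\in I_g$ that is not fixed by~$g$ and consider the sub-interval $J=[x,gx]\subset I_g$. Bounded distortion gives
\[
\frac{|F_n(J)|}{|I_n|}\ge e^{-CL_\reg}\frac{|J|}{|I_g|}=:\delta>0 .
\]
So $g$ moves the point $F_n x$ by at least $\delta|I_n|$ inside an interval of length $|I_n|$ whose endpoints it fixes. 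Take a subsequence with $I_n$ converging to a point $p$. Then $g(p)=p$, and on $I_n$ the map $g$ has two fixed points (the endpoints of $I_n$) together with a point of displacement ratio $\ge\delta$. By the mean value theorem applied on $[a_n,F_nx]$ and on $[F_nx,b_n]$, where $a_n,b_n$ are the endpoints of $I_n$, there are points of $I_n$ where $|Dg-1|\ge\delta'$ for some $\delta'>0$ depending only on $\delta$. Indeed, $g(F_nx)-F_nx$ is at least $\delta|I_n|$ in absolute value, so the average slope of $g$ on one of the two pieces differs from $1$ by at least $\delta$. The mean value theorem also gives points of $I_n$ where $Dg=1$, since $g(a_n)=a_n$ and $g(b_n)=b_n$. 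As $n\to\infty$ both kinds of points converge to $p$, which contradicts the continuity of $Dg$ at~$p$.

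The main obstacle is this last step. Bounded distortion must be converted into a $g$-displacement proportional to $|I_n|$, which is why we keep the relative length of $[x,gx]$ inside $I_g$. We also need the $g$-invariance of each $I_n$, which comes from the commutation relation, together with $|I_n|\to0$; the contradiction then uses only that $g$ is $C^1$.
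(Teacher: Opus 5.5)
Your proof is correct, and after the common first step it takes a genuinely different route from the paper. That first step is the uniform distortion bound $|\log DF_n(x)-\log DF_n(y)|\le CL_\tau$ on $I_g$, which is exactly the paper's.

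The paper then writes $Dg^m(x)=Dg^m(F_nx)\cdot DF_n(x)/DF_n(g^mx)$ and lets $n\to\infty$ with $m$ fixed. This gives $Dg^m\le e^{CL_\tau}$ on $I_g$ for all $m$. It then needs the separate fact that a diffeomorphism which is nontrivial on $I_g$ cannot have uniformly bounded derivatives of all its iterates.

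You never iterate $g$. The commutation carries $[x,gx]$ onto $[F_nx,\,g(F_nx)]\subset I_n$, so bounded distortion keeps the displacement ratio $|g(u)-u|/|I_n|$ at $u=F_nx$ at least $\delta>0$. On the $g$-invariant interval $I_n$, whose endpoints are fixed so that $Dg=1$ somewhere in it, this ratio is at most $\sup_{I_n}|Dg-1|$. That supremum tends to $0$ by uniform continuity of $Dg$, since $|I_n|\to 0$.

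Your argument is shorter and self-contained, and like the paper's it only uses $g\in C^1$. The paper's $Dg^m$ formulation is the classical one, and it is what Section~\ref{s:critical} adapts, pairing a sublinear bound on $Dg^m$ with Lemma~\ref{l:no-sublinear}. Your displacement version would adapt there too. With the bounds of Proposition~\ref{p:process-critical}, H\"older continuity of $Dg$ gives $\sup_{I_N}|Dg-1|=O(|I_N|^{1/d})=O(1/N)$. This contradicts the lower bound of order $e^{-C\log^{1-1/d}N}$ on the displacement ratio.

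Two minor points. First, the formula in your discarded digression is inverted. Differentiating $g\circ F_n=F_n\circ g$ gives $Dg(F_ny)=Dg(y)\,DF_n(gy)/DF_n(y)$, and the stray $\xi'$ should be $\xi$. Since you do not use that passage, just delete it.

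Second, the claim that $g$ fixes the endpoints of $I_n$ uses that $g$ preserves orientation. This is implicit in the paper's setting, and it is genuinely needed. Without it, the reflection $x\mapsto -x$ of the circle is a counterexample: it commutes with the odd map $x\mapsto x-\frac{1}{4\pi}\sin 2\pi x$, which contracts towards $0$.
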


For the reader's convenience, we sketch here the ideas of the proof. First, one uses the standard technique to estimate the distortion of the compositions $F_j := f_{i_j}\cdots f_{i_1}$. Namely: 

\begin{definition}
Let $f\in \Diff^1(M)$, and $I\subset M$ be an interval. The \emph{distortion} of $f$ on $I$ is defined as 
\begin{equation}\label{eq:def-dist}
\varkappa(f;I):=\osc_{I} \log Df = \max_{x,y\in I} \log \frac{Df(x)}{Df(y)}.
\end{equation}
\end{definition}
It is a standard remark that the distortion is composition-subadditive,
\begin{equation}\label{eq:kappa-subadditive}
\varkappa(f\circ g;I) \le \varkappa(f;g(I)) +\varkappa(g;I),
\end{equation}
and for a given $C^{1+\reg}$-diffeomorphism $f$ its distortion on an interval of a given length is bounded from above by
\begin{equation}\label{eq:dist-reg}
\varkappa(f;I) \le C_f \cdot |I|^{\reg}.
\end{equation}
Joined with the subadditivity, the bounds~\eqref{eq:dist-reg} and the assumption~\eqref{eq:L-sum} imply that the distortions $\varkappa(F_n;I_g)$ are uniformly bounded. 

\begin{figure}[h!]
\begin{center}
\includegraphics{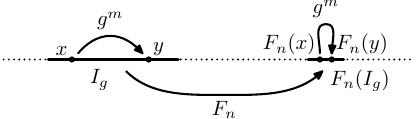}
\end{center}
\caption{Estimating the derivative of~$g^m$.}\label{fig:Fn-gm}
\end{figure}

On the other hand, the derivative of any power $g^m$ at any point $x\in I_g$ can be written as
\begin{equation}\label{eq:Dgm}
Dg^m (x) = D(F_n^{-1} g^m F_n)(x) = Dg^m |_{F_n(x)} \cdot \frac{DF_n (x)}{DF_n(y)},
\end{equation}
where $y=g^m(x)$ and $n$ is arbitrary (see Fig.~\ref{fig:Fn-gm}). The second factor in the right hand side of~\eqref{eq:Dgm} is uniformly bounded due to the distortion control, that is, due to the boundedness of~$\varkappa(F_n;I_g)$. At the same time, for any fixed~$m$ the first factor converges to~$1$ as $n\to \infty$ (as the diffeomorphism $g^m$ preserves the interval $F_n(I_g)$ whose length that tends to~0). Thus, the derivatives of $g^m$ are bounded by a uniform constant. However, for any non-identity diffeomorphism $g$ the derivatives of its iterations cannot be uniformly bounded. This contradiction proves the proposition; we refer the reader to~\cite[Section~3.2]{DKN_acta}, as well as to~ \cite[Proposition 2.1]{int4}, for the complete proof.

Finally, let us introduce a related notion that generalises the estimate~\eqref{eq:dist-reg} above.
\begin{definition}
For a $C^1$-diffeomorphism $f:I\to J$, let $\kappa_\alpha(f,I)$ be the $\alpha$-H\"older constant of $\log Df$ on $I$, provided that it is finite, and $+\infty$ otherwise:
\begin{equation}
\kappa_{\alpha}(f,I):= \sup_{x,y\in I, \atop x\neq y} \frac{|\log Df(x) - \log Df(y)|}{|x-y|^{\alpha}}.
\end{equation}
\end{definition}
Then, $\kappa_{\alpha}(f,I)<\infty$ if and only if $f$ is a $C^{1+\alpha}$-diffeomorphism, and in that case, one has 
\begin{equation}
\forall I'\subset I \quad \varkappa(f,I')\le \kappa_{\alpha}(f;I) \cdot |I'|^{\alpha}.
\end{equation}
This notation will be used later, in Section~\ref{s:lower}, as well as a standard remark
\begin{equation}\label{eq:kappa-composition}
\kappa_\alpha(g\circ f,I)\le \kappa_\alpha(f,I) + \kappa_\alpha(g, f(I)) \cdot \|Df\|_{C(I)}^{\alpha}.
\end{equation}

\subsection{Stabilizer subgroups and Schreier graphs}\label{s:p-stab}

In order to apply the generalized Kopell's Lemma, one needs to have a map $g$, commuting with different elements $f_1,\dots, f_k\in G$, and to have intervals that are fixed by it. A natural choice then is to take an element of the center of the group, $g\in Z(G)$, and to consider one of its support intervals: 
\begin{definition}
Let $G$ be a group acting by homeomorphisms of a connected one-manifold $M$. An interval $I\subseteq M$ is a \emph{support interval} for an element $f\in G$, if it is the closure of a connected component of the complement $M\setminus \Fix(f)$. For the case $M=\Sc$, this definition only applies if $f$ has at least one fixed point; in the case of single-point $\Fix(f)=\{p\}$, this definition gives the full circle (as the closure of an open arc $\Sc\setminus \{p\}$).
\end{definition}

Indeed, choosing $g\in Z(G)$ guarantees that it commutes with all of its elements. Meanwhile, it is easy to see the following:

\begin{lemma}
Let $g\in Z(G)$, and let $I_g$ be a support interval for~$g$. Then for any $f\in G$ the image~$f(I_g)$ is also a support interval of~$g$ (see Fig.~\ref{fig:support}). Two different support intervals of the same~$g$ have disjoint interiors.
\end{lemma}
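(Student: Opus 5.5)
The plan is to prove both claims by working directly with the fixed point set $\Fix(g)$, using only that $f$ is a homeomorphism commuting with $g$.

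\emph{Step 1: $f$ preserves $\Fix(g)$.} Since $g\in Z(G)$, we have $fg=gf$. If $g(x)=x$, then $g(f(x))=f(g(x))=f(x)$, so $f(\Fix(g))\subseteq\Fix(g)$. Applying the same argument to $f^{-1}$, which also commutes with $g$, gives $f(\Fix(g))=\Fix(g)$. It follows that $f$ maps the complement $M\setminus\Fix(g)$ homeomorphically onto itself.

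\emph{Step 2: $f(I_g)$ is a support interval.} A homeomorphism of an open set onto itself permutes its connected components. So if $U$ is the component of $M\setminus\Fix(g)$ with $\overline{U}=I_g$, then $f(U)$ is again a component of $M\setminus\Fix(g)$. Since $f$ is a homeomorphism of $M$, closure commutes with $f$, and hence $f(I_g)=\overline{f(U)}$ is a support interval of~$g$.

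The circle needs a separate look. If $\Fix(g)=\{p\}$, then $f(p)=p$ and the unique support interval is the whole circle, which $f$ maps to itself. If $\Fix(g)=\emptyset$, the notion of support interval is not defined and there is nothing to prove.

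\emph{Step 3: disjoint interiors.} Let $I=\overline U$ and $J=\overline V$ be support intervals with $U\ne V$ distinct components. These components are disjoint. In dimension one, each component $U$ is an open interval (or, on the circle, the complement of a point). Its closure adds only endpoints, and these endpoints lie in $\Fix(g)$, since they are limit points of $U$ that are not in $U$. Therefore $\mathrm{int}(I)=U$.

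The only delicate case is when an added point does not disconnect anything, namely the circle with a single fixed point. In that case there is only one support interval, so the claim is vacuous. For two distinct components, their closures have interiors $U$ and $V$ respectively, which are disjoint.

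The main, if mild, obstacle is the bookkeeping in this last step: checking that the interior of the closure of a component is the component itself. On one-manifolds this holds because the boundary points are isolated endpoints belonging to $\Fix(g)$.
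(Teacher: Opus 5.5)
Your proof is correct and is the routine argument the paper has in mind; the paper gives no proof and simply calls the lemma ``easy to see.'' Commutation with $g$ makes $f$ preserve $\Fix(g)$, so $f$ permutes the components of $M\setminus\Fix(g)$ and their closures, and distinct components are disjoint. One small quibble: for a support interval containing an endpoint of $M=[0,1]$ or $(0,1]$, the interior relative to $M$ also contains that endpoint, so ``$\mathrm{int}(I)=U$'' should be read as the interior of $I$ as an interval; the disjointness conclusion is unaffected.
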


\begin{figure}[h!]
\begin{center}
\includegraphics{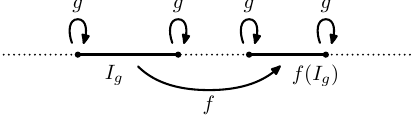}
\end{center}
\caption{A support interval and its image}\label{fig:support}
\end{figure}

The group $G$ is thus acting on the set of support intervals of $g\in Z(G)$. As always, the orbit of an element~$I_g$ of this set is naturally identified with the (left) coset space $G/K$ for the stabilizer subgroup $K=\Stab(I_g)$, where the interval $f(I_g)$ corresponds to the coset~$fK$.

Fixing a set of generators $f_1,\dots,f_k$ of $G$ allows to transform the orbit of $I_g$ into a \emph{Schreier graph}: its vertices are the intervals that form the orbit, and an interval $J$ is linked by edges to $f_1(J),\dots,f_k(J)$. 

Note that when $M$ is compact (that is, the closed interval or the circle), the disjointness of intervals of the orbit implies that the sum of their lengths is finite (it doesn't exceed the total length of~$M$). Hence, a task to apply the generalized Kopell's lemma (to establish an upper bound for the regularity) becomes the task of finding a path in the Schreier graph, along which the sum of powers of lengths~\eqref{eq:L-sum} converges. The obstruction we obtain on this way depends on the structure of the Schreier graph $G/K$, and hence on the subgroup~$K$.

Hence it is natural to ask what subgroups $K\leqslant G$ can actually appear in the role of a stabilizer. For the case of torsion-free nilpotent group acting on the interval (open, semi-open, or compact), the following definition actually provides a criterion for actions by homeomorphisms:

\begin{definition}\label{def stab subg} Let $K$ be a subgroup of a finitely generated torsion-free nilpotent group $G$. Let $c\in K$ be a non-trivial central element of~$G$. We say that $K$ is a \emph{stabilizer subgroup} of $c$ if it satisfies the following conditions:
\begin{enumerate}
\item\label{i:H} $K/H\simeq \Z$ for some $H \triangleleft K$ that satisfies $H\cap \langle c\rangle=\{e\}$.
\item\label{i:chain} There exists a finite subnormal ascending chain 
\begin{equation}\label{eq:K-chain}
K=K_n\triangleleft K_{n-1}\triangleleft \cdots \triangleleft K_0=G,
\end{equation}
such that $K_i/K_{i+1}\simeq \Z$ for all $i\in \{0,\ldots,n-1\}$.
\end{enumerate}

  We denote the set of all stabilizer subgroups of an element $c\in Z(G)\smallsetminus \{e\}$ as $\StSub(G,c).$
\end{definition}

\begin{remark} It is straightforward to verify that the center of a torsion-free nilpotent group always satisfies the above definition. Indeed, this is a consequence of the fact that, in torsion-free nilpotent groups, the factors of the ascending central series are torsion-free (see, for example, \cite{CMZ}).
\end{remark}

\begin{remark}\label{rq:Zr}
The part~\ref{i:chain} of Definition~\ref{def stab subg} can be replaced by the following equivalent condition: there exists a chain 
\begin{equation}\label{eq:Kp-chain}
K=K_{(n')}\triangleleft K_{(n'-1)}\triangleleft \cdots \triangleleft K_{(0)}=G,
\end{equation}
such that for all $i$ one has $K_{(i)}/K_{(i+1)}\simeq \Z^{r_i}$ for some $r_i\in \N$. Indeed, for every $i$ denote by $\psi_{i,1}:K_{(i)}/K_{(i+1)} \to \Z^{r_i}$ the corresponding isomorphism, by $\psi_{i,2}:K_{(i)}\to K_{(i)}/ K_{(i+1)}$ the natural projection, and let $\psi_i:=\psi_{i,2}\circ \psi_{i,1}$. Now, take a chain of coordinate subgroups $0<\Z<\Z^2<\dots<\Z^{r_i}$ (with $\Z$ as every consecutive quotient), and replace every $K_{(i+1)}\triangleleft K_{(i)}$ in the chain~\eqref{eq:Kp-chain} by 
\[
K_{(i+1)} = \psi_i^{-1}(0) \triangleleft \psi_i^{-1}(\Z) \triangleleft \psi_i^{-1}(\Z^2) \triangleleft \dots \triangleleft \psi_i^{-1}(\Z^{r_i})=K_{(i)},
\]
with the quotient on every step being isomorphic to~$\Z$.
\end{remark}

\begin{prop}\label{p:stab-subgr}
A subgroup $K$ is a stabilizer subgroup for $c\in Z(G), \, c\neq e$, if and only if there exists an action of $G$ by homeomorphisms of $M=[0,1]$ and a support interval~$J$ for~$c$, such that $\Stab(J)=K$. 
\end{prop}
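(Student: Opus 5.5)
The plan is to prove the two implications separately. The ``if'' direction (a stabilizer subgroup is realised) is an explicit construction. The ``only if'' direction extracts the algebraic conditions from the dynamics, using invariant Radon measures for nilpotent actions on the line.

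\emph{Realisation.} Let $K\in\StSub(G,c)$. First I would show that the chain~\eqref{eq:K-chain} induces a $G$-invariant total order on the coset space $G/K$. I proceed by induction along the chain: fixing an order on $\Z\simeq K_i/K_{i+1}$, the left $K_i$-action on $K_i/K_{i+1}$ preserves it, since it is a translation action of $\Z$ on itself. Then I order $G/K_{i+1}$ lexicographically, first by the image in $G/K_i$ and then inside each fibre $gK_i/K_{i+1}$. For the inner comparison I transport the order of $K_i/K_{i+1}$ by left multiplication by a representative $g$. This is well defined precisely because $K_{i+1}\triangleleft K_i$: changing the representative changes it by an element of $K_i$, which acts on $K_i/K_{i+1}$ by an order-preserving translation.

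Next, using $K/H\simeq\Z$, I let $K$ act on $\R$ through $K\to K/H\simeq\Z$ by integer translations. Since $H\cap\langle c\rangle=\{e\}$ and $c$ is central, I expect the image of $c$ in $K/H$ to be nonzero, so $c$ acts as a nontrivial translation. (Strictly, $H\cap\langle c\rangle=\{e\}$ alone does not force $c\notin H$ unless one also uses that $K/H$ is torsion-free; I will check this point.)

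I then form the induced action of $G$ on $G/K\times\R$ with the lexicographic order, choosing coset representatives $s(gK)$ and the cocycle $k(g,x)=s(gx)^{-1}g\,s(x)\in K$. Finally I realise this order-preserving action on $[0,1]$ in the usual dynamical-realisation manner. I enumerate $G/K$ and assign to the $j$-th coset an open interval of length $2^{-j}$, placed consistently with the order. On that interval I put the conjugate of the $\Z$-translation action via a fixed homeomorphism $\R\to$ interval. I extend by continuity to the closure of the union, and affinely on the remaining gaps.

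Then $c$ acts on each interval $I_{gK}$ as a conjugate of a nonzero translation, because $c$ is central. So $c$ has no fixed point inside $I_{gK}$, while the endpoints of $I_{gK}$ are fixed, and hence $J=\overline{I_K}$ is a support interval of $c$. Its stabilizer is exactly $K$, since $g\notin K$ moves the coset $K$.

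\emph{Necessity.} Suppose $G$ acts on $[0,1]$, $J$ is a support interval of $c$ and $K=\Stab(J)$. Then $K$ acts on $\mathrm{int}\,J\cong\R$, and the central element $c$ acts there without fixed points. By Plante's theorem there is a $K$-invariant Radon measure on $\mathrm{int}\,J$, hence a translation-number homomorphism $\tau:K\to\R$ with $\tau(c)\neq0$. Since $K$ is finitely generated (subgroups of finitely generated nilpotent groups are), $\tau(K)$ is free abelian of finite rank. I choose a homomorphism $\pi:\tau(K)\to\Z$ with $\pi(\tau(c))\neq0$ and put $H=\ker(\pi\circ\tau)$. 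Then $K/H\simeq\Z$ and $c^n\notin H$ for $n\neq0$, which gives condition~\ref{i:H}.

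For condition~\ref{i:chain} I would argue by induction on the Hirsch length, using Remark~\ref{rq:Zr} so that only quotients $\Z^{r}$ are needed. Let $I_0$ be the closure of the component of $(0,1)\setminus\Fix(G)$ containing $\mathrm{int}\,J$. On $\mathrm{int}\,I_0$ the group $G$ preserves a Radon measure, with translation homomorphism $\tau_0$; the image of $\tau_0$ is free abelian. If $\tau_0\neq0$, then $G_1=\ker\tau_0\triangleleft G$ has quotient $\Z^{r}$. Moreover $G_1$ has fixed points in $I_0$ (the support of the measure), so $J$ lies in a component $I_1$ of $\mathrm{int}\,I_0\setminus\Fix(G_1)$. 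I then replace $G$ by $\Stab_{G_1}(I_1)$ and iterate.

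The main obstacle is this descent. At each step I must check that the stabilizer of the new interval is normal in the previous group with a free abelian (torsion-free) quotient. I must also handle the case $\tau_0\equiv0$, where I would pass directly to the stabilizer of a component of $\mathrm{int}\,I_0\setminus\Fix(G)$, and show that the chain eventually reaches exactly $K$. My first attempt would be to use the Conradian (convex-subgroup) structure of the order induced on the orbit of $J$ to identify each successive stabilizer as the preimage of a subgroup under a translation-number homomorphism.
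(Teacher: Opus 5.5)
Your realisation direction is correct. Building a $G$-invariant lexicographic order on $G/K$ (then on $G/K\times\R$) and realising it dynamically is an equivalent repackaging of the paper's recursive construction. The paper instead extends the action one $\Z$-step at a time along the chain, making $J_{j+1}$ a fundamental domain of $g_j$ and conjugating. Your parenthetical worry is unfounded: if $c\in H$ then $c\in H\cap\langle c\rangle=\{e\}$, so $c\notin H$ directly. In the necessity direction, your proof of condition~\ref{i:H} is exactly the paper's argument.

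The gap is condition~\ref{i:chain} in the necessity direction. You set up a descent but leave its three essential points open: normality with free abelian quotient at each step, the case $\tau_0\equiv 0$, and the fact that the chain ends exactly at $K$. You propose to settle them via Conradian orders, so nothing is actually proved there. Your step ``replace $G$ by $\Stab_{G_1}(I_1)$'' also suggests a wrong worry: a genuine non-normal stabiliser would break the chain condition, but none is ever needed. Since $G_1=\Ker\tau_0$ fixes the endpoints of $I_1$ (they lie in $\Fix(G_1)$ or in $\partial I_0$), one has $\Stab_{G_1}(I_1)=G_1$. So the paper simply takes $K_{(j+1)}:=\Ker\tau_{J_j}$, where $J_j$ is the closure of the convex hull of the $K_{(j)}$-orbit of a point $x\in\mathrm{int}\,J$; this is the same interval as your $I_j$. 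Being a kernel, $K_{(j+1)}$ is normal in $K_{(j)}$ with quotient $\Im\tau_{J_j}\simeq\Z^{r_j}$, which is finitely generated because $K_{(j)}$ is. It also preserves $J_{j+1}$, since it fixes that interval's endpoints.

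The remaining points are then short.
\begin{itemize}
\item The case $\tau\equiv 0$ cannot occur. The kernel of the translation number fixes the support of the invariant measure pointwise, whereas $K_{(j)}$ has no global fixed point in $\mathrm{int}\,J_j$. Hence $r_j\ge 1$, the Hirsch length drops strictly at each step, and the process terminates.
\item By induction, $K\subseteq K_{(j)}$. If $J\subsetneq J_j$, some endpoint of $J$ lies in $\mathrm{int}\,J_j$ and is fixed by all of $K$. So every element of $K\cap K_{(j)}$ has zero translation number and lies in $K_{(j+1)}$.
\item In particular $c\in K_{(j+1)}$. Its orbit of $x$ accumulates at both endpoints of $J$, which guarantees $J\subseteq J_{j+1}$.
\item The process can only stop when $J_j=J$. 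Then $K_{(j)}$ preserves $J$, so $K_{(j)}\subseteq\Stab(J)=K$, giving $K_{(j)}=K$.
\end{itemize}
Remark~\ref{rq:Zr} then converts this chain into one with $\Z$ quotients.
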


\begin{proof} 
Let us first prove the realisation part. Namely, assume that an ascending chain~\eqref{eq:K-chain} is given. We then recursively construct actions of subgroups~$K_j$ on some intervals~$J_{j}$, going along this chain all the way to the $j=0$ (when the action of $G$ is constructed). We start with $j=n$, taking an interval $J_n$ and its diffeomorphism $g_n$ with no fixed points inside. This diffeomorphism induces an action of the group $\Z$ on $J_n$ by $i\mapsto g_n^i$, that we extend to the action group $K=K_n$ by pre-composing it with the passage to the quotient $K_n \to K_n/H \simeq \Z$. As $\langle c \rangle \cap H =\{e\}$ by assumption, the element $c$ acts by some nonzero power of the diffeomorphism~$g_n$, and hence $I_c:=J_n$ is the support interval of the action of~$c$.

Now, assume that the action of some $K_{j+1}$ on $J_{j+1}$ is already constructed. Fix $g_j\in K_j$ that projects to~$1$ in the quotient $K_j/K_{j+1}\simeq \Z$, take interval $J_j\supset J_{j+1}$ and define the action of~$g_j$ on~$J_j$ in such a way that $J_{j+1}$ is the fundamental domain of~$g_j$ (see Fig.~\ref{f:extending-K}). 
\begin{figure}[!h!]
{\center
\includegraphics{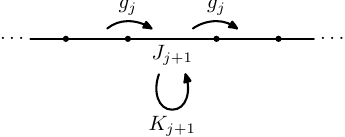} 

}
\caption{Extending the action of $K_{j+1}$, acting on $J_{j+1}$}\label{f:extending-K}
\end{figure}

Now, extend the action of $K_{j+1}$ on $J_j$: for every $i$ and any $h\in K_{j+1}$, define the restriction of $h$ on $g_j^i(J_{j+1})$ by
\begin{equation}\label{eq:extending}
h(g_j^i x) := g_j^i (\underbrace{(g_j^{-i} h g_j^i)}_{\in K_{j+1}} (x)), \quad x\in J_{j+1}.
\end{equation}
The action of $K_{j+1}$ then preserves each of the images of the fundamental domain $g_j^i(J_{j+1})$; gluing them together, one gets a continuous action of~$K_{j+1}$ on~$J_{j}$. Finally, every $g\in K_j$ can be represented as $g_j^i h$ for some $h\in K_{j+1}$ and $i\in\Z$, thus defining the action of all the subgroup $K_j$ on~$J_j$; it is easy to see from the definition~\eqref{eq:extending} that this is indeed an action. Also, by induction we see that the stabiliser of $I_c=J_n$ for this action consists only of the group~$K_n$ (if $i\neq 0$, then $g_j^i J_{j+1}$ is disjoint from $J_{j+1}$).

Proceeding until $j=0$, we define the action of the full group~$G$, for which $J_n=I_c$ is the support interval of the element~$c$, and $K_n=\Stab(I_c)$.

To proceed in the other direction, recall first the following general statement due to Plante~\cite{plante}. If a nilpotent group $G$ acts on the real line (or on the open interval) by orientation-preserving homeomorphisms, then there exists a Radon (infinite) invariant measure~$\nu$ of this action. This measure allows to define the \emph{translation number} $\tau(g)$ of each element $g\in G$, that is a group homomorphism from~$G$ to~$\R$. Finally, if the action has no global fixed points, then an element $g\in G$ has a fixed point if and only if its translation number vanishes. We refer the reader to~\cite{navas-book} for the details (see Theorem~2.2.39 and Section~2.2.5 therein).

Assume now that we are given an action of $G$ on $[0,1]$ and a support interval $J=I_c$ of~$c$, and let us prove that its stabiliser $\Stab(J)$ satisfies the definition of the stabiliser subgroup. Consider first the action of the subgroup $K=\Stab(I_c)$ on the open interval~$I_c^{\circ}$. The element~$c$ has no fixed points inside $I_c$, hence there is a translation number homomorphism $\tau_{I_c}:K\to \R$ that does not send $c$ to zero. Its image $\tau_{I_c}(K)\subset \R$ is a finitely generated abelian torsion-free group, hence isomorphic to some~$\Z^{l}$. In $\Z^l$, for every nonzero element $v_c$ there exists a subgroup~$\tH$ (actually, one of the coordinate hyperplanes $\Z^{l-1}$), intersecting trivially~$\langle v_c \rangle$ and such that $\Z^l / \tH \simeq \Z$. Hence, such a subgroup $\tH$, intersecting trivially $\langle \tau_{I_c}(c) \rangle$, exists in $\tau_{I_c}(K)$. Taking $H:=\tau_{I_c}^{-1}(\tH)$, we obtain the subgroup that satisfies the condition~\ref{i:H}.

Now, to check the ascending chain condition~\ref{i:chain}, we use Remark~\ref{rq:Zr} and construct the chain~\eqref{eq:Kp-chain}. We construct it, starting from the right (that is, from the group~$K_{(0)}=G$). If some $K_{(j)}$ is already constructed, consider the orbit $K_{(j)}(x)$ of any point $x\in I_c$ and let 
\[
J_j:= [\inf K_{(j)}(x), \sup K_{(j)}(x)]\supset I_c
\]
be a $K_{(j)}$-invariant interval, inside which the action of $K_{(j)}$ has no fixed points. Again, consider the associated translation number $\tau_{J_j}:K_{(j)}\to \R$. Its image is isomorphic to some~$\Z^{r_j}$; now, it suffices to take $K_{(j+1)}:=\Ker \tau_{J_j}$, so that $K_{(j)}/K_{(j+1)} \simeq \Im \tau_{J_j}\simeq \Z^{r_j}$. Repeating these steps, we get a process that ends when we reach the equality $J_j=I_c$, thus implying that $K_{(j)}=\Stab (I_c)$.
\end{proof}

\subsection{Growth of subgroups and Schreier graphs}

Given a group $G$ and a finite set $\mF$ of its generators, we consider the corresponding metric $d_{G;\mF}$ on~$G$ (distance in the Cayley graph, where each $g$ is joined by edges to all $fg$ with $f\in \mF$), and hence the corresponding radius $n$ balls 
\[
B_n^{G;\mF}(g):=\{h\mid d_{G;\mF}(g,h)\le n\}, \quad  g\in G, \quad n\in \N.
\]
This metric allows to define the \emph{growth function}
\[
\GrFn_{G;\mF}(n) = \# B_n^{G;\mF}(g);
\]
using another system $\mF'$ of generators, or passing to a finite-index subgroup, leads to a quasi-isometric metric. Though, it is easy to see that the new growth function $\GrFn'$ is equivalent to the old one $\GrFn$ in the sense of an equivalence relation given by 
\begin{equation}\label{eq:phi-prime}
\GrFn'\sim \GrFn \quad \Leftrightarrow \quad  \GrFn' \preccurlyeq \GrFn \,  \text{ and } \, \GrFn \preccurlyeq \GrFn',
\end{equation}
where
\[
\GrFn' \preccurlyeq \GrFn \Leftrightarrow \quad  \exists C: \quad \forall n\in \N \quad   \GrFn'(n) \le C \GrFn (Cn).
\]
Thus usually the growth function is considered up to this equivalence (given by~\eqref{eq:phi-prime}); one class of such equivalence is the class of polynomial growth of degree~$d$:
\begin{definition}
The group $G$ has a \emph{polynomial growth of degree}~$d$ if the limit 
\begin{equation}\label{eq:def-growth}
\growth(G):=\lim_{n\to \infty} \frac{\log \GrFn_{G;\mF}(n)}{\log n}
\end{equation}
exists and is equal to~$d$. 
\end{definition}

Two notions that we will need in this work is the \emph{relative} growth of a subgroup $H<G$, and the one of the growth of the corresponding Schreier graph~$G/H$. Namely, in the same way as the growth function $\GrFn_G$ counts all the points in a radius $n$ ball of the group~$G$, we can count the number of elements of the subgroup $H$ that are there. For the nilpotent groups, we consider the corresponding degree of polynomial growth:
\begin{definition}
The \emph{relative growth function} for a subgroup $H<G$ is defined by 
\[
\GrFn_H^G(n):=\# (B^{G;\mF}_n(e) \cap H)
\]
The subgroup $H<G$ has a \emph{polynomial relative growth of degree}~$d$ if the limit 
\begin{equation}\label{eq:def-rel-gr}
\growth_G(H):=\lim_{n\to \infty} \frac{\log \GrFn_H^G(n)}{\log n}
\end{equation}
exists and is equal to~$d$. 
\end{definition}
In the same way, for a given subgroup $H<G$ the quotient $G/H$ becomes a metric space (with the distance $d_{G/H;\mF}$ coming from the Schreier graph: the class $gH$ is joined by an edge to $fgH$ for all $f\in \mF$), so the corresponding balls $B_n^{G/H;\mF}(g)$ and the growth function 
\[
\GrFn_{G/H;\mF}(n):=\# B^{G/H;\mF}_n(e)
\]
are defined as before. Also, in the same way as before, we say that the \emph{polynomial growth rate} of $G/H$ is equal to $d$ if the limit
\begin{equation}\label{eq:def-growth-G/H}
\growth(G/H):=\lim_{n\to \infty} \frac{\log \GrFn_{G/H;\mF}(n)}{\log n}
\end{equation}
exists and is equal to~$d$.

From now on, we will assume that the system $\mF$ of generators of $G$ is fixed, and will write simply $B_n^G(g)$ and not $B_n^{G,\mF}(g)$.

\subsection{Nilpotent groups and Bass--Guivarc'h formula}

Given a group $G$, one considers the descending chain of commutators 
\[
G= G_1 \vartriangleright G_2 \vartriangleright \dots
\]
that is defined by 
\[
G_1:=G, \quad G_{j+1}:=[G,G_{j}], \quad j=1,2,\dots.
\]
\begin{definition}
The group $G$ is called \emph{nilpotent}, if for some $m$ one has $G_{m+1}=\{e\}$ (in other words, the descending chain of commutators reaches the trivial subgroup after a finite number of steps). 
\end{definition}

It is known (see~\cite[5.1.11]{robinson},~\cite[Theorem~2.52]{hall}) that for a group~$G$, one has the inclusions
\begin{equation}\label{eq:Gi-Gj}
[G_i,G_j]\le G_{i+j}, \quad i,j=1,2,\dots.
\end{equation}

Also, the following statement holds:
\begin{lemma}\label{l:on-centre}
Assume that the group $G$ is nilpotent, and $N\triangleleft G$ some normal subgroup, $N\neq \{e\}$. Then $N$ intersect non-trivially the center of the group, $N\cap Z(G)\neq\{e\}$. 
\end{lemma}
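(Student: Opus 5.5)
The plan is to use the lower central series $G=G_1\vartriangleright G_2\vartriangleright\dots\vartriangleright G_{m+1}=\{e\}$ together with the commutator inclusions~\eqref{eq:Gi-Gj}. Start from the nontrivial normal subgroup $N$ and define $N_1:=N$ and $N_{j+1}:=[G,N_j]$. Since $N$ is normal, each $N_j$ is a normal subgroup of $G$ contained in $N$: indeed $[G,N_j]\le N_j$ whenever $N_j$ is normal, so by induction $N_{j+1}\le N_j\le N$. The key step is to show $N_j\le G_j$ for all $j$. This holds by induction, since $N_1\le G=G_1$, and if $N_j\le G_j$ then
\[
N_{j+1}=[G,N_j]\le [G,G_j]=G_{j+1}.
\]
(Alternatively one can invoke~\eqref{eq:Gi-Gj} with $i=1$.) Consequently $N_{m+1}\le G_{m+1}=\{e\}$.

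We therefore obtain a descending chain $N=N_1\ge N_2\ge\dots\ge N_{m+1}=\{e\}$ with $N_1\neq\{e\}$. Let $k$ be the largest index with $N_k\neq\{e\}$, so $k\le m$ and $N_{k+1}=[G,N_k]=\{e\}$. The latter equality says precisely that every element of $N_k$ commutes with every element of $G$, that is, $N_k\le Z(G)$. Since $N_k\le N$ and $N_k\neq\{e\}$, we conclude $N\cap Z(G)\supseteq N_k\neq\{e\}$.

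There is no serious obstacle here; the only points to verify carefully are that $[G,N_j]$ stays inside $N$ (this uses the normality of $N_j$, which is itself inherited inductively since $[G,A]$ is normal whenever $A$ is normal) and the comparison $N_j\le G_j$, which is just monotonicity of the commutator $[G,\cdot]$ under inclusion. No finiteness or torsion-freeness assumptions are needed; nilpotency alone suffices.
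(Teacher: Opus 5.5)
Your proof is correct and is essentially the paper's argument carried out at the level of subgroups. The paper starts from a single $g\in N\setminus\{e\}$ and builds a chain $g_{i+1}=[g_i,g_i']$, choosing each $g_i'$ so that the commutator stays nontrivial; the chain remains in $N$ by normality and must end at a nontrivial central element, whereas you iterate $N_{j+1}=[G,N_j]$ and take the last nontrivial term. Your version also makes explicit, via $N_j\le G_j$, why the iteration terminates, which the paper only attributes to nilpotence in passing.
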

\begin{proof}
Note that for every $g\in G$, if $g\notin Z(G)$, there exists $g'\in G$ such that $[g,g']\neq e$. Hence, for every $g\neq e$ there exists a chain of commutators
\begin{equation}\label{eq:g-chain}
g_0=g, \quad g_{i+1}=[g_i,g'_i], \quad g'_i\in G,
\end{equation}
that ends with some $g_l\in Z(G) \setminus \{e\}$. (Such a chain ends in a finite number of steps due to the nilpotence of the group.)

On the other hand, for any $g\in N$ and $g'\in G$ one immediately gets 
\[
[g,g']=g \cdot (g' g^{-1} (g')^{-1})\in N.
\]
Thus, taking $g\in N\setminus \{e\}$, we see that all the chain of commutators given by~\eqref{eq:g-chain} consists of non-trivial elements, belonging to~$N$; in particular, this applies to its last element $g_l\in Z(G)$.
\end{proof}

Gromov's celebrated theorem on groups of polynomial growth states that finitely generated groups of at most polynomial growth are precisely the virtually nilpotent ones. Moreover, in this case, the Bass--Guivarc'h formula asserts that the growth function satisfies
\[
\GrFn_G(n) \asymp n^d, 
\]
and the growth rate $d=\growth(G)$ can be described explicitly in terms of the group~$G$. Here $\asymp$ is the notation for two sequences being comparable: by definition, $a_n \asymp b_n$ for two sequences $a_n,b_n$ of positive numbers if there exists a constant $C$ such that 
\[
\frac{1}{C} \le \frac{a_n}{b_n} \le C
\]
for all sufficiently large~$n$.
To state the Bass--Guivarc'h formula, giving the growth rate $\growth(G)$ of a finitely generated nilpotent group~$G$, let $d^G_j$ be the rank of the quotient $G_{j}/G_{j+1}$:
\begin{equation}\label{eq:def-dG}
G_{j}/G_{j+1} \simeq \Z^{d^G_j} \oplus T_j, \quad |T_j|<\infty.
\end{equation}
We will also denote the quotient map in~\eqref{eq:def-dG} by $\varphi_j:G_j\to G_j/G_{j+1}$, and by $\varphi^T_j$ its post-composition with the quotient by the torsion subgroup~$T_j$, 
\begin{equation}\label{eq:def-phi-T}
\varphi^T_j : G_j \to \Z^{d^G_j}.
\end{equation}

The statement of the classical Bass--Guivarc'h formula, giving the growth of the group $G$ in terms of the ranks $d^G_j$, is the following. 

\begin{theorem}[Bass--Guivarc'h formula,~{\cite{bass, guivarch}}]
The growth of a nilpotent group $G$ is given by
\[
\growth (G) =\sum_j j d^G_j.
\]
Moreover, one has $\GrFn_G(n) \asymp n^{\growth(G)}$.
\end{theorem}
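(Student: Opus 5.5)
The statement is the classical Bass--Guivarc'h formula, and I would prove it by the standard two-sided estimate: an upper bound $\GrFn_G(n)\preccurlyeq n^{D}$ and a lower bound $\GrFn_G(n)\succcurlyeq n^{D}$, where $D:=\sum_j j d^G_j$. Both bounds pass freely to finite-index subgroups, since growth functions of commensurable groups are equivalent in the sense of~\eqref{eq:phi-prime}. I therefore first reduce to $G$ torsion-free, and then to the case where every quotient $G_j/G_{j+1}$ is torsion-free up to finite index. To set up coordinates, for each $j$ I choose elements $u_{j,1},\dots,u_{j,d^G_j}\in G_j$ whose images under $\varphi^T_j$ form a basis of $\Z^{d^G_j}$. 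Concatenating these over $j$ gives a Mal'cev-type basis. Every element of a finite-index subgroup can then be written uniquely as an ordered product $\prod_{j}\prod_i u_{j,i}^{a_{j,i}}$, with $u_{j,i}$ assigned weight $j$.

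For the upper bound, the key claim is that every word of length $n$ in $\mF$ equals such an ordered product with $|a_{j,i}|\le C n^{j}$. Counting the integer vectors in the box $\prod_{j,i}[-Cn^j,Cn^j]$ then gives $\GrFn_G(n)\le C' n^{D}$. I would prove the claim by induction on the nilpotency class, working in $G/G_{m}$ and then lifting. The lift is done by a collection process: move the weight-one letters to the left, and keep track of the commutators created along the way. A commutator of weight $j$ is produced at most $O(n^j)$ times, which uses~\eqref{eq:Gi-Gj}. The cleanest version of this bookkeeping is a polynomial-in-$n$ estimate on the exponents. Concretely, the exponent in weight $j$ of a product of $n$ generators is bounded by a polynomial of degree $j$ in $n$, via the Hall--Petresco formula $x^ny^n=(xy)^n c_2^{\binom n2}\cdots c_m^{\binom nm}$ with $c_k\in G_k$.

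For the lower bound, I would show that each $u_{j,i}^{a}$ with $|a|\le n^j$ has word length $O(n)$. This holds because an element of $G_j$ is a product of a bounded number of $j$-fold iterated commutators $[x_1,[x_2,\dots,x_j]]$ of generators. By multilinearity modulo $G_{j+1}$, the commutator $[x_1^{k},[\dots,x_j^{k}]]$ agrees with $[x_1,\dots,x_j]^{k^j}$ up to an error in $G_{j+1}$. The error is handled by downward induction on $j$, since errors in $G_{j+1}$ of size $O(n^{j+1})$ are themselves reachable by words of length $O(n)$. Hence every ordered product with $|a_{j,i}|\le c n^{j}$ has length $\le n$, and uniqueness of the normal form gives at least $c' n^{D}$ distinct elements in $B_n$.

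The main obstacle is the upper-bound collection estimate: proving uniformly that the weight-$j$ coordinates of words of length $n$ grow like $n^j$ and not faster. My first approach would be to embed $G$ as a lattice in its Mal'cev completion, a simply connected nilpotent Lie group. There I would use the dilations $\delta_t$ acting by $t^j$ on the $j$-th graded piece, together with a homogeneous quasi-norm $\|\cdot\|$ satisfying $\|xy\|\le C(\|x\|+\|y\|)$. Word length $n$ then forces $\|g\|\le Cn$, and the quasi-norm ball of radius $n$ has Haar volume $\asymp n^{D}$. This gives both bounds at once, by comparing lattice points to volume.
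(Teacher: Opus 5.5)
\textbf{Verdict: there is a genuine gap.} Your outline is the classical one: a normal form adapted to the lower central series, an $O(n^j)$ bound on level-$j$ exponents, and multilinearity of commutators modulo $G_{j+1}$ for the lower bound. It is also the skeleton of Section~\ref{s:proof-E}, which reproves this theorem as the case $H=G$ of Theorem~\ref{t.B-G}. But the step you flag yourself, the bound $|a_{j,i}|\le Cn^j$ for the normal form of an \emph{arbitrary} word of length $n$, is only asserted, and neither tool you offer proves it.

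Hall--Petresco relates $x^ny^n$ to $(xy)^n$. It says nothing about a general product of $n$ generators.

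The Lie-group route fails in two places. First, dilations acting by $t^j$ on the $j$-th graded piece are automorphisms only if the Mal'cev Lie algebra is isomorphic to its associated graded (Carnot). This fails in general: characteristically nilpotent Lie algebras have no nonzero semisimple derivation, hence no such dilations, and some have rational structure constants, so they come from lattices. Second, even on a Carnot group, $\|xy\|\le C(\|x\|+\|y\|)$ with $C>1$ does not by itself give $\|g\|\le Cn$ for words of length $n$; dyadic iteration yields only $n^{1+\log_2 C}$. You would need a genuinely subadditive homogeneous norm, e.g.\ Carnot--Carath\'eodory combined with \v{S}varc--Milnor. In the non-graded case you would need Guivarc'h's comparison of a filtration-adapted pseudo-norm with a Riemannian metric, or an inductive BCH estimate along partial products. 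Also, ``both bounds at once'' is not right: the lower bound needs the reverse comparison of word length with the norm, which a volume count does not supply.

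The same missing estimate is used silently in your lower bound. To say that the error $w^{-1}u^a\in G_{j+1}$ has level-$j'$ exponents $O(n^{j'})$, you need a \emph{weighted} upper bound. The length-$n$ version does not apply, because $u^a$ is a word of length about $n^j$.

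The paper closes exactly this gap with a potential function. A letter of level $j$ gets weight $A^{-(j-1)}n^{-j}$, and $\wt_n$ of a word also adds the products of weights over disordered pairs, as in \eqref{eq:weight}. For $A>3\max(C_{comm},C_T)$, each exchange \eqref{eq:exchange-ab} or merge \eqref{eq:merge} does not increase $\wt_n$ as long as it is at most $2$ (Lemma~\ref{l:A}). Hence the canonical form of a word of length $n$ has weight at most $2$, which gives $\sum_i|a_{i,j}|\le 2A^{j-1}n^j$. Because $\wt_n$ is defined for arbitrary words, the same device also controls the error terms (Lemma~\ref{l:delta}). For $H=G$ the paper does not even need that. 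Its counting uses only $\varphi^T_j$-images, namely \eqref{eq:j-powers-contains-G}, together with the fibre inequalities \eqref{eq:G-lower}--\eqref{eq:G-upper} in Lemma~\ref{l:backwards-volume}.

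Two smaller points. When some $G_j/G_{j+1}$ has torsion, the ordered products $\prod u_{j,i}^{a_{j,i}}$ need not form a subgroup, so your unique-normal-form claim needs either the isolator series $\sqrt{G_j}$ or torsion lifts like the paper's $\wT_j$. You should also justify that $\sum_j jd^G_j$ is unchanged when passing to a finite-index subgroup.
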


\section{Main results}\label{s:main}

To present and state our results, let $G$ be a finitely generated, torsion-free nilpotent group. (We will repeat this assumption in the statements of the main results to avoid the risk of misunderstanding, but otherwise, from now on and unless explicitly stated otherwise, this is a standing assumption.) Our goal is to find the critical regularity for actions of $G$ on one-dimensional manifolds; more precisely, we'll be looking for an algebraic characterisation for it.

\subsection{Classes of actions associated to a given central element}

We start by considering an intermediate question. Given a nontrivial central element $c\in Z(G)\setminus \{e\}$, we consider the class of actions where only the cyclic group $\langle c \rangle$, generated by it, is required to act faithfully. For the actions on $[0,1]$ (or $[0,1)$), this condition is equivalent to a simpler one: it suffices to require only that~$c$ acts non-trivially. We also consider the subclass of actions on $[0,1]$ where the action is required to be tangent to the identity at the endpoints.

\begin{definition} For a one-dimensional manifold $M$, a group $G$ and a non-trivial central element $c\in G$, define
\[
\Class_{M}(G,c):=\left\{\phi : G \to \Diff^1_+(M)) \mid \forall n=1,2,\dots \quad \phi( c^n) \neq \id \right\}
\]
and the modification of this class for $M=[0,1]$ with an additional restriction of derivative at the endpoints: 
\[
\Class_{[0,1]}^0(G,c):= \left\{\phi \in \Class_{M}(G,c) \mid D\phi(g)(0) =D\phi(g)(1) = 1 \quad \forall g\in G \right\}.
\]
Let
\[
\Crit_M(G,c) := \Crit \Class_{M}(G,c)
\]
and 
\[
\Crit^0_{[0,1]}(G,c) := \Crit (\Class_{[0,1]}^0(G,c))
\]
be the corresponding critical regularities.
\end{definition}

Then, we have an explicit formula for the corresponding critical regularity. Namely, the following propositions provide respectively upper and lower bound for the regularity:

\begin{prop}\label{thm upper bound}
Let $\phi:G\to \Diff_+^1(M)$,
where $M$ is one of $[0,1]$, $(0,1]$ or $\Sc$, and let $c\in G$ be a non-trivial central element. 
Assume that $I_c$ is a support interval of $c$ that satisfies $\growth(G/\Stab(I_c)) >0$. Then, the action $\phi$ cannot be
of class $C^{1+\alpha}$ for any $\alpha> [\growth(G/\Stab(I_c))]^{-1}$. 
\end{prop}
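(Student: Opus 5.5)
We argue by contradiction via the Generalized Kopell Lemma (Proposition~\ref{p:Kopell}), with $g=\phi(c)$, $I_g=I_c$, and $f_1,\dots,f_k$ the images of a fixed symmetric generating set $\mF$ of $G$. Set $K=\Stab(I_c)$ and $d=\growth(G/K)>0$, and fix $\alpha>1/d$; we must produce a path in the Schreier graph $G/K$ along which $\sum_j |F_j(I_c)|^{\alpha}<\infty$, where $F_j=f_{i_j}\cdots f_{i_1}$. For compact $M$ (the interval or the circle) the intervals $hK\mapsto h(I_c)$ of the orbit have pairwise disjoint interiors, so $\sum_{hK\in G/K}|h(I_c)|\le |M|$. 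This is the only geometric input. For $M=(0,1]$ the intervals may accumulate at the removed endpoint, where the H\"older constants blow up; we will handle this at the end.

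The core is a probabilistic/averaging step. Choose a random process in the Schreier graph, run in successive stages. In stage $m$ the walker moves inside the ball of radius roughly $2^m$ and arrives at a vertex that is close to uniformly distributed over $B^{G/K}_{2^m}$, using about $2^m$ steps. For nilpotent $G$ this should follow from the polynomial growth of $G/K$, which comes from the generalized Bass--Guivarc'h formula of Section~\ref{s:gen-Bass}. That formula gives $\#B_n^{G/K}\asymp n^d$ and in particular doubling. Concretely, one can pick an endpoint uniformly in the ball and follow a geodesic to it, concatenating stages. The expected contribution of stage $m$ is then bounded by
\[
\sum_{\text{steps }t}\E|F_t(I_c)|^{\alpha} \lesssim 2^m\cdot \max_t\sum_{v}\p(\text{at }v\text{ at step }t)\,|v|^{\alpha}.
\]
If the occupation probabilities are bounded by $C\,2^{-md}$ on a ball of size $2^{md}$, H\"older's inequality gives
\[
\sum_v p_v |v|^\alpha\le C 2^{-md}\cdot (2^{md})^{1-\alpha}\Big(\sum_v|v|\Big)^{\alpha}\lesssim 2^{-md\alpha}.
\]
Hence stage $m$ contributes $\lesssim 2^{m(1-d\alpha)}$, and this is summable exactly when $\alpha>1/d$. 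Since the expectation of the total sum is finite, some realisation of the path has $L_\alpha<\infty$, and Proposition~\ref{p:Kopell} yields the contradiction.

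The main obstacle is constructing a process whose distribution at \emph{every} intermediate step, not just at stage endpoints, has density $\lesssim n^{-d}$ on the ball. Geodesics to uniform targets can concentrate near the start. I would fix this by spreading the stage: first take a uniformly random number of steps, or use the group structure and apply a random group element of word length $\le n$ followed by lifting. The upper bound $\#B_n\lesssim n^d$ by itself gives the needed estimate only once the measure is spread over the ball, so I would use the sharp two-sided bound $\asymp n^d$ to show that a uniform $h\in B_n^G$ projects to a distribution on $G/K$ with density $\lesssim n^{-d}$ on a set of size $\asymp n^d$. Paths in $G$ then project to paths in $G/K$.

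For $(0,1]$ we additionally need the intervals visited to stay in a fixed compact subset $[\delta,1]$, where uniform $C^{1+\alpha}$ constants hold. To arrange this, we would condition the process to avoid the finitely many (in measure) orbit intervals near $0$. Since the total length of the orbit intervals is finite, only a proportion $o(1)$ of the ball's mass lies in intervals near $0$, and we would reroute through vertices whose intervals lie in $[\delta,1]$. This is the technical modification that the paper defers to Section~\ref{s:half-int}.
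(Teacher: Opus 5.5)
Your architecture matches the paper's: Kopell's lemma with $g=\phi(c)$, a random path made of stages of dyadic length, H\"older's inequality against $\sum_v|v|\le|M|$, and summability of $2^{m(1-d\alpha)}$ (equivalently of $n^{-\alpha d}$). But the step you flag as the main obstacle is left open, and it is the heart of the proof (Proposition~\ref{p:process}). Your stage bound assumes occupation probabilities $\lesssim 2^{-md}$ at \emph{every} time of stage $m$, and neither remedy you list delivers this. Within a single stage, a geodesic walker, delayed or not, spends at least a fraction of order $s/2^m$ of its time within distance $s$ of the stage's starting point. The projection of a uniform element of $B^G_n(e)$ to $G/K$ only controls where a stage \emph{ends}. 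The missing idea is that the spreading at intermediate times must come from the \emph{previous} stage, and exploiting this requires working in $G$ and using independence. Take independent $g_{(j)}$, each uniform in $B^G_{2^j}(e)$ and written as a word of exactly $2^j$ letters (padding with $e\in\mF$). At a time $n$ inside stage $j+1$ one has $g_n=\xi_1\,g_{(j)}\,\xi_3$, where $\xi_1$ (the partial current stage) and $\xi_3$ (the earlier stages) are independent of $g_{(j)}$. Since $x\mapsto\xi_1x\xi_3$ is injective, $\max_{g}\Prob(g_n=g)\le 1/\#B^G_{2^j}(e)\le 1/\#B^G_{n/4}(e)$. Only then does the projection enter: $\Prob(g_n(I_c)=I')\le \#(B^G_{2n}(e)\cap K)/\#B^G_{n/4}(e)\lesssim n^{-d}$ by Theorem~\ref{t.B-G}. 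Lemma~\ref{l:Holder} then gives $\E|g_n(I_c)|^\alpha\lesssim n^{-\alpha d}$ for every $n$.

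Your treatment of $M=(0,1]$ does not work as stated. Finiteness of $\sum|h(I_c)|$ bounds the \emph{length} of the orbit intervals near $0$, not the proportion of \emph{vertices} of a ball whose intervals lie there, and it is the latter that the law of the walk weighs. For example, take $G=\mathbb{Z}^2=\langle a,c\rangle$ with $a(x)<x$ on $(0,1)$ and $c$ supported on the $a$-translates of a fundamental domain $D$. Then about half of every large ball of $G/K\simeq\mathbb{Z}$ consists of intervals $a^k(D)$ inside $(0,\delta)$. Conditioning the path to avoid such intervals, or rerouting it, also destroys the independence on which the density bound rests, and you give no control on the resulting law. The paper's fix keeps the independent stages intact. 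With $x_0$ the left endpoint of $I_c$, it inserts before each random stage of length $n_j$ the deterministic greedy word $F_{n_j}$: at each step, apply the element of $\mF$ moving the current point furthest to the right. By monotonicity and the symmetry of $\mF$ (Lemma~\ref{l:F-g}), $h(F_{n_j}(x))\ge x_0$ for all $x\ge x_0$ and all words $h$ of length $\le n_j$. So every intermediate interval stays in $[x_0,1]$, Remark~\ref{r:Kopell-half-open} applies, and the density bound only degrades to $1/\#B^G_{n/8}(e)$.
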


\begin{prop}\label{p:lower-K}
Let $G$ be a torsion-free finitely generated nilpotent group, $c\in Z(G)$ a non-trivial central element, and let $K<G$ be a stabilizer subgroup for~$c$. Assume that $\alpha<[\growth(G/K)]^{-1}$. Then there exists an action $\phi\in \Class_{[0,1]}(G;c)$ of class $C^{1+\alpha}$ and a support interval $I_c$ of~$c$ for this action, such that~$K=\Stab(I_c)$.
\end{prop}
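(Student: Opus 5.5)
The plan is a Pixton--Tsuboi type construction that follows the combinatorial realisation from the proof of Proposition~\ref{p:stab-subgr}, now carried out by $C^{1+\alpha}$ diffeomorphisms. First, use the chain~\eqref{eq:K-chain} and the homomorphism $K\to K/H\simeq\Z$ to build the topological action of Proposition~\ref{p:stab-subgr} combinatorially. Then $[0,1]$ (or rather a subinterval) contains a family of disjoint intervals $I_v$, indexed by the cosets $v\in G/K$. Each $f\in G$ sends $I_v$ to $I_{fv}$, and on the base interval $I_{eK}=I_c$ the subgroup $K$ acts through the quotient $K/H\simeq\Z$.

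The core of the argument is to choose the geometry of this action, that is, the lengths $\ell_v=|I_v|$ and the gluing maps. For the lengths, take
\[
\ell_v \asymp \frac{1}{(1+|v|)^{d}\,\log^{2}(2+|v|)},
\]
with $d=\growth(G/K)$ and $|v|$ the distance to $eK$ in the Schreier graph. Since $\#\{|v|=n\}\lesssim n^{d-1}$ up to constants, this sum converges. The constant and the bound on the number of vertices in spheres come from the growth estimate $\GrFn_{G/K}(n)\asymp n^{d}$, which is the generalized Bass--Guivarc'h formula, Theorem~\ref{t.B-G}. In practice one may need a smoothed, "regularised" length function in place of $|v|$, so that neighbouring cosets have comparable lengths: $\ell_{fv}/\ell_v \to 1$ with $|\ell_{fv}/\ell_v-1|\lesssim 1/|v|$ for every generator $f\in\mF$.

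Each generator $f$ then acts as follows.
\begin{itemize}
\item On the interval $I_v$, it acts as a map onto $I_{fv}$ taken from the Tsuboi family.
\item On the gaps between intervals, it acts by the affine interpolation.
\end{itemize}
The Tsuboi maps $\varphi_{a,b}$ are diffeomorphisms $[0,a]\to[0,b]$, tangent to the identity at the endpoints, whose $C^{1+\alpha}$ constants are controlled by $|a/b-1|/a^{\alpha}$. Inside $I_c$, and in its images, we insert the prescribed nonidentity action of $K/H$: it is conjugated by the Tsuboi maps and acts as a fixed time-one map on the base interval. Consequently $c$ acts nontrivially and $\Stab(I_c)=K$. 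Commutation relations must hold exactly, so the maps on $I_{fv}$ have to be defined equivariantly along the chain~\eqref{eq:K-chain}, step by step as in~\eqref{eq:extending}. The Tsuboi family is designed for this: it is closed under the needed compositions, since $\varphi_{b,c}\circ\varphi_{a,b}=\varphi_{a,c}$.

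The key estimate is the H\"older control. On $I_v$ the distortion of $f$ satisfies
\[
\kappa_\alpha(f,I_v)\lesssim \frac{|\ell_{fv}/\ell_v-1|}{\ell_v^{\alpha}}\lesssim \frac{1}{|v|}\,\bigl(|v|^{d}\log^2|v|\bigr)^{\alpha},
\]
which is bounded whenever $\alpha d<1$, because the logarithms are absorbed by the strict inequality $\alpha<1/d$. We then glue these local bounds into a global $\alpha$-H\"older bound on $\log Df$. Points in different intervals are handled by the fact that $Df\to 1$ near the accumulation set, with $|\log Df|\lesssim 1/|v|$ on $I_v$, together with the composition inequality~\eqref{eq:kappa-composition}.

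The main obstacle is the choice of the length function. The raw word-metric distance $|v|$ in $G/K$ gives the ratio bound $|\ell_{fv}/\ell_v-1|\lesssim 1/|v|$ only up to constants. At the same time, the count of cosets at distance $n$ needs the upper polynomial bound of Theorem~\ref{t.B-G} in order to sum the lengths. Controlling both simultaneously, while also respecting the exact algebraic equivariance along the subnormal chain, is where I expect the technical work (the ``Choice-and-control'' framework) to concentrate.
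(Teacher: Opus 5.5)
There is a genuine gap: your construction never handles how the stabilisers act \emph{inside} the intervals $I_v$. Once $K$ acts on $I_c$ through $K/H\simeq\Z$, consistency forces, for a generator $f$ and coset representatives $h_v$,
\[
f|_{I_v}=h_{fv}\circ\bigl(h_{fv}^{-1}fh_v\bigr)\big|_{I_c}\circ h_v^{-1},\qquad h_{fv}^{-1}fh_v\in c^{\lh(f,v)}H .
\]
So $f|_{I_v}$ is a transported copy of the $\lh(f,v)$-th power of the $K/H$-generator, not a single Tsuboi map $I_v\to I_{fv}$. The cocycle $\lh(f,v)$ is unbounded in general.

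Take $N_3=\langle a,b\rangle$, $c=[a,b]$, $K=\langle b,c\rangle$, $H=\langle b\rangle$, so that $\growth(G/K)=1$. Then $a^{-n}ba^{n}=c^{-n}b$, so $b$ preserves every $I_{a^nK}$ and acts there as a conjugate of $c^{-n}$. Your bound $\kappa_\alpha(f,I_v)\lesssim|\ell_{fv}/\ell_v-1|/\ell_v^{\alpha}$ gives $0$ for $f=b$, although $b|_{I_{a^nK}}\neq\id$.

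Worse, since $b(I_{a^nK})=I_{a^nK}$, $\log Db$ vanishes somewhere there. Hence $\kappa_\alpha(b,I_{a^nK})\ge\|\log Db\|_{C(I_{a^nK})}/|I_{a^nK}|^{\alpha}$. A uniform H\"older bound therefore forces $b|_{I_{a^nK}}$ to become $C^1$-close to the identity as $|I_{a^nK}|\to0$, while it must perform an $n$-fold shift of the fundamental domains of $c$. A fixed time-one map $T$ on $I_c$, conjugated by transports that depend only on $\ell_v$, gives no such control. For instance, if the transports have bounded distortion, the displacement of the transported $T^{-n}$ is at least a fixed fraction of $|I_v|$ for every $n\ge1$.

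The missing idea is a $v$-dependent internal geometry of $I_v$, tuned against the growth of $\lh$. The paper works as follows.
\begin{enumerate}
\item It splits each $I_v$ into the fundamental domains $I_{v,j}=h_vc^j(J_0)$ and runs Pixton--Tsuboi on $X=(G/K)\times\Z$. The Tsuboi map on $I_{v,j}$ also depends on the adjacent $I_{v,j-1}$, and the composition rule then gives a group action automatically, with no step-by-step equivariance along the chain.
\item It takes lengths $L(v,j)=(A_v^2+j^2)^{-(1+\eps)/2}$. This makes shifts cheap: $\kappa_\alpha(\sigma_v^{l},I_v)\lesssim (l/A_v)/L(A_v)^{\alpha}$ for $|l|<A_v$ (Proposition~\ref{p:sigma-v}).
\item Osin's polynomial distortion theorem gives $|\lh(f,v)|\le p(\|v\|)$ (Lemma~\ref{l:ell}).
\item With $L(A_v)=(C_0+\|v\|)^{-1/\alpha}$ one has $A_v\asymp\|v\|^{1/(\alpha\eps)}$. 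Choosing $\eps$ with $1/(\alpha\eps)>\deg p+1$ bounds the shift part.
\end{enumerate}
Writing $f|_{I_v}=g_0\circ\sigma_v^{\lh(f,v)}$, your estimate is the correct one for the no-shift factor $g_0$ (Proposition~\ref{p:L-A-Ap}), and~\eqref{eq:kappa-composition} combines the two factors.

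Two smaller points. First, the sphere bound $\#\{\|v\|=n\}\lesssim n^{d-1}$ is not known; this is exactly the gap in Parkhe's argument discussed in the paper. Your sum still converges if you group dyadic shells and use only $\#B^{G/K}_n\lesssim n^d$, as in Lemma~\ref{l:convergence}; then $\ell_v=(C_0+\|v\|)^{-1/\alpha}$ already suffices without the logarithm. Second, no smoothing of $\|v\|$ is needed, since $\bigl|\,\|fv\|-\|v\|\,\bigr|\le1$ directly gives $|\log(\ell_{fv}/\ell_v)|=O(1/\|v\|)$. There are also no gaps to interpolate across: the intervals are adjacent.
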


Together, they imply our first main theorem:

\begin{thmx}\label{t:A}
Let $G$ be a torsion-free finitely generated nilpotent group, $c\in Z(G)$ a non-trivial central element, and $M$ one of $[0,1]$, $(0,1]$ or~$\Sc$. Then
\[
\Crit \Class_M(G,c) = \Crit \Class_{[0,1]}^0 (G,c) = 
1+ \left[ \min_{K\in \StSub(G,c)}  \growth (G/ K) \right]^{-1}.
\]
\end{thmx}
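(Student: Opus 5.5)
The plan is to derive Theorem~\ref{t:A} from Propositions~\ref{thm upper bound} and~\ref{p:lower-K}, as the text announces. Write $d_{\min}:=\min_{K\in\StSub(G,c)}\growth(G/K)$. First I check that this minimum exists and is positive. Each $\growth(G/K)$ will be an integer by the generalized Bass--Guivarc'h formula (Theorem~\ref{t.B-G}), so a minimum over a nonempty set of nonnegative integers is attained; the set is nonempty because $Z(G)\in\StSub(G,c)$ by the first Remark. For positivity, if $\growth(G/K)=0$ then $G/K$ would be finite; the chain~\eqref{eq:K-chain} has infinite cyclic quotients, so this forces $n=0$ and $K=G$. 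But then condition~\ref{i:H} gives $G/H\simeq\Z$ with $H\cap\langle c\rangle=\{e\}$. I would try to rule this out either by using that $c$ is central and so maps into $[G,G]\cdots$ (it need not, so it is safer to allow it), or else to treat $d_{\min}=0$ as giving critical regularity $+\infty$. Under the convention $1/0=\infty$, this case is consistent with the construction: when $K=G$, Proposition~\ref{p:lower-K} gives actions of every regularity $C^{1+\alpha}$, $\alpha<\infty$, and I would interpret the formula accordingly.

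\textbf{Lower bound.} Take $K$ realizing the minimum and any $\alpha<1/d_{\min}$. Proposition~\ref{p:lower-K} gives an action in $\Class_{[0,1]}(G,c)$ of class $C^{1+\alpha}$. I still need to land in $\Class^0_{[0,1]}$ and in the other manifolds, and I would do this by inspecting the Pixton--Tsuboi construction, which produces maps tangent to the identity at the endpoints. If that is not automatic, I would embed the action in a slightly smaller interval and extend it by the identity near $0$ and $1$; $C^{1+\alpha}$ gluing then works because the derivatives equal $1$ at the gluing points. The condition $\phi(c^n)\ne\id$ holds because $c$ has a support interval. Restricting to $(0,1]$ is harmless, and for $\Sc$ I collapse the endpoints, using tangency to the identity. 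This gives $\Crit\Class_M(G,c)\ge\Crit\Class^0_{[0,1]}(G,c)\ge 1+1/d_{\min}$. The first inequality follows from the obvious inclusions, or, for the other manifolds, from the transfers just described.

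\textbf{Upper bound.} Let $\phi\in\Class_M(G,c)$ be of class $C^{1+\alpha}$ with $\alpha>1/d_{\min}$. Since $c$ acts nontrivially, it has a support interval $I_c$, which requires care on $\Sc$ when $c$ has no fixed points. On the interval, Proposition~\ref{p:stab-subgr} gives $\Stab(I_c)\in\StSub(G,c)$, hence $\growth(G/\Stab(I_c))\ge d_{\min}>0$, and Proposition~\ref{thm upper bound} yields a contradiction. For $(0,1]$ the same argument holds, since Proposition~\ref{p:stab-subgr}'s converse direction only uses the action on an invariant interval containing $I_c$. For $\Sc$ I expect the main obstacle. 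If $c$ is fixed-point free (with irrational rotation number, or after passing to a power), nilpotence should give, via the invariant-measure argument, an action semiconjugate to rotations. If instead $c$ has fixed points, then $G$ preserves $\Fix(c)$ and the stabilizer of an arc $I_c$ acts on it as on an interval, so the Plante translation-number argument still applies. A finite-index issue arises here: the stabilizer chain may need to start from a finite-index subgroup of $G$, but relative growth is unchanged under finite index. In the fixed-point-free case I would replace $c$ by a power $c^n$ having fixed points when its rotation number is rational, and argue separately that the irrational case forces a Denjoy-type contradiction or an abelian-type bound.
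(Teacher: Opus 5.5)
\textbf{Verdict.} The interval, half-open interval and lower-bound parts are fine, but the circle upper bound has a genuine gap.

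On $M=[0,1]$ and $M=(0,1]$ your argument is essentially the paper's: Proposition~\ref{p:stab-subgr} with Proposition~\ref{thm upper bound} for the upper bound, and Proposition~\ref{p:lower-K} for the lower bound. Tangency to the identity comes from the construction itself: each generator has derivative $1$ at every point not interior to some $I_v$, in particular at the endpoints. Your fallback of ``extending by the identity'' would be circular, since a $C^1$ gluing needs exactly that derivative-one property. The gap is the circle case, which you flag as the main obstacle but do not resolve; in the paper this is the content of Lemma~\ref{l:circle}.

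\textbf{Irrational case.} If no power of $c$ has a fixed point, there is no Denjoy-type contradiction to be had. The correct conclusion is that the right-hand side is $+\infty$, i.e.\ $G\in\StSub(G,c)$; then $G\to G/H\simeq\Z$ composed with an irrational rotation is a $C^\infty$ action in the class. The idea you are missing is this:
\begin{itemize}
\item Nilpotence gives a $G$-invariant probability measure on $\Sc$, so the rotation number $\rho:G\to\R/\Z$ is a homomorphism.
\item $\rho(c)$ has infinite order in the finitely generated abelian group $\rho(G)$.
\item Modulo torsion, project $\rho(G)$ onto $\Z$ without killing the image of $c$. The preimage $H$ of the kernel then satisfies $G/H\simeq\Z$ and $H\cap\langle c\rangle=\{e\}$.
\end{itemize}

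\textbf{Rational case.} Proposition~\ref{thm upper bound}, applied to the central element $c^k$, gives $\alpha\le 1/\growth(G/\Stab(I_{c^k}))$. But $\Stab(I_{c^k})$ is in general \emph{not} in $\StSub(G,c)$, for two reasons.
\begin{itemize}
\item If $\rho(c)\neq 0$, then $c$ permutes the support intervals of $c^k$ without fixing any of them, so $c\notin\Stab(I_{c^k})$.
\item $\Stab(I_{c^k})$ may have finite index in $G$, whereas every member of $\StSub(G,c)$ other than $G$ has infinite index. For example, take $G=\Z^2=\langle a,c\rangle$ with $a$ the rotation by $1/2$ and $c$ commuting with $a$ and having fixed points.
\end{itemize}
Saying that relative growth is insensitive to finite index does not repair this. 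You must exhibit some $K\in\StSub(G,c)$ with $K\supseteq\Stab(I_{c^k})$, so that $\growth(G/K)\le\growth(G/\Stab(I_{c^k}))$.

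The paper builds such a chain starting from $G$ itself:
\begin{itemize}
\item First set $K_{(1)}=\rho^{-1}(\Tor(\rho(G)))$.
\item At each step, take the translation number $\tau_{i,0}$ of $K'_{(i)}=K_{(i)}\cap\Ker\rho$ on the component $J_i\supseteq I_{c^k}$ of $\Sc\setminus\Fix(K'_{(i)})$.
\item The key step, again using nilpotence, is that the translation numbers on the $q_i$ cyclically permuted copies $g_i^j(J_i)$ coincide. Otherwise the iterated commutators $[\dots[[f,g_i],g_i],\dots,g_i]$ would never become trivial.
\item This makes $t_i(f)=\tau_{i,0}(f^{q_i})/q_i$ a homomorphism on all of $K_{(i)}$, not just on the finite-index subgroup $K'_{(i)}$.
\item Then $K_{(i+1)}=\Ker t_i$ has free abelian quotient and still contains both $\Stab(I_{c^k})$ and $c$.
\item The process stops when $J_i=I_{c^k}$; there $t_i(c)\neq 0$ provides the subgroup $H$.
\end{itemize}
This argument is needed even when $c$ itself has fixed points, because other elements of $G$ may still act with nonzero rotation number. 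Without it, the Plante-type argument of Proposition~\ref{p:stab-subgr} cannot be started from $G$.
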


Note that for an action of a nilpotent group, it suffices to check its faithfulness only for the central elements. Indeed, the kernel of a homomorphism is a normal subgroup, and by Lemma~\ref{l:on-centre} we know that a non-trivial normal subgroup of a nilpotent group~$G$ intersects its center~$Z(G)$ non-trivially. In particular, for nilpotent groups with a \emph{cyclic} center, Theorem~\ref{t:A} already allows to find the critical regularity of faithful actions:
\begin{corollary}\label{rk:cyclic}
Let $G$ be a torsion-free finitely generated nilpotent group with a cyclic center, and $c\in Z(G)$ a non-trivial central element. Then
\[
\textup{Crit}_{[0,1]}(G)=1+ \left[ \min_{K\in \StSub(G,c)}  \growth (G/ K) \right]^{-1}.
\]
\end{corollary}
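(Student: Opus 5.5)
The corollary reduces to Theorem~\ref{t:A} applied to $M=[0,1]$ and the given $c$; the work is to identify the class of faithful actions with $\Class_{[0,1]}(G,c)$, at least as far as critical regularity is concerned. We show the two classes actually coincide, restricted to actions by $C^1$ diffeomorphisms, so their critical regularities agree, and then quote the formula of Theorem~\ref{t:A}.

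First inclusion: a faithful action lies in $\Class_{[0,1]}(G,c)$. Since $G$ is torsion-free and $c\neq e$, every $c^n$ with $n\ge1$ is nontrivial in $G$. Faithfulness then gives $\phi(c^n)\neq\id$. So $\Crit_{[0,1]}(G)\le \Crit\Class_{[0,1]}(G,c)$.

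Reverse inclusion: let $\phi\in\Class_{[0,1]}(G,c)$ and suppose $N=\Ker\phi\neq\{e\}$. By Lemma~\ref{l:on-centre}, $N\cap Z(G)\neq\{e\}$.
\begin{itemize}
\item Since $Z(G)$ is cyclic, $Z(G)=\langle z\rangle$ with $z$ of infinite order (torsion-freeness), and $c=z^a$ for some $a\neq 0$.
\item The nontrivial subgroup $N\cap Z(G)$ contains some $z^b$ with $b\neq0$.
\item Hence $z^{ab}\in N$, and $z^{ab}=c^{b}$, so (inverting if needed) $c^{|b|}\in N$ with $|b|\ge1$.
\end{itemize}
This means $\phi(c^{|b|})=\id$, contradicting membership in $\Class_{[0,1]}(G,c)$. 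So every action in $\Class_{[0,1]}(G,c)$ is faithful.

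The classes therefore coincide for $C^1$ actions. Since critical regularity only concerns actions of class at least $C^1$, we get $\Crit_{[0,1]}(G)=\Crit\Class_{[0,1]}(G,c)$, and Theorem~\ref{t:A} gives the stated value. There is no real obstacle. The only point needing care is the cyclic-center argument, where torsion-freeness is used twice: once to make $c$ of infinite order, and once to ensure the kernel meets $\langle c\rangle$ nontrivially. Also, the formula does not depend on which nontrivial $c\in Z(G)$ is chosen, because each such $c$ gives the same class.
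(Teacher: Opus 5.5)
Your proposal is correct and follows the paper's own argument. The paper also uses Lemma~\ref{l:on-centre} to reduce faithfulness to faithfulness on $Z(G)$; with cyclic center, the faithful $C^1$ actions on $[0,1]$ are then exactly $\Class_{[0,1]}(G,c)$, and Theorem~\ref{t:A} gives the formula. Your computation with $c=z^a$ and $z^b\in\Ker\phi$ spells out the step the paper leaves implicit.
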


After the first version of this paper was prepared, from our discussions with Andrés Navas resulted that the critical regularity cannot be attained, too:

\begin{prop}\label{p:critical}
In the assumptions of Proposition~\ref{thm upper bound}, the action $\phi$ also cannot have the regularity $C^{1+\alpha}$ with $\alpha= [\growth(G/\textup{Stab}(I_c))]^{-1}$. In particular, in the assumptions of Theorem~\ref{t:A}, the action cannot be of regularity exactly $\Crit\Class(G,c)$.
\end{prop}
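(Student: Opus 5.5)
We argue by contradiction, using the Kopell argument of Proposition~\ref{p:Kopell} with a weaker input. Let $d=\growth(G/K)$ with $K=\Stab(I_c)$, assume $\phi$ is of class $C^{1+\alpha}$ with $\alpha=1/d$, and write $\ell(v)$ for the length of the support interval of $c$ corresponding to a vertex $v$ of the Schreier graph $G/K$. Then $\sum_v \ell(v)\le |M|$. At the critical exponent we cannot expect a path with $\sum_j \ell(v_j)^{1/d}<\infty$. Indeed, a ball of radius $n$ contains $\asymp n^d$ vertices, by the generalised Bass--Guivarc'h formula (Theorem~\ref{t.B-G}). Typical lengths at distance $n$ are then about $n^{-d}\cdot n^{-1}$ or so, and their $1/d$-powers only just fail to be summable. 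So instead of uniform distortion control, we aim for distortion that grows \emph{sublinearly} along the path, and then extract a contradiction from that.

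\textbf{Step 1: a random process with slowly growing distortion.} Mimicking the process of Proposition~\ref{p:process} (used for the upper bound), we build a random path $v_0=I_c, v_1, v_2,\dots$ in the Schreier graph $G/K$ such that its law at time $n$ is spread roughly uniformly over the ball of radius $\asymp n^{1/?}$. We intend a diffusive or ballistic walk adapted to the polynomial growth. The goal is a bound of the form
\[
\E\Big[\sum_{j\le n}\ell(v_j)^{1/d}\Big]\le \sum_{j\le n}\Big(\sum_{v}\p(v_j=v)\,\ell(v)^{1/d}\Big)=o(\log\text{-type sums}),
\]
obtained by Hölder's inequality: $\sum_v p_v\,\ell(v)^{1/d}\le \big(\sum_v p_v^{d/(d-1)}\big)^{(d-1)/d}\big(\sum_{v\in\text{supp}}\ell(v)\big)^{1/d}$. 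Here the second factor is the total length of the intervals in the relevant annulus, which tends to $0$ because the series $\sum_v \ell(v)$ converges. This is the gain over the subcritical case: there the factor was bounded by $|M|^{1/d}$, whereas here its tail tends to $0$. So we aim for $\E\sum_{j\le n}\ell(v_j)^{1/d}=o(\Lambda(n))$, where $\Lambda(n)$ is the bound one would get with just a constant (for example $\Lambda(n)\asymp\log n$ or $n^{\beta}$). This is the analogue of Proposition~\ref{p:process-critical}. From it we pick a deterministic path $F_n$ with $\varkappa(F_n;I_c)\le C_\phi\sum_{j\le n}\ell(v_j)^{1/d}=o(\Lambda(n))$, using~\eqref{eq:kappa-subadditive} and~\eqref{eq:dist-reg}.

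\textbf{Step 2: a contradiction from sublinear distortion.} Now we use~\eqref{eq:Dgm} with $g=c$:
\[
Dc^m(x)=Dc^m|_{F_n(x)}\cdot e^{O(\varkappa(F_n;I_c))}.
\]
Since $c\ne\id$ on $I_c$, the derivatives $Dc^m$ are unbounded, with growth at least exponential in $m$ in an averaged sense. Indeed, $\log Dc^m$ at points near the fixed endpoint of $I_c$ behaves in a controlled way, and by the mean value theorem some point has $Dc^m\ge |c^m(J)|/|J|$ for an appropriate fundamental domain $J$. Meanwhile $Dc^m|_{F_n(I_c)}$ tends to $1$ at a rate governed by $m$ times the distortion of $c$ on the tiny interval $F_n(I_c)$, which is $O(m\,\ell(v_n)^{1/d})$. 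Choosing $m=m(n)$ so that $m\,\ell(v_n)^{1/d}$ stays bounded while $\log\sup Dc^{m}\gg\varkappa(F_n;I_c)=o(\Lambda(n))$ yields the contradiction. This is the analogue of Lemma~\ref{l:no-sublinear}.

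\textbf{Main obstacle.} The delicate part is matching the scales: the time $n$ along the path, the length $\ell(v_n)$, and a lower bound on the growth of $\log \sup_{I_c} Dc^m$ in $m$. For a general $C^{1+\alpha}$ map $c$ with fixed endpoints, the growth of $\sup Dc^m$ need not be linear in $m$. So I would first try to use a sharper Kopell-type fact: the sequence $Dc^m(x)$ is unbounded, and even $\sum_m |c^m(J)|$-type arguments force $\sup_m \log Dc^m=\infty$. I would then only need the distortion to be $o(1)$-dominated along a subsequence, which the $o(\Lambda(n))$ bound combined with a suitable averaging over $n$ should provide.
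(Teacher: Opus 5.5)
Your overall architecture matches the paper's: a random path with slowly growing distortion, then the conjugation identity~\eqref{eq:Dgm}, with $Dc^m$ controlled on the small final interval by $m\,\ell^{1/d}$. But Step~2 contains a genuine gap, and Step~1 is not set up to deliver what is needed.

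\textbf{The wrong growth rate in Step~2.} You assert that $\sup_{I_c}Dc^m$ grows at least exponentially in $m$. This is false in the situation at hand. Here $c$ maps each of the arbitrarily short intervals $g(I_c)$ onto itself, so it has a point of derivative $1$ in each of them. Also $Dc$ at the endpoints of $g(I_c)$ equals $Dc$ at the endpoints of $I_c$, by conjugation. By continuity, $Dc=1$ at $\partial I_c$, and then $\log\sup_{I_c}Dc^m=o(m)$; for parabolic endpoints the growth can be merely polynomial.

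\textbf{The fallback also fails.} In your last paragraph you propose to use only that $Dc^m$ is unbounded. This cannot work. The only distortion bound you have along a path is $C\sum_{j\le n}\ell(v_j)^{1/d}$. It is at least $C|I_c|^{1/d}$, and, as you note yourself, it cannot be expected to stay bounded at the critical exponent. So $\sup Dc^m\le O(1)\,e^{C\varkappa(F_n;I_c)}$ is perfectly compatible with unbounded $Dc^m$.

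\textbf{The missing lemma.} You need a quantitative lower rate to play against a quantitative upper rate. The right one is the Polterovich--Sodin fact (Lemma~\ref{l:no-sublinear}): $\|Dc^m\|_{C(I_c)}=o(m)$ is impossible. Among the disjoint fundamental domains $c^{-j}(J_0)$, $n\le j\le 2n$, one has length at most $|I_c|/n$. The mean value theorem on it gives a point where $Dc^m\ge |J_0|\,m/(2|I_c|)$. Your mean-value remark points this way, but you drew the wrong conclusion from it.

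\textbf{What Step~1 must produce.} With linear growth as the target, you need, for each large $m$, a path of length $N=m$ satisfying two conditions you never formulate.
\begin{enumerate}
\item[(a)] The final interval satisfies $|g_N(I_c)|\le C_2N^{-d}$. Then $m\,|g_N(I_c)|^{1/d}=O(1)$ and $Dc^m$ is bounded on $g_N(I_c)$. You do not control the endpoint length. Without it, the admissible $m\lesssim\ell(v_n)^{-1/d}$ may be far smaller than $n$.
\item[(b)] $\exp\bigl(C\sum_{n\le N}|g_n(I_c)|^{1/d}\bigr)=o(N)$, i.e.\ the sum must grow slower than $\frac1C\log N$. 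A bound like $N^\beta$, or $\log N$ with an uncontrolled constant, is useless.
\end{enumerate}

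\textbf{How the paper gets them.} For (b), the paper applies H\"older not at each time step but to the cumulative occupation vector $\widetilde p_N(J)=\sum_{n\le N}\Prob(g_n(I_c)=J)\lesssim\dist(J,I_c)^{1-d}$. Summing over dyadic shells bounds its $\ell^{d/(d-1)}$-norm by $O(\log^{1-1/d}N)$. It gets (a) from $\max_J\Prob(g_N(I_c)=J)\lesssim N^{-d}$ together with $\sum_J|J|\le|M|$. Markov's inequality then yields a single path satisfying both.

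\textbf{Salvaging your tail idea.} It could also give (b), in the form $o(\log N)$, under two conditions.
\begin{itemize}
\item You must use the ball-uniform process of Proposition~\ref{p:process}. A diffusive walk would give only $\sum\lesssim\sqrt N$.
\item You must split $\Prob(v_n=\cdot)$ into two parts. The part with $\|v\|\le\delta n$ contributes $\lesssim\delta^{d-1}/n$. The rest contributes $\lesssim n^{-1}\bigl(\sum_{\|v\|>\delta n}\ell(v)\bigr)^{1/d}$.
\end{itemize}
As written, the ``support'' of the law at time $n$ is a whole ball, and its length tail does not tend to $0$.

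Incidentally, your heuristic $\ell\approx n^{-d-1}$ at distance $n$ would make the $1/d$-powers summable, which contradicts your own point.
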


\subsection{General critical regularity}

\begin{thmx}[Critical regularity]\label{thm crit interval} 
Let $G$ be a finitely generated, torsion-free non-abelian nilpotent group. Then, 
\begin{equation}\label{eq:th-A}
\Crit_{[0,1]}(G) = 1 + \left[ \max_{c\in Z(G)\setminus \{e\}} \min \{\growth(G/K) \, : \,  K \in \StSub(G,c)\} \right]^{-1}
\end{equation}
\end{thmx}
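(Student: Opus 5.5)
Write $m(c):=\min\{\growth(G/K)\, :\, K\in\StSub(G,c)\}$ and $d:=\max_{c\in Z(G)\setminus\{e\}} m(c)$. The identity~\eqref{eq:th-A} says that $\Crit_{[0,1]}(G)=1+1/d$, and I would prove the two inequalities separately, relying on Propositions~\ref{thm upper bound} and~\ref{p:lower-K}.

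\emph{Upper bound.} Let $\phi$ be a faithful action of class $C^{1+\alpha}$ on $[0,1]$, and suppose $\alpha>1/d$. Choose $c$ with $m(c)=d$. Faithfulness gives $\phi(c)\neq\id$, so $c$ has a support interval $I_c$. By Proposition~\ref{p:stab-subgr}, $K:=\Stab(I_c)$ belongs to $\StSub(G,c)$, hence $\growth(G/K)\ge m(c)=d$. Since $G$ is non-abelian, I expect $d>0$; this needs checking, for instance by verifying that $K=G$ is impossible when the center is chosen appropriately. Proposition~\ref{thm upper bound} then forbids $\alpha>[\growth(G/K)]^{-1}$, and $[\growth(G/K)]^{-1}\le 1/d<\alpha$, a contradiction. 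Thus $\Crit\le 1+1/d$. Note that only the particular $c$ realizing the maximum is used; the minimum over $K$ is automatically dominated.

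\emph{Lower bound.} Fix $\alpha<1/d$. For each $c\in Z(G)\setminus\{e\}$ there is $K_c\in\StSub(G,c)$ with $\growth(G/K_c)=m(c)\le d$, so $\alpha<[\growth(G/K_c)]^{-1}$. Proposition~\ref{p:lower-K} therefore gives a $C^{1+\alpha}$ action $\phi_c$ on $[0,1]$ in which $c$ acts nontrivially. A single such action need not be faithful. I would repair this by gluing finitely many of them: place rescaled copies of $\phi_{c_1},\dots,\phi_{c_N}$ on disjoint subintervals $[a_i,b_i]\subset[0,1]$ and let every element act as the identity outside these subintervals.

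Two issues have to be handled in this gluing.

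First, $C^{1+\alpha}$ regularity of the glued action. The construction should produce actions tangent to the identity at the endpoints, in the spirit of $\Class^0_{[0,1]}$. If Proposition~\ref{p:lower-K} does not already provide this, it can be obtained from Theorem~\ref{t:A}, which asserts that the critical regularity of $\Class^0$ is the same. Conjugating by the affine map onto an interval of length $\ell$ scales the Hölder constant of $\log D\phi$ by $\ell^{-\alpha}$. With finitely many pieces this causes no problem, and gluing maps that are tangent to the identity at the junction points keeps each map $C^{1+\alpha}$.

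Second, faithfulness with only finitely many pieces. This is the step I expect to be the main obstacle. The kernel of the glued action is $\bigcap_i\Ker\phi_{c_i}$. By Lemma~\ref{l:on-centre}, it is trivial as soon as no nontrivial central element lies in all the kernels. The center $Z(G)\cong\Z^r$ has infinitely many elements, so the choice of the $c_i$ must be made carefully. My plan is a Noetherian argument. The subgroup $\Ker\phi_c\cap Z(G)$ misses $\langle c\rangle$ except at $e$. Start with $c_1$ and set $N_1=\Ker\phi_{c_1}\cap Z(G)$. If $N_1\neq\{e\}$, choose $c_2\in N_1\setminus\{e\}$ and set $N_2=N_1\cap\Ker\phi_{c_2}$, which is strictly smaller since $c_2\notin\Ker\phi_{c_2}$. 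The trouble is that a strictly descending chain of subgroups of $\Z^r$ need not terminate; $2^k\Z$ is an example. To prevent this I would make sure the rank drops at each step. One way is to modify $\phi_{c_2}$ so that its kernel meets $N_1$ in a subgroup of strictly smaller rank. Since $\langle c_2\rangle$ acts faithfully, the intersection $N_1\cap\Ker\phi_{c_2}$ avoids $c_2$, but not necessarily its other multiples; I would handle this by choosing $c_2$ primitive in the isolator of $N_1$ and using torsion-freeness of $\Z^r$. An alternative, which may be simpler, is to take $\phi_c$ for every $c$ in a finite set representing all rank-one directions of a suitable family. After at most $r$ steps the intersection is trivial, the glued action is faithful and of class $C^{1+\alpha}$, and hence $\Crit\ge 1+\alpha$ for every $\alpha<1/d$.
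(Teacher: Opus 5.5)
Your route is the paper's. The upper bound comes from Proposition~\ref{thm upper bound} applied at a central element realizing the maximum; the paper instead takes the minimum over all $c$ of the bounds from Theorem~\ref{t:A}, which amounts to the same thing. The lower bound comes from concatenating finitely many actions from Theorem~\ref{t:A} that are tangent to the identity at the endpoints, adding a piece for a central element still lying in all the kernels.

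The positivity of $d$, which you leave unchecked, is Remark~\ref{rmk abelian}. Suppose every $c\in Z(G)\setminus\{e\}$ admitted $G$ itself as a stabilizer subgroup, with $G/H_c\simeq\Z$ and $H_c\cap\langle c\rangle=\{e\}$. Then the kernel of $G\to\prod_c G/H_c$ would meet $Z(G)$ trivially. By Lemma~\ref{l:on-centre} the map would then be injective, and $G$ would be abelian.

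The one real defect is the termination step, which you call the main obstacle but mis-diagnose. You write that $N_1\cap\Ker\phi_{c_2}$ avoids $c_2$ but \emph{not necessarily its other multiples}. This contradicts what you stated a sentence earlier. A nontrivial orientation-preserving homeomorphism of an interval has infinite order, and this is also built into the definition of $\Class_M(G,c)$. Hence $\Ker\phi_{c_2}\cap\langle c_2\rangle=\{e\}$.

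It follows that $N_2:=N_1\cap\Ker\phi_{c_2}$ satisfies $N_2\cap\langle c_2\rangle=\{e\}$, while $c_2\in N_1$. If $\rk N_2=\rk N_1$, then $N_1/N_2$ would be a finitely generated abelian group of rank zero, hence finite. Then $c_2^{k}\in N_2$ for $k=|N_1/N_2|\ge 1$, a contradiction. So the rank drops at every step, for \emph{any} choice of $c_{i+1}\in N_i\setminus\{e\}$, and the naive iteration stops after at most $\rk Z(G)$ steps. This is exactly the paper's argument, and the paper asserts this rank drop without further detail. Your example $2^k\Z$ cannot occur, since it would put $c_2^2$ into $\Ker\phi_{c_2}$.

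Your proposed remedies are unnecessary: modifying $\phi_{c_2}$, taking $c_2$ primitive in the isolator of $N_1$, or using a finite family of rank-one directions. The primitivity fix would not even address the worry you raised. Primitivity of $c_2$ does nothing to keep $c_2^2$ out of $\Ker\phi_{c_2}$; only the faithfulness of $\langle c_2\rangle$ does. Replace that paragraph with the rank argument above and the proof is complete.
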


\begin{remark}\label{rmk abelian} 
J.~Plante and W.~Thurston have shown~\cite{pt} (see Theorem~4.5 therein) that an action of nilpotent non-abelian group on $[0,1]$ cannot be of regularity~$C^2$; hence the right hand side of~\eqref{eq:th-A} should be finite (and not exceeding~$1$). Let us check that it is indeed the case.

Indeed, one has $\growth(G/K_c)=0$ for some $c\in Z(G)\setminus \{e\}$ and a stabilizer subgroup $K_c\in \StSub(G,c)$ only if the ascending chain in the part~\ref{i:chain} of Definition~\ref{def stab subg} is trivial, $K_c=G$, and thus due to the part~\ref{i:H} for some $H_c\triangleleft G$ one has $G/H_c\simeq \Z$, and $H_c \cap \langle c \rangle = \{e\}$.

Now, if such $H_c$ exists for all $c\in Z(G)\setminus \{e\}$, then the homomorphism $G\mapsto \prod_{c\in Z(G) \setminus \{e\}} (G/H_c)$ is injective due to Lemma~\ref{l:on-centre}, as it is injective on $Z(G)$, and ranges in an abelian group. This would imply that $G$ is abelian, thus providing a contradiction.
 \end{remark}

\begin{thmx}[Half-open interval]\label{t:half-open}
Let $G$ be like in Theorem~\ref{thm crit interval}. Then
\[
 \Crit_{(0,1]}(G)=  \Crit_{[0,1]}(G)
\]
\end{thmx}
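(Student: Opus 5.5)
We prove the two inequalities separately. Write
\[
d^*:=\max_{c\in Z(G)\setminus\{e\}} \min\{\growth(G/K) : K\in\StSub(G,c)\}.
\]

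\textbf{Lower bound $\Crit_{(0,1]}(G)\ge \Crit_{[0,1]}(G)$.} Take any faithful $C^{1+\alpha}$ action of $G$ on $[0,1]$. Restricting it to $(0,1]$ gives a faithful action of the same regularity: the H\"older condition only has to hold locally on $(0,1]$, and it holds globally here. Restriction is also injective on $G$, since a diffeomorphism of $[0,1]$ that is the identity on $(0,1]$ is the identity. So every regularity admissible on $[0,1]$ is admissible on $(0,1]$.

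\textbf{Upper bound $\Crit_{(0,1]}(G)\le 1+1/d^*$.} By Theorem~\ref{thm crit interval}, $1+1/d^*$ equals $\Crit_{[0,1]}(G)$, so this closes the argument. Let $\phi$ be a faithful action on $(0,1]$ of class $C^{1+\alpha}$, and choose $c\in Z(G)\setminus\{e\}$ realising the maximum in $d^*$.

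Since $\phi$ is faithful, $\phi(c)\ne\id$, so $c$ has a support interval $I_c$. A support interval is the closure of a component of the complement of $\Fix(c)$. If that component accumulates at the missing endpoint $0$, we take the closure inside $(0,1]$, and $I_c$ may be half-open. By Proposition~\ref{p:stab-subgr} (its proof works verbatim on $(0,1]$, since only the translation-number argument on invariant open intervals is used), $K=\Stab(I_c)$ lies in $\StSub(G,c)$. Hence $\growth(G/K)\ge d^*$.

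If $d^*=0$ there is nothing to prove, since $G$ is non-abelian and Remark~\ref{rmk abelian} gives $d^*>0$. So $\growth(G/K)>0$, and Proposition~\ref{thm upper bound}, which explicitly covers $M=(0,1]$, shows that $\phi$ cannot be $C^{1+\alpha}$ for $\alpha>1/\growth(G/K)$. Because $1/\growth(G/K)\le 1/d^*$, this gives $\alpha\le 1/d^*$, and therefore $\Crit_{(0,1]}(G)\le 1+1/d^*$.

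\textbf{Main obstacle.} The substantive difficulty is hidden inside Proposition~\ref{thm upper bound} for $M=(0,1]$. The H\"older constants of $\log D\phi(f_i)$ may blow up near $0$, so the distortion bound~\eqref{eq:dist-reg} fails along orbit paths of $I_c$ that drift towards $0$. The remedy, which the plan takes from Proposition~\ref{p:process-new}, is to choose the random path in the Schreier graph $G/K$ so that the intervals $F_j(I_c)$ stay inside a fixed compact $[\delta,1]$. On that compact set the H\"older constants are uniform, and the sum of $\alpha$-powers of lengths is finite by Lemma~\ref{l:alpha-conv-new}. The generalized Kopell lemma (Proposition~\ref{p:Kopell}) then applies with all maps restricted to a neighbourhood of $[\delta,1]$.

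A secondary point is the degenerate case where $I_c$ itself accumulates at $0$, so that no compact piece of it is available. If that happens, we would replace $c$ by a conjugate-invariant choice or pick a different support interval of the same $c$ away from $0$. Failing that, we would run the Kopell argument on a compact subinterval, using that its images under the path remain in $[\delta,1]$.
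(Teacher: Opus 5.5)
Your proposal is correct and follows the paper's proof. The lower bound comes from restricting an action on $[0,1]$ (the paper removes the endpoint of an action tangent to the identity). The upper bound comes from a support interval of a suitable central element, Proposition~\ref{p:stab-subgr}, and Proposition~\ref{thm upper bound} on $(0,1]$, whose proof keeps the images in a compact set via Proposition~\ref{p:process-new}.

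The degenerate case in your last paragraph cannot occur, so it needs no remedy. If $I_c=(0,a]$, then each $g(I_c)=(0,\phi(g)(a)]$ is a support interval of $c$ whose interior meets that of $I_c$, so $g(I_c)=I_c$ for all $g$. This gives $\Stab(I_c)=G$ and $\growth(G/K)=0$, contradicting $\growth(G/K)\ge d^*>0$. Your last suggested fix would also fail, since Kopell's lemma needs an interval fixed by $\phi(c)$.
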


\begin{thmx}[Circle]\label{t:circle}
Let $G$ be like in Theorem~\ref{thm crit interval}. Then
\[
 \Crit_{\Sc}(G)=  \Crit_{[0,1]}(G)
\]
\end{thmx}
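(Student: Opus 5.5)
We need to prove $\Crit_{\Sc}(G)=\Crit_{[0,1]}(G)$ for the circle. Write $d^*:=\max_{c\in Z(G)\setminus\{e\}}\min\{\growth(G/K):K\in\StSub(G,c)\}$, so that $\Crit_{[0,1]}(G)=1+1/d^*$ by Theorem~\ref{thm crit interval}.

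\emph{Lower bound.} This direction is immediate. Take any faithful $C^{1+\alpha}$ action of $G$ on $[0,1]$ with $\alpha<1/d^*$. By Theorem~\ref{t:A} we may assume the action is tangent to the identity at the endpoints; to get this for a faithful action we use the construction in the proof of Theorem~\ref{thm crit interval}, which glues actions from the classes $\Class^0_{[0,1]}(G,c)$. Gluing the two endpoints then produces a faithful $C^{1+\alpha}$ action on $\Sc$ with a global fixed point. Hence $\Crit_{\Sc}(G)\ge\Crit_{[0,1]}(G)$.

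\emph{Upper bound.} Let $\phi:G\to\Diff^{1+\alpha}_+(\Sc)$ be faithful with $\alpha>1/d^*$; we aim for a contradiction. The plan is to reduce to the setting of Proposition~\ref{thm upper bound}, which already covers $M=\Sc$. It then suffices to find a central element $c\ne e$ with a support interval $I_c$ such that $\Stab(I_c)\in\StSub(G,c)$ and $\growth(G/\Stab(I_c))\ge d^*$. We split into cases.
\begin{itemize}
\item[(a)] \emph{$G$ has a finite orbit.} A nilpotent group acting on the circle has an invariant probability measure, so the rotation number $\rho:G\to\Sc$ is a homomorphism. If $\rho$ has finite image, a finite-index subgroup $G'$ has a global fixed point. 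Cutting at that point gives a faithful action of $G'$ on $[0,1]$, and we apply the interval theory to $G'$. This needs a small check: the quantity $d^*$ is unchanged when passing to finite-index subgroups. The reason is that stabilizer subgroups and the growth of $G/K$ should behave well under finite index, using the generalized Bass--Guivarc'h formula. Once that holds, the cut-open action combined with Proposition~\ref{thm upper bound} gives the contradiction.
\item[(b)] \emph{No finite orbit, but $\rho$ is not faithful on $Z(G)$.} Pick $c\in Z(G)\setminus\{e\}$ with $\rho(c)=0$. Since $c$ is central, its fixed set is $G$-invariant. If the action is minimal, $c$ would be the identity, contradicting faithfulness. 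So there is an exceptional minimal set, on which every element with zero rotation number acts trivially. Support intervals of $c$ then lie in the complementary gaps. The stabilizer of such a gap acts on a closed interval, so Proposition~\ref{p:stab-subgr} identifies the stabilizer structure, and Proposition~\ref{thm upper bound} applies directly on $\Sc$.
\item[(c)] \emph{The action is semi-conjugate to an action by rotations that is faithful on $Z(G)$.} Since the kernel of a homomorphism to an abelian group contains $[G,G]$, we get $[G,G]\cap Z(G)=\{e\}$. By Lemma~\ref{l:on-centre}, $[G,G]$ is then trivial, because it is normal. So $G$ is abelian, which is excluded.
\end{itemize}

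I expect the main obstacle to be case (b), together with the finite-index reduction in (a). In (b) we must show that the stabilizer of a gap containing $I_c$, which lives in an action on the circle, still has $\growth(G/\Stab(I_c))\ge d^*$. The natural route is to restrict to the subgroup $\ker\rho$ and apply Proposition~\ref{p:stab-subgr} to its action on the gap. Elements with nonzero rotation number permute the gaps freely, which makes the orbit of $I_c$ only larger and the Schreier graph bigger, not smaller. The key point to verify is that $\Stab(I_c)$ is still a stabilizer subgroup of $G$ itself: we need the chain condition~\ref{i:chain}, with top step $G/\ker\rho\cong\Z^r$, where $\ker\rho$ acts on an interval. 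If this works, the minimum over $\StSub(G,c)$ is at most $\growth(G/\Stab(I_c))$, and we choose $c$ (for instance, realizing the maximum) so that the estimate exceeds $1/\alpha$.
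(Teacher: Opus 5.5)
Your lower bound and case~(c) are correct. The overall strategy is also the paper's: apply Proposition~\ref{thm upper bound} on $\Sc$ to a support interval of a suitable central element. The gap is in certifying $\growth(G/\Stab(I_c))\ge d^*$, and it comes from elements of nonzero \emph{rational} rotation number.

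In~(b) you want the top step of the chain to be $G/\ker\rho\cong\Z^r$. But $G/\ker\rho\cong\rho(G)\subset\R/\Z$ can have torsion even when the minimal set is exceptional: one element may have rotation number $1/2$ while another has irrational rotation number. Then this step has a finite cyclic factor, and condition~\ref{i:chain} of Definition~\ref{def stab subg} fails. In fact your literal target $\Stab(I_c)\in\StSub(G,c)$ can be false, for instance when $G/\Stab(I_c)$ is finite and nontrivial. What one can expect, and what suffices, is $\Stab(I_c)\subseteq K$ for some $K\in\StSub(G,c)$.

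Case~(a) hides the same difficulty. Bass--Guivarc'h only tells you that growth is unchanged under commensurability. What you actually need is that every $K'\in\StSub(G',c^m)$ comes, up to finite index, from some $K\in\StSub(G,c)$. That means the homomorphisms to $\Z$ defining the chain inside the finite-index normal subgroup $G'$ must extend to homomorphisms to $\R$ on the corresponding subgroups of $G$. This is false for general groups (take $\Z$ inside the infinite dihedral group), and it is exactly where nilpotence must enter. So it is not a ``small check''; it is the heart of the proof.

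The missing idea is the paper's Lemma~\ref{l:circle}. First pass to $K_{(1)}=\rho^{-1}(\Tor(\rho(G)))$, so that $G/K_{(1)}\cong\Z^{r_0}$. Now suppose $\rho(K_{(i)})$ is cyclic of order $q_i$, generated by $\rho(g_i)$. Take the translation number $\tau_{i,0}$ of $K_{(i)}\cap\ker\rho$ on the component $J_i$ that contains the support interval.

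Nilpotence shows that $\tau_{i,0}$ is invariant under conjugation by $g_i$. This is the cyclic-differences argument: otherwise $f,[f,g_i],[[f,g_i],g_i],\dots$ would never become trivial. Hence $t_i(f)=\tau_{i,0}(f^{q_i})/q_i$ is a homomorphism on all of $K_{(i)}$. Its kernel $K_{(i+1)}=\Ker t_i$ is the next term of the chain, with free abelian quotient, and it still contains $\Stab(I_{c^k})$. When $J_i=I_{c^k}$ you stop, and $t_i(c)\neq 0$ produces the subgroup $H$.

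Separately, in~(b) you pick an arbitrary $c$ with $\rho(c)=0$, which gives no comparison with $d^*$. You must use a maximizing $c$, and it may have $\rho(c)\neq 0$.
\begin{itemize}
\item If $\rho(c)$ is irrational, then $G\in\StSub(G,c)$ by part~\ref{i:irrational} of Lemma~\ref{l:circle}. This is impossible for a maximizer, since $d^*\ge 1$.
\item If $\rho(c)=p/q$, pass to $c^q$, which has support intervals. Then observe that $\StSub(G,c^q)=\StSub(G,c)$: the $\Z$-quotients of the chain force $c\in K$, and torsion-freeness gives $H\cap\langle c\rangle=\{e\}$.
\end{itemize}
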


\begin{remark}\label{rq:circle-torsion}
Theorem~\ref{t:circle} admits a generalisation to groups with torsion that admit faithful actions on the circle. Namely, the set of torsion elements $T$ in such a group $G$ is a finite cyclic subgroup of the center $Z(G)$, the quotient group $G/T$ is a torsion-free nilpotent group, and 
\begin{equation}\label{eq:G/T}
 \Crit_{\Sc}(G)=  \Crit_{[0,1]}(G/T).
\end{equation}
\end{remark}

\subsection{Generalisations of the Bass--Guivarc'h formula}\label{s:gen-Bass}

We generalise the classical Bass-Guivarc'h formula to the case of Schreier graphs, as well as to the relative growth of subgroups. Namely, let a subgroup $H<G$ of a nilpotent finitely generated group $G$ be given. Denote
\[
H_{j}:=H\cap G_j.
\]
Consider the rank~$d^{H;G}_j$ of the image of $H_{j}$ under the map $\varphi_j: G_j\to G_j/G_{j+1}$:
\[
\varphi_j (H_{j}) \simeq \Z^{d^{H;G}_j} \oplus T'_j, \quad |T'_j|<\infty;
\]
equivalently, this is the rank of the image $\varphi^T_j(H_j)<\Z^{d^{G}_j}$, where $\varphi^T_j$ is given by~\eqref{eq:def-phi-T}.
Note that these ranks differ from the ones that one would obtain when applying Bass-Guivarc'h formula for the growth of $H$ as an abstract nilpotent group. For instance, for the Heisenberg group $G=N_3$ and $H=Z(G)=\langle c\rangle\simeq \Z$, one gets $d^{H;G}_1=0$ and $d^{H;G}_2=1$, instead of $d^{H}_1=1$ and $d^{H}_2=0$.

We then have the following result.
\begin{thmx}\label{t.B-G}
The relative growth of a subgroup $H<G$ is given by
\[
\growth_G(H) = \sum_j  j d^{H;G}_j =: \grH,  
\]
and the growth of the associated Screier graph is given by 
\[
\growth(G/H) = \sum_j j (d^G_j - d^{H;G}_j) = D_G-\grH.
\]
Moreover, both these growth functions are equivalent to polynomial, in the sense that 
\begin{equation}\label{eq:asymp-H}
\# (B_n^G(e)\cap H) \asymp n^{\grH} 
\end{equation}
and 
\begin{equation}\label{eq:asymp-G/H}
\# (B_n^{G/H}(e)) \asymp n^{D_G-\grH}.
\end{equation}
\end{thmx}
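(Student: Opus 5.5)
The plan is to prove the two asymptotics \eqref{eq:asymp-H} and \eqref{eq:asymp-G/H} directly. The growth rates then follow by taking logarithms. Two reductions come first. Passing to a finite-index subgroup only changes the growth functions by the equivalence \eqref{eq:phi-prime}, so we may assume $G$ is torsion-free with torsion-free quotients $G_j/G_{j+1}$; more precisely, we work with the isolators of the $G_j$, which changes no ranks. Replacing $H$ by a finite-index subgroup changes neither the ranks $d^{H;G}_j$ nor the growth functions up to $\asymp$. Under these reductions we can choose a Malcev basis adapted to both filtrations. In each $\Z^{d^G_j}$ we take a basis whose first $d^{H;G}_j$ vectors span a finite-index subgroup of $\varphi^T_j(H_j)$. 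We lift these first vectors to elements $h_{j,1},\dots,h_{j,d^{H;G}_j}\in H_j$ and the remaining ones to $u_{j,k}\in G_j$. Every $g\in G$ is then written uniquely as an ordered product $\prod_j \prod_k x_{j,k}^{a_{j,k}}$ over all basis elements $x_{j,k}$. The standard fact used in the classical Bass--Guivarc'h proof (Guivarc'h's comparison of word length with Malcev coordinates) says that $g\in B^G_n$ implies $|a_{j,k}|\le Cn^j$. Conversely, every such product with $|a_{j,k}|\le n^j/C$ lies in $B_n^G$.

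For the relative growth, the key claim is that a finite-index subgroup of $H$ consists exactly of the products $\prod_j\prod_{k} h_{j,k}^{b_{j,k}}$ with $b_{j,k}\in\Z$. This holds up to bounded index, by induction down the filtration using that $H_j/H_{j+1}$ embeds with finite index image into $\varphi^T_j(H_j)$. Rewriting such products in the full Malcev basis is polynomial, and weight-compatible with $G_j$. Hence $|b_{j,k}|\le n^j/C$ gives elements of $B_n^G\cap H$, which yields the lower bound $\gtrsim n^{\sum_j j d^{H;G}_j}$. For the upper bound, an element $h\in H\cap B_n^G$ has coordinates $|a_{j,k}|\le Cn^j$. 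Its $H$-coordinates $b_{j,k}$ are determined successively: at level $j$, $\varphi^T_j$ of the current remainder determines $b_{j,k}$ up to a bounded linear change. That remainder has size $O(n^j)$ by the coordinate estimate, using the polynomial (weighted-homogeneous degree $\le j$) form of the multiplication. Counting the possible $b$'s gives $\lesssim \prod_j n^{j d^{H;G}_j}$.

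For the Schreier graph, the lower bound uses the transversal elements $\prod_j\prod_k u_{j,k}^{c_{j,k}}$ with $|c_{j,k}|\le n^j/C$. These lie in $B_n^G$, and distinct tuples give distinct cosets up to a bounded multiplicity: if two of them differed by an element of $H$, then projecting level by level via $\varphi^T_j$ would force the $u$-coordinates to agree modulo a finite-index lattice. This gives $\gtrsim n^{D_G-\grH}$. For the upper bound, a volume argument avoids a separate count: $\#B^{G/H}_n\cdot \min_{gH}\#(gH\cap B^G_{2n})\le \#B^G_{3n}$. Each coset $gH$ with $g\in B_n^G$ satisfies $g(H\cap B_n^G)\subset gH\cap B^G_{2n}$, so the minimum is $\gtrsim n^{\grH}$ by the first part. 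Combined with classical Bass--Guivarc'h $\#B^G_{3n}\asymp n^{D_G}$, this gives $\#B_n^{G/H}\lesssim n^{D_G-\grH}$.

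The main obstacle is the upper bound for $\#(B_n^G\cap H)$. We must show that the $H$-coordinates of an element of $H$ are controlled by $n^j$ at level $j$, even though the subgroups $H_j$ need not be in "good position" with respect to the lower levels: components of $h$ in $G_{j+1}$ could a priori come from large cancellations. I would handle this by induction on the nilpotency class. The statement is applied to $G/G_m$ and $HG_m/G_m$, the layer $H\cap G_m$ inside the central subgroup $G_m$ is treated directly, and the fibres are controlled with the Malcev coordinate bounds.
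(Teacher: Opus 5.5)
Your argument is correct, but it is organised differently from the paper's.

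\textbf{Your route.} You take as known the Malcev-coordinate form of Guivarc'h's estimates: a weighted-homogeneous polynomial group law and coordinate bounds in both directions. From this you get an explicit box in $H$-adapted coordinates,
$\{|b_{j,k}|\le n^j/C\}\subset B^G_n\cap H\subset\{|b_{j,k}|\le Cn^j\}$,
and an explicit transversal that gives the lower bound on $\#B^{G/H}_n$.

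\textbf{The paper's route.} It avoids Malcev coordinates altogether and keeps the torsion of $G_j/G_{j+1}$. Both bounds for $G/H$ are derived from the $H$-asymptotics via \eqref{eq:G-lower}--\eqref{eq:G-upper}; your volume inequality is \eqref{eq:G-lower}. The $H$-asymptotics are reduced, through the fibration $H_j\to\varphi_j(H_j)$ with fibre $H_{j+1}$, to the size of the images $\varphi^T_j(B^G_n\cap H_j)$. These images are then controlled by a word-rewriting weight (Lemmas~\ref{l:canon-length} and~\ref{l:delta}). The paper's version is self-contained; yours is more explicit but rests on heavier standard machinery and on the isolator reductions.

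\textbf{Your stated main obstacle is not one.} Your own successive determination of the $b_{j,k}$ already settles it, because the remainder lies in $H_{j+1}\cap B^G_{O(n)}$. That is exactly the paper's one-line inequality $\#(B^G_n\cap H_j)\le\#\varphi_j(B^G_n\cap H_j)\cdot\#(B^G_{2n}\cap H_{j+1})$. So the induction on nilpotency class in your last paragraph is superfluous.

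\textbf{The delicate step is the lower bound.} There, lattice points of $\varphi^T_j(H_j)$ of size $\lesssim n^j$ must be realised by elements of $H_j$ itself, not merely of $G_j$, inside $B^G_n$. You handle this correctly through powers $h_{j,k}^b$ with $h_{j,k}\in G_j$ and $|b|\le n^j/C$. This is the same mechanism as Lemma~\ref{l:delta}.

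\textbf{One small repair.} Vectors lying in $\varphi^T_j(H_j)$ need not extend to a basis of $\Z^{d^G_j}$ (e.g.\ $2\Z\subset\Z$). So you cannot insist on a single Malcev basis of $G$ that contains lifts $h_{j,k}\in H_j$. Instead, take the $h_{j,k}$ lifting a genuine basis of $\varphi^T_j(H_j)$, which gives unique coordinates on $H$. Take the $u_{j,k}\in G_j$ merely independent modulo $\varphi^T_j(H_j)\otimes\mathbb{Q}$. Then compare both families with a fixed Malcev basis of $G$ via weighted homogeneity. With this choice, your level-by-level argument shows that distinct transversal tuples give exactly distinct cosets, not just up to bounded multiplicity.
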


\section{Proof of Theorem~\ref{t.B-G}}\label{s:proof-E}

\subsection{Main proposition: comparing $\varphi^T_j$-images with the balls in $\Z^{d_j^G}$.}\label{s:varphi-j}

In order to prove Theorem~\ref{t.B-G}, we will be establishing directly the asymptotics~\eqref{eq:asymp-H} and~\eqref{eq:asymp-G/H}. Note first that it actually suffices to establish~\eqref{eq:asymp-H}.
\begin{prop}\label{p:G/H}
The asymptotics~\eqref{eq:asymp-H} directly implies~\eqref{eq:asymp-G/H}. 
\end{prop}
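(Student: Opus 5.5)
We relate the cosets in $B_n^{G/H}(e)$ to the ball $B_n^G(e)$ and to the intersections $H\cap B_m^G(e)$ by a counting argument. We combine the Bass--Guivarc'h formula $\#B_n^G(e)\asymp n^{D_G}$ with the assumed asymptotics $\#(B_n^G(e)\cap H)\asymp n^{\grH}$.

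Note first that the Schreier ball $B_n^{G/H}(e)$ is exactly the image of $B_n^G(e)$ under the projection $g\mapsto gH$. Indeed, a path of length $\le n$ from $H$ in the Schreier graph is labelled by a word $f_{i_k}\cdots f_{i_1}$, $k\le n$, which ends at $gH$ with $g=f_{i_k}\cdots f_{i_1}\in B_n^G(e)$. The same applies with left cosets $gH$ and the left-multiplication convention of the Cayley graph.

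\emph{Upper bound.} Each coset $gH$ meeting $B_n^G(e)$ has a representative $g$ with $|g|\le n$. The set $B_{n}^G(e)\cdot(H\cap B_n^G(e))$ lies in $B_{2n}^G(e)$, and the translates $g(H\cap B_n^G(e))$ for distinct cosets are disjoint. Hence
\[
\#B_n^{G/H}(e)\cdot \#(H\cap B_n^G(e)) \le \#B_{2n}^G(e),
\]
which gives $\#B_n^{G/H}(e)\lesssim (2n)^{D_G}/n^{\grH}\asymp n^{D_G-\grH}$.

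\emph{Lower bound.} Partition $B_n^G(e)$ by cosets. Fix a coset meeting $B_n^G(e)$ at a point $g_0$. Any other element $g$ of $gH\cap B_n^G(e)$ satisfies $g_0^{-1}g\in H$ with $|g_0^{-1}g|\le 2n$, so
\[
\#(g_0H\cap B_n^G(e))\le \#(H\cap B_{2n}^G(e)).
\]
Therefore
\[
\#B_n^G(e)\le \#B_n^{G/H}(e)\cdot \#(H\cap B_{2n}^G(e)),
\]
giving $\#B_n^{G/H}(e)\gtrsim n^{D_G}/(2n)^{\grH}\asymp n^{D_G-\grH}$.

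The main point requiring care is the convention for the metric: edges join $g$ to $fg$, so the word length is $|g|$ and the distance is $d(g,h)=|hg^{-1}|$. We must check that the translation used above, $h\mapsto gh$ for $h\in H$, stays within distance control. Here $|gh|\le |g|+|h|$ holds by the triangle inequality for word length regardless of the convention, so the estimates go through. With these two bounds the constants depend only on $G$, $\mF$ and the constants in \eqref{eq:asymp-H}, which yields \eqref{eq:asymp-G/H}, and the growth exponent $D_G-\grH$ follows.
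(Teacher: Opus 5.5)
Your argument is correct and is essentially the paper's proof. The paper establishes exactly the two inequalities $\# B_{2n}^G(e) \ge \# B_n^{G/H}(e)\cdot \#(B_n^G(e)\cap H)$ (via disjoint translates of short coset representatives) and $\# B_n^G(e) \le \# B_n^{G/H}(e)\cdot \#(B_{2n}^G(e)\cap H)$ (via bounding the fibres of the projection $G\to G/H$), and then combines them with the Bass--Guivarc'h formula and \eqref{eq:asymp-H}, just as you do.
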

To prove it, notice first that we have the following estimates: 
\begin{lemma}
The following inequalities hold for all~$n$:
\begin{equation}\label{eq:G-lower}
\# B_{2n}^G(e) \ge \# B_n^{G/H}(e) \cdot \# B_{n}^{H;G}(e),
\end{equation}
\begin{equation}\label{eq:G-upper}
\# B_n^G(e) \le \# B_n^{G/H}(e) \cdot \# B_{2n}^{H;G}(e).
\end{equation}
\end{lemma}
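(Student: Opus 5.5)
Both inequalities come from comparing the ball in $G$ with products (coset representative)$\cdot$(element of $H$). Here $B_n^{H;G}(e):=B_n^G(e)\cap H$, and the coset space $G/H$ is taken as left cosets $gH$ with the Schreier metric in which $gH$ is adjacent to $fgH$. The key observation is that the projection $\pi:G\to G/H$, $g\mapsto gH$, is $1$-Lipschitz. Indeed, $g$ and $fg$ are adjacent in the Cayley graph and their images $gH$, $fgH$ are adjacent in the Schreier graph. Hence $\pi(B_n^G(e))\subseteq B_n^{G/H}(e)$. Conversely, every coset at Schreier distance at most $n$ from $H$ has a representative of word length at most $n$: lift a Schreier path $H, f_1H, f_2f_1H,\dots$ to the word $f_k\cdots f_1$, of length $k\le n$. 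Fix such a short representative $g_x\in B_n^G(e)$ for each $x\in B_n^{G/H}(e)$.

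For the lower bound~\eqref{eq:G-lower}, consider the map $(x,h)\mapsto g_x h$ on $B_n^{G/H}(e)\times B_n^{H;G}(e)$. Its image lies in $B_{2n}^G(e)$, since $|g_xh|\le |g_x|+|h|\le 2n$ by the triangle inequality for word length. The map is injective: if $g_xh=g_{x'}h'$, then projecting to $G/H$ gives $x=x'$, hence $g_x=g_{x'}$ and $h=h'$. Counting elements gives the inequality.

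For the upper bound~\eqref{eq:G-upper}, take $g\in B_n^G(e)$ and set $x=\pi(g)\in B_n^{G/H}(e)$. Then $g=g_x h$ with $h=g_x^{-1}g\in H$ and $|h|\le |g_x|+|g|\le 2n$, so $h\in B_{2n}^{H;G}(e)$. The map $g\mapsto (x,h)$ is injective, because $g=g_xh$ can be recovered from the pair. This gives $\#B_n^G(e)\le \#B_n^{G/H}(e)\cdot\#B_{2n}^{H;G}(e)$.

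There is no real obstacle. The only point requiring care is matching the left/right conventions: the Cayley graph edges are $g\sim fg$, so word length is $|g|=d(e,g)$ and satisfies $|ab|\le|a|+|b|$, and the cosets must be $gH$ so that $\pi$ is compatible with left multiplication by generators. With these conventions, both decompositions $g=g_xh$ are exactly those used above.
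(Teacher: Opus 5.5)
Your proof is correct and follows the paper's argument. For the lower bound the paper also uses the injection $(gH,h)\mapsto gh$ built from short coset representatives; for the upper bound it bounds each fiber of $B_n^G(e)\to G/H$ by $\#B_{2n}^{H;G}(e)$ via $g'=g\,(g^{-1}g')$, where you use a fixed representative $g_x$ instead of an arbitrary fiber element, and you also spell out two facts the paper leaves implicit: that the projection is $1$-Lipschitz, and that every coset in $B_n^{G/H}(e)$ has a representative in $B_n^G(e)$.
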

\begin{proof}
Indeed, for~\eqref{eq:G-lower}, take a set $X_n\subset B_n^G(e)$ that contains exactly one preimage of each element of~$B_n^{G/H}(e)$ under the projection from $G$ to $G/H$. Now, all the products $gh$ for $g\in X_n$, $h\in B_n^{H;G}(e)$, are pairwise different, as the left classes $gH$ for $g\in X_n$ are pairwise disjoint. Now, all such products belong to $B_{2n}^G(e)$, and this implies~\eqref{eq:G-lower}. 

On the other hand, if two elements $g,g'\in B_n^G(e)$ project to the same vertex of the Schreier graph~$G/H$, then $g'=gh$ for 
\[
h=g^{-1} g', \quad h\in H,
\]
and hence $h\in B_{2n}^{H;G}(e)$. The preimage of every vertex in~$B_n^{G/H}(e)$ under the projection from $G$ to $G/H$ consists of at most $\# B_{2n}^{H;G}(e)$ elements, thus implying~\eqref{eq:G-upper}.
\end{proof}
\begin{proof}[Proof of Proposition~\ref{p:G/H}]
Combining~\eqref{eq:G-lower} and~\eqref{eq:G-upper}, we get 
\begin{equation}\label{eq:G/H-estimate}
\frac{\# B_n^G(e)}{\# B_{2n}^{H;G}(e)} \le B_n^{G/H}(e) \le \frac{\# B_{2n}^G(e)}{\# B_n^{H;G}(e)}.
\end{equation}
As $\# B_n^G(e)\asymp n^{D_G}$ and $\# B_n^{H;G}(e)\asymp n^{\grH}$ by assumption of~\eqref{eq:asymp-H}, both upper and lower estimates in~\eqref{eq:G/H-estimate} are $\asymp$-equivalent to $n^{D_G-\grH}$, thus completing the deduction of~\eqref{eq:asymp-G/H}.
\end{proof}

Now, the asymptotics~\eqref{eq:asymp-H}, applied to the subgroup~$H_j$ instead of~$H$, states that 
\begin{equation}\label{eq:growth-H-j}
\# (B_n^G(e)\cap H_j) \asymp n^{D_{H_j;G}}, \quad D_{H_j;G} = \sum_{j'=1}^m j' d^{H_j;G}_{j'} = \sum_{j'=j}^m j' d^{H;G}_{j'}.
\end{equation}

The asymptotics~\eqref{eq:growth-H-j}, in its turn, can be proven by backwards induction on~$j$. Namely, recall that the map $\varphi^T_j:G_j\to\Z^{d_j^G}$ is the composition of the quotient map by~$G_{j+1}$ and by the torsion subgroup~$T_j$. Then, we have the following statement.
\begin{lemma}\label{l:backwards-volume}
Assume that for every $j=1,\dots,m$ one has 
\begin{equation}\label{eq:H-j-ball-T}
\# \varphi^T_j (B^G_n(e) \cap H_j) \asymp n^{j d^{H;G}_j}
\end{equation}
Then for every $j=1,\dots,m$ the asymptotics~\eqref{eq:growth-H-j} holds. 
\end{lemma}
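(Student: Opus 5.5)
Under the assumption that \eqref{eq:H-j-ball-T} holds for every $j$, we prove \eqref{eq:growth-H-j} by backwards induction on $j$, starting from $j=m$. For $j=m$ we have $G_{m+1}=\{e\}$, so $\varphi^T_m$ restricted to $G_m$ has finite kernel $T_m$. Hence $\#(B^G_n(e)\cap H_m)$ differs from $\#\varphi^T_m(B^G_n(e)\cap H_m)$ by at most the factor $|T_m|$, and \eqref{eq:H-j-ball-T} gives $\asymp n^{m d^{H;G}_m}=n^{D_{H_m;G}}$.

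For the inductive step, assume \eqref{eq:growth-H-j} holds for $H_{j+1}$ and consider the map $\varphi^T_j$ restricted to $H_j$. Its kernel is $H_j\cap \varphi_j^{-1}(T_j)$, which contains $H_{j+1}=H_j\cap G_{j+1}$ with finite index (at most $|T_j|$). We run the same fibre-counting argument as in the lemma preceding Proposition~\ref{p:G/H}.

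\emph{Upper bound.} Two elements $g,g'\in B_n^G(e)\cap H_j$ with the same image under $\varphi^T_j$ satisfy $g^{-1}g'\in B_{2n}^G(e)\cap \Ker(\varphi^T_j|_{H_j})$. The latter set is covered by at most $|T_j|$ translates of sets $B_{2n+C}^G(e)\cap H_{j+1}$, where $C$ is the maximal length of fixed coset representatives of $H_{j+1}$ in the kernel. So each fibre has at most $|T_j|\cdot\#(B_{3n}^G(e)\cap H_{j+1})\lesssim n^{D_{H_{j+1};G}}$ elements, using the induction hypothesis (and $\asymp$ absorbs the constant rescaling $n\mapsto 3n$). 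Multiplying by the number of fibres, which is $\lesssim n^{jd^{H;G}_j}$ by \eqref{eq:H-j-ball-T}, gives $\#(B_n^G(e)\cap H_j)\lesssim n^{jd_j^{H;G}+D_{H_{j+1};G}}=n^{D_{H_j;G}}$.

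\emph{Lower bound.} Choose one preimage in $B_n^G(e)\cap H_j$ for each point of $\varphi^T_j(B_n^G(e)\cap H_j)$, and multiply these on the right by the elements of $B_n^G(e)\cap H_{j+1}$. The resulting products are pairwise distinct: products with different chosen preimages have different $\varphi^T_j$-images, since $\varphi^T_j$ is a homomorphism on $G_j$ trivial on $G_{j+1}$, and products with the same preimage are distinct because right multiplication is injective. All products lie in $B_{2n}^G(e)\cap H_j$. Hence $\#(B_{2n}^G(e)\cap H_j)\gtrsim n^{jd^{H;G}_j}\cdot n^{D_{H_{j+1};G}}$, which gives the lower bound after rescaling $n$.

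The identity $D_{H_j;G}=jd^{H;G}_j+D_{H_{j+1};G}$ used in both bounds follows directly from the definitions: for $j'\ge j$ we have $H_j\cap G_{j'}=H_{j'}$, while for $j'<j$ the set $H_j\cap G_{j'}=H_j$ lies in $G_{j'+1}$ and so has trivial image under $\varphi_{j'}$. The only delicate points are the finite-index discrepancy between $\Ker\varphi^T_j\cap H_j$ and $H_{j+1}$, and the fact that the radius-doubling in the fibre counts is harmless for functions $\asymp n^D$. Both are routine, so this lemma itself poses no real obstacle. The real work, establishing \eqref{eq:H-j-ball-T}, is deferred to the subsequent proposition.
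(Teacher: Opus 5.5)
Your proof is correct and follows the paper's route: backwards induction, using the same fibre-counting inequalities as in the lemma preceding Proposition~\ref{p:G/H}. The one difference is cosmetic. The paper first replaces $\varphi^T_j$ by $\varphi_j$ (the image counts differ by at most a factor $\# T_j$), so that the kernel on $H_j$ is exactly $H_{j+1}$; you keep $\varphi^T_j$ and handle the finite-index gap with bounded-length coset representatives, which works equally well.
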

Note that for $j=1$ the asymptotics~\eqref{eq:growth-H-j} becomes the desired~\eqref{eq:asymp-H}, hence establishing it implies Theorem~\ref{t.B-G}.

\begin{proof}[Proof of Lemma~\ref{l:backwards-volume}]
We will now prove~\eqref{eq:growth-H-j} by backwards induction on~$j$. Namely, for $j=m+1$, we have $H_{j}=H_{m+1}=\{e\}$, and as $D_{H_{m+1};G}=0$, 
the statement holds automatically. 

Next, assume that for some $t$ the asymptotics~\eqref{eq:growth-H-j} holds for $j=t+1$, and let us establish it for $j=t$. Note that 
\[
1\le \frac{\# \left( \varphi_j (B^G_n(e) \cap H_j)\right)}{\# \left( \varphi^T_j (B^G_n(e) \cap H_j)\right)} \le \# T_j,
\]
hence the asymptotics~\eqref{eq:H-j-ball-T} implies 
\begin{equation}\label{eq:H-j-ball}
\# \left( \varphi_j (B^G_n(e) \cap H_j)\right) \asymp n^{j d^{H;G}_j}.
\end{equation}
Now, in the same way as~\eqref{eq:G-lower},~\eqref{eq:G-upper}, as $\varphi(H_j) \simeq H_j/H_{j+1}$, we get
\[
\# (B^G_n(e) \cap H_j) \le \# \left(\varphi_j (B^G_n(e) \cap H_j)\right) \cdot \# (B^G_{2n}(e) \cap H_{j+1}),
\]
\[
\# (B^G_{2n}(e) \cap H_j) \ge \# \left( \varphi_j (B^G_n(e) \cap H_j)\right) \cdot \# (B^G_{n}(e) \cap H_{j+1}).
\]
Joining~\eqref{eq:H-j-ball} with that $\# (B^G_{n}(e) \cap H_{j+1}) \asymp n^{D_{H_{j+1};G}}$ by the induction assumption, we get 
\[
\# (B^G_n(e) \cap H_j) \asymp n^{D_{H_{j+1};G}} \cdot n^{n^{j d^{H;G}_j}} = n^{D_{H_{j+1};G} + j d^{H;G}_j} = n^{D_{H_j;G}}.
\]
This concludes the induction step and thus the proof.
\end{proof}

The following lemma, joining classical statements, can be seen as one of the steps of the proof of Bass-Guivarc'h Theorem. Roughly speaking, it says that $\varphi^T_j$-images of intersections $B_n^G(e)\cap G_j$ scale as $n^j$, that is, contain and are contained in balls of radii~$\const\cdot n^j$. We refer the reader to~\cite[Section 6]{bass} and to~\cite[Section~II]{guivarch}:

\begin{lemma}[Bass, Guivarc'h]
For every $j$ there exists a constant $C_j$ such that for all $n$
\begin{equation}\label{eq:j-powers-contains-G}
B_{n^j}^{\Z^{d^G_j}}(0) \subset \varphi^T_j(B_{C_j n}^G(e)\cap G_j)
\end{equation}
\begin{equation}\label{eq:j-powers-G}
\varphi_j^T(B_n^G(e) \cap G_j) \subset B_{C_j n^j}^{\Z^{d^G_j}}(0)
\end{equation}
\end{lemma}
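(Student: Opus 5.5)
We prove the two inclusions separately, both resting on the multilinearity of iterated commutators modulo higher terms of the lower central series. The basic tool is this: the map $G_1\times\dots\times G_1\to G_j/G_{j+1}$ given by $(g_1,\dots,g_j)\mapsto [g_1,[g_2,\dots,[g_{j-1},g_j]\dots]]\cdot G_{j+1}$ is well defined and multilinear. It factors through $(G/G_2)^j$, and its image generates $G_j/G_{j+1}$. Both facts follow from~\eqref{eq:Gi-Gj} and the standard commutator identities $[xy,z]=x[y,z]x^{-1}[x,z]$. Composing with the torsion quotient gives a multilinear map into $\Z^{d_j^G}$, and the images of $j$-tuples of generators from $\mF$ span a finite-index subgroup $\Lambda_j<\Z^{d^G_j}$.

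\emph{Inclusion~\eqref{eq:j-powers-contains-G}.} Fix $v\in\Z^{d_j^G}$ with $|v|\le n^j$. First, $v$ differs from an element of $\Lambda_j$ by one of finitely many coset representatives, each realised by a fixed element of $G_j$, so this costs only a bounded word length. Next, write the $\Lambda_j$-part as $\sum_i a_i e_i$, where the $e_i$ are images of basic iterated commutators of generators and $|a_i|\le C n^j$. Each coefficient we write as $a_i=\sum_{k} \pm m_k^j$ with $m_k\le n$ and a bounded number of terms. This is possible since $a=q n^j + r$ with $q$ bounded; for the remainder we use the $n$-adic expansion, and each digit $d$ times $n^{k}$ can in turn be written as a product $d\cdot n\cdots n\cdot 1\cdots$. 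More simply, multilinearity gives $\varphi^T_j([f_1^{b_1},[\dots,f_j^{b_j}]])=b_1\cdots b_j\, e$, and any integer $|a|\le n^j$ can be written as $\sum_{k=0}^{j-1}\pm b^{(k)}_1\cdots b^{(k)}_j$ with all $|b^{(k)}_l|\le 2n$ (expand $a$ in base $n$). The element $[f_1^{b_1},[\dots,f_j^{b_j}]]$ has word length at most $C\,(|b_1|+\dots+|b_j|)\le C'n$, because an iterated commutator of words of lengths $\ell_i$ has length at most $2^j\sum\ell_i$. Multiplying boundedly many such elements gives a preimage of $v$ in $B^G_{C_jn}(e)\cap G_j$.

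\emph{Inclusion~\eqref{eq:j-powers-G}.} This is the harder direction, and we prove it by a collection argument, inducting on $j$. Take $g=s_1\cdots s_n\in G_j$ with $s_i\in\mF^{\pm1}$. The plan is to show that any word of length $n$ can be rewritten as $\prod_{w} w^{e_w}$, where $w$ runs over a fixed ordered list of basic commutators and each basic commutator of weight $k$ has exponent $|e_w|\le C n^k$ (Hall's collection process). Moving letters past one another creates commutators, and a counting argument shows that at most $O(n^k)$ weight-$k$ commutators are produced. Once $g\in G_j$ is collected in this form, the product of the factors of weight $<j$ must vanish modulo the appropriate terms. We handle this by working in a Mal'cev basis adapted to the series $G_i$: with coordinates of weight $i$ bounded by $Cn^i$ (the Guivarc'h polynomial estimate for products in Mal'cev coordinates), an element of $G_j$ has zero coordinates below weight $j$, and its $\varphi^T_j$-image is read off linearly from the weight-$j$ coordinates, which are bounded by $Cn^j$.

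The main obstacle is this polynomial bound on Mal'cev coordinates: multiplication is polynomial in the coordinates, with weight-$i$ coordinates of the product being polynomials of homogeneous weighted degree $\le i$. Granting that degree structure, which comes from~\eqref{eq:Gi-Gj}, induction on word length gives the bound $C n^i$. Since these are classical results, we would ultimately cite \cite[Section 6]{bass} and \cite[Section~II]{guivarch} for this step.
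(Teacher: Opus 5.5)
Your argument is correct, but both inclusions are proved by routes different from the paper's.

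\emph{First inclusion.} For \eqref{eq:j-powers-contains-G}, the paper inducts on $j$. It writes $f_{j,i}$ as a product of commutators $[a_s,b_s]$ with $a_s\in G_{j-1}$ and uses $\varphi_j([a^{k'},b^l])=\varphi_j([a,b])^{k'l}$ (Lemma~\ref{l:comm-power}). It then replaces $a_s^{k'}$, $|k'|\le n^{j-1}$, by an $O(n)$-word with the same $\varphi_{j-1}$-image, obtained from the induction hypothesis plus a torsion correction, and finishes with $k=nq+r$. You unroll this induction in one step. You use the $j$-linear map $(G/G_2)^j\to G_j/G_{j+1}$ and split each coefficient into boundedly many products of $j$ integers of size $O(n)$. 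This avoids torsion bookkeeping at every intermediate level. The price is quoting that simple $j$-fold commutators of generators generate $G_j$ modulo $G_{j+1}$; their images in fact span all of $\Z^{d^G_j}$, not just a finite-index sublattice.

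\emph{Second inclusion.} For \eqref{eq:j-powers-G} you essentially cite Guivarc'h's Mal'cev-coordinate estimates, which is acceptable for a classical lemma. To make your sketch complete you would need two points.
\begin{itemize}
\item The basis must be adapted to the isolators $\sqrt{G_i}$, since $G_i/G_{i+1}$ may have torsion.
\item In $(xs)_i=x_i+s_i+P_i(x,s)$, the correction satisfies $P_i(x,0)=0$. Hence every monomial contains a coordinate of the generator $s$ and has weighted degree at most $i-1$ in $x$. Without this, induction on word length only gives $n^{i+1}$.
\end{itemize}

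The paper instead stays elementary. It fixes a canonical form with respect to generators adapted to the lower central series, including the torsion lifts $\wT_j$. It controls bubble-sorting with the potential $\wt_n$ of \eqref{eq:weight}, which does not increase under exchanges and merges (Lemma~\ref{l:A}). Since the initial word has weight at most $2$, the level-$j$ exponents are at most $2A^{j-1}n^j$. This is a rigorous version of the ``counting argument'' in your Hall-collection paragraph.

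Uniqueness of the canonical form also supplies what you found missing there: for $g\in G_j$ the lower brackets are trivial. That fails for collected words in an arbitrary $G$, for instance when a generator already lies in $G_2$. The payoff of the paper's choice is that the same potential is reused in Lemma~\ref{l:delta}. That lemma gives the relative lower bound \eqref{eq:j-powers-contains-H} needed for Theorem~\ref{t.B-G}.
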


To prove our Theorem~\ref{t.B-G}, we will need a stronger version of this statement, considering $\varphi^T_j$-images of intersections $B_n^G(e)\cap H_j$. Namely, the proposition below states that these images contain and are contained in spheres of radii $\const \cdot n^j$ in the rank $d^{H;G}_j$ lattice $\varphi^T_j(H_j)<\Z^{d^G_j}$.

\begin{prop}\label{p:j-powers-H}
For every $j$ there exists a constant $C_j$ such that for all $n$
\begin{equation}\label{eq:j-powers-contains-H}
B_{C_j^{-1} n^j}^{\Z^{d^G_j}}(0) \cap \varphi^T_j(H_j)  \subset \varphi^T_j(B_n^G(e) \cap H_j)
\end{equation}
\begin{equation}\label{eq:j-powers-H}
\varphi_j^T(B_n^G(e) \cap H_j) \subset B_{C_j n^j}^{\Z^{d^G_j}}(0)
\end{equation}
\end{prop}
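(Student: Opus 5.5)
The upper inclusion~\eqref{eq:j-powers-H} is immediate. Since $H_j\subset G_j$, we have $B_n^G(e)\cap H_j\subset B_n^G(e)\cap G_j$, and~\eqref{eq:j-powers-G} of the Bass--Guivarc'h lemma gives $\varphi^T_j(B_n^G(e)\cap H_j)\subset B^{\Z^{d^G_j}}_{C_j n^j}(0)$. All the work is in the lower inclusion~\eqref{eq:j-powers-contains-H}.

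For the lower inclusion, set $\Lambda:=\varphi^T_j(H_j)$, a lattice of rank $d^{H;G}_j$ in $\Z^{d^G_j}$. I would choose elements $h_1,\dots,h_r\in H_j$ ($r=d^{H;G}_j$) whose images $v_i=\varphi^T_j(h_i)$ form a basis of $\Lambda$. Since all norms on $\Lambda\otimes\R$ are equivalent, there is $C$ such that every $v\in\Lambda$ with $|v|\le C^{-1}n^j$ can be written as $v=\sum a_i v_i$ with integer $|a_i|\le n^j$. Because $\varphi^T_j$ is a homomorphism on $G_j$, it then suffices to show:

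\emph{Key claim.} For each fixed $h\in H_j$ there is a constant $C_h$ such that $\ell_G(h^a)\le C_h|a|^{1/j}$ for all $a\in\Z$. That is, elements of $G_j$ are distorted with exponent $1/j$.

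Granting the claim, $\prod_i h_i^{a_i}$ lies in $H_j$, maps to $v$, and has word length at most $\sum_i C_{h_i}n$. Absorbing the constants (replace $n$ by $n/C'$ and enlarge $C_j$) gives~\eqref{eq:j-powers-contains-H}. The point is that the chosen preimage lies in $H_j$ itself, not merely in $G_j$.

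The main obstacle is the key claim. My first attempt would be to extract it from the Bass--Guivarc'h lemma as stated, but that lemma only produces \emph{some} short element of $G_j$ with the right $\varphi^T_j$-image, not a short power of $h$. So I would prove it directly, by induction on $j$ via commutators. Write $h$, modulo finitely many factors, as a product of iterated commutators $[x_1,[x_2,\dots,x_j]]$ of generators times an element of $G_{j+1}$. For a single commutator of weight $j$, the standard identity
\[
[x_1^{k_1},\dots,x_j^{k_j}]=[x_1,\dots,x_j]^{k_1\cdots k_j}\cdot(\text{terms in }G_{j+1})
\]
shows that $c^{k^j}$ is, up to a $G_{j+1}$ correction, a word of length $O(k)$. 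The corrections lie in $G_{j+1}$, and one handles them by downward induction on $j$: higher-degree elements are even more distorted, and the top term $G_m$ is central. Finally, for general $a$, write $a$ in a $k^j$-adic expansion, or use the Mal'cev-coordinate/homogeneous-norm estimate $\ell_G(g)\asymp\sum_j\|\text{coord}_j(g)\|^{1/j}$ from Guivarc'h~\cite[Section~II]{guivarch}.

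In practice I would cite that Guivarc'h estimate directly. With it, $h^a$ has degree-$j$ coordinate of size $|a|$ and higher coordinates of size $O(|a|^{j'/j})$ for $j'>j$, since $h^a$ is a polynomial map of $a$ with those degree bounds. This yields $\ell_G(h^a)\lesssim|a|^{1/j}$, proving the claim and hence the proposition.
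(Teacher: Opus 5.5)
Your proposal is correct and follows essentially the paper's route. The upper inclusion is read off from \eqref{eq:j-powers-G}. For the lower one, the paper likewise lifts a basis of $\varphi^T_j(H_j)$ to elements $g_i\in H_j$, shows that $g_i^{a_i}\in B^G_{\eps n}(e)$ whenever $|a_i|\le c\,n^j$, and multiplies these powers, so the preimage lies in $H_j$ itself.

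The only difference is how the distortion bound $\ell_G(h^a)\lesssim |a|^{1/j}$ for $h\in G_j$ is obtained. You import it from Guivarc'h's homogeneous-norm estimate. The paper proves it self-containedly: it uses $\wt_n(g_i^{a_i})=O(|a_i|n^{-j})$ and Lemma~\ref{l:delta}, whose backward induction covers \emph{all} elements of $G_{j+1}$ given by small-weight words. That broader induction is exactly what your commutator sketch would need to control the varying $G_{j+1}$-corrections; distortion of fixed elements alone would not suffice.
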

The upper bound~\eqref{eq:j-powers-H} directly follows from~\eqref{eq:j-powers-G}. However, the lower bound~\eqref{eq:j-powers-contains-H} is stronger than the inclusion~\eqref{eq:j-powers-contains-G}. Indeed, the intersection of images $\varphi^T_j(H_j)\cap \varphi^T_j(B_n^G(e))$ is \emph{a priori} larger than the image of the intersection $\varphi^T_j(H_j \cap B_n^G(e))$. So it might happen that for too many points that can be both obtained as a $\varphi^T_j$-image of a point from the subgroup  $H_j$ and as an image of some points from the ball $B_{cn}(G_j)$, there would be no way to get them as a $\varphi^T_j$-image of a point from the subgroup~$H_j$ within such a ball. It seems also that there is no way to modify the commutator construction to get an element from the subgroup $H_j$ right away. For instance, if one tries to use elements of $H$ instead of those of $G$ in the commutators chain, the commutators might become trivial (for instance, for the case $G=N_3$ for the central subgroup $H=Z(N_3)<N_3=G$).

We will prove Proposition~\ref{p:j-powers-H} below; for completeness of the exposition, we will also present the proof of the upper bounds~\eqref{eq:j-powers-G}, as tools used to obtain it will be also used to obtain the lower bound~\eqref{eq:j-powers-contains-H}.

\subsection{Canonical form of an element}

We start by introducing a canonical form of an element of a nilpotent group; while doing so, we will also fix a particular system of generators of~$G$ with which we will be working from now on. Namely, take every $j=1,2,\dots$; from~\eqref{eq:def-dG} we have
\[
\varphi_j(G_j) = G_j/G_{j+1} \simeq T_j \oplus \Z^{d^G_j}.
\]
Fix a set $\wT_j\subset \varphi_j^{-1}(T_j) <G_j$, consisting of one $\varphi_j$-preimage of each of the elements of~$T_j$: 
\[
\forall t \in T_j \quad \exists! \, t'\in \wT_j : \quad \varphi_j(t')=t,
\]
taking $e\in \wT_j$ as the $\varphi_j$-preimage of $e\in T_j$. Also, fix elements $f_{1,j},\dots, f_{d^G_j,j}\in G_j$ such that their images under $\varphi^T_j$ form a base of 
$\Z^{d^G_j}= \varphi^T_j(G_j)$. Then, we have the following (the reader is also referred, for instance, to~\cite[Chapter~3]{CMZ} for the details) canonical form:

\begin{prop}[Canonical form]
Every element $g\in G$ can be written in a unique way in the form 
\begin{equation}\label{eq:canon}
\left(t_1 f_{1,1}^{a_{1,1}} \dots f_{d^G_1,1}^{a_{d^G_1,1}} \right) \left(t_2 f_{1,2}^{a_{1,2}} \dots f_{d^G_2,2}^{a_{d^G_2,2}} \right) \dots 
\left(t_m f_{1,m}^{a_{1,m}} \dots f_{d^G_m,m}^{a_{d^G_m,m}} \right),
\end{equation}
where $t_j \in \wT_j, \quad j=1,2,\dots,m$, and $a_{i,j}\in \Z$ are some integer powers.
\end{prop}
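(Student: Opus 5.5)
We prove existence and uniqueness together by descending along the lower central series, peeling off one layer $G_j/G_{j+1}$ at a time. The tools are the quotient maps $\varphi_j:G_j\to G_j/G_{j+1}\simeq T_j\oplus\Z^{d^G_j}$, the fixed transversal $\wT_j$ of $\varphi_j^{-1}(T_j)$ modulo $G_{j+1}$, and the elements $f_{1,j},\dots,f_{d^G_j,j}$, whose $\varphi^T_j$-images form a basis of $\Z^{d^G_j}$. The induction statement is:
\[
\text{for every } j \text{ and every } g\in G_j,\quad g=\Big(t_j f_{1,j}^{a_{1,j}}\cdots f_{d^G_j,j}^{a_{d^G_j,j}}\Big)\, g', \quad g'\in G_{j+1},
\]
with $t_j\in\wT_j$ and $a_{i,j}\in\Z$ uniquely determined by $g$, and hence $g'$ uniquely determined too. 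Applying this successively for $j=1,\dots,m$, and using $G_{m+1}=\{e\}$ to stop, gives the form~\eqref{eq:canon}.

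\textbf{Existence at level $j$.} Take $g\in G_j$ and let $\varphi_j(g)=(t,v)\in T_j\oplus\Z^{d^G_j}$. Write $v=\sum_i a_{i,j}\varphi^T_j(f_{i,j})$, which is possible because these images form a basis. Then
\[
\varphi_j\big(f_{1,j}^{a_{1,j}}\cdots f_{d^G_j,j}^{a_{d^G_j,j}}\big)=(s,v)
\]
for some torsion component $s\in T_j$. Since $G_j/G_{j+1}$ is abelian, the order of the factors in the product does not matter at this level. Choose $t_j\in\wT_j$ with $\varphi_j(t_j)=t-s$. Then
\[
\varphi_j\big(t_j f_{1,j}^{a_{1,j}}\cdots f_{d^G_j,j}^{a_{d^G_j,j}}\big)=\varphi_j(g),
\]
so $g':=\big(t_j f_{1,j}^{a_{1,j}}\cdots f_{d^G_j,j}^{a_{d^G_j,j}}\big)^{-1}g$ lies in $G_{j+1}=\Ker\varphi_j$. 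One point needs care: $\wT_j$ is a transversal of the torsion classes, so $\varphi_j(t_j)$ ranges over all of $T_j$ as embedded in $T_j\oplus\Z^{d^G_j}$. We read the torsion summand through a fixed splitting, so $\varphi_j(t_j)$ has zero free part.

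\textbf{Uniqueness.} Suppose two expressions of the form~\eqref{eq:canon} are equal, and let $j$ be the first level at which their blocks differ. Cancelling the identical earlier blocks on the left, both sides lie in $G_j$, and all later blocks lie in $G_{j+1}$. Applying $\varphi^T_j$ kills the $t_j$ and gives $\sum_i a_{i,j}\varphi^T_j(f_{i,j})=\sum_i a'_{i,j}\varphi^T_j(f_{i,j})$. Linear independence then forces $a_{i,j}=a'_{i,j}$ for all $i$. Applying $\varphi_j$ and cancelling the now-equal free parts gives $\varphi_j(t_j)=\varphi_j(t'_j)$. The one-preimage choice of $\wT_j$ then gives $t_j=t'_j$. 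This contradicts the choice of $j$, so the two expressions coincide.

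\textbf{Main obstacle.} The delicate step is the torsion bookkeeping. The products $f_{i,j}^{a}$ may carry a nonzero torsion component under $\varphi_j$, and $\wT_j$ is defined only through preimages of $T_j$. This requires fixing the splitting $G_j/G_{j+1}\simeq T_j\oplus\Z^{d^G_j}$ consistently and checking that $\varphi_j(t_j)$ has zero free part, as used above. Once that is settled, everything else is routine, because each $G_j/G_{j+1}$ is abelian and $G_{m+1}=\{e\}$.
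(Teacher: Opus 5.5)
Your proof is correct and takes essentially the same route as the paper: read off the exponents $a_{i,j}$ from $\varphi^T_j$, then $t_j$ from $\varphi_j$, and pass the remainder into $G_{j+1}$. The paper phrases this as a backward induction on $j$, while you peel the layers forward and prove uniqueness through the first differing block. The torsion bookkeeping you flag as delicate is not really an obstacle: $T_j$ is the canonical torsion subgroup of $G_j/G_{j+1}$ and $\varphi^T_j$ is the quotient by it, so $\varphi^T_j(t_j)=0$ holds automatically and no choice of splitting is needed.
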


\begin{proof}
For any $j$, let us prove that elements $g\in G_j$ can be written in a unique way in the form~\eqref{eq:canon}, where only the brackets with numbers $j, j+1,\dots$ are used: 
\begin{equation}\label{eq:j-canon}
g= \underbrace{\left(t_j f_{1,j}^{a_{1,j}} \dots f_{d^G_j,j}^{a_{d^G_j,j}}\right)}_{\in G_j}  \dots 
\underbrace{\left(t_m f_{1,m}^{a_{1,m}} \dots f_{d^G_m,m}^{a_{d^G_m,m}} \right)}_{\in G_m}
\end{equation}
We will prove it by backward induction by $j$: for $j=m+1$, $G_j=\{e\}$ and the statement is trivial. Now, apply $\varphi^T_j$ to~\eqref{eq:j-canon}: in the right hand side only the image of the first bracket is nontrivial, and hence there are unique $a_{1,j},\dots, a_{d^G_j,j}\in \Z$, such that the images of the left and the right hand coincide. Fixing them, in the same way there is a unique $t_j\in \wT_j$ such that $\varphi_j$-images of the left and the right hand sides coincide. Then, it suffices to take the quotient $g'$, given by
\[
g=\left( t_j f_{1,j}^{a_{1,j}} \dots f_{d^G_j,j}^{a_{d^G_j,j}}\right) \cdot g';
\]
as $\varphi_j(g')=e$, one has $g'\in G_{j+1}$, and one can apply the induction assumption to it. The induction step is proven, and the statement for $j=1$ is exactly the statement of the proposition.
\end{proof}

We will now denote 
\[
\mF'_{(j)}:=\wT_j \cup \{f_{1,j},\dots, f_{d^G_j, j} \}, \quad \mF_{(j)}:=\mF'_{(j)} \cup \dots\cup \mF_{(m)}'. 
\]
Then for any $j$ the set $\mF_{(j)}$ is a generating set for~$G_j$.

\subsection{Upper bound: establishing inclusion~\eqref{eq:j-powers-G}}\label{s:upper-inclusion}

\begin{proof}
For every element $g\in B_n^G(e)$, let us write it as a product of generators in the canonical form~\eqref{eq:canon}. 
Actually, the following statement holds; it largely refers to~\cite{bass} (see Proposition~1 and Lemma~4 therein), but for the readers' convenience we present here its proof. 
\begin{lemma}\label{l:canon-length}
There exists a constant $C_A$ such that for every $n$ and any $g\in B_n^G(e)$, the canonical form~\eqref{eq:canon} satisfies the bound on the powers 
\begin{equation}\label{eq:sum-a-i-j}
\forall j \quad \sum_{i =1}^{d_j^G} |a_{i,j}| \le C_{A,j} \cdot n^j.
\end{equation}
\end{lemma}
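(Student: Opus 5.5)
We will argue by induction on the word length $n$, using a collection-type argument that keeps track of how the canonical exponents change when $g$ is multiplied on the left by one generator. Since $\GrFn$ is insensitive to the choice of finite generating set up to constants, we may assume $\mF\subset\mF_{(1)}$. The induction is organised by a weighted "size" of an element: for $g$ in canonical form~\eqref{eq:canon} put
\[
\|g\|:=\max_j \Bigl(\sum_i |a_{i,j}|\Bigr)^{1/j}.
\]
The claim~\eqref{eq:sum-a-i-j} is then equivalent to $\|g\|\le C\,|g|$, with $C$ depending only on $G$. So the whole statement reduces to showing that $\|\cdot\|$ is, up to constants, subadditive.

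The key step is a product estimate: there is $C$ such that $\|gh\|\le C(\|g\|+\|h\|+1)$ for all $g,h$. The case we need is $h$ a single generator, which is the multiplication step in the induction on $n$. It will be proved by backward induction on the depth $j$, the depth-$m$ case being trivial because $G_m$ is central and abelian. To compute the canonical form of $gh$ we move the letters of $h$ past those of $g$ block by block. Passing $f_{i,j}^{a}$ past $f_{k,l}^{b}$ produces commutator corrections lying in $G_{j+l}$, by~\eqref{eq:Gi-Gj}. Quantitatively, we use the standard polynomial identity in nilpotent groups: $[x^a,y^b]$ is a product, of bounded length, of basic commutators $c$ of weight $w$ raised to powers which are polynomials in $a,b$ of degree at most $w$, with $c\in G_{\ge j+l}$ for the appropriate weights. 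If $|a|\le N^j$ and $|b|\le N^l$, then a correction landing in $G_k$ has exponent at most $C N^{k}$, because each $G_j$-letter contributes weight $j$ and the degrees add up. The torsion representatives $t_j$ contribute only bounded factors, since $\wT_j$ is finite; their commutators with other letters are handled in the same way, with no exponent growth.

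Once the product estimate is available, the lemma follows. From $\|g f\|\le \|g\|+C$ for every generator $f$, with an additive constant, we get $\|g\|\le Cn$ for $|g|\le n$, which is~\eqref{eq:sum-a-i-j}. To obtain the additive rather than multiplicative form, the cleanest route is to prove directly that $\sum_i|a_{i,j}(gf)|\le \sum_i|a_{i,j}(g)| + P_j\bigl(\|g\|\bigr)$, where $P_j$ is a polynomial of degree at most $j-1$. Summing these increments over $n$ steps gives $\sum_i |a_{i,j}| \le \sum_{k\le n} P_j(Ck)\le C' n^j$, by induction on $j$ from $j=1$ upward. At level $j=1$ the exponent is simply additive mod torsion, so $P_1$ is constant. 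At level $j$, the change caused by multiplying by one generator comes only from commutators of $f$ with lower-level blocks of $g$ whose combined weight equals $j$. These blocks have exponents $\le C k^{j'}$ with $j'\le j-1$, which gives an increment of order $k^{j-1}$.

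The main obstacle is making the collection step precise: we must verify that rewriting $g f$ into canonical form changes the level-$j$ exponents only by terms controlled by polynomials in the lower-level exponents of total weight $j-1$. I would handle this by moving $f$ leftward through the blocks of $g$ one at a time, from the last block to the first, using the Hall–Petresco formula for $x^a y = y x^a [x^a,y]$ with $[x^a,y]=\prod_w c_w^{\binom{a}{w}}$. Each newly created commutator lies in a strictly deeper $G_k$, so it can be re-collected inductively within $G_{j+1}$, where the backward induction hypothesis on the depth applies.
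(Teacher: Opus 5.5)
Your reduction is fine as far as it goes. Write $S_j(g):=\sum_i|a_{i,j}|$. A one-step bound $S_j(gf)\le S_j(g)+C\bigl(1+\|g\|^{j-1}\bigr)$ for every generator $f$ gives $\|gf\|\le\|g\|+C'$, and the lemma then follows by induction on $n$.

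The gap is that this one-step increment bound is the entire content of the lemma, and you do not prove it. You defer it as ``the main obstacle'', and the mechanism you sketch cannot deliver it. After $f$ has been moved leftward past a block, the commutator corrections must be re-collected together with the already full-size tail $B_{j+1}\cdots B_m$. You propose to do this with ``the backward induction hypothesis on the depth'', which in your setup is the multiplicative estimate $\|gh\|\le C(\|g\|+\|h\|+1)$. Each application of that estimate loses a constant factor. So it cannot show that re-collecting a tail with $S_s$ of order $k^s$ against a correction changes $S_s$ by only $O(k^{s-1})$, which is exactly what the additive bound requires. Your degree count (``each $G_j$-letter contributes weight $j$'') is the right heuristic. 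But putting the basic commutators $c^{p_c(a,b)}$ back into canonical form generates further commutators, and controlling that cascade is precisely the bookkeeping that is missing.

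To close the gap you need an inductive statement of the right shape, for example by backward induction on the depth $i$: if $y,z\in G_i$ satisfy $S_s(y)\le K^s$ and $S_s(z)\le CK^{s-1}$ for all $s$, then $S_s(yz),\,S_s(zy)\le S_s(y)+C'K^{s-1}$. You also need the companion fact that a commutator of such $y$ and $z$ again satisfies $S_s\le C''K^{s-1}$. When the quotients $G_j/G_{j+1}$ are torsion-free, this is the polynomiality of multiplication in Mal'cev coordinates: the level-$s$ correction terms are mixed in the two factors and have weighted degree at most $s$.

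Torsion also needs more care than you allow. If $\varphi_j(t)$ has finite order, then $[f^a,t]$ is torsion modulo the next term of the series, but its deeper exponents can grow linearly in $a$. So ``no exponent growth'' is false, although this is harmless for the weighted count once it is accounted for.

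The paper avoids all of this bookkeeping:
\begin{itemize}
\item Each letter of level $j$ gets weight $A^{-(j-1)}n^{-j}$, and a disorder term $\sum\wt_n(g_{i_1})\wt_n(g_{i_2})$ over disordered pairs is added.
\item One checks that a single interchange or merge cannot increase the total, provided the total is at most $2$ and $A>3\max(C_{comm},C_T)$.
\item The initial word has weight at most $2$, hence so does the canonical form, which gives $S_j\le 2A^{j-1}n^j$.
\end{itemize}
This never requires tracking how corrections are re-collected.
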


This lemma immediately implies that once $g\in B_n^G(e)\cap G_j$, the image 
\[
\varphi_j^T(g) = (a_{1,j},\dots,a_{d_j^G,j}) \in \Z^{d_j^G}
\]
belongs to the ball of radius $C_{A,j} n^j$; hence, once it is proved, we get the desired inclusion~\eqref{eq:j-powers-G}.

The main idea of the proof of Lemma~\ref{l:canon-length} is the following: by definition, the element $g\in B_n^G(e)$ can be written as a product of at most $n$ generators from $\mF$ (as well as their inverses). Now, one starts to re-odrer these generators in the lexicographical order, bringing first the element of $\wT_1$, then all the powers of $f_{1,1}$, then all the powers of $f_{2,1}$, etc. Each time, to interchange two elements $a\in \mF'_{j_1}, \, b\in \mF_{j_2}, \, j_2>j_1$, that go in the wrong order, one uses the relation 
\begin{equation}\label{eq:exchange-ab}
ba=ab g_{a,b}, \quad \text{where } g_{a,b}:=[b^{-1},a^{-1}]\in G_{j_1+j_2},
\end{equation}
writing the commutator in the right hand side as a product of a generators in $\mF'_j, \quad j\ge j_1+j_2$. 
At the same time, when two elements $t_1,t_2\in \wT_j$ occur next to each other, they are merged using
\begin{equation}\label{eq:merge}
t_1 t_2 = t' g'_{t_1,t_2}, \quad \text{for some  }\, t'\in \wT_j, \quad g'_{t_1,t_2}\in G_{j+1}.
\end{equation}

As the relations~\eqref{eq:exchange-ab} are applied only for a finite set of possible $a,b\in \mF$, the number of generators used to write the commutators appearing in the right hand of~\eqref{eq:exchange-ab} side are uniformly bounded by some constant $C_{comm}$. In the same way, the number of generators used to write the correction terms $g$ in~\eqref{eq:merge} is also bounded by some constant~$C_T$. Finally, for the simplicity of the afterwards arguments, we will assume that these writings of $g_{a,b}$ and $g'_{t_1,t_2}$ are already their canonical forms.

Now, one can (as it was actually done in~\cite{bass}) keep track on how many generators of each level~$j$ might occur as a result of this process of interchanging. Interchanging $n$ generators from $\mF_1$ might create at most $\sim\const\cdot n^2$ generators from $\mF_2$, etc.. Though, accurate estimates here might be cumbersome (in~\cite{bass}, powers of idempotent operators are used for that), so we propose the following go-around (that will be also used to establish the inclusion~\eqref{eq:j-powers-contains-H}).

\begin{proof}[Proof of Lemma~\ref{l:canon-length}]
Fix a positive constant $A$. Now, for every given $n$, define the \emph{weight} of a generator $g\in \mF_j'$  as 
\[
\wt_{n}(g):=\frac{1}{A^{j-1}}\cdot n^{-j}. 
\]
Say that a pair $(g_1,g_2)$ is \emph{disordered} if we would have to interchange or merge them if they were next to each other in this order. Then, for a word $w=g_1 \dots g_k$ in generators from $\mF$ and their inverses, define its weight as
\begin{equation}\label{eq:weight}
\wt_{n} (w):= \sum_{i} \wt_{n}(g_i) + \sum_{i_1<i_2: \atop (g_{i_1},g_{i_2}) \, \text{ is disordered}} \wt_{n}(g_{i_1})\wt_{n}(g_{i_2}).
\end{equation}
Then, the following statement holds: 
\begin{lemma}\label{l:A}
We can choose the constant $A$ sufficiently large, so that for every $n$, if $w$ is a word of weight $\wt_{n}(w)\le 2$, and $w'$ a word that is obtained from $w$ by applying one of the operations~\eqref{eq:exchange-ab},~\eqref{eq:merge} to a disordered pair of its consecutive elements, then $\wt_{n}(w')\le \wt_{n} (w)$. 
\end{lemma}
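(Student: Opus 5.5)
We compare $\wt_n(w')$ with $\wt_n(w)$ directly, one operation at a time. Suppose the disordered consecutive pair is $(b,a)$ with $b\in\mF'_{(j_2)}$, $a\in\mF'_{(j_1)}$.

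For an interchange~\eqref{eq:exchange-ab}, the pair $ba$ is replaced by $ab\,g_{a,b}$, where $g_{a,b}$ is written with at most $C_{comm}$ generators, all of levels $\ge j_1+j_2$. For a merge~\eqref{eq:merge}, $t_1t_2$ is replaced by $t'g'_{t_1,t_2}$, where $g'_{t_1,t_2}$ has at most $C_T$ generators of levels $\ge j+1$. Here $j_1=j_2=j$, since only elements of the same $\wT_j$ are merged.

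The bookkeeping then has three parts.

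\textbf{The disappearing term.} The pair term $\wt_n(a)\wt_n(b)$ is no longer present, because after the swap $a$ precedes $b$. In the merge case the two single weights $\wt_n(t_1)+\wt_n(t_2)$ are replaced by $\wt_n(t')$. In either case the gain is at least
\[
\wt_n(a)\wt_n(b)=A^{-(j_1+j_2-2)}n^{-(j_1+j_2)},
\]
and the merge gains at least $A^{-(j-1)}n^{-j}$.

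\textbf{Relative order of the other letters.} For every other letter $g_i$ of the word, its disorder relations with $a$ and $b$ are unchanged, since only the relative order of $a$ and $b$ changed. In the merge case, $t'$ inherits at most the pairs of $t_1$, which is harmless given the extra linear gain above.

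\textbf{The new letters.} Each new letter $h$ has level $\ell\ge j_1+j_2$ (respectively $\ge j+1$), so
\[
\wt_n(h)\le A^{-(j_1+j_2-1)}n^{-(j_1+j_2)}=A^{-1}\,\wt_n(a)\wt_n(b).
\]
There are at most $C_{comm}$ such letters, contributing at most $C_{comm}A^{-1}\wt_n(a)\wt_n(b)$ in single weights. They also create new disordered pairs, both among themselves and with every other letter $g_i$ of $w'$. The total of these new pair terms is bounded by
\[
\sum_h\wt_n(h)\Bigl(\sum_i\wt_n(g_i)+\sum_{h'}\wt_n(h')\Bigr)\le C_{comm}A^{-1}\wt_n(a)\wt_n(b)\,(\wt_n(w)+1).
\]
Using the hypothesis $\wt_n(w)\le2$, this is at most $3C_{comm}A^{-1}\wt_n(a)\wt_n(b)$.

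\textbf{Conclusion for the interchange.} The total increase is at most $4C_{comm}A^{-1}\wt_n(a)\wt_n(b)$, while the loss is $\wt_n(a)\wt_n(b)$. So $A>4C_{comm}$ suffices.

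\textbf{Conclusion for the merge.} The loss is at least one single weight $A^{-(j-1)}n^{-j}$. The new letters have single weights at most $A^{-j}n^{-(j+1)}\le A^{-1}\cdot A^{-(j-1)}n^{-j}$, using $n\ge1$. Their pair terms are again bounded via $\wt_n(w)\le2$ by a factor $3$. Hence $A>3C_T+C$ suffices, where $C$ absorbs the bounded change in pairs attached to $t'$.

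\textbf{Main obstacle.} The delicate point is checking that pairs involving the replaced letters never increase uncontrollably. In the merge case, $t'$ may be disordered with letters that $t_1$ and $t_2$ were not disordered with. I would try to handle this by noting that disorderedness depends only on the levels and on the index within the level, and $t'$ has the same level and type as $t_1$. So its pair sum is bounded by that of $t_1$, which is in turn controlled by $\wt_n(t_1)\cdot 2$. The gain from the merge must therefore be taken as $\wt_n(t_2)$ plus the pair terms of $t_2$, and that is sufficient.
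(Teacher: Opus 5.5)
Your proposal is correct and follows the paper's proof. In both, the gain is the vanishing disorder term $\wt_n(a)\wt_n(b)$ (resp.\ one single weight $A^{-(j-1)}n^{-j}$ in a merge). The loss is at most $(1+\wt_n(w))\le 3$ times the total weight of the correction letters, which is $O(A^{-1})$ times the gain. The only difference is that the paper assumes $g_{a,b}$ and $g'_{t_1,t_2}$ are already written in canonical form, so they contain no disordered pairs among themselves. You instead bound those internal pairs directly, which gives your threshold $4C_{comm}$ instead of the paper's $3C_{comm}$.

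In the merge case you need no constant $C$. Since $t'$ occupies the position of $t_1$ and has the same level, type and weight, its disorder terms coincide exactly with those of $t_1$. So $t'$ cannot acquire new disordered partners, contrary to your worry, and the disorder terms of $t_2$ simply disappear. This matters: a positive change of order $\wt_n(t')\,\wt_n(w)$ would be comparable to the gain $A^{-(j-1)}n^{-j}$, and could not be absorbed by enlarging $A$ as your sentence with "$A>3C_T+C$" suggests.
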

\begin{proof}
Consider first the case when the interchange~\eqref{eq:exchange-ab} is applied, so that we pass from some word $w=w_1 ba w_2$ to $w'=w_1 ab g_{a,b} w_2$. Then we have 
\begin{equation}\label{eq:wt-difference}
\wt_{n}(w) - \wt_{n}(w') \ge \wt_{n}(a)\wt_{n}(b) - \wt_{n}(g_{a,b}) - \wt_{n}(g_{a,b})\cdot \wt_{n}(w), 
\end{equation}
where the first summand corresponds to the product $\wt_{n}(a)\wt_{n}(b)$ disappearing from the disorder sum, the second is due to $g_{a,b}$ appearing in the sum of weights of generators, and the third is an upper bound for its contribution to the disorders. For $a\in \mF'_{j_1}, b\in \mF'_{j_2}$, taking into account $\wt_{n}(w)\le 2$, the right hand side in~\eqref{eq:wt-difference} can be estimated as
\begin{multline*}
\wt_{n}(a)\wt_{n}(b) - \wt_{n}(g_{a,b}) - \wt_{n}(g_{a,b})\cdot \wt_{n}(w) \ge 
\\
\ge \frac{1}{A^{j_1+j_2-2}} n^{-j_1-j_2} - \frac{3C_{comm}}{A^{j_1+j_2-1}} n^{-j_1-j_2} = \frac{1}{A^{j_1+j_2-2}} n^{-j_1-j_2} \cdot \left(1-\frac{3C_{comm}}{A}\right) > 0
\end{multline*}
once $A> 3 C_{comm}$.

Now, when the lifts of torsion elements merging~\eqref{eq:merge} is implemented with some $t_1,t_2\in\wT_j$, replacing 
$w=w_1 t_1t_2 w_2$ by $w'=w_1 t'_{t_1,t_2} g'_{t_1,t_2} w_2$, we have 
\begin{equation}\label{eq:wt-merge-difference}
\wt_{n}(w) - \wt_{n}(w') \ge [\wt_{n}(t_1)+\wt_{n}(t_2) -\wt_{n}(t'_{t_1,t_2})] - \wt_{n}(g'_{t_1,t_2}) - \wt_{n}(g'_{t_1,t_2})\cdot \wt_{n}(w), 
\end{equation}
and  the right hand is bounded from below by
\[
\frac{1}{A^{j-1}} n^{-j} - \frac{3 C_T}{A^j} n^{-j-1} = \frac{1}{A^{j-1}} n^{-j} \cdot \left(1-\frac{3C_T}{nA} \right);
\]
again, the right hand side is guaranteed to be positive once~$A>3C_T$. 
\end{proof}

Now, Lemma~\ref{l:A} immediately implies the statement of Lemma~\ref{l:canon-length}. Indeed, for every $g\in B_n^G(e)$, the word $w_0$ of length~$n$ that writes it as a product of generators $g_i$ satisfies 
\[
\wt_{n}(w_0) \le \sum_{i} \wt_{n}(g_i) + \left(\sum_{i} \wt_{n}(g_i)\right)^2 \le 1+1^2=2.
\]
Let $w_k$ be the sequence of words that we obtain by the procedure of ``bubble sorting''. Then by induction we see that $\wt_{n}(w_k)$ is a decreasing sequence, and in particular, $\wt_{n}(w_k)\le 2$ for all~$k$.

Hence, the same upper bound for the weight holds for the canonical form (that we reach at the end of the chain $(w_k)$). In particular, for every $j$ the total weight of generators from $\mF'_j$ used in the canonical form doesn't exceed~$2$ (as it is a part of the full weight), and hence 
\[
\frac{1}{A^{j-1}} n^{-j}  \sum_{i=1}^{d^G_j} |a_{i,j}| \le 2,
\]
implying the desired~\eqref{eq:sum-a-i-j} with the constant~$C_{A,j}=2A^{j-1}$. 
\end{proof}

\subsection{Lower bound: establishing inclusion~\eqref{eq:j-powers-contains-H}}

Let us first establish~\eqref{eq:j-powers-contains-G}. The following lemma is a reformulated version of statements from~\cite[p.~609]{bass} and \cite[p.~344]{guivarch}; we include its proof for completeness.

\begin{lemma}\label{l:comm-power}
For any $a\in G_{j-1}$, $b\in G$ and any $k,l$ one has 
\begin{equation}\label{eq:ab-kl}
\varphi_j([a^k,b^l]) = (\varphi_j([a,b]))^{kl}.
\end{equation}
Also, if $\varphi_{j-1}(a)=\varphi_{j-1}(\ta)$, then $\varphi_j([a,b])=\varphi_j([\ta,b])$
\end{lemma}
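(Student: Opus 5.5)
The plan is to derive both claims from a bilinearity property of the commutator map $G_{j-1}\times G\to G_j/G_{j+1}$. Two facts follow from the inclusions~\eqref{eq:Gi-Gj}. First, $G_j/G_{j+1}$ is central in $G/G_{j+1}$, since $[G,G_j]=G_{j+1}$. Second, for $a\in G_{j-1}$ and $b\in G$ the commutator $[a,b]$ lies in $G_j$, so $\varphi_j([a,b])$ is well defined.

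The first step is to prove additivity in each argument modulo $G_{j+1}$. Take $a,a'\in G_{j-1}$ and $b\in G$; with the convention $[x,y]=xyx^{-1}y^{-1}$ (the other convention is handled the same way) we have the identity
\[
[aa',b]=a[a',b]a^{-1}\,[a,b].
\]
Since $[a',b]\in G_j$ and conjugation acts trivially on $G_j/G_{j+1}$, we get $\varphi_j([aa',b])=\varphi_j([a',b])\varphi_j([a,b])$. Similarly, for $b,b'\in G$,
\[
[a,bb']=[a,b]\,b[a,b']b^{-1},
\]
and $[a,b']\in G_j$ by~\eqref{eq:Gi-Gj}, so $\varphi_j([a,bb'])=\varphi_j([a,b])\varphi_j([a,b'])$. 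The only subtlety is that in the second argument $b$ ranges over all of $G$ rather than over $G_{j-1}$. This is harmless: additivity in $b$ only uses $a\in G_{j-1}$ and the centrality of $G_j/G_{j+1}$, both of which hold.

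The second step is to obtain~\eqref{eq:ab-kl}. By induction on $k\ge 0$, additivity in the first argument gives $\varphi_j([a^k,b])=\varphi_j([a,b])^k$, because $a^k\in G_{j-1}$. Negative exponents follow from $[e,b]=e$ together with additivity, which gives $\varphi_j([a^{-1},b])=\varphi_j([a,b])^{-1}$. The same argument in the second variable, applied with $a^k$ in place of $a$, gives $\varphi_j([a^k,b^l])=\varphi_j([a^k,b])^l=\varphi_j([a,b])^{kl}$. Here I use that $G_j/G_{j+1}$ is abelian, so the order of the factors is irrelevant.

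The third step is the last claim. If $\varphi_{j-1}(a)=\varphi_{j-1}(\ta)$, write $\ta=ah$ with $h\in G_j$. Additivity in the first argument gives $\varphi_j([\ta,b])=\varphi_j([h,b])\varphi_j([a,b])$. Since $[h,b]\in[G_j,G]\le G_{j+1}$, we have $\varphi_j([h,b])=e$, which finishes the proof. Because the whole argument reduces to routine commutator identities, I do not expect a real obstacle. The only point needing care is tracking which level each commutator lies in, so as to justify that the conjugations are trivial modulo $G_{j+1}$.
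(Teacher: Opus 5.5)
Your proof is correct, but for~\eqref{eq:ab-kl} it takes a different route from the paper's. The paper first observes that $[a,b]$ is central modulo $G_{j+1}$, which gives $\varphi_j(g_1abg_2)=\varphi_j(g_1bag_2)\,\varphi_j([a,b])$. It then ``bubble-sorts'' the word $a^kb^la^{-k}b^{-l}$, moving the $a$'s past the $b$'s one swap at a time and collecting one factor $\varphi_j([a,b])$ per swap. This is the same swapping philosophy as in the proof of Lemma~\ref{l:canon-length}.

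You instead prove once that $(a,b)\mapsto\varphi_j([a,b])$ is a bi-homomorphism $G_{j-1}\times G\to G_j/G_{j+1}$. You do this via the identities $[aa',b]=a[a',b]a^{-1}[a,b]$ and $[a,bb']=[a,b]\,b[a,b']b^{-1}$, and then read off~\eqref{eq:ab-kl}. Your route is the cleaner one, for two reasons:
\begin{itemize}
\item Negative exponents are handled uniformly. In the paper's swapping argument, negative $k$ or $l$ lead to swaps involving $a^{-1}$ or $b^{-1}$, and these are left implicit.
\item You never need to track whether the intermediate words lie in $G_j$.
\end{itemize}

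For the second claim the two proofs essentially coincide. The paper's explicit identity $[ah,b]=[a,[h,b]]\,[h,b]\,[a,b]$ is your additivity identity with $a'=h$, rewritten using $a[h,b]a^{-1}=[a,[h,b]]\,[h,b]$.
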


\begin{proof}
Note that $[a,b]\in G_j$, hence for any element $g\in G$ we have $[[a,b],g]\in G_{j+1}$. Now, for any $g_1,g_2\in G$ we have 
\[
g_1 [a,b] g_2 = g_1 g_2 [a,b] \cdot h, \quad h=[[a,b],g_2]^{-1},
\]
hence the products $g_1 [a,b] g_2 = g_1 g_2 [a,b]$ simultaneously belong or not to $G_j$, and if they do, their images under $\varphi_j$ coincide, implying in this case
\[
\varphi_j(g_1 [a,b] g_2) = \varphi_j(g_1 g_2) \varphi_j([a,b]).
\]
As $ab=[a,b]ba$, for any $g_1,g_2\in G$ such that $g_1 ab g_2\in G_j$, we have 
\begin{equation}\label{eq:g1-ab-g2}
\varphi_j(g_1 ab g_2) = \varphi_j (g_1 [a,b]ba g_2) = \varphi_j (g_1 ba g_2) \varphi_j([a,b]).
\end{equation}

Applying~\eqref{eq:g1-ab-g2} to interchange consecutively $a$'s and $b$'s in $[a^k,b^l]=a^k b^l a^{-k} b^{-l}$,
we obtain the desired~\eqref{eq:ab-kl}. 

For the second part, the equality $\varphi_{j-1}(\ta)=\varphi_{j-1}(a)$ implies that $\ta=ah$ for some $h\in G_j$. Using this, we can write
\[
[\ta,b] = ahbh^{-1}a^{-1}b^{-1} =   a \, \underbrace{[h,b]}_{\in G_{j+1}} \, b a^{-1}b^{-1} = \underbrace{[a,[h,b]] \cdot [h,b]}_{\in G_{j+1}} \cdot [a,b],
\]
and the application of $\varphi_j$ concludes the proof.
\end{proof}

Now, to complete the proof of~\eqref{eq:j-powers-contains-G}, we will again proceed by induction on~$j$; we reproduce here the arguments 
from~\cite[Lemma~3]{bass} (see also~\cite[p.~344]{guivarch}). Recall that $f_{j,1},\dots, f_{j,d^{G}_j}$ are sent by $\varphi^T_j$ to the base elements of $\Z^{d_j^G}$. To prove the induction statement, it suffices to show that for each $i=1,\dots, d_j^{G}$ there exists a sufficiently large~$C_{j;i}$ such that for every $n$ we have 
\begin{equation}\label{eq:k-power}
\forall |k|\le n^j \quad   \varphi^T_j(f_{j,i}^k) \in \varphi^T_j(B_{C_{j;i} n}^G(e)).
\end{equation}
Indeed, once~\eqref{eq:k-power} is proven, for an arbitrary $a=(k_1,\dots, k_{d^G_j})\in \Z^{d_j^G}$ one can multiply the elements $g_i\in B_{C_{j;i} n}^G(e)$, such that $\varphi^T_j(g_i)= f_i^{k_i}$, and obtain $g\in B_{C'_{j} n}^G(e)$ with $\varphi^T_j(g)=a$, where  $C'_j=\sum_i C_{j;i}$.

Now, as $f_{j,i}\in G_j$, it has a representation of the form 
\[
f_{j,i} = [a_{1},b_{1}]\dots [a_{r},b_{r}]
\]
for some $a_1,\dots,a_r\in G_{j-1}, \quad b_1,\dots,b_r\in G$. Applying Lemma~\ref{l:comm-power}, we see that for every $|k'|\le n^{j-1}, 1\le l\le n$, the image of its $k'l$-th power can be written as
\[
\varphi^T_j(f_{j,i}^{k'l}) = \varphi^T_j([a_1^{k'},b_1^l] \dots [a_r^{k'},b_r^l]).
\]
Now, for each of the powers $a_s^{k'}$ due to the induction assumption (and the finiteness of the set $T_j$) there exists a word $w_s$ of length at most $\const n$ such that $\varphi_j(w_s)=\varphi_j(a_s^{k'})$. Hence,
\begin{equation}\label{eq:k'l}
\varphi^T_j(f_{j,i}^{k'l}) = \varphi^T_j([w_1,b_1^l] \dots [w_r,b_r^l]).
\end{equation}
Finally, writing $k=n q + r$ and multiplying the representations~\eqref{eq:k'l} for $k'=q, \, l=n$ and for $k'=r, l=1$, one gets a representation for $f_{i,j}^k$ of length at most $\const \cdot n$.
\end{proof}

Let us now establish~\eqref{eq:j-powers-contains-H}, thus completing the proof of Proposition~\ref{p:j-powers-H} and hence of Theorem~\ref{t.B-G}. We will start by establishing the following lemma.

\begin{lemma}\label{l:delta}
For any $\eps>0$ there exists $\delta>0$ such that for any $g\in G$ that can be written by a word $w$ of weight $\wt_n(w)<\delta$, one has $g\in B_{\eps n}^G(e)$. 
\end{lemma}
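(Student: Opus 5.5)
We will deduce Lemma~\ref{l:delta} from the weight-monotonicity mechanism of Lemma~\ref{l:A}, combined with the inclusion~\eqref{eq:j-powers-contains-G} applied level by level. The idea is to reduce the word $w$ to its canonical form~\eqref{eq:canon} by bubble sorting. Each level-$j$ block of that canonical form is then short in $\mF'_j$-weight. Finally, each such block can be rewritten as a genuinely short word in the original generators.

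First, suppose $\wt_n(w)<\delta\le 2$. By Lemma~\ref{l:A}, the bubble-sorting procedure (interchanges~\eqref{eq:exchange-ab} and merges~\eqref{eq:merge}) never increases the weight. Hence the canonical form of $g$ has weight less than $\delta$. In particular, for every $j$,
\[
\frac{1}{A^{j-1}} n^{-j}\sum_{i=1}^{d^G_j}|a_{i,j}|<\delta,
\]
that is, $\sum_i|a_{i,j}|<\delta A^{j-1} n^j$. There is one caveat: the weight of the $t_j$'s is bounded below by a fixed positive quantity per letter. So if $\delta n^j A^{1-j}<1$, then no nontrivial $t_j$ can appear; otherwise there is at most one $t_j$ per level anyway. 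Either way, the $t_j$ contribute at most $m$ bounded-length letters, which is $O(1)\le \eps n/2$ for large $n$. Small $n$ is handled separately: for $n<n_0$ we take $\delta$ so small that $\delta<\min_{g\in\mF}\wt_{n_0}(g)$. Then the only word of weight less than $\delta$ is the empty one, and $g=e$.

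Next, we convert each block $f_{1,j}^{a_{1,j}}\cdots f_{d^G_j,j}^{a_{d^G_j,j}}$ into a short word. Here we need a sharpened form of~\eqref{eq:k-power}, in which the power itself, and not just its $\varphi^T_j$-image, is realized by a short word. Namely, we want: for $|k|\le (\eta n)^j$, the element $f_{j,i}^k$ lies in $B^G_{C\eta n+C}(e)$ modulo a correction in $G_{j+1}$ whose canonical coordinates at level $j'$ are $O((\eta n)^{j'})$. Proving this directly by the commutator construction in~\eqref{eq:k'l} is what we expect to be the main obstacle, because the corrections in $G_{j+1}$ must be controlled.

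We will sidestep this by backward induction on $j$. The claim at level $j$ is: every $h\in G_j$ whose canonical coordinates satisfy $\sum_i|a_{i,j'}|\le (\eta n)^{j'}$ for all $j'\ge j$ lies in $B^G_{C_j' \eta n + C''_j}(e)$. For $j=m$ this follows from~\eqref{eq:j-powers-contains-G}, since $G_m$ is central and $\varphi^T_m$ is injective up to torsion.

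For the inductive step, we proceed as follows.
\begin{enumerate}
\item Use~\eqref{eq:j-powers-contains-G} to pick $u\in B^G_{C_j\eta n}(e)\cap G_j$ with the same $\varphi^T_j$-image as the level-$j$ block of $h$.
\item By Lemma~\ref{l:canon-length} applied to $u$, whose length is at most $C_j\eta n$, the canonical coordinates of $u$ satisfy $\sum_i|a_{i,j'}|\le C(\eta n)^{j'}$.
\item Rewrite $u^{-1}h$ in canonical form. The group operations (multiplying and inverting canonical forms) keep the coordinates at level $j'$ of size $O((\eta n)^{j'})$. This is again the weight argument of Lemma~\ref{l:A}, now with $n$ replaced by $\eta n$.
\item Conclude that $u^{-1}h$ lies in $G_{j+1}$ with coordinates of the right size, so the induction hypothesis applies to it.
\end{enumerate}

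Finally, we choose the parameters. Take $\eta^{j}=\delta A^{j-1}$ up to constants, that is, $\eta\approx\delta^{1/m}$ suitably adjusted. Then $g$ lies in $B^G_{C\delta^{1/m} n+C}(e)$. Choosing $\delta$ small enough makes this at most $\eps n$ for $n\ge n_0$, and the case $n<n_0$ was already handled above.
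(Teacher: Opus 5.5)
Your argument is correct, and its skeleton is the paper's. Both run a backward induction on $j$: peel off the level-$j$ part using a short word supplied by \eqref{eq:j-powers-contains-G}, then control the remainder in $G_{j+1}$.

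The difference is the quantity carried through the induction. The paper inducts directly on the weight statement, restricted to $g\in G_j$. There the remainder $g''=(g')^{-1}g$ is represented by the concatenated word, whose weight is at most $\wt_n(w)+\wt_n(g')+\wt_n(w)\wt_n(g')$, so no re-canonicalisation is needed. Lemma~\ref{l:A} enters the paper's proof only implicitly, to bound $\varphi^T_j(g)$ by roughly $A^{j-1}\delta_1^j n^j$. You instead pass to canonical coordinates once and prove the coordinate statement by induction; that statement is exactly the Remark the paper states right after the lemma. This forces you to re-run the bubble-sort argument, at a rescaled weight $\wt_{K\eta n}$, at every level in order to bound the coordinates of $u^{-1}h$. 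So your route costs a bit more bookkeeping. In return it gives an explicit linear bound $g\in B^G_{C\eta n+C}(e)$ with $\eta\approx\delta^{1/m}$, and it treats the torsion letters and the small-$n$ case explicitly, which the paper glosses over.

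When you write it out, make the following points explicit.
\begin{enumerate}
\item In step 1, choose $u$ with $\varphi_j(u)=\varphi_j(h)$, which costs one extra letter from $\wT_j$. Equality of $\varphi^T_j$-images alone only puts $u^{-1}h$ in $\varphi_j^{-1}(T_j)$, not in $G_{j+1}$.
\item Step 3 gives coordinates bounded by $C(\eta n)^{j'}$, not by $(\eta n)^{j'}$. So the induction hypothesis must be applied with $C\eta$ in place of $\eta$. This is legitimate because your constants are uniform in the single scale $\eta n$.
\item Applying Lemma~\ref{l:A} at the non-integer scale $K\eta n$ requires $K\eta n\ge 1$, and $K$ large enough that the concatenated word has weight at most $2$. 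The case $\eta n<1$ is trivial, since then all $a_{i,j'}=0$.
\item The condition excluding a nontrivial $t_j$ should read $\delta A^{j-1}n^j\le 1$, not $\delta n^jA^{1-j}<1$. Your ``either way'' remark makes this slip harmless.
\end{enumerate}
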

\begin{proof}
We will prove by backwards induction on $j$ this statement under an additional assumption that $g\in G_j$. As always, for $j=m+1$ there is nothing to prove. Now, let us establish the (backwards) induction step. Namely, take first some $\delta_1<1$. Assuming that $\wt_n(w)<\delta_1^j$ for a word $w$ defining an element $g\in G_j$, from~\eqref{eq:j-powers-contains-G} we can find $g'\in G_j$ such that $\varphi_j(g)=\varphi_j(g')$, and that $g'$ is a product of at most $C_j' \delta_1 n$~generators. Hence, once $\delta_1$ is chosen sufficiently small so that $C_j'\delta_1<1$, the weight $\wt_n(g')$ satisfies the upper bound 
\[
\wt_n(g') \le C_j' \delta_1 + (C_j' \delta_1)^2 < 2 C_j' \delta_1.
\]
Now, consider the composition quotient $g''=(g')^{-1}g$. As $\varphi_j(g)=\varphi_j(g')$, one has $g''\in G_{j+1}$. On the other hand, its weight satisfies 
\[
\wt_n(g'')\le \wt_n(g) + \wt_n(g') + \wt_n(g) \wt_n(g') < 3 \wt_n(g) + \wt(g'),
\]
so once $\wt_n(g)<\delta<\delta_1$, it does not exceed $3\delta + 2 C_j'\delta_1<(3+2C_j')\delta_1$. Now, as $g''\in G_{j+1}$, from the induction assumption there exists $\delta'$ such that if $\wt_n(g'')<\delta'$, then $g''$ can be written as a product of at most $\frac{\eps}{2} n$ generators. Fix such $\delta'$, and choose $\delta_1>0$ sufficiently small such that 
\[
(3+2C_j')\delta_1<\delta', \quad C_j'\delta_1<\min(\frac{\eps}{2},1),
\]
and take $\delta=\delta_1^j$. Then both $g'$ and $g''$ can be written as products of at most $\frac{\eps}{2}n$ generators, thus $g=g'g''$ belongs to $B_{\eps n}^G(e)$. This completes the (backwards) induction step, and for $j=1$ the statement becomes the conclusion of the lemma.
\end{proof}

\begin{remark}
The statement of Lemma~\ref{l:delta} can be reformulated in the following way, in a sense, reversing Lemma~\ref{l:canon-length}. There exist constants $c_{A,j}>0$, such that any $g\in G$, whose canonical the canonical form~\eqref{eq:canon} satisfies the bounds on the powers 
\begin{equation*}
\forall j \quad \sum_{i =1}^{d_j^G} |a_{i,j}| \le c_{A,j} \cdot n^j,
\end{equation*}
belongs to $B_n^G(e)$.
\end{remark}

\begin{proof}[Proof of Proposition~\ref{p:j-powers-H}]
To conclude the proof of the proposition, it suffices to establish the inclusion~\eqref{eq:j-powers-contains-H}. Indeed, consider the elements $g_1,\dots, g_{d^{H;G}_j}\in H_j$ such that their $\varphi^T_j$-images form a base of $\varphi^T_j(H_j)$. Then, one has $\wt_n(g_i)<L n^{-j}$ for some uniform constant $L$ (related to the lengths of their canonical forms). Hence, $\wt_n(g_i^{a_i})< 2La_i n^{-j}$ for all $a_i\in \Z$ such that $|a_i|<\frac{1}{L} n^j$. Applying Lemma~\ref{l:delta} to such powers and multiplying them, and we obtain a representation of length at most $n$ for some $g=g_1^{a_1}\dots g_{d^{H;G}_j}^{a_{d^{H;G}_j}}\in H_j$ with $\varphi^T_j(g)$ belonging to some radius $c n^j$-ball, intersected with~$\varphi^T_j(H_j)$. This (after adding a finite number of possibilities for the torsion elements) completes the proof of~\eqref{eq:j-powers-contains-H}, and hence of Proposition~\ref{p:j-powers-H} and of Theorem~\ref{t.B-G}.
\end{proof}

\section{Upper bounds: proof of Propositions~\ref{thm upper bound} and~\ref{p:critical}}\label{s:upper}
\subsection{Case of compact $M$}\label{s:upper-compact}

As it was already mentioned, the proof of Proposition~\ref{thm upper bound} will be based on the application of the Generalised Kopell's Lemma. Namely, let $G$ be a finitely generated nilpotent group, and let $\mF=\{f_1,\dots, f_k\}$ be a set of its generators, their inverses, and the neutral element~$e$. Assume that we are given an action of a nilpotent group $G$ on $M=[0,1]$ or $M=\Sc$, an interval $I_c\subset M$ is a support interval  for some central element $c\in Z(G)$, and the corresponding stabiliser subgroup is $K=\Stab(I_c)$. The quotient space $G/K$ is then naturally identified with the orbit $G(I_c)$. We are going to establish the following proposition.
\begin{prop}\label{l:alpha-conv}
In the assumptions above, for every $\alpha>[\growth(G/K)]^{-1}$ there exists a path 
\[
g_0=e, \quad g_{n}=f_{w_n}g_{n-1}, \quad n=1,2,\dots
\]
in the group $G$, such that
\begin{equation}\label{eq:g-sum}
\sum_n |g_n(I_c)|^{\alpha} < \infty.
\end{equation}
\end{prop}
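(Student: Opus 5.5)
The plan is to build the path $(g_n)$ at random and to show that the expected value of $\sum_n |g_n(I_c)|^{\alpha}$ is finite. Then almost every realisation of the process is a path satisfying~\eqref{eq:g-sum}. Write $d:=\growth(G/K)$, so that $\alpha d>1$. The only geometric input is this: the intervals $f(I_c)$, $f\in G$, are support intervals of $c$. Distinct ones have disjoint interiors, and they are indexed by the cosets $fK\in G/K$, so
\[
\sum_{v\in G/K} |I_v|\le |M|, \qquad I_v:=f(I_c)\ \text{for}\ v=fK .
\]

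The key intermediate statement is a \emph{spreading property} of the process. I will construct a random path $g_0=e$, $g_n=f_{w_n}g_{n-1}$, such that
\begin{equation*}
\Prob\bigl(g_nK=v\bigr)\le C\, n^{-d}\qquad\text{for all } v\in G/K \text{ and all } n .
\end{equation*}
Granting this, $\E|g_n(I_c)|=\sum_v \Prob(g_nK=v)|I_v|\le C|M|n^{-d}$. Since $\alpha<1$, Jensen's inequality gives $\E|g_n(I_c)|^{\alpha}\le (C|M|)^{\alpha}n^{-\alpha d}$, which is summable because $\alpha d>1$. Monotone convergence then shows that the expected value of $\sum_n|g_n(I_c)|^{\alpha}$ is finite, so the sum is finite almost surely.

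The spreading bound should follow from Theorem~\ref{t.B-G} together with the estimates~\eqref{eq:G-lower}, \eqref{eq:G-upper}. Suppose $U$ is uniformly distributed in $B^G_R(e)$ and $x\in G$ is arbitrary. Then
\[
\Prob(U\in xK)\le \frac{\#\bigl(B_{2R}^G(e)\cap K\bigr)}{\#B_R^G(e)}\asymp R^{D_{K;G}-D_G}=R^{-d},
\]
because any two elements of $B_R^G(e)\cap xK$ differ by an element of $K\cap B^G_{2R}(e)$. So a single uniform sample at scale $R$ hits any fixed coset with probability $O(R^{-d})$. The process will be organised in blocks $k=1,2,\dots$ of duration $\asymp 2^k$. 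In block $k$ the path travels, by left multiplications by generators, between independent random elements $U_k$ and $U_{k+1}$ that are uniform in balls of radius $\asymp 2^k$ and $\asymp 2^{k+1}$. I will try to arrange that at every time $n$ inside block $k$ we have $g_n=p\,U$, where $U$ is uniform at scale $\asymp 2^k\asymp n$ and independent of the remaining factor $p$. Conditioning on $p$ and applying the bound above to the coset $p^{-1}vK$ then gives the spreading property.

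The main obstacle is this last step: the steps of the path are \emph{left} multiplications, and the uniform randomness has to stay on the correct side so that it still determines the coset $g_nK$. A naive route from $U_k$ to $U_{k+1}$ can fail. Passing through $e$, for instance, or following a geodesic that begins with short prefixes, visits cosets near $eK$ with probability of order $1$ and destroys the bound. My first attempt will be to go from $U_k$ to $U_{k+1}$ by successively adding letters on the left of a word for $U_{k+1}U_k^{-1}$. I will then control the coset distribution of the intermediate points $p_sU_k$ using only the uniformity of $U_k$, with $p_s$ independent of $U_k$, by taking $U_{k+1}=V_kU_k$ where $V_k$ is an independent uniform element. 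This requires a version of the counting bound $\Prob(U\in xK)\lesssim R^{-d}$ that is uniform over all left translates, which is exactly what the displayed estimate provides. If the bookkeeping of scales breaks down near the start of each block, I will fall back on making consecutive blocks overlap. In that fallback every time $n$ is covered by a stretch of the path in which a fresh uniform factor at scale $\asymp n$ sits leftmost, possibly losing a constant factor in $C$.
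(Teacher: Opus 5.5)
\textbf{Verdict.} Your outer reduction is sound and matches the paper: a random path, finite expectation of $\sum_n|g_n(I_c)|^{\alpha}$, the disjointness bound $\sum_v|I_v|\le|M|$, and the coset count via Theorem~\ref{t.B-G}. Using Jensen's inequality instead of the paper's Hölder-type Lemma~\ref{l:Holder} is a tidy simplification that gives the same bound. However, there is a genuine gap. The step that carries the proof is constructing a process with $\Prob(g_nK=v)\le Cn^{-d}$, and you leave it open.

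\textbf{Where your construction breaks.} Your concrete attempt is inconsistent as written.
\begin{itemize}
\item Once you set $U_{k+1}:=V_kU_k$, the element $U_k=V_{k-1}\cdots V_1U_1$ is no longer uniform in a ball. So in block $k$ you cannot invoke ``the uniformity of $U_k$'', and your displayed estimate does not apply to it. The only uniform factor at scale $\asymp n$ is $V_{k-1}$, and it sits in the middle of $g_n=p_sV_{k-1}U_{k-1}$.
\item Your fallback, with the uniform factor leftmost, does not fit your conditioning argument either. Indeed $\{Up\in xK\}=\{U\in xKp^{-1}\}$, which is a coset of $pKp^{-1}$, not of $K$.
\item If blocks only have duration $\asymp 2^k$, the block containing a fixed time $n$ is random, and the independence of $p$ from $U$ is lost. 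You need exact block lengths, obtained by padding with $e\in\mF$.
\end{itemize}

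\textbf{The missing idea.} Bound point probabilities rather than coset probabilities; this removes the left/right issue you identify as the main obstacle.
\begin{itemize}
\item If $g_n=\xi_1\xi_2\xi_3$ with independent factors and $\xi_2$ uniform in $B^G_R(e)$, then $\max_g\Prob(g_n=g)\le 1/\#B^G_R(e)$ wherever $\xi_2$ sits. This holds because convolving with a probability measure does not increase the sup norm.
\item Since $g_n$ is a product of $n$ generators, $g_n\in B^G_n(e)$ deterministically.
\item Any two elements of $xK\cap B^G_n(e)$ differ by an element of $K\cap B^G_{2n}(e)$.
\end{itemize}
Hence
\[
\Prob(g_nK=v)\le \#\bigl(B^G_{2n}(e)\cap K\bigr)\cdot\max_g\Prob(g_n=g)\lesssim n^{D_{K;G}-D_G}=n^{-d}
\]
as soon as $R\ge n/4$.

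\textbf{How this rescues your scheme.} With exact block lengths $2^k$, your scheme $U_{k+1}=V_kU_k$ is precisely the process of Proposition~\ref{p:process}. It goes through once you replace ``$U_k$ uniform'' by the two properties your displayed estimate actually uses:
\begin{itemize}
\item $U_k$ is supported in a ball of radius $O(2^k)$;
\item its point masses are at most $1/\#B^G_{2^{k-1}}(e)$.
\end{itemize}

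\textbf{A smaller omission.} Your Jensen step silently assumes $\alpha<1$. So the case $d=1$, where $\alpha>1$ is forced, is not covered. It needs the separate one-line argument the paper gives: iterate a generator of the quotient.
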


Once established, Proposition~\ref{l:alpha-conv} implies Proposition~\ref{thm upper bound} immediately due to Generalised Kopell's Lemma (Proposition~\ref{p:Kopell}).

To establish Proposition~\ref{l:alpha-conv}, we will look for $g_n$ as a trajectory of a \emph{random process}, such that the \emph{expectation} of the sum~\eqref{eq:g-sum} is finite. Once such a process is constructed, almost every trajectory of such a process would satisfy the conclusion of Proposition~\ref{l:alpha-conv}. We will assume that $d=[\growth(G/K)]>1$, as the case $d=1$ is straightforward (in that case, $G/K\simeq \Z$, and it suffices iterate the generator of the quotient). 

For the case $G=\Z^d$, such an approach was already implemented in~\cite{DKN_acta}, with the process based on the Polya urn. However, this construction was heavily relying on the abelian nature of the group, and doesn't seem to generalise even to the Heisenberg group. A new way of constructing such a process was thus required. We are providing it in the proof of the following statement, that seems to be of interest of its own. 

\begin{prop}\label{p:process}
Let $G$ be any finitely generated group, $\mF$ the set of its generators, their inverses and~$e$. Then there exists a random process 
\[
g_0=e, \quad g_{n}=f_{w_n}g_{n-1}, \quad f_{w_n}\in\mF, \quad n=1,2,\dots,
\]
such that for every $n$ and any $g\in G$, one has 
\begin{equation}\label{eq:p-n-upper}
\Prob(g_n=g)\le \frac{1}{\# B^G_{n/4}(e)}.
\end{equation}
\end{prop}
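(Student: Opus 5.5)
The plan is to build the process in \emph{stages} of doubling length, and to use one elementary observation: multiplying by an independent random element cannot increase the maximal point probability. Precisely, if $X,Y$ are independent $G$-valued random variables, then for every $g\in G$
\[
\Prob(XY=g)=\sum_{x}\Prob(X=x)\,\Prob(Y=x^{-1}g)\le \sup_{y}\Prob(Y=y).
\]
So once the process at some time is distributed with all point masses at most $1/\#B^G_r(e)$, every later position inherits this bound, provided the later position is an independent increment multiplied on the left.

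\textbf{Construction.} Set the stage times $T_k:=2^k$ for $k\ge 0$, and put $g_n=e$ for $n\le 1$ (using $e\in\mF$). During stage $k$, that is, for $T_k< n\le T_{k+1}$, the process proceeds as follows.
\begin{enumerate}
\item At time $T_k$, choose $h_k$ uniformly at random in $B^G_{2^k}(e)$, independently of everything before.
\item Fix, for each $h$, a geodesic word $h=f_{s_\ell}\cdots f_{s_1}$ with $\ell\le 2^k$.
\item Over the next $\ell$ steps, left-multiply successively by $f_{s_1},\dots,f_{s_\ell}$.
\item Fill the remaining $2^k-\ell$ steps with the generator $e\in\mF$.
\end{enumerate}
Then every step is of the required form $g_n=f_{w_n}g_{n-1}$ with $f_{w_n}\in\mF$, and at the end of the stage $g_{T_{k+1}}=h_k\,g_{T_k}$.

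\textbf{Bound at intermediate times.} For $T_{k+1}\le n\le T_{k+2}$, write $g_n=x_n\,g_{T_{k+1}}=x_n h_k\, g_{T_k}$. Here $x_n$ is the prefix of the stage-$(k+1)$ walk, and it is a function of $h_{k+1}$ alone. Since $h_{k+1}$, $h_k$ and $g_{T_k}$ are independent, the observation applied to $X=x_n$, $Y=h_kg_{T_k}$ gives
\[
\Prob(g_n=g)\le \sup_y \Prob(h_kg_{T_k}=y).
\]
Applying it again with $X=h_k$, $Y=g_{T_k}$ is not what is wanted here; instead, condition on $g_{T_k}$ and use that $h_k$ is uniform on $B^G_{2^k}(e)$, which gives
\[
\sup_y \Prob(h_kg_{T_k}=y)\le \frac{1}{\#B^G_{2^k}(e)}.
\]
Since $n\le 2^{k+2}$, we have $n/4\le 2^k$, so $\#B^G_{n/4}(e)\le \#B^G_{2^k}(e)$, and \eqref{eq:p-n-upper} follows. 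For small $n<4$ the ball $B^G_{n/4}(e)$ is $\{e\}$ and the bound is trivial.

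\textbf{Main obstacle.} The delicate point is that positions \emph{inside} a stage are not uniform, because prefixes of geodesics are badly distributed. The stage structure avoids this: the non-uniform prefix is always multiplied on the left of an already well-spread, independent element. This is exactly why the estimate loses the factor $4$: one factor $2$ because the bound comes from the previous stage, and one factor $2$ from the stage length.
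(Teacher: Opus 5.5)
Your proof is correct and follows essentially the same route as the paper. Both build the walk from independent blocks of doubling length, each block uniform in a ball of radius equal to its length (padded with $e$), and both use that multiplying by an independent random element cannot increase the maximal point probability. Your bookkeeping is, if anything, cleaner than the paper's: you bound via the last \emph{completed} stage $h_k$, with the prefix $x_n$ of the current stage multiplied on the left, whereas the paper's indexing labels the partial prefix and the full block by the same index $j$.
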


Proposition~\ref{l:alpha-conv} follows from Proposition~\ref{p:process} and from the relative growth estimates for the nilpotent groups.
\begin{proof}[Deducing Proposition~\ref{l:alpha-conv} from Proposition~\ref{p:process}]

Note that for the case of the nilpotent group $G$, one has
\[
\#B_{n/4}^G(e) \ge C_1 n^{d_G}, 
\]
and for its subgroup $K$
\[
\#B_{2n}^G(e)\cap K \le C_2 n^{d_{K,G}};
\]
finally, the growth of $G/K$ is equal to $d=d_G-d_{K,G}$
(see Theorem~\ref{t.B-G}).

Take the random sequence~$g_n$ from the conclusion of Proposition~\ref{p:process}. As it was mentioned earlier, we are going to show that the 
expectation of the sum~\eqref{eq:g-sum} is finite. To do it, consider the sequence of (random) images $I_n:=g_n(I_c)$, and rewrite 
\[
\E \left(\sum_n |g_n(I_c)|^{\alpha} \right) = \sum_n  \E\left(|g_n(I_c)|^{\alpha}\right) = \sum_n  \E\left(|I_n|^{\alpha}\right).
\]
For any given $n$ the expectation in the right hand side can be rewritten as 
\begin{equation}\label{eq:E-I-n}
\E\left(|I_n|^{\alpha}\right) = \sum_{I'} |I'|^{\alpha} \cdot \Prob(I_n = I'),
\end{equation}
where the summation is over intervals $I'$ the $G$-orbit of~$I_c$. 

The behaviour of the probabilities in the right hand side can be estimated from Proposition~\ref{p:process}. Namely, for every $n$ and any interval $I'$ on the orbit of $I_c$, 
\[
I'=\overline{g}(I_c), \quad \overline{g}\in G,
\]
one has 
\begin{equation}\label{eq:I-Ip}
\Prob(I_n=I') = \Prob(\{ g_n=g \, \text{for some $g$ s.t. } \,  g(I_c)=\overline{g}(I_c)\}) = \sum_{h\in K} \Prob(g_n=\overline{g}h).
\end{equation}
If the event in~\eqref{eq:I-Ip} is non-empty, one can take $\overline{g}$ inside the ball $B_n^G(e)$, and thus restrict the sum in the right hand side to the radius $2n$ ball, that is, to $h\in B_{2n}^G(e) \cap K$. The upper bound~\eqref{eq:p-n-upper} then implies that for every $I'$ one has
\begin{equation}\label{eq:I-upper}
\Prob(I_n=I') \le  \frac{\# (B_{2n}^G(e) \cap K)}{\# B_{n/4}^G(e)} \le \frac{C_2 \cdot (2n)^{d_{K,G}}}{C_1 \cdot (n/4)^{d_G}} = \frac{C_3}{n^{d_G - d_{K,G}}} = \frac{C_3}{n^d}.
\end{equation}

Roughly speaking, the upper bound~\eqref{eq:I-upper} for the probabilities is similar to what would happen if the interval $I_n$ was uniformly chosen in the radius $n$ neighbourhood of $I_c$ in the corresponding Schreier graph. To conclude the proof, we will use this upper bound to estimate the expectation~\eqref{eq:E-I-n}. Namely, the following statement holds:
\begin{lemma}\label{l:Holder}
Let $\psi, p:X\to \R_+$ be nonnegative functions on some (at most countable) set $X$, such that for some $p_0>0$
\[
\sum_x \psi(x) \le 1, \quad \sum_x p(x) =1, \quad \forall x\in X \quad p(x)\le p_0.
\]
Then for every $\alpha<1$ one has
\begin{equation}\label{eq:p-bound}
\sum_{x\in X} \psi(x)^{\alpha} p(x) \le p_0^{\alpha}.
\end{equation}
\end{lemma}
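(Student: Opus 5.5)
The plan is a single application of Hölder's inequality. The bound will split as a product of a mass factor controlled by $\sum_x\psi(x)\le 1$ and a factor controlled by $\sum_x p(x)=1$ together with $p\le p_0$.

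First, handle $\alpha\in(0,1)$; the case $\alpha\le 0$ is not needed in the application, and for $\alpha=0$ the bound is trivial. Write each term as
\[
\psi(x)^{\alpha}p(x)=\bigl(\psi(x)\,p(x)^{\beta}\bigr)^{\alpha}\cdot p(x)^{\gamma},
\]
with exponents $\beta,\gamma$ to be chosen. Apply Hölder's inequality with conjugate exponents $1/\alpha$ and $1/(1-\alpha)$:
\[
\sum_x \psi(x)^{\alpha}p(x)\le \Bigl(\sum_x \psi(x)\,p(x)^{\beta}\Bigr)^{\alpha}\Bigl(\sum_x p(x)^{\gamma/(1-\alpha)}\Bigr)^{1-\alpha}.
\]
Matching the exponents of $p$ requires $\alpha\beta+\gamma=1$.

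The simplest choice is $\beta=1$, so $\gamma=1-\alpha$ and $\gamma/(1-\alpha)=1$. The second factor then equals $(\sum_x p(x))^{1-\alpha}=1$. For the first factor, use $p(x)\le p_0$ to get
\[
\sum_x \psi(x)\,p(x)\le p_0\sum_x\psi(x)\le p_0.
\]
Combining the two factors gives $\sum_x\psi(x)^\alpha p(x)\le p_0^{\alpha}$, which is exactly~\eqref{eq:p-bound}.

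An equivalent way to see this is Jensen's inequality for the concave function $t\mapsto t^\alpha$ under the probability measure $p$: $\E_p[\psi^\alpha]\le(\E_p\psi)^\alpha=(\sum_x\psi(x)p(x))^\alpha\le p_0^\alpha$. I would present this version as the proof, since it is one line. Countability of $X$ causes no issue, because all terms are nonnegative and the sums converge.

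There is no real obstacle here. The only point requiring care is the choice of splitting, namely putting the full weight $p(x)$ inside the concave function's argument, so that the hypothesis $p\le p_0$ is used exactly once and $\sum_x p(x)=1$ normalizes the measure. In the subsequent application, I would take $\psi(I')=|I'|$ over the orbit intervals, whose total length is at most $|M|$ (normalized to $1$), and $p_0=C_3 n^{-d}$ from~\eqref{eq:I-upper}. This gives $\E|I_n|^\alpha\le C n^{-\alpha d}$, which is summable precisely because $\alpha d>1$.
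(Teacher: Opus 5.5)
Your proof is correct, but it uses a different decomposition from the paper's, and the difference is where the hypothesis $p\le p_0$ is spent.

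\textbf{The paper's route.} It applies H\"older to the pair $\psi^{\alpha}\in\ell_{1/\alpha}$, $p^{1-\alpha}\in\ell_{1/(1-\alpha)}$. The two normalisations alone give the $p_0$-free bound $\sum_x\psi(x)^{\alpha}p(x)^{1-\alpha}\le 1$. Only afterwards is $p_0$ inserted, pointwise, through $p(x)\le p(x)^{1-\alpha}p_0^{\alpha}$.

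\textbf{Your route.} You split $\psi^{\alpha}p=(\psi p)^{\alpha}\cdot p^{1-\alpha}$, which is Jensen's inequality $\E_p[\psi^{\alpha}]\le(\E_p\psi)^{\alpha}$ for the concave map $t\mapsto t^{\alpha}$. You then use $p\le p_0$ once, inside $\sum_x\psi(x)p(x)\le p_0$.

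Both are one-line arguments valid for $0<\alpha<1$. You are right to set $\alpha<0$ aside, but it is not merely unneeded: the statement as written is false there. Take $X=\{1,2\}$, $p(1)=p(2)=p_0=\tfrac12$, $\psi=(1-\delta,\delta)$, $\alpha=-1$; then the left side $\tfrac12\bigl(\tfrac{1}{1-\delta}+\tfrac1\delta\bigr)\to\infty$ as $\delta\to0$, while the right side is $p_0^{-1}=2$. The paper's proof also tacitly needs $\alpha>0$ for $\ell_{1/\alpha}$ to make sense.

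\textbf{What each buys.} Your version shows that the only input really needed is $\sum_x\psi(x)p(x)\le p_0$. In the application you can therefore write $\E|I_n|^{\alpha}\le(\E|I_n|)^{\alpha}$ directly. You then bound $\E|I_n|$ by the $\ell_1$--$\ell_\infty$ pairing of the length vector with the probability vector. That is exactly the estimate the paper itself uses later for $\E\xi_2$ in the critical case, so the lemma could be bypassed entirely. The paper's version, in turn, isolates the duality inequality $\sum_x\psi^{\alpha}p^{1-\alpha}\le 1$, which does not involve $p_0$ at all.
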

\begin{proof}
Note that the functions $\psi(x)^{\alpha}$ and $p(x)^{1-\alpha}$ belong to the $\ell_{\frac{1}{\alpha}}(X)$ and $\ell_{\frac{1}{1-\alpha}}(X)$ respectively, and their norms do not exceed 1:
\[
\sum_x (\psi(x)^{\alpha})^{1/\alpha} = \sum_x \psi(x) \le 1, \qquad  \sum_x (p(x)^{1-\alpha})^{\frac{1}{1-\alpha}} = \sum_x p(x) =1.
\]
Applying H\"older inequality, one gets 
\[
\sum_{x\in X} \psi(x)^{\alpha} p(x)^{1-\alpha} \le \left\| \psi^{\alpha}\right\|_{\frac{1}{\alpha}} \cdot \left\| p^{1-\alpha}\right\|_{\frac{1}{1-\alpha}} \le 1\cdot 1 =1.
\]
Now, from the assumption of the lemma one has
\[
\forall x\in X \quad p(x) \le p(x)^{1-\alpha} \cdot p_0^{\alpha},
\]
and hence 
\[
\sum_{x\in X} \psi(x)^{\alpha} p(x) \le \left(\sum_{x\in X} \psi(x)^{\alpha} p(x)^{1-\alpha}\right) \cdot p_0^{\alpha} \le 1\cdot p_0^{\alpha}=p_0^{\alpha},
\]
thus concluding the proof.
\end{proof}

Let us now apply Lemma~\ref{l:Holder} to our case: here $X$ is the $G$-orbit of $I_c$, the function $\psi$ is given by $\psi(I')=|I'|$, and $p(I'):=\Prob(I_n=I')$, bounded from above by $p_0:=\frac{C_3}{n^d}$ due to the upper bound~\eqref{eq:I-upper}. We get 
\begin{equation}\label{eq:E-n}
\E |I_n|^{\alpha} = \sum_{I'} |I'|^{\alpha} \cdot \Prob(I_n=I') \le p_0^{\alpha} = \frac{C_3^{\alpha}}{n^{\alpha d}}.
\end{equation}
As $\alpha d>1$ due to the assumption of the lemma, summing~\eqref{eq:E-n} over $n$, we get 
\[
\sum_n \E |I_n|^{\alpha} \le C_3^{\alpha} \cdot \sum_n \frac{1}{n^{\alpha d}} < \infty,
\]
thus concluding the proof.
\end{proof}

\begin{proof}[Proof of Proposition~\ref{p:process}]
Fix a sequence $n_j=2^j$; we will construct the path as a concatenation of independent paths of lengths $n_1,n_2,\dots$. Namely, let 
\[
s_0:=0, \quad s_{j+1}=s_j+n_j, \quad j=1,2,\dots.
\]
Now, for each $j$, take random variables $f_{w_{j,1}},\dots,f_{w_{j,n_j}}\in \mF$ such that the product 
\[
g_{(j)}:=f_{w_{j,n}},\dots,f_{w_{j,n_1}}\in G
\]
is distributed uniformly in the ball $B_{n_j}^G(e)$ of radius~$n_j$, and such that these families $(f_{w_{j,1}},\dots,f_{w_{j,n_j}})$ are independent for different~$j$. One way of doing so is to \emph{first} choose the products $g_{(j)}\in B_{n_j}^G(e)$ independently (and uniformly in the corresponding balls). Then, one deterministically takes for each $g_j$ the lexicographically first way of decomposing it into a product of exactly $n_j$ elements from~$\mF$.

Now, we take 
\[
w_{s_j+m}:=w_{j,m}, \quad m=1,\dots, n_j;
\]
in other words, the path writes as 
\[
\dots \underbrace{f_{w_{j,n_j}} \dots f_{w_{j,1}}}_{g_{(j)}}\dots \underbrace{f_{w_{2,2}} f_{w_{2,1}}}_{g_{(2)}} \underbrace{f_{w_{1,1}} }_{g_{(1)}}.
\]

To conclude the proof, recall that the (group) convolution cannot increase $\|\cdot \|_{\infty}$-norm of a measure;
formally, we have the following (immediate) lemma:
\begin{lemma}\label{l:prod}
Let $\xi_1,\xi_2$ be two independent random variables, taking values in a group~$G$, and 
\[
\xi=\xi_1 \xi_2
\]
be their product. Then
\[
\max_{g\in G} \Prob(\xi=g) \le \max\left(\max_{g\in G} \Prob(\xi_1=g), \max_{g\in G} \Prob(\xi_2=g)\right).
\]
The same applies for the product $\xi=\xi_1\dots \xi_l$ of more than two independent random variables.
\end{lemma}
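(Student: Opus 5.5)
The plan is to prove Lemma~\ref{l:prod} directly by conditioning on one of the factors. The case of more than two factors will then follow by induction.

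For two factors, use the independence of $\xi_1$ and $\xi_2$. For every $g\in G$ we have
\[
\Prob(\xi=g)=\sum_{h\in G}\Prob(\xi_1=h,\ \xi_2=h^{-1}g)=\sum_{h\in G}\Prob(\xi_1=h)\,\Prob(\xi_2=h^{-1}g).
\]
This sum is at most
\[
\Bigl(\max_{g'\in G}\Prob(\xi_2=g')\Bigr)\cdot\sum_{h\in G}\Prob(\xi_1=h)=\max_{g'\in G}\Prob(\xi_2=g').
\]
The second maximum is therefore an upper bound on its own. So the bound we actually get is the stronger one,
\[
\max_g\Prob(\xi=g)\le \min\Bigl(\max_g\Prob(\xi_1=g),\max_g\Prob(\xi_2=g)\Bigr).
\]
By symmetry, conditioning on $\xi_2$ instead also gives the bound by $\max_g \Prob(\xi_1=g)$. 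In particular the claimed bound by the maximum of the two holds. The group is countable, so the sum over $h$ is a countable sum of nonnegative terms, and the only fact used is that the probabilities $\Prob(\xi_1=h)$ sum to $1$.

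For $l>2$ factors, write $\xi=(\xi_1\cdots\xi_{l-1})\,\xi_l$. The product $\xi_1\cdots\xi_{l-1}$ is independent of $\xi_l$, because it is a function of $(\xi_1,\dots,\xi_{l-1})$. Applying the two-factor case and then the induction hypothesis gives the claim, again in the stronger form with the minimum over the factors.

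Nothing here is a real obstacle; the only point that needs care is that independence is used in the factorization of the joint probability. It is worth recording the stronger ``min'' form, because that is what the later application needs. There, $g_n$ is a product of independent blocks $g_{(j)}$, together with a partial block, and the bound $1/\#B^G_{n/4}(e)$ in~\eqref{eq:p-n-upper} comes from the single largest complete block. That block is uniform on a ball of radius $n_j\ge n/4$, so each of its values has probability at most $1/\#B^G_{n/4}(e)$, and the ``min'' form transfers this bound to $g_n$.
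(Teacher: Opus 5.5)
Your proof is correct and is the same convolution bound the paper has in mind; the paper gives no proof, only the remark that group convolution cannot increase the $\ell_\infty$-norm of a measure. Your observation that the argument actually yields the minimum rather than the maximum is worth keeping. It is the form genuinely needed in the proof of Proposition~\ref{p:process}: there the partial block can be deterministic (it is trivial for $m=0$), so the stated max-bound alone would only give $1$.
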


Now, for every $n=1,2,\dots$ one can write $n=s_j+m$, where $j$ is taken so that $s_j\le n < s_{j+1}$ and hence $0\le m< n_j$. The corresponding product $g_{n}$ is then equal to
\[
g_{n} =\underbrace{(f_{w_{j,m}}\dots f_{w_{j,1}})}_{\xi_1} \cdot \underbrace{g_{(j)}}_{\xi_2}\cdot \underbrace{g_{(j-1)}\cdot  \dots \cdot g_{(1)}}_{\xi_3}.
\] 
Note that by construction 
\[
\xi_1:=f_{w_{j,m}}\dots f_{w_{j,1}}, \quad \xi_2:=g_{(j)} \quad \text{and} \quad \xi_3:=g_{(j-1)}\cdot  \dots \cdot g_{(1)}
\]
are independent. Lemma~\ref{l:prod} together with the choice of $g_{(j)}$ thus implies that 
\[
\forall g\in G \quad \Prob(\xi_1\xi_2\xi_3=g) \le \frac{1}{B_{n_j}^G(e)}.
\]
Finally, $n_j = \frac{s_j+1}{2}$, and $s_j\ge \frac{n}{2}$, thus implying $n_j\ge \frac{n}{4}$. Hence,
\[
\forall g\in G \quad \Prob(g_n=g) \le \frac{1}{B_{n/4}^G(e)}.
\]
This completes the proof of Proposition~\ref{p:process}, and hence of Proposition~\ref{l:alpha-conv} and of Proposition~\ref{thm upper bound} for the case of compact~$M$.
\end{proof}

\subsection{Case of $M=(0,1]$}\label{s:half-int}

If $M$ is a half-open interval, without loss of generality, we can consider $M=(0,1]$. As $M$ is non-compact, the Generalised Kopell's Lemma cannot be applied directly: the distortion control is no longer uniform close to the open end of the interval. However, it suffices to require that the images of the initial interval do not accumulate to this end:
\begin{remark}\label{r:Kopell-half-open}
Proposition~\ref{p:Kopell} holds for non-compact $M$ under an additional assumption that all the intermediate images $f_{i_j}\cdots f_{i_1}(I_g)$ belong to some compact subset $M^\prime\subset M$.
\end{remark}
Indeed, with this additional assumption the proof can be repeated identically.

Now, to prove Proposition~\ref{thm upper bound} for this case, it suffices to modify Proposition~\ref{p:process} in a way that ensures that the images $g_n(I_c)$ stay within a compact part of $M=(0,1]$. Indeed, the construction can be modified as follows.

\begin{prop}\label{p:process-new}
Let $G$ be a finitely generated group, acting on $M=(0,1]$, and some point~$x_0$ is chosen. Also, let $\mF$ the set, formed by generators of~$G$, their inverses and~$e$. Then there exists a random process 
\[
g_0=e, \quad g_{n}=f_{w_n}g_{n-1}, \quad f_{w_n}\in\mF, \quad n=1,2,\dots,
\]
such that for every $n$ and any $g\in G$, one has 
\begin{equation}\label{eq:p-n-upper-new}
\Prob(g_n=g)\le \frac{1}{\# B^G_{n/8}(e)},
\end{equation}
as well as for every $n$ one has $g_n(x_0)\ge x_0$.
\end{prop}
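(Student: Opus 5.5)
The plan is to keep the block structure of the proof of Proposition~\ref{p:process}, and to insert into each block a deterministic correction that keeps the tracked point above~$x_0$. Throughout I use only that the action is orientation preserving on~$(0,1]$, so each element is monotone and comparisons of points are preserved. As before, set $n_j=2^j$ and, independently for different $j$, draw $g_{(j)}$ uniformly in $B^G_{n_j}(e)$ together with a fixed geodesic word $a_{n_j}\cdots a_1$ for it, $a_i\in\mF$. Let $p_k=a_k\cdots a_1$ denote its prefixes.

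The block will be traversed from the current element $h=g_{s_j}$, which by induction satisfies $y:=h(x_0)\ge x_0$. The naive path $p_k h$ may push the point below $x_0$, so I would use a reflection trick that costs a factor $2$ in length; this is where $n/8$ replaces $n/4$. Let $i^*$ be the index minimising $p_k(y)$ over $0\le k\le n_j$, and set $z=p_{i^*}(y)$.
\begin{itemize}
\item If $z\ge x_0$, the original word already works: every intermediate point satisfies $p_k(y)\ge z\ge x_0$.
\item Otherwise, I would first spend at most $n_j$ steps on an ``escort'' segment and then run a modified word whose prefixes are all $\ge$ the starting point. The natural candidate for the modified word is the cyclic rotation $(a_{i^*}\cdots a_1)(a_{n_j}\cdots a_{i^*+1})$ read from the minimum, or equivalently following $w$ from its minimum with the inverse letters. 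This is the standard fact that a walk read from its minimum stays above its starting value.
\end{itemize}
Since each block then has length at most $2n_j$, padding with the neutral element $e\in\mF$ gives every block exactly length $2n_j$.

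The key probabilistic point is that the block's net element is obtained from $g_{(j)}$ by a map whose fibres are uniformly bounded, or at least by multiplication on either side by elements that are independent of $g_{(j)}$ once we condition on the past. In the simplest version, the net displacement of the block is $g_{(j)}$ or $g_{(j)}^{-1}$, and the inverse of a uniform element of a ball is again uniform in that ball. With this in hand, Lemma~\ref{l:prod} applies exactly as before. Writing $g_n=\xi_1\xi_2\xi_3$, with $\xi_2$ the (possibly inverted) block element, one gets $\Prob(g_n=g)\le 2/\#B^G_{n_j}(e)$. Blocks of length $2n_j$ give $n_j\ge n/8$, and the factor $2$ is absorbed by a constant change, or by the doubling of the radius, using the polynomial growth of $G$; this yields \eqref{eq:p-n-upper-new}.

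The main obstacle is making the choice between the forward and the reflected path depend on $g_{(j)}$ and on the current point in a way that still preserves near-uniformity of the block element conditionally on the past. If the choice is a deterministic function of $(g_{(j)},y)$, the map $g_{(j)}\mapsto$ (block element) must be at most $2$-to-$1$. I would therefore first try the rule ``use $g_{(j)}$ if its word keeps the point $\ge y$, else use $g_{(j)}^{-1}$''. The property to verify is that, by monotonicity, for any word at least one of the two directions read from the minimum keeps the point above $y$. Proving this, possibly after replacing $y$ by the running minimum inside a doubled block, is the step I expect to require the most care.
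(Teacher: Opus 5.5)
Your proposal has a genuine gap exactly where you flag the main obstacle. You give no valid mechanism that keeps the tracked point above $x_0$ while keeping the block element (near-)uniform, and the concrete candidates you offer fail.

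\begin{itemize}
\item \textbf{The $g_{(j)}$ versus $g_{(j)}^{-1}$ rule.} This rests on the claim that, for any word, one of the two directions stays above $y$. The claim is false already for $\Z^2=\langle a,b\rangle$ acting by $a(x)=x+1$, $b(x)=x-1$ (conjugate $\R$ into $(0,1)$ and fix $1$). For the geodesic word $ab$ (with $b$ applied first), the trajectory of $y$ is $y-1,y$. For its inverse word $b^{-1}a^{-1}$, the trajectory is again $y-1,y$. Both dip below $y$, which at the start equals $x_0$.
\item \textbf{The cyclic rotation read from the minimum.} The fact that ``a walk read from its minimum stays above its start'' is the cycle lemma, which needs translation-invariant increments. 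For an action by homeomorphisms the increments depend on the point. Monotonicity only compares the rotated trajectory from $y$ with the original one from $z=p_{i^*}(y)$, so it gives no lower bound by $x_0$, since $z<x_0$ in the case at hand.
\item \textbf{Loss of uniformity under rotation.} The rotated word represents $p_{i^*}\,g_{(j)}\,p_{i^*}^{-1}$, and the conjugator depends on $g_{(j)}$. So once $G$ is non-abelian there is no reason for $g_{(j)}\mapsto(\text{block element})$ to have bounded fibres.
\item \textbf{Secondary issues.} Once the block element depends on $y=\xi_3(x_0)$, and the next partial block depends on the point reached after $\xi_2$, the factors are no longer independent. So Lemma~\ref{l:prod} does not apply ``exactly as before''. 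Also, absorbing a factor $2$ by polynomial growth is not available for an arbitrary finitely generated $G$, which is what the statement covers.
\end{itemize}

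The missing idea is that the ``escort'' alone does the whole job, provided it is the deterministic greedy path. Set $x_k=\max_{f\in\mF}f(x_{k-1})$ and $F_n=f_{u_n}\cdots f_{u_1}$.
\begin{itemize}
\item An induction using only monotonicity gives $F_n(x_0)\ge h(x_0)$ for every word $h$ of length $\le n$ (pad with $e$).
\item Since $\mF$ is symmetric, this yields $g(F_n(x))\ge x_0$ for all $x\ge x_0$ and all words $g$ of length $\le n$; this is Lemma~\ref{l:F-g}.
\end{itemize}
The paper then runs $\cdots g_{(j)}F_{n_j}\cdots g_{(2)}F_2\,g_{(1)}F_1$ with the random blocks \emph{unmodified}. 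Along $F_{n_j}$ the point only moves right, because $e\in\mF$. Along $g_{(j)}$ every prefix has length $\le n_j$, so the point never drops below $x_0$. Because the $F_{n_j}$ are deterministic, the $g_{(j)}$ stay independent and uniform in their balls, and Lemma~\ref{l:prod} applies verbatim. The doubled block length is exactly what turns $n/4$ into $n/8$. No reflection, case distinction or fibre counting is needed.
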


Proposition~\ref{p:process-new} implies an analogue of Proposition~\ref{l:alpha-conv}:
\begin{lemma}\label{l:alpha-conv-new}
In the assumptions above, for every $\alpha>[\growth(G/K)]^{-1}$ there exists a path 
\[
g_0=e, \quad g_{n}=f_{w_n}g_{n-1}, \quad n=1,2,\dots
\]
in the group $G$, such that
\begin{equation}\label{eq:g-sum-new}
\sum_n |g_n(I_c)|^{\alpha} < \infty.
\end{equation}
and that for some compact $M^\prime\subset M$ one has $g_n(I_c)\subset M^\prime$.
\end{lemma}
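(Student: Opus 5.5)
We derive Lemma~\ref{l:alpha-conv-new} from Proposition~\ref{p:process-new} in the same way as Proposition~\ref{l:alpha-conv} was derived from Proposition~\ref{p:process}. The only new issue is the compactness of the intermediate images. First choose the point $x_0$: take $x_0$ to be the left endpoint of $I_c$. (If $I_c$ is not compact in $(0,1]$, i.e.\ $0$ is its left endpoint, choose a support interval bounded away from $0$ instead; since $c$ is central and nontrivial, its support intervals are permuted by $G$.) Let $g_n$ be the random process given by Proposition~\ref{p:process-new} for this $x_0$. Then every $g_n(I_c)$ is an interval of the orbit of $I_c$ whose left endpoint $g_n(x_0)\ge x_0$. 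Hence $g_n(I_c)\subset M':=[x_0,1]$, which is compact in $M$. This holds for \emph{every} trajectory, not just almost surely, so it is compatible with discarding a null set later.

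Next we bound the expectation of $\sum_n |g_n(I_c)|^\alpha$ as in the compact case. For an orbit interval $I'=\overline g(I_c)$, formula~\eqref{eq:I-Ip} still applies. Restricting to $\overline g\in B_n^G(e)$ and $h\in B_{2n}^G(e)\cap K$, the bound~\eqref{eq:p-n-upper-new} gives
\[
\Prob(I_n=I')\le \frac{\#(B_{2n}^G(e)\cap K)}{\# B_{n/8}^G(e)}\le \frac{C_3'}{n^{d}},
\]
using the relative growth asymptotics of Theorem~\ref{t.B-G}. The intervals of the orbit have pairwise disjoint interiors inside $(0,1]$, so their lengths sum to at most $1$. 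We can therefore apply Lemma~\ref{l:Holder} with $\psi(I')=|I'|$ and $p_0=C_3'n^{-d}$, which gives $\E|I_n|^\alpha\le C n^{-\alpha d}$. Since $\alpha d>1$, the series $\sum_n \E|I_n|^\alpha$ converges. Hence almost every trajectory satisfies~\eqref{eq:g-sum-new}, and we fix any such trajectory. The case $d=1$ can be handled directly: iterate an element projecting to a generator of $G/K\simeq\Z$, or its inverse, choosing the direction that pushes $x_0$ to the right. The random argument also covers this case as long as Proposition~\ref{p:process-new} is available.

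The main obstacle is therefore Proposition~\ref{p:process-new} itself. Within the lemma, the only delicate point is guaranteeing that $x_0$ can be taken as the left endpoint of a support interval that lies in the compact part, together with the observation that the constraint $g_n(x_0)\ge x_0$ confines all images to $[x_0,1]$. If one only knows $g_n(x_0)\ge x_0$ for some other $x_0$, one would instead pick $x_0\le \min I_c$ with $x_0>0$. Monotonicity of the action then gives $g_n(\min I_c)\ge g_n(x_0)\ge x_0$, and the same conclusion holds with $M'=[x_0,1]$.
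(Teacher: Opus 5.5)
Your proposal is essentially the paper's proof. With $x_0$ the left endpoint of $I_c$, Proposition~\ref{p:process-new} gives $g_n(I_c)\subset[g_n(x_0),1]\subset[x_0,1]=:M'$ for every trajectory, and the expectation argument of Proposition~\ref{l:alpha-conv} then goes through unchanged with $n/8$ in place of $n/4$.

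Two side remarks are slightly off but harmless:
\begin{enumerate}
\item The case where $0$ is the left endpoint of $I_c$ is vacuous. For $I_c=(0,a]$, each $g(I_c)=(0,g(a)]$ overlaps $I_c$, so $g(I_c)=I_c$. Hence $K=G$ and $\growth(G/K)=0$, which is excluded. Replacing $I_c$ by another support interval, which would change $K$, is therefore neither needed nor legitimate.
\item The random argument does not by itself cover $d=1$. There $\alpha>1$, Lemma~\ref{l:Holder} no longer applies, and the available bound is only $\E|I_n|^{\alpha}\le C/n$, which is not summable. Your direct argument is the one needed in that case.
\end{enumerate}
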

\begin{proof}[Deducing Lemma~\ref{l:alpha-conv-new} from Proposition~\ref{p:process-new}]
Take $x_0$ to be the left end point of $I_c$. Then for the process $g_n$, provided by Proposition~\ref{p:process-new}, one has 
\[
g_n(I_c)\subset [g_n(x_0),1]\subset [x_0,1]=:M^\prime.
\]
The rest of the proof of Proposition~\ref{l:alpha-conv} can be repeated word-for-word.
\end{proof}

Hence, once proven, Proposition~\ref{p:process-new} implies Proposition~\ref{thm upper bound} for the case of half-open interval $M=(0,1]$, as Lemma~\ref{l:alpha-conv-new} brings us in the assumptions of the Generalised Kopell's Lemma (in its form in Remark~\ref{r:Kopell-half-open}).

\begin{proof}[Proof of Proposition~\ref{p:process-new}]
Consider the most-moving to the right sequence of iterates of the initial point $x_0$: let the sequences $(u_n,x_n)$ be defined by 
\[
x_n:=f_{u_{n}}(x_{n-1}) = \max_{f\in \mF} f(x_{n-1}), \quad n=1,2\dots.
\]
Let also 
\[
F_n:=f_{u_n}\dots f_{u_1}
\]
be the corresponding composition of length~$n$.
\begin{lemma}\label{l:F-g}
For any composition $g$ of elements of $\mF$ of length at most $n$, and any $x\ge x_0$, one has 
\begin{equation}\label{eq:right}
g(F_n (x)) \ge x_0.   
\end{equation}
\end{lemma}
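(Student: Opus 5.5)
The plan is to reduce the claim to a statement about the single greedy orbit $(x_k)$, using only two facts: every element of $G$ acts as an increasing homeomorphism of $M$, and $\mF$ is closed under inverses and contains~$e$.

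First, the sequence $x_k$ is nondecreasing. Since $e\in\mF$, we have
\[
x_k=\max_{f\in\mF}f(x_{k-1})\ge e(x_{k-1})=x_{k-1}.
\]
In particular $x_k\ge x_0$ for all $k\ge 0$.

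Second, $F_n$ is increasing, so for $x\ge x_0$ we get $F_n(x)\ge F_n(x_0)=x_n$. By monotonicity of $g$ it then suffices to prove $g(x_n)\ge x_0$ for every composition $g$ of at most $n$ elements of~$\mF$.

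The key one-step inequality is
\[
f(x_k)\ge x_{k-1}\qquad \text{for all } f\in\mF,\ k\ge 1 .
\]
To see it, note that $f^{-1}\in\mF$, so $f^{-1}(x_{k-1})\le\max_{f'\in\mF}f'(x_{k-1})=x_k$. Applying the increasing map $f$ to both sides gives $x_{k-1}\le f(x_k)$.

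Now write $g=h_l\cdots h_1$ with $h_i\in\mF$ and $l\le n$. We show by induction on $i$ that
\[
h_i\cdots h_1(x_n)\ge x_{n-i}.
\]
The case $i=1$ is the one-step inequality. For the inductive step, suppose $h_{i-1}\cdots h_1(x_n)\ge x_{n-i+1}$. Applying the increasing map $h_i$ and then the one-step inequality gives
\[
h_i\cdots h_1(x_n)\ge h_i(x_{n-i+1})\ge x_{n-i}.
\]
Taking $i=l$ yields $g(x_n)\ge x_{n-l}\ge x_0$, where the last inequality uses the first step. This is exactly~\eqref{eq:right}.

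The only point needing care is the one-step inequality, since it is where the symmetry of $\mF$ under inversion is used. Everything else is bookkeeping with monotone maps, so I do not expect a real obstacle.
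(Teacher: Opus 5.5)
Your proof is correct and is essentially the paper's argument: both reduce to $x=x_0$ by monotonicity and combine the greedy maximality of $f_{u_k}$ with the symmetry of $\mF$ and the fact that $e\in\mF$. The only difference is bookkeeping. The paper inverts $g$ as a whole and shows $x_n\ge w(x_0)$ for every word $w$ of length exactly $n$, padding shorter words with $e$; you instead invert letter by letter via $f(x_k)\ge x_{k-1}$ and handle shorter words by using that $(x_k)$ is nondecreasing.
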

\begin{proof}
Due to the monotonicity it suffices to establish~\eqref{eq:right} for $x=x_0$. Applying $g^{-1}$ to both sides, we see that it is equivalent to
\begin{equation}\label{eq:F-g}
F_n(x_0)\ge g^{-1}(x_0);
\end{equation}
as $\mF$ contains $e$ (so we can assume the length to be exactly equal to~$n$) and is preserved by passing to the inverses, we can write $g^{-1}=f_{u'_n}\dots f_{u'_1}$, thus transforming~\eqref{eq:F-g} into
\[
x_n=f_{u_n}\dots f_{u_1} (x_0) \ge f_{u'_n}\dots f_{u'_1} (x_0),
\]
This inequality can be proven by induction, using the definition of $f_{u_n}$:
\[
f_{u'_n}(f_{u'_{n-1}}\dots f_{u'_1} (x_0)) \le  f_{u'_n}(f_{u_{n-1}}\dots f_{u_1} (x_0))=f_{u'_n} (x_{n-1}) \le f_{u_n}(x_{n-1}) =x_n.
\]
\end{proof}

We are now ready to construct the desired sequence $w_n$. To do so, define random variables (independent for different $j$)
\[
g_{(j)}= f_{w_{j,s_j}}\dots f_{w_{j,1}}
\]
in the same way as in the proof of Proposition~\ref{p:process}. Now, we take
\[
w_{2s_j+m}:=u_{m}, \quad w_{s_j+n_j+m}:=w_{j,m}, \quad m=1,\dots, n_j;
\]
in other words, the path writes as 
\[
\dots \underbrace{f_{w_{j,n_j}} \dots f_{w_{j,1}}}_{g_{(j)}}\underbrace{f_{u_{n_j}} \dots f_{u_{1}}}_{F_{n_j}}\dots \underbrace{f_{w_{2,2}} f_{w_{2,1}}}_{g_{(2)}}\underbrace{f_{u_{2}}f_{u_{1}}}_{F_2} \underbrace{f_{w_{1,1}} }_{g_{(1)}} \underbrace{f_{u_{1}}}_{F_1}.
\]
Then the occurrences of $F_{n_j}$ ensure that the condition $g_n(x_0)\ge x_0$ is fulfilled: $g_{(j)}(F_{n_j}(x_0))\ge x_0$ due to Lemma~\ref{l:F-g}.

At the same time, for any length $n$, take $j$ such that $2s_j\le n<2s_{j+1}$. Then, in the same way as in the proof of Proposition~\ref{l:alpha-conv}, the product $g_{(j)}$ is an independent part of the product defining~$g_n$, thus
\[
\forall g\in G \quad \Prob(g_n=g)\le \frac{1}{\# B^G_{n_j}(e)},
\]
At the same time, we get $n_j = \frac{s_j+1}{2} \ge \frac{n}{8}$, thus concluding the proof.
\end{proof}

\subsection{Critical case}\label{s:critical} 

The idea of the proof of this proposition goes back to \cite{na-crit}. Namely, for the critical value $\alpha_0:=[\growth(G/\textup{Stab}(I_c))]^{-1}$ we no longer have the \emph{uniform} bound~\eqref{eq:g-sum} from Proposition~\ref{l:alpha-conv} to apply Generalized Kopell's Lemma~\ref{p:Kopell}. However, the upper bounds for the sum of $\alpha$'s powers of lengths during the first $n$ iterations can be shown to grow sufficiently slowly, so that when one repeats the steps of the proof of Proposition~\ref{p:Kopell}, the resulting estimate for the derivative $D(g^m)$ grows sublinearly, as~$o(m)$. On the other hand, there is a very general statement, stating that $\|D(g^m)\|=o(m)$ is impossible even for $C^1$-diffeomorphisms. This contradiction implies that the action of regularity $\alpha_0$ is also impossible.

Let us start with the lower bound for the growth of norm of derivatives. The following lemma was noticed by L.~Polterovich and M.~Sodin~\cite{ps}; for the reader's convenience we recall here its proof: 
\begin{lemma}[Polterovich-Sodin, {\cite{ps}}]\label{l:no-sublinear}
Let $J$ be a closed interval, $g\in\Diff_+^1(J)$, and assume $g\neq \id$. Then the comparison 
\[
\|Dg^m\|_{C^1(J)} = o(m)
\]
cannot take place.
\end{lemma}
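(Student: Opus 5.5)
The plan is to prove the Polterovich--Sodin lemma by contradiction, using a subadditivity argument for the maximum of $\log Dg^m$, the fact that fixed points of $g$ block orbits, and then a direct computation near a fixed point. (The norm $\|Dg^m\|_{C^1(J)}$ is read as the sup norm of the derivative $Dg^m$ on $J$.)

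\textbf{Step 1: reduce to a support interval.} Since $g\neq\id$, there is a support interval $[a,b]\subset J$ on which $g$ has no interior fixed points and $g(a)=a$, $g(b)=b$. Replacing $g$ by $g^{-1}$ if needed, we may assume $g(x)>x$ on $(a,b)$; the hypothesis is symmetric because of the relation below. Indeed, $Dg^{-m}(x)=1/Dg^m(g^{-m}x)$, so a sublinear bound on $\|Dg^{-m}\|$ corresponds to a lower bound on $\min Dg^m$. To avoid this issue, I will work with $g$ itself and use both ends of the interval.

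\textbf{Step 2: the mean value argument on a fundamental domain.} Fix $x_0\in(a,b)$ and the fundamental domain $[x_0,g(x_0)]$. The intervals $g^k([x_0,g(x_0)])$, $k\in\Z$, tile $(a,b)$, and their lengths tend to $0$ as $k\to\pm\infty$. By the mean value theorem,
\[
|g^m([x_0,g(x_0)])| = Dg^m(\xi_m)\,|[x_0,g(x_0)]|
\]
for some $\xi_m$. This only gives that $\inf Dg^m\to 0$, so I need more.

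\textbf{Step 3: the key estimate.} For any $y\in(a,b)$, compose the sum of lengths: the intervals $[g^k(y),g^{k+1}(y)]$ for $k=0,\dots,m-1$ are disjoint, so
\[
\sum_{k=0}^{m-1}\bigl(g^{k+1}(y)-g^k(y)\bigr)\le b-a .
\]
On the other hand, $g^{k+1}(y)-g^k(y)=\int_y^{g(y)} Dg^k$. Averaging over $k$ gives
\[
\int_y^{g(y)} \frac1m\sum_{k<m} Dg^k \le \frac{b-a}{m}.
\]
This already shows that the averages of $Dg^k$ are small. To obtain a lower bound on $\max Dg^m$, I will use the chain rule at the fixed point $a$ if $Dg(a)\neq1$: then $Dg^m(a)=Dg(a)^m$ grows exponentially either for $g$ or for $g^{-1}$, contradicting the hypothesis. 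So the main case is $Dg(a)=Dg(b)=1$.

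\textbf{Step 4: the main case $Dg(a)=Dg(b)=1$.} I expect this to be the main obstacle. The approach I would try first is the identity $g^m(g(x))=g(g^m(x))$, which gives
\[
Dg^m(g(x))\,Dg(x)=Dg(g^m x)\,Dg^m(x).
\]
Alternatively, I would use $\int_a^b Dg^m = b-a$ together with the fact that $Dg^m(x)\to 0$ for points $x$ in the interior away from $b$, since $g^m$ pushes points toward $b$ and contracts. The goal is to find $x_m$ with $Dg^m(x_m)\ge cm$.

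The idea is to pick $x$ close to $a$ such that $g^m(x)$ lies in a fixed fundamental domain $D$ near the middle of $(a,b)$. If $Dg^m$ stayed below $\varepsilon m$ on $J$, the preimage $g^{-m}(D)$ would have length at least $|D|/(\varepsilon m)$. However, the intervals $g^{-k}(D)$, $k\ge0$, are disjoint and contained in $[a,b]$. The map $g^{-1}$, restricted near $a$, is $C^1$ with derivative close to $1$, so the lengths $|g^{-k}(D)|$ vary slowly; I expect that they are eventually monotone-ish. Combining disjointness with the lower bound $|g^{-k}(D)|\ge |D|/(\varepsilon k)$ for all large $k$ would give
\[
\sum_k \frac{|D|}{\varepsilon k}=\infty,
\]
which contradicts $\sum_k |g^{-k}(D)|\le b-a$.

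This argument uses the hypothesis $\|Dg^k\|\le\varepsilon k$ for all large $k$, which is exactly the $o(m)$ assumption. So this harmonic-series argument is the plan, and it does not even need $Dg(a)=1$. Steps 2 and 3 then serve only as motivation.
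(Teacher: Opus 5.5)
Your Step 4 is a correct proof and uses the same mechanism as the paper: the backward iterates $g^{-k}(D)$ of a fundamental domain are disjoint, and the mean value theorem gives $|D|\le \|Dg^k\|\,|g^{-k}(D)|$. Where the paper picks $m\in[n,2n]$ with $|J'_m|\le |J|/n$ by pigeonhole, you use divergence of the harmonic series. Your summation even gives the stronger bound $\sum_{k\ge 0} 1/\|Dg^k\| \le |J|/|D|<\infty$, so $\|Dg^m\|=O(m)$ is impossible as well.

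The ``eventually monotone'' remark and Steps 2--3 are not needed and should be dropped. In particular, Step 3's claimed contradiction is unjustified: if $Dg(a)<1$, the exponential growth at $a$ concerns $g^{-1}$, not $g$.
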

\begin{proof}[Proof of Lemma~\ref{l:no-sublinear}]
Take any point $x_0\in J$ that is not fixed by $g$; without loss of generality we can assume that $g(x_0)>x_0$. Let 
\[
J'_0 := [x_0,g(x_0)]
\]
be the corresponding fundamental domain for the action of $g$, and let 
\[
J'_j:=g^{-j}(J'_0), \quad j=1,2,\dots
\]
be the corresponding preimages. Then for any $n$ the interiors of the intervals $J'_n,\dots,J'_{2n}$ are disjoint, hence 
\begin{equation}\label{eq:m-def}
\exists m, \quad n\le m \le 2n : \quad |J'_m| \le \frac{1}{n} |J|.
\end{equation}
As $g^m(J'_m)=J'_0$, by Lagrange Theorem we have 
\[
\exists x\in J'_m : \quad (Dg^m)(x) = \frac{|J'_0|}{|J'_m|},
\]
thus implying (together with the inequality $n\ge \frac{m}{2}$) the lower bound
\begin{equation}\label{eq:Dgm-lower}
\| Dg^m \|_{C^1(J)} \ge (Dg^m)(x) \ge \frac{|J'_0|}{\frac{1}{n} |J|} \ge \frac{|J'_0|}{2|J|} \cdot m.
\end{equation}
The sequence of iterations $m=m_n$, constructed by~\eqref{eq:m-def}, tends to infinity due to the inequality $m\ge n$. Hence the lower bound~\eqref{eq:Dgm-lower} takes place for an infinite number of $m$'s, thus concluding the proof.
\end{proof}

To establish the upper bound for the distortion, we replace Proposition~\ref{l:alpha-conv} by the following one: 
\begin{prop}\label{p:process-critical}
In the assumptions of Proposition~\ref{l:alpha-conv} for $d:=[\growth(G/K)]$ there exist constants $C_1, C_2$, such that for every~$N$ 
there exists a path 
\[
g_0=e, \quad g_{n}=f_{w_n}g_{n-1}, \quad n=1,2,\dots, N,
\]
in the group $G$, for which
\begin{equation}\label{eq:g-sum-N}
\sum_{n=0}^{N} |g_n(I_c)|^{1/d}  \le C_1 \log^{1-\frac{1}{d}} N
\end{equation}
and 
\begin{equation}\label{eq:length-N}
|g_N(I_c)| \le \frac{C_2}{N^d}.
\end{equation}
The same holds in the assumptions of Proposition~\ref{p:process-new}, with the additional conclusion that all $g_n(I_c)$ are to the right of~$I_c$.
\end{prop}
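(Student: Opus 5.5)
The plan is to reuse the random process of Proposition~\ref{p:process}, or of Proposition~\ref{p:process-new} in the half-open case, stopped at time $N$. I then estimate two expectations and pick one trajectory of the process that is simultaneously good for both. Write $\alpha=1/d$, $I_n=g_n(I_c)$ and $\psi(I')=|I'|$ on the orbit $G(I_c)$, so that $\sum_{I'}\psi(I')\le |M|$ by disjointness. From \eqref{eq:I-upper} we have, for every $n$ and every $I'$,
\[
p_n(I'):=\Prob(I_n=I')\le C_3 n^{-d}.
\]

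\textbf{The endpoint bound \eqref{eq:length-N}.} This is easy: $\E|I_N|=\sum_{I'}|I'|\,p_N(I')\le C_3N^{-d}|M|$. By Markov's inequality, $|I_N|\le 3C_3|M|N^{-d}$ with probability at least $2/3$.

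\textbf{The sum bound \eqref{eq:g-sum-N}.} I will aim for $\E\sum_{n\le N}|I_n|^{1/d}\le C\log^{1-1/d}N$. Markov's inequality then makes this bound hold (with the constant tripled) with probability at least $2/3$. Intersecting the two events gives a trajectory satisfying both \eqref{eq:g-sum-N} and \eqref{eq:length-N}. For the half-open case, Proposition~\ref{p:process-new} already forces $g_n(x_0)\ge x_0$ on every trajectory, and we take $x_0$ to be the left endpoint of $I_c$, as in Lemma~\ref{l:alpha-conv-new}.

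\textbf{Estimating the expectation.} Applying Lemma~\ref{l:Holder} at each time $n$ separately gives only $\E|I_n|^{1/d}\le C/n$, hence a bound of order $\log N$. To gain the exponent $1-1/d$, I will apply H\"older once, jointly over all times. Set $q(I'):=\sum_{n\le N}p_n(I')$, the expected number of visits to $I'$. Then
\[
\E\sum_{n\le N}|I_n|^{\alpha}=\sum_{I'}\psi(I')^{\alpha}q(I')\le |M|^{\alpha}\,\|q\|_{\ell^{1/(1-\alpha)}}.
\]
The task becomes bounding the $\ell^{d/(d-1)}$-norm of the occupation function $q$. For this I would group times into dyadic blocks $[2^k,2^{k+1})$, $k\le \log_2 N$. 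On a block, the occupation $q_k$ has mass $2^k$ and sup at most $C2^{k(1-d)}$. By the Bass--Guivarc'h type estimates (Theorem~\ref{t.B-G}), $q_k$ is essentially supported in the Schreier ball of radius $C2^k$ around $I_c$, which contains about $2^{kd}$ vertices. Consequently, a vertex at Schreier distance $r$ from $I_c$ satisfies $q(I')\lesssim\sum_{2^k\gtrsim r}2^{k(1-d)}\lesssim r^{1-d}$, with the sum taken over at most $\log N$ blocks.

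\textbf{Main obstacle.} This last step is where I expect the difficulty. Feeding the pointwise profile $q\lesssim r^{1-d}$ into the $\ell^{d/(d-1)}$-norm, layer by layer in $r$ with about $r^{d-1}$ vertices per layer, gives
\[
\|q\|_{d/(d-1)}^{d/(d-1)}\lesssim\sum_{r\le N}r^{d-1}r^{-d}\approx \log N.
\]
Hence $\|q\|_{d/(d-1)}\lesssim \log^{(d-1)/d}N=\log^{1-1/d}N$, which is exactly the target. The delicate part is justifying the profile $q(I')\lesssim r^{1-d}$ in terms of the Schreier distance $r$. Proposition~\ref{p:process} only gives a uniform sup bound at each time, not the localisation needed here. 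It must be complemented by the fact that $g_n\in B_n^G(e)$, so that $I_n$ lies in the Schreier ball of radius $n$, together with the growth asymptotics \eqref{eq:asymp-G/H}. If the crude Minkowski estimate over blocks loses a logarithm, I would instead modify the process: take each block to be uniformly distributed on the ball of radius $2^k$ in $G/K$, via uniform sampling in $G$. This makes the profile of $q$ explicit.
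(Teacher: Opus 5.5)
Your proposal is essentially the paper's proof. It uses the same process stopped at time $N$, applies Markov's inequality to both $\sum_{n\le N}|g_n(I_c)|^{1/d}$ and $|g_N(I_c)|$, and pairs $(|J|^{1/d})_J\in\ell_d$ against the occupation vector $q$ in $\ell_{d/(d-1)}$ by H\"older's inequality.

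The step you call delicate is in fact immediate, and no modification of the process is needed. Since $g_n\in B^G_n(e)$ deterministically, $p_n(J)=0$ whenever $n<\dist(J,I_c)=:r$. Hence $q(J)\le\sum_{n\ge r}C_3n^{-d}\lesssim r^{1-d}$ for $r\ge1$, and $q(I_c)$ is bounded for $d\ge 2$. This is exactly how the paper argues, and your dyadic-in-time version already proves the same bound.

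The one point to repair is the layer count. You may not assume that the sphere $\{J:\dist(J,I_c)=r\}$ has about $r^{d-1}$ elements: such sphere estimates are not known, and this is precisely the gap in Parkhe's argument pointed out in the paper. Group instead into dyadic shells $2^r\le\dist(J,I_c)<2^{r+1}$ and use only the ball bound $\#B^{G/K}_{2^{r+1}}(e)\lesssim 2^{rd}$ from Theorem~\ref{t.B-G}. Each shell then contributes $O(1)$ to $\sum_J q(J)^{d/(d-1)}$, there are $O(\log N)$ shells, and $\|q\|_{d/(d-1)}\lesssim\log^{1-1/d}N$ follows.
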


Let us deduce Proposition~\ref{p:critical} from these estimates (postponing the proof of Proposition~\ref{p:process-critical} to the end of this section):

\begin{proof}[Proof of Proposition~\ref{p:critical}]
Take and fix $g=\phi(c)$, and let $I_c$ be its support interval. 
For any $m\in \N$ and any $x\in I_c$, let us estimate $Dg^m(x)$; the estimate that we will obtain would be of the form~$o(m)$, thus providing a contradiction with the conclusion of Lemma~\ref{l:no-sublinear}.

Let~$y=g^m(x)$; then, for any map $F\in \phi(G)$, we have
\begin{equation}\label{eq:Dgm-critical}
Dg^m (x) = Dg^m |_{F_n(x)} \cdot \frac{DF_n (x)}{DF_n(y)}.
\end{equation}
Take $F$ to be the composition provided by Proposition~\ref{p:process-critical} for $N=m$. Then, the inequality~\eqref{eq:g-sum-N}, together with the distortion control~\eqref{eq:kappa-subadditive},, implies an upper estimate for the second factor in the right hand side of~\eqref{eq:Dgm-critical}:  
\begin{equation}\label{eq:DF-quotient}
\frac{DF_n (x)}{DF_n(y)} \le \exp \left(C_{\kappa} \sum_{j=0}^{m-1} |g_n(I_c)|^{1/d}\right) \le  \exp \left(C_{\kappa} \cdot C_1 \log^{1-\frac{1}{d}} m \right)  = o(m).
\end{equation}

On the other hand, the estimate~\eqref{eq:length-N} for the image length $|g_m(I_c)|$, together with the fact that~$g^m$ preserves $g_m(I_c)$ and hence its log-derivative vanishes at some point, imply (due to the same distortion arguments) that
\begin{equation}\label{eq:Dgm-image}
\|Dg^m\|_{C(g_m(I_c))} \le \exp \left(C_{\kappa} \cdot m\cdot |g_m(I_c)|^{1/d}\right) \le \exp\left(C_{\kappa} C_2^{1/d}\right) = O(1).
\end{equation}
Joining the two estimates~\eqref{eq:DF-quotient} and~\eqref{eq:Dgm-image}, we get from~\eqref{eq:Dgm-critical} that $\|Dg^m\|_{C(I_c)} = o(m)$, obtaining the desired contradiction.
\end{proof}

\begin{proof}[Proof of Proposition~\ref{p:process-critical}]
Take $N$ steps of the random process provided by Proposition~\ref{p:process}, and let $\xi_1$, $\xi_2$ be the random variables giving the values in the left hand sides of~\eqref{eq:g-sum-N} and~\eqref{eq:length-N}:
\[
\xi_1:= \sum_{n=0}^{N} |g_n(I_c)|^{1/d}, \quad \xi_2:=|g_N(I_c)|.
\]
Then it will suffice to show that the \emph{expectations} of these random variables admit the same kind of bounds. Indeed, by Markov inequality, for a positive random variable $\xi$ one has $\Prob (\omega : \, \xi(\omega)>3 \E\xi)\le \frac{1}{3}$. Hence, if 
\begin{equation}\label{eq:xi-expect}
\E \xi_1 \le C'_1 \log^{1-\frac{1}{d}} N, \quad \E \xi_2 \le \frac{C'_2}{N^d},
\end{equation}
then for at least some $\omega$ (actually, for the set of probability at least $1-\frac{1}{3}-\frac{1}{3}=\frac{1}{3}$) one has 
\[
\xi_1 \le 3C'_1 \log^{1-\frac{1}{d}} N, \quad \xi_2 \le \frac{3C'_2}{N^d},
\]
thus establishing the desired~\eqref{eq:g-sum-N} and~\eqref{eq:length-N} with $C_1=3C'_1, \, C_2=3C'_2$.

Now, the second part of~\eqref{eq:xi-expect} is easy to obtain. Indeed, identifying $G/K=G/\Stab(I_c)$ with the $G$-orbit of $I_c$,
\[
\Orb_{I_c}:= \{ f(I_c) \, : f\in G\},
\]
we get 
\[
\E \xi_2 = \sum_{J \in \Orb_{I_c}} |I| \cdot \Prob(g_N(I_c)=J).
\]
The right hand side is a scalar product of the ``length'' vector $L:=(|J|)_{J\in \Orb_{I_c}}  \in \ell_1(\Orb_{I_c})$ and the probability vector 
$p_N:=(\Prob(g_N(I_c)=J))_{J\in \Orb_{I_c}} \in \ell_{\infty}(\Orb_{I_c})$, we get 

\[
\E \xi_2 = \langle L, p_N \rangle \le \|L\|_{1} \cdot \|p_N\|_{\infty} \le |I| \cdot \frac{1}{\# B^{G/K}_{N/4}(e)} \le \frac{C'_1}{N^d},
\]
where the last estimate is due to the polynomial growth of~$G/K$. This completes the proof the second part of~\eqref{eq:xi-expect} (leading to~\eqref{eq:length-N} via the Markov inequality).

In order to estimate the expectation of~$\xi_1$, let us again write it in terms of a scalar product: 
\begin{equation}\label{eq:expect-xi1}
\E \xi_1 = \sum_{n=0}^N \E |g_n(I_c)|^{1/d} = \langle \wL, \wtp_N \rangle,
\end{equation}
where 
\begin{equation}\label{eq:wL-wp}
\wL:= (|J|^{1/d})_{J\in \Orb_{I_c}} \in \ell_{d}(\Orb_{I_c}), \quad  \wtp_N := \left( \sum_{n=0}^N \Prob(g_n(I_c)=J) \right)_{J\in \Orb_{I_c}}.
\end{equation}
We are now going to apply H\"older inequality to estimate the product in the right hand side of~\eqref{eq:expect-xi1},
\[
\langle \wL, \wtp_N \rangle \le \|\wL\|_{\ell_{d}(\Orb_{I_c})} \cdot  \| \wtp_N\|_{\ell_{q}(\Orb_{I_c})} \le  |I|^{1/d} \cdot  \| \wtp_N\|_{\ell_{q}(\Orb_{I_c})},
\]
where $q:=\frac{d}{d-1}$ so that $\frac{1}{d}+\frac{1}{q}=1$; to do so, we will establish an upper bound for the norm~$\| \wtp_N\|_{\ell_{q}(\Orb_{I_c})}$. 

To do so, let us first estimate the values~$\wtp_N(J)$ at a given interval of the orbit $J\in \Orb_{I_c}$. 

Namely, let $n_0:=\dist(I_c,J)$; then the summands in the definition~\eqref{eq:wL-wp} of $\wtp_N(J)$ with $n<n_0$ vanish, hence
\[
\wtp_N(J) = \sum_{n=n_0}^N \Prob(g_n(I_c)=J) \le \sum_{n=n_0}^N \frac{\const}{n^d} \le \frac{C_2'}{n_0^{d-1}}.
\]
Now, 
\begin{multline}\label{eq:sum-q}
\sum_{J\in \Orb_{I_c}} \wtp_N(J)^{q} \le \sum_{r=0}^{[\log_2 N]} \sum_{2^{r} \le \dist(J,I_c) < 2^{r+1}} \frac{(C_2')^q}{\dist(J,I_c)^d} 
\\
\le \sum_{r=0}^{[\log_2 N]} \frac{(C_2')^q}{(2^r)^d} \cdot \# B^{G/K}_{2^{r+1}}(e) \le \const  \sum_{r=0}^{[\log_2 N]} \frac{(2\cdot 2^r)^d}{(2^r)^d} \le C_3 \log N
\end{multline}
for some constant~$C_3$. The bound~\eqref{eq:sum-q} implies 
\[
\|\wtp_N\|_{\ell_{q}(\Orb_{I_c})} \le \const \cdot (\log N)^{1/q} = \const \cdot \log^{1-\frac{1}{d}} N,
\]
thus completing the estimate of $\E\xi_1$ in~\eqref{eq:xi-expect}. As~\eqref{eq:xi-expect} is now fully established, an application of the Markov inequality completes the proof of the proposition.

Using the conclusions of Lemma~\ref{l:alpha-conv-new} instead of Proposition~\ref{l:alpha-conv}, by repeating word-for-word the above arguments, one concludes the proof for the case of the action on~$M=(0,1]$. 
\end{proof}

\section{Lower bounds: construction of the actions}\label{s:lower}

\subsection{Pixton--Tsuboi strategy}\label{s:P-T}

In order to construct actions of regularity $C^{1+\alpha}$, we are going to use the Pixton--Tsuboi strategy (see \cite{pixton} and \cite{tsuboi}). For the reader's convenience, we recall here its ideas. Start by assuming that we are given an (orientation-preserving) action of the group $G$ on a countable ordered set~$(X,\prec)$ (``the set of intervals''), where each point has a closest neighbour from above and from below. The latter condition can be rephrased by saying that $X$ is of the form~$X=X_0\times \Z$, where $X_0$ is a fully ordered set, and the order on $X$ is lexicographic: $(v,j)\prec (v',j')$ if and only if $v\prec v'$ or $v=v'$ and $j<j'$. Note that in this case the action of $G$ on $X$ descends to an action on $X_0$, and the original action is of the form
\begin{equation}\label{eq:l-g}
g(v,j)=(g(v), j+ \lh(g,v)),
\end{equation}
where $\lh(\cdot,\cdot)$ satisfies the cocycle relation 
\[
\lh(g'g,v) = \lh(g',g(v)) + \lh (g,v).
\]

Choose a strictly positive function $L\in \ell_1(X)$, and replace each point $(v,j)\in X$ by an interval~$I_{v,j}$ of length $L(v,j)$, so that these intervals are adjacent to each other: the left endpoint of $I_{v,j}$ is the point $x_{v,j}:=\sum_{(u,k)\prec (v,j)} L(u,k)$. Then, the group is naturally acting on the closure of the set of the boundary points $\bd:=\overline{\{x_{(v,j)} \mid (v,j)\in X\}}$. 

Next step is to extend this action inside the intervals, in a way that the resulting action on the interval 
\[
I_M:=[\inf \bd, \sup \bd] = [0, \sum_{u,k} L(u,k)]
\] 
would be sufficiently regular; see Fig.~\ref{f:intervals}.

\begin{figure}[!h!]
{\center
\includegraphics[width=15cm]{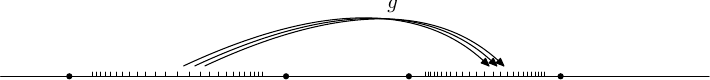} 

}
\caption{A family of intervals, permuted by the action}\label{f:intervals}
\end{figure}

\subsection{Tsuboi family}\label{s:T-family}

The extension inside~$I_M$ is made with help of the \emph{Tsuboi family} of interval diffeomorphisms. Namely, there exists the following family of maps.
\begin{prop}[Tsuboi, \cite{tsuboi}]\label{p:Tsuboi}
There is a way to associate to any four intervals $I',I,J',J$, where $I'$ is adjacent to the left to $I$, and $J'$ is adjacent to the left to $J$, a diffeomorphism $\varphi_{I',I}^{J',J}:I\to J$, such that the following conditions hold: 
\begin{enumerate}
\item\label{i:derivative}  the derivatives of $\varphi_{I',I}^{J',J}$ at the left and right endpoints of $I$ are respectively equal to~$\frac{|J'|}{|I'|}$ and to~$\frac{|J|}{|I|}$ (see Fig.~\ref{f:Tsuboi}, left).
\item\label{i:composition} for any three pairs of adjacent intervals $I',I;J',J;K',K$ one has 
\begin{equation}\label{eq:comp}
\varphi_{J',J}^{K',K}\circ \varphi_{I',I}^{J',J}=\varphi_{I',I}^{K',K}.
\end{equation}
\item\label{i:bound-M} For every constant $R>1$ there is a constant $C_R$ such that if 
\begin{equation}\label{eq:1/2}
\frac{1}{R}\le \frac{|J|}{|I|} \le R, \quad \frac{1}{R}\le \frac{|J'|}{|I'|} \le R, 
\end{equation}
then one has 
\begin{equation}\label{eq:M}
\forall x,y\in I \quad \left|\log D \varphi_{I',I}^{J',J}(x)-\log D \varphi_{I',I}^{J',J}(y)\right| \le C_R \cdot \left| \log \frac{|J|}{|I|} - \log \frac{|J'|}{|I'|} \right| \cdot \frac{|x-y|}{|I|}.
\end{equation}
\end{enumerate}
\end{prop}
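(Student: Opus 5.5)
The plan is to construct the Tsuboi family explicitly, by normalising to a reference model. First I reduce to a canonical situation. Write $I=[a,b]$, $I'=[a-|I'|,a]$, and let $A_I:I\to[0,1]$ be the affine rescaling. Conjugating by these rescalings, the map $\varphi_{I',I}^{J',J}$ should be $A_J^{-1}\circ \psi_{\lambda}\circ A_I$, where $\psi_\lambda:[0,1]\to[0,1]$ is a diffeomorphism that depends only on the single parameter
\[
\lambda=\log\frac{|J'|/|I'|}{|J|/|I|}=\log\frac{|J'|}{|I'|}-\log\frac{|J|}{|I|}.
\]
The affine factors contribute the ratio $|J|/|I|$ to the derivative. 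So condition~\ref{i:derivative} becomes $D\psi_\lambda(0)=e^{\lambda}$ and $D\psi_\lambda(1)=1$. The composition rule~\eqref{eq:comp} becomes the requirement that $\lambda\mapsto\psi_\lambda$ be a one-parameter group, $\psi_{\mu}\circ\psi_\lambda=\psi_{\lambda+\mu}$, because the parameters add along compositions (the log-ratios telescope).

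So I will take $\psi_\lambda$ to be the time-$\lambda$ flow of a fixed $C^\infty$ vector field $X$ on $[0,1]$. This vector field will have $X(0)=X(1)=0$, $X'(0)=1$, $X'(1)=0$, and $X>0$ inside; for instance $X(x)=x(1-x)^2$. Then $D\psi_\lambda(0)=e^{\lambda X'(0)}=e^\lambda$ and $D\psi_\lambda(1)=e^{\lambda X'(1)}=1$. The group property holds automatically, which gives~\ref{i:derivative} and~\ref{i:composition}. For the composition check I note that the rescalings $A_J$ cancel in the middle, and that the parameters satisfy $\lambda(I\to K)=\lambda(I\to J)+\lambda(J\to K)$ because $\log\frac{|K'|}{|I'|}=\log\frac{|K'|}{|J'|}+\log\frac{|J'|}{|I'|}$, and similarly for the unprimed intervals.

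For~\ref{i:bound-M}, affine maps do not change $\log D$ up to an additive constant. Hence the left-hand side of~\eqref{eq:M} equals $|\log D\psi_\lambda(s)-\log D\psi_\lambda(t)|$ with $s-t=(x-y)/|I|$. It therefore suffices to show $\|(\log D\psi_\lambda)'\|_{C[0,1]}\le C_R|\lambda|$ for $|\lambda|\le 2\log R$, which is exactly what the assumption~\eqref{eq:1/2} guarantees. I expect this to be the only real step. I would differentiate along the flow: $\log D\psi_\lambda(x)=\int_0^\lambda X'(\psi_\tau(x))\,d\tau$. Hence
\[
\frac{d}{dx}\log D\psi_\lambda(x)=\int_0^\lambda X''(\psi_\tau(x))\,D\psi_\tau(x)\,d\tau .
\]
Here $\psi_\tau$ is smooth jointly in $(\tau,x)$, so $D\psi_\tau$ is bounded by $e^{|\tau|\,\|X'\|}$. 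The integral is then bounded by $|\lambda|\,\|X''\|_\infty e^{2\log R\,\|X'\|}$. This yields~\eqref{eq:M} with $C_R=\|X''\|_\infty R^{2\|X'\|_\infty}$.
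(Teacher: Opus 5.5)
Your proposal is correct and is essentially the paper's construction: the same vector field $x(1-x)^2$, the same time parameter $\log\frac{|J'|}{|I'|}-\log\frac{|J|}{|I|}$, and the same affine conjugation, which cancels under composition. Your variational formula $\frac{d}{dx}\log D\psi_\lambda(x)=\int_0^\lambda X''(\psi_\tau(x))\,D\psi_\tau(x)\,d\tau$ also makes explicit the factor $|\lambda|$ required in~\eqref{eq:M}; the paper's sketch leaves this factor implicit, stating only that the bound on $t$ gives a bound on the Lipschitz constant.
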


\begin{figure}[!h!]
{\center
\includegraphics[height=5cm]{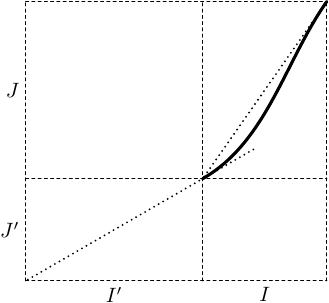} \qquad \includegraphics[width=8cm]{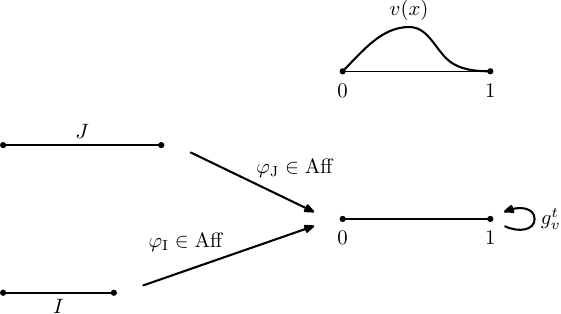}

}
\caption{An map $\varphi_{I',I}^{J',J}$ from the Tsuboi family: its graph over $I$ and tangent lines at the endpoints (left), and its construction (right)}\label{f:Tsuboi}
\end{figure}

\begin{proof}[Sketch of the proof]
For the reader's convenience, we recall here a sketch of Tsuboi's construction (the details can be found in \cite{tsuboi}). Namely, one fixes on the interval $[0,1]$ a smooth vector field $V(x)$, vanishing at the endpoints, such that $x=0$ is its hyperbolic fixed point with the Lyapunov exponent equal to~$1$, while $x=1$ is parabolic; for instance, one can take $V(x)=x(1-x)^2 \frac{\partial}{\partial x}$. Then, every $\varphi_{I',I}^{J',J}$ is constructed as follows. For every interval $I_0$, let $\varphi_{I_0}:I_0\to [0,1]$ be the (uniquely defined) orientation-preserving affine bijection. Then, one lets 
\[
\varphi_{I',I}^{J',J}:= \varphi_{J}^{-1} \circ \exp(tV) \circ \varphi_I,
\]
where $\exp(tV)$ is the time $t$ shift along the vector field $V$, and the time $t$ is 
chosen in a way that ensures the condition for the derivative at the left endpoint (see Fig.~\ref{f:Tsuboi}, right). It is easy to see that the time $t$ is uniquely defined by such a condition, namely, one has 
\[
t=t(I',I,J',J) = \log \left(\frac{|J'|/|I'|}{|J|/|I|} \right) = \log \frac{|J'|}{|I'|} - \log \frac{|J|}{|I|}.
\]
Then, conclusion~\ref{i:derivative} is guaranteed by the construction, and the conclusion~\ref{i:composition} follows easily from it (the composition in the right hand side of~\eqref{eq:comp} is again of the same type, as the affine maps $\varphi_J$ and $\varphi_J^{-1}$ in the middle cancel out). 

The assumption~\eqref{eq:1/2} then provides a uniform upper bound for the time~$t$. Now, this upper bound implies an upper bound for the Lipschitz constant of $\log D(\exp(tV))$ on $[0,1]$; on the other hand, as $\varphi_I,\varphi_J$ are affine, one has 
\[
\left.\left(\log D \varphi_{I',I}^{J',J}\right)\right|_x^y=  \left.\left(\log D\exp(tV) \right)\right|_{\varphi_I(x)}^{\varphi_I(y)} \quad \text{and} \quad \left| \varphi_I(x)-\varphi_I(y) \right| = \frac{|x-y|}{|I|}.
\]
In turn, this implies the estimate~\eqref{eq:M}.
\end{proof}

\begin{remark}\label{r:C-R}
As the map $\varphi=\varphi_{I',I}^{J',J}$ has derivatives $\frac{|J|}{|I|}$ and $\frac{|J'|}{|I'|}$ at the endpoints of~$I$, for any $\alpha\le 1$ the $\alpha$-H\"older constant of its log-derivative $\log D\varphi$ cannot be smaller than $\frac{1}{|I|^{\alpha}} \left| \log \frac{|J|}{|I|} - \log \frac{|J'|}{|I'|} \right|$. It easily follows from~\eqref{eq:M} that (in the assumptions of conclusion~\ref{i:bound-M}) this estimate is exact up to a constant factor. Namely, for the same constant $C_R$ as in~\eqref{eq:1/2}, one has
\begin{equation}\label{eq:M-alpha}
\forall x,y\in I\quad \left|\log D \varphi_{I',I}^{J',J}(x)-\log D \varphi_{I',I}^{J',J}(y)\right| \le C_R \cdot \frac{\left| \log \frac{|J|}{|I|} - \log \frac{|J'|}{|I'|} \right|}{|I|^{\alpha}} \cdot |x-y|^{\alpha}.
\end{equation}
\end{remark}

Given this family of maps, one can extend the above action of the group $G$ from the set $\bd$ to all the intervals $I_{v,j}$. Namely, one chooses
\begin{equation}\label{eq:g-I-v-j}
g|_{I_{v,j}} = \varphi_{I_{v,j-1},I_{v,j}}^{I_{u,k-1},I_{u,k}},
\end{equation}
where $g(v,j)=(u,k)$ in the sense of the action of $G$ on $X=X_0\times \Z$. Note that every such restriction $g|_{I_{v,j}}$ is a $C^{\infty}$-diffeomorphism on the image, and the derivatives at the left and at the right at each endpoint $x_{v,j}$ (separating $I_{v,j-1}$ and $I_{v,j}$) coincide due to the condition~\ref{i:derivative} of the Tsuboi family: both are equal to $\frac{|I_{u,k-1}|}{|I_{v,j-1}|}$. In particular, the constructed maps are $C^{1}$-diffeomorphisms of \emph{open} intervals 
\begin{equation}\label{eq:I-v}
I_v:=\bigcup_{j\in\Z} I_{v,j}. 
\end{equation}
The next part is to choose the lengths $L(v,j)=|I_{v,j}|$ in such a way that these maps would be of class $C^{1+\alpha}$.

\subsection{Choice of lengths and estimates for the H\"older constants}\label{s:Choice-and-control}

\subsubsection{Choice of lengths and the structure of $I_v$}

A choice of lengths, related to the previous works (see for instance \cite{int4}, \cite{ER}, \cite{JNR}, \cite{kkr}), is the following one. For every $\eps\in (0,1)$ and $A\ge 1$, define the sequence of lengths 
\begin{equation}\label{eq:L-A-eps}
L_{A,\eps}(j):= \frac{1}{(A^2 +j^2)^{\frac{1+\eps}{2}}}.
\end{equation}
Now, assume that $\eps\in (0,1)$ is fixed, and one chooses $A_v\ge 1$ for every $v\in X_0$ (in other words, the map $A_{\bullet}:X_0\to [1,+\infty)$). Then, let the interval lengths be defined by
\begin{equation}\label{eq:L-choice-A-v}
L(v,j):=L_{A_v,\eps}(j).
\end{equation}

The choice~\eqref{eq:L-A-eps} is a slightly optimized version of the choices in the cited papers. We could have also used $\frac{1}{A^{1+\eps} + j^{1+\eps}}$, but it would lead to slightly worse estimates.

Two immediate remarks on the choice~\eqref{eq:L-A-eps} is that the intervals $I_{v,j}\subset I_v$ with $|j|\le 3A_v$ have lengths comparable to $A_v^{-(1+\eps)}$, while those with $|j|\ge A_v$ have lengths comparable to $j^{-(1+\eps)}$: 
\begin{equation}\label{eq:length-core}
\frac{1}{6A_v^{1+\eps}} \le L_{A,\eps}(j) \le \frac{1}{A_v^{1+\eps}}, \quad |j|\le 3A_v,
\end{equation}
\begin{equation}\label{eq:length-outer}
\frac{1}{6j_v^{1+\eps}} \le L_{A,\eps}(j) \le \frac{1}{j^{1+\eps}}, \quad |j|> A_v.
\end{equation}
In further, we will refer to the intervals $I_{v,j}\subset I_v$ with $|j|\le A_v$ as the \emph{core part}, and with $|j|> A_v$ as \emph{the outer part} (see Fig.~\ref{fig:lvj}).
Finally, note that the intervals $I_{v,j}$ and $I_{v,j+1}$ have lengths with the quotient close to~$1$; we will be using it in the estimates below.

\begin{figure}[h!]
\begin{center}
\includegraphics{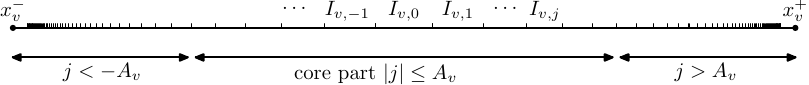}
\end{center}
\caption{The interval $I_v$ for given $A_v$.}\label{fig:lvj}
\end{figure}

\subsubsection{Reducing to study of individual intervals $I_v$}

A first remark is that for any two such intervals $I_v$, $I_{u}$ and any $l$, the map $g:I_v\to I_u$, defined by its restrictions on every $I_{v,j}$ by~\eqref{eq:g-I-v-j}, has its derivative converging to~$1$ at the endpoints of $I_v$.
\begin{lemma}
Assume that the lengths $L(v,j)$ are given by~\eqref{eq:L-A-eps} and~\eqref{eq:L-choice-A-v}, and for some $u=g(v)$ and $l_0=l(v,g)$ the action of $g$ on $I_v$ is defined by~\eqref{eq:g-I-v-j} and~\eqref{eq:l-g}. Then the restriction $g:I_v\to I_u$ is a $C^1$-diffeomorphism, and one has 
\begin{equation}\label{eq:D-limit}
\lim_{x\to \xvl} D g(x) = \lim_{x\to \xvr} D g(x) = 1,
\end{equation}
where $I_v=[\xvl,\xvr]$.
\end{lemma}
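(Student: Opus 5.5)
The plan is to compute the derivative of $g$ explicitly at the division points $x_{v,j}$, and then control it inside each interval $I_{v,j}$ using the Tsuboi estimate. Write $g(v,j)=(u,j+l_0)$, where $l_0=l(g,v)$ does not depend on $j$ because of \eqref{eq:l-g}. By conclusion~\ref{i:derivative} of Proposition~\ref{p:Tsuboi}, the one-sided derivatives of $g$ at $x_{v,j}$ coincide: the map on $I_{v,j-1}$ has right-endpoint derivative $|I_{u,j-1+l_0}|/|I_{v,j-1}|$, and the map on $I_{v,j}$ has left-endpoint derivative given by the same ratio. So $g$ is $C^1$ on the open interval $I_v$, with
\[
Dg(x_{v,j})=\frac{L_{A_u,\eps}(j-1+l_0)}{L_{A_v,\eps}(j-1)}.
\]
This is as already noted after \eqref{eq:g-I-v-j}. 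Also, $g$ extends continuously to $\bar I_v\to\bar I_u$, sending endpoints to endpoints.

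Next I show that this ratio tends to $1$ as $j\to\pm\infty$, with $A_u,A_v,l_0$ fixed. By \eqref{eq:L-A-eps},
\[
\frac{L_{A_u,\eps}(j-1+l_0)}{L_{A_v,\eps}(j-1)}=\left(\frac{A_v^2+(j-1)^2}{A_u^2+(j-1+l_0)^2}\right)^{\frac{1+\eps}{2}},
\]
and both numerator and denominator are $j^2(1+o(1))$. In the same way, the consecutive ratios $|I_{v,j}|/|I_{v,j-1}|$ and $|I_{u,k}|/|I_{u,k-1}|$ tend to $1$. Hence, for $|j|$ large, the two quantities $\log\frac{|J|}{|I|}$ and $\log\frac{|J'|}{|I'|}$ entering \eqref{eq:M} are both close to $0$, and the hypothesis \eqref{eq:1/2} holds with, say, $R=2$.

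The key step is to pass from the points $x_{v,j}$ to all $x\in I_{v,j}$. By \eqref{eq:M} with $R=2$, for every $x\in I_{v,j}$,
\[
|\log Dg(x)-\log Dg(x_{v,j})|\le C_2\left|\log\frac{|I_{u,j+l_0}|}{|I_{v,j}|}-\log\frac{|I_{u,j-1+l_0}|}{|I_{v,j-1}|}\right|,
\]
since $|x-y|/|I|\le1$. Both logarithms on the right tend to $0$ by the previous step. Therefore $\sup_{I_{v,j}}|\log Dg|\to0$ as $j\to\pm\infty$. As $x\to x_v^\pm$ corresponds to $j\to\pm\infty$, this gives \eqref{eq:D-limit}.

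Finally, setting $Dg=1$ at the endpoints makes $g$ a $C^1$-diffeomorphism of the closed interval $\bar I_v$ onto $\bar I_u$. The one-sided derivative at an endpoint equals $1$ by the mean value theorem combined with the limit just proved; the inverse is handled the same way, or because $Dg$ stays bounded away from $0$. The only point needing care is the uniformity in \eqref{eq:M}: I need $R$ fixed, which holds for all but finitely many $j$, and that suffices for a limit statement. I do not expect any real obstacle.
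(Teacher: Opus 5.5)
Your proposal is correct and takes the same route as the paper. You compute $Dg(x_{v,j})$ as a ratio of interval lengths via conclusion~\ref{i:derivative} of Proposition~\ref{p:Tsuboi}, show that this ratio tends to $1$ as $j\to\pm\infty$, and use the estimate~\eqref{eq:M} (with a fixed $R$, valid for all but finitely many $j$) to extend the closeness of $Dg$ to $1$ over the whole intervals $I_{v,j}$. Your explicit ratio $\bigl(\frac{A_v^2+(j-1)^2}{A_u^2+(j-1+l_0)^2}\bigr)^{\frac{1+\eps}{2}}$ is the correctly written one (the paper's displayed limit has $l^2$ where $j^2$ should be), and your remarks on the uniformity of $R$ and on the one-sided derivatives at $\xvl,\xvr$ just make explicit what the paper leaves implicit.
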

\begin{proof}
One first checks that the the derivatives~\eqref{eq:D-limit} tend to~$1$ at the points $x=x_{v,j}$. Indeed, $Dg(x_{v,j}) = \frac{L_{u,j+l}}{L_{v,j}}$, and 
\begin{equation}
\lim_{j\to \pm \infty} \frac{L_{g(v),j+\lh}}{L_{v,j}} =\lim_{j\to \pm \infty} \left(\frac{A_{v}^{2}+ l^{2}}{A_{g(v)}^{2}+(j+\lh)^{2}}\right)^{\frac{1+\eps}{2}} =1.
\end{equation}
Then, the estimate~\eqref{eq:M} allows to extend closeness of the derivatives to~$1$ inside each interval $I_{v,j}=[x_{v,j},x_{v,j+1}]$.
\end{proof}
\begin{corollary}\label{c:split}
The action of $g\in G$, defined using the lengths~\eqref{eq:L-A-eps}, is of class $C^{1+\alpha}$ if and only if it is of class $C^{1+\alpha}$ on each interval $I_v$, and the corresponding $\alpha$-H\"older constants of $\log Dg|_{I_v}$ are uniformly bounded.
\end{corollary}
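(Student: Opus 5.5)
The statement to prove is Corollary~\ref{c:split}. The ``only if'' direction is immediate: the restriction of a $C^{1+\alpha}$-diffeomorphism $g$ of $I_M$ to any $I_v$ is $C^{1+\alpha}$. Its $\alpha$-H\"older constant of $\log Dg$ is bounded by $\kappa_\alpha(g,I_M)$, uniformly in $v$.

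For the ``if'' direction, let $K:=\sup_v \kappa_\alpha(g,I_v)<\infty$. I first extend $g$ to the whole interval $I_M$ by continuity. On the complement $\bd\setminus\bigcup_v \operatorname{int} I_v$ the map is already defined, and I set $Dg=1$ there. The preceding lemma, \eqref{eq:D-limit}, gives $Dg\to1$ at both endpoints of every $I_v$. The lengths of the $I_v$ are summable, so only finitely many $I_v$ have length exceeding any given $\eta>0$. Combining this with the uniform H\"older bound, $\sup_{I_v}|\log Dg| \le K|I_v|^\alpha$ (using that $\log Dg$ tends to $0$ at an endpoint). Hence $\log Dg$ is uniformly small on short $I_v$, and the glued function $\log Dg$ is continuous. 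It follows that $g$ is $C^1$ on $I_M$, since a homeomorphism differentiable off a closed set, with derivative extending continuously, is $C^1$. One may need to verify the derivative at accumulation points of $\bd$ via the mean value theorem over small intervals; this works because $\log Dg\to0$ there.

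The main step is the global H\"older estimate. Take $x<y$ in $I_M$. If both lie in the same $I_v$, the bound is $K|x-y|^\alpha$. Otherwise, let $x'$ be the right endpoint of the $I_v$ containing $x$, or $x'=x$ if $x$ lies in no open $I_v$. Define $y'$ symmetrically as the left endpoint of the interval containing $y$. Then $\log Dg(x')=\log Dg(y')=0$, so
\[
|\log Dg(x)-\log Dg(y)|\le |\log Dg(x)-\log Dg(x')|+|\log Dg(y')-\log Dg(y)|\le K\bigl(|x-x'|^\alpha+|y-y'|^\alpha\bigr)\le 2K|x-y|^\alpha .
\]
Hence $\kappa_\alpha(g,I_M)\le 2K$. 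The same argument applies to $g^{-1}$, or one obtains it from $\log Dg^{-1}=-\log Dg\circ g^{-1}$ with $g^{-1}$ Lipschitz. I expect the only delicate point to be the $C^1$ gluing at accumulation points of $\bd$ described above. The H\"older inequality itself is the simple triangle-inequality computation just given.
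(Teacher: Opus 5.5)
Your continuity argument for the glued derivative is the same as the paper's and is fine, and so is your global H\"older estimate through the endpoints $x'\le y'$ where $\log Dg=0$. There is a genuine gap in the $C^1$ step. The principle you invoke is false: ``a homeomorphism differentiable off a closed set, whose derivative extends continuously, is $C^1$''. Let $c$ be the Cantor function. Then $h(x)=x+c(x)$ is a homeomorphism of $[0,1]$ onto $[0,2]$ with $\log Dh=0$ off the Cantor set. Yet $h(1)-h(0)=2\neq 1=\int_0^1 Dh$, so $h$ is not $C^1$. Your fallback does not repair this. Any small interval around an accumulation point of $\bd$ meets $E:=I_M\setminus\bigcup_v\operatorname{int}I_v$, where differentiability is exactly what is in question, so the mean value theorem is not available there. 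Moreover, the property ``$\log Dg\to 0$ near $E$'' holds verbatim in the example above, so it cannot be what makes the argument work.

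What is missing is the length bookkeeping of the construction; this is what the paper's appeal to the union of the interiors of the $I_{v,j}$ having full measure is for. Since $\sum_v|I_v|=\sum_{u,k}L(u,k)=|I_M|$, the set $E$ is Lebesgue-null. Since $g$ maps each $\operatorname{int}I_v$ onto $\operatorname{int}I_{g(v)}$ and $v\mapsto g(v)$ is a bijection, $g(E)=E$, so $g(E)$ is null as well. You need this image statement too: full measure of the domain alone does not exclude a singular part, as the example shows. Let $\psi$ equal $Dg$ on $\bigcup_v\operatorname{int}I_v$ and $1$ on $E$. Then for $x<y$,
\[
g(y)-g(x)=\bigl|g(E\cap[x,y])\bigr|+\sum_v\bigl|g(I_v\cap[x,y])\bigr|=\sum_v\int_{I_v\cap[x,y]}Dg=\int_x^y\psi .
\]
You have already shown that $\psi$ is continuous, so the fundamental theorem of calculus gives $g\in C^1$ with $Dg=\psi$. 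With this step inserted in place of the general principle, the rest of your proof goes through as written.
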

\begin{proof}
``Only if'' is immediate, let us prove the ``if'' part. Note that $g$ is then a $C^1$-diffeomorphism everywhere. Indeed, extend $Dg$ from the interiors of intervals $I_{v,j}$ by~$1$ everywhere outside. This extension defines a function~$\psi:I_M\to \R$ that is \emph{continuous} due to the uniform bound on the H\"older constant (as on small intervals $Dg$ is thus uniformly close to~$1$). The union of interiors of $I_{v,j}$ is of full measure inside~$I_M$, hence $g$ is a continuous function such that $Dg$ almost everywhere coincides with a continuous function~$\psi$. Hence,~$g$ is of class~$C^1$, and $Dg=\psi$ everywhere in~$I_M$.

Finally, given two points $x<y$ that belong to two different (and thus having disjoint interiors) intervals $I_v=[\xvl,\xvr]$, $I_u=[x_u^-,x_u^+]$, one has
\[
\left. \left(\log Dg \right) \right|_x^y  = \left. \left(\log Dg \right) \right|_x^{\xvr}  + \left. \left(\log Dg \right) \right|_{\xvr}^{x_u^-}  + \left. \left(\log Dg \right) \right|_{x_y^-}^y.
\]
Assuming that $\alpha$-H\"older constants of $\log Dg$ on each $I_v$ do not exceed a certain $C'_{\alpha}$, and using $Dg(\xvr)=Dg(x_u^-)=1$, one then has 
\begin{multline}\label{eq:splitting}
\left| \log Dg (x) - \log Dg (y) \right| \le \left| \log Dg (x) - \log Dg (\xvl) \right| + \left| \log Dg (x_u^-) - \log Dg (y) \right|
\\ \le C'_{\alpha} \, |x-\xvr|^{\alpha} + C'_{\alpha} \, |x_{u}^- -y|^{\alpha} \le 2 C'_{\alpha} \, |x-y|^{\alpha},
\end{multline}
thus showing that $\log Dg$ is $\alpha$-H\"older with the H\"older constant at most~$2 C'_{\alpha}$.
\end{proof}

\subsubsection{No-shift $l=0$ case}

Due to Corollary~\ref{c:split}, an application of this technique with these choices of lengths boils down to ensuring the uniform $\alpha$-H\"older of the restrictions $g:I_v\to I_{g(v)}$. Let us see when it can be stated. We will start with the case of $l=0$, so that $g(v,j)=(u,j)$.

Prior to writing \emph{upper} bounds for the H\"older constants, let us observe how these constants can be bounded \emph{from below}. To do so, note that due to the Lagrange theorem, the map $g:I_v\to I_{u}$ has an intermediate point $\xi\in I_{v}$ such that 
\begin{equation}\label{eq:xi-A-Ap}
Dg(\xi)= \frac{|I_{u}|}{|I_{v}|}.
\end{equation}
As $Dg=1$ at the endpoints of $I_v$, there are two points in $I_v$, between which the increment of $\log Dg$ is equal to $\log \frac{|I_{u}|}{|I_{v}|}$. Hence the $\alpha$-H\"older constant of $\log Dg$ cannot be smaller than 
\begin{equation}\label{eq:alpha-lower}
\frac{\left| \log |I_v| - \log |I_u| \right|}{|I_v|^{\alpha}} = \frac{\left| \log L(A_v) - \log L(A_u) \right|}{L(A_v)^{\alpha}},
\end{equation}
where we introduce the function 
\[
L(A):= \sum_{j=-\infty}^{\infty} L_{(A,\eps)}(j) = \sum_{j=-\infty}^{\infty} \frac{1}{(A^{2}+j^{2})^{\frac{1+\eps}{2}}}
\]
that defines the lengths of the interval $I_v$ constructed using the corresponding~$A_v$.

The following proposition states that the lower bound~\eqref{eq:alpha-lower} for the $\alpha$-regularity is exact up to a constant factor.

\begin{prop}\label{p:L-A-Ap} 
For every $\eps\in (0,1)$ there exists $C=C(\eps)$ such that the following holds. Let $g:I_v\to I_u$ be given by~\eqref{eq:g-I-v-j} and~\eqref{eq:l-g} with $l=0$, where we make the choice of lengths~\eqref{eq:L-A-eps}--\eqref{eq:L-choice-A-v}). Assume also that the lengths $L(A_v)$ and~$L(A_u)$ differ at most two times. Then the $\alpha$-H\"older constant of $g:I_v\to I_u$ satisfies an upper bound
\[
\kappa_{\alpha}(g,I_v) =\| \log Dg \|_{C^{\alpha}(I_v)} \le  C_{\eps}\cdot \frac{\left| \log \frac{L(A_v)}{L(A_u)} \right|}{L(A_v)^{\alpha} }
\]
\end{prop}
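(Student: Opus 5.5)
The plan is to reduce to local estimates on each piece $I_{v,j}$ via the Tsuboi bound~\eqref{eq:M-alpha}, and then to glue these estimates together as in~\eqref{eq:splitting}. Write $A=A_v$, $A'=A_u$, and without loss of generality $A\le A'$, so that $L(A)\ge L(A')$. Put $\delta:=\left|\log\frac{L(A_v)}{L(A_u)}\right|\le\log 2$. First I would record the asymptotics $L(A)\asymp A^{-\eps}$, which follows by comparing the sum with an integral and splitting into $|j|\le A$ and $|j|>A$. Then $\delta$ is comparable to $\log\frac{A'}{A}$, and the hypothesis that the lengths differ at most two times gives $A'\le C A$. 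In particular, the ratios $|I_{u,j}|/|I_{v,j}|$ are bounded above and below uniformly in $j$, so~\eqref{eq:1/2} holds with some fixed $R$ and Tsuboi's constant $C_R$ is available.

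The key quantity is $\Delta_j:=\log\frac{L_{A'}(j)}{L_A(j)}=\frac{1+\eps}{2}\log\frac{A'^2+j^2}{A^2+j^2}$, which is the value of $\log Dg$ at $x_{v,j}$. I would estimate it in two regimes. For $|j|\le A$ one has $|\Delta_j|\lesssim\log\frac{A'}{A}\lesssim\delta$. For $|j|>A$ one has $|\Delta_j|\lesssim\frac{A'^2-A^2}{j^2}\lesssim\frac{A^2}{j^2}\,\delta$. The increments satisfy the same kind of bounds: $|\Delta_{j+1}-\Delta_j|\lesssim\delta\min(1,|j|/A^2)$ in the core and $|\Delta_{j+1}-\Delta_j|\lesssim\delta A^2/|j|^3$ in the outer part. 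By~\eqref{eq:M-alpha}, on $I_{v,j}$ the $\alpha$-H\"older constant of $\log Dg$ is at most $C_R|\Delta_{j+1}-\Delta_j|/|I_{v,j}|^{\alpha}$. In the core, $|I_{v,j}|\asymp A^{-1-\eps}$, which gives a bound of order $\delta\,A^{(1+\eps)\alpha}\cdot A^{-1}$. In the outer part, $|I_{v,j}|\asymp |j|^{-1-\eps}$, which gives $\delta A^2|j|^{(1+\eps)\alpha-3}\lesssim\delta A^{(1+\eps)\alpha-1}$. Since $L(A)^{-\alpha}\asymp A^{\eps\alpha}$ and $\alpha\le1$, both are at most $C\delta/L(A_v)^{\alpha}$.

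To pass from these local bounds to the global H\"older bound on $I_v$, I would take $x<y$ in $I_v$, lying in pieces $I_{v,j_1}$ and $I_{v,j_2}$. If $j_1=j_2$ or the pieces are adjacent, the local bounds suffice. Otherwise I would write
\[
\left|\log Dg(x)-\log Dg(y)\right|\le |\text{local terms}|+|\Delta_{j_2}-\Delta_{j_1+1}|,
\]
and bound the last term by $\min\bigl(\sum_{j_1<j<j_2}|\Delta_{j+1}-\Delta_j|,\ 2\max|\Delta_j|\bigr)$. I need this to be at most $C\delta\,|x-y|^{\alpha}/L(A)^{\alpha}$, where $|x-y|\ge\sum_{j_1<j<j_2}|I_{v,j}|$. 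When $|x-y|\ge c\,L(A)$ the trivial bound $2\max|\Delta_j|\lesssim\delta$ already works. When $|x-y|$ is smaller, I would use the sum of increments. Interpolating with $\min(a,b)\le a^{\alpha}b^{1-\alpha}$ reduces the claim to a bound of the form $\sum|\Delta_{j+1}-\Delta_j|\lesssim\delta\,\frac{|x-y|}{L(A)}\cdot\max(\dots)$. In the outer region this should follow from monotonicity and decay of $\Delta_j$: the variation of $\Delta$ over the window $[j_1,j_2]$ is at most $\delta A^2/j_1^2$, while the covered length is about $(j_2-j_1)j_1^{-1-\eps}$ for short windows and about $j_1^{-\eps}$ for long ones.

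I expect this gluing across many intervals in the outer part to be the main obstacle, namely checking that $\delta A^2 j^{-2}\lesssim \delta\,(\text{length})^{\alpha}A^{\eps\alpha}$ uniformly in the window. I would first treat windows lying entirely in the core and windows lying entirely in the outer part separately, splitting the outer part dyadically in $j$. If the direct estimate fails, the fallback is to use the Lipschitz-type form~\eqref{eq:M} on each piece and sum the increments, using that $|I_{v,j}|$ changes by a bounded factor between neighbours.
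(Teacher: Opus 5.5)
Your plan is correct, and up to the final step it is the paper's argument. Both use Tsuboi's bound \eqref{eq:M-alpha} on each $I_{v,j}$, then a H\"older estimate for $\log Dg$ along the endpoints, organised as core, short outer windows and long outer windows. The paper handles long windows by comparing with $\xvr$, which amounts to your tail bound $\delta A^2/j_1^2$, and everything is converted into $\delta$ at the end.

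One justification must be repaired. The inequality $\log(A_u/A_v)\lesssim\delta$, which you use in every estimate, does not follow from $L(A)\asymp A^{-\eps}$. That asymptotic determines $\log L$ only up to an additive constant, while $\delta\le\log 2$. What you need is $-\frac{d}{dA}\log L(A)\asymp 1/A$, which is Lemma~\ref{l:A-to-L} (proved by differentiating the series termwise).

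Where you genuinely differ is the interpolation $\min(a,b)\le a^{\alpha}b^{1-\alpha}$. Carried one step further, it removes the obstacle you anticipate in the outer part. Your increment bounds give $|\Delta_j-\Delta_{j-1}|/|I_{v,j}|\lesssim\delta A^{\eps}$ uniformly in $j$:
in the core this is $(\delta/A)\cdot A^{1+\eps}$, and in the outer part it is $\delta A^{2}|j|^{\eps-2}\le\delta A^{\eps}$ since $|j|\ge A$.

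Now use the Lipschitz form \eqref{eq:M} instead of \eqref{eq:M-alpha}. Then $\log Dg$ is Lipschitz on all of $I_v$ with constant $\lesssim\delta A^{\eps}\asymp\delta/L(A_v)$, and it is bounded by $\lesssim\delta$. Hence $|\log Dg(x)-\log Dg(y)|\lesssim\min\bigl(\delta,\delta|x-y|/L(A_v)\bigr)\le\delta\,(|x-y|/L(A_v))^{\alpha}$. This needs no windows, no symmetry reduction and no appeal to Lemma~\ref{l:splitting}.

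This route is shorter than the paper's three-case analysis with the mixed derivative $\partial^2F/\partial A\,\partial j$. It also makes clear that $\alpha$ enters only in the last line, with the worst scale being $|x-y|\asymp L(A_v)$, consistent with \eqref{eq:alpha-lower}. The same two lines also give Proposition~\ref{p:sigma-v}: there $|\log D\sigma_v|\lesssim 1/A_v$ and the Lipschitz constant is $\lesssim A_v^{\eps-1}\asymp (1/A_v)/L(A_v)$.
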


We postpone its proof until the end of this section: together with the other technical estimates, it will be presented in Section~\ref{s:technical}.

\subsubsection{Same-interval $u=v$ case}

Now, a general map $g$ with $g(v,j)=(u,j+l)$ can be represented as $g=g_0 \circ \sigma_v^l$, where $\sigma_v : I_v\to I_v$ and $g_0:I_v\to I_u$ are maps that are defined by their actions
\[
\sigma_v(v,j)=(v,j+1), \quad g_0(v,j)=(u,j)
\]
on the set of endpoints~$X\cap I_v$. As Proposition~\ref{p:L-A-Ap} can be applied to estimate the H\"older constant of $\log Dg_0$, so the next natural estimate to obtain is the one for $\log D\sigma_v^l$.

Again, we start by proposing a lower bound. Note that the interval $I_v$ can be split into its ``core part'', formed by $I_{v,j}$ with $|j|\le A_v$, and two ``ourter parts'', where $j>A_v$ and $j<-A_v$ respectively (see Fig.~\ref{fig:lvj}). As we will see, these parts have lengths, comparable with the length of~$I_v$. Indeed, for the core part one has 
\begin{equation}\label{eq:A-core}
\sum_{-[A_v]}^{[A_v]} L_{A_v,\eps}(j) = \sum_{-[A_v]}^{[A_v]} \frac{1}{(A_v^{2}+j^{2})^{\frac{1+\eps}{2}}} 
\asymp (2 A_v+1) \cdot \frac{1}{A_v^{1+\eps}} \asymp \frac{1}{A_v^{\eps}};
\end{equation}
recall that $\asymp$ means that both sides differ from each other less than a constant number of times (with a constant that does not depend on $A\ge  1$, though it might depend on~$\eps$ or~$\alpha$). In the same way, for the outer parts, one has 
\begin{equation}\label{eq:A-outer}
\sum_{j>A_v}   \frac{1}{(A_v^{2}+j^{2})^{\frac{1+\eps}{2}}} 
\asymp \int_{A_v}^{\infty} \frac{dj}{j^{1+\eps}} \asymp \frac{1}{A_v^{\eps}}.
\end{equation}
In particular, adding~\eqref{eq:A-core} and \eqref{eq:A-outer}, we get $L(A)\asymp A_v^{-\eps}$ as well. 

Also, the lengths of adjacent intervals $I_{v,j}$ and $I_{v,j+1}$, are quite close to each other; moreover, the lengths of any two intervals $I_{v,j}$ from the core part differ at most two times. Let us now estimate the H\"older contant for $\log D \sigma_v^l$ from below. We will assume $l>0$ (the case $l<0$ is analogous), and $l<A_v$.

Under these assumptions, the endpoint $x_{v,0}$ is shifted by at least 
\[
|x_{v,l}-x_{v,0}| \ge \frac{| l |}{2A_v^{1+\eps}} \asymp \frac{|l|}{A_v} \cdot L(A_v).
\]
The length of the interval $[\xvl,x_{v,0}]$ gets changed by an application of $\sigma_v^l$ at least by a factor 
\[
\frac{|x_{v,l}-\xvl|}{|x_{v,0}-\xvl|}\ge 1 + \frac{|x_{v,l}-x_{v,0}|}{L(A_v)},
\]
thus implying a lower bound for $\log D\sigma_v^l$ at some intermediate point $\xi\in I_v$ by 
\[
\log D\sigma_v^l (\xi) \ge \const \, \frac{l}{A_v}.
\]
Hence (as the derivatives at the endpoints are equal to~$1$), the $\alpha$-H\"older constant of $\log D\sigma_v^l$ cannot be smaller than 
\[
\frac{\const \cdot l/A_v}{L(A_v)^{\alpha}}
\]

As before, the following proposition states that this lower bound is, up to a constant factor, exact. Namely, we have the following statement.

\begin{prop}\label{p:sigma-v}
Let $\alpha\in (0,1)$. Then the map $\sigma_v$ is a $C^{1+\alpha}$-diffeomorphism of~$I_v$. Moreover, there exists constants $C_\sigma=C_\sigma(\eps)$, $C'=C'(\eps)$ such that for every $l$, $|l|<A_v$, the following statements hold:
\[
\| \log D\sigma_v^l  \|_{C(I_v)} \le C_{\sigma}
\]
and the $\alpha$-H\"older constant of $\log D\sigma_v^l$ admits an upper bound
\begin{equation}\label{eq:sigma-v-l}
\kappa_{\alpha}(\sigma_v^l,I_v) = \|\log D\sigma_v^l\|_{C^{\alpha}(I_v)}\le C' \cdot \frac{l/A_v}{ L(A_v)^{\alpha}}.
\end{equation}
\end{prop}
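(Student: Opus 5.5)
We will work directly with the Tsuboi pieces. By~\eqref{eq:g-I-v-j}, on each $I_{v,j}$ the map $\sigma_v^l$ coincides with $\varphi_{I_{v,j-1},I_{v,j}}^{I_{v,j+l-1},I_{v,j+l}}$. Indeed, the composition rule~\eqref{eq:comp} shows that the $l$-th iterate is again a single Tsuboi map.

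\textbf{Derivatives at the endpoints.} Write $L_j:=L_{A_v,\eps}(j)$ and $\lambda_j:=\log L_{j+l}-\log L_j$. Then $D\sigma_v^l(x_{v,j})=L_{j+l-1}/L_{j-1}$, so $\log D\sigma_v^l(x_{v,j})=\lambda_{j-1}$. The map $t\mapsto -\frac{1+\eps}{2}\log(A^2+t^2)$ has derivative $-(1+\eps)t/(A^2+t^2)$, whose absolute value is at most $(1+\eps)\min(1/A,1/|t|)$. Integrating over $[j,j+l]$ with $|l|<A_v$ gives two bounds. First, $|\lambda_j|\le 2|l|/A_v$ always, which yields the $C^0$ bound $\|\log D\sigma_v^l\|\le C_\sigma$. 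Second, $|\lambda_j|\lesssim |l|/|j|$ for $|j|\ge 2A_v$. In the same way, the second derivative of the profile is bounded by $\const\cdot\min(A^{-2},t^{-2})$. This gives $|\lambda_j-\lambda_{j-1}|\lesssim |l|\min(A_v^{-2},j^{-2})$.

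\textbf{Hölder constant inside one interval.} The ratios in~\eqref{eq:1/2} are bounded by a uniform $R$, because consecutive lengths, and lengths shifted by $|l|<A_v$, are comparable. Hence Remark~\ref{r:C-R} applies, and on $I_{v,j}$ the $\alpha$-Hölder constant is at most $C_R|\lambda_j-\lambda_{j-1}|/L_j^{\alpha}$.

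\textbf{The estimate for a pair of points.} Take $x<y$ in $I_v$. There are two cases.

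\emph{Case 1: $x,y$ lie in the same or adjacent intervals.} The previous step gives the bound $|l|\min(A_v^{-2},j^{-2})L_j^{-\alpha}|x-y|^\alpha$. Using $L_j\asymp\min(A_v,|j|)^{-(1+\eps)}$, the worst case is $|j|\lesssim A_v$. There the coefficient is $|l|A_v^{-2}A_v^{\alpha(1+\eps)}$, and we need it to be at most $(|l|/A_v)A_v^{\alpha\eps}$, using $L(A_v)\asymp A_v^{-\eps}$. This holds since $A_v^{\alpha-1}\le 1$. For $|j|>A_v$ the coefficient is $|l|\,|j|^{-2+\alpha(1+\eps)}$, which decreases in $|j|$ because $\alpha(1+\eps)<2$, so it is dominated by its value at $|j|=A_v$.

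\emph{Case 2: $x,y$ are far apart.} Bound the increment by $|\log D\sigma_v^l(x)-\log D\sigma_v^l(y)|\le\sum|\lambda_j-\lambda_{j-1}|$ over the intervals between $x$ and $y$, plus the within-interval contributions at the two ends. It is simpler, however, to compare with the coarse bound $2\max|\lambda_j|\lesssim |l|/A_v$ when $|x-y|\gtrsim L(A_v)$. When $|x-y|$ is smaller, we interpolate. Let $N$ be the number of intervals crossed. In the core, $|x-y|\asymp N A_v^{-1-\eps}$ and the increment is $\lesssim N|l|A_v^{-2}$. We need $N|l|A_v^{-2}\lesssim (|l|/A_v)A_v^{\alpha\eps}(NA_v^{-1-\eps})^{\alpha}$, that is $N^{1-\alpha}\lesssim A_v^{1-\alpha}$, which holds because $N\lesssim A_v$. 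In the outer part near index $j$, spanning $N\le|j|$ intervals, the increment is $\lesssim \min(N|l|/j^2,\,|l|/|j|)$ and $|x-y|\gtrsim N|j|^{-1-\eps}$. An analogous power count then closes the estimate; beyond that, the coarse bound $|l|/|j|$ suffices.

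\textbf{Main obstacle.} The main difficulty is this bookkeeping across the core/outer transition and the choice between the summed bound and the coarse bound. We would organise it by dyadic ranges of $|j|$, using~\eqref{eq:length-core} and~\eqref{eq:length-outer}. The $C^{1+\alpha}$-regularity of $\sigma_v$ itself is the case $l=1$.
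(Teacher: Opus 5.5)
Your proposal is correct. Its case analysis is essentially the paper's; the real difference is how the iterate $\sigma_v^l$ is handled.

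\textbf{The paper's route.} It first proves the $C^0$ bound for all $|l|<A_v$. It then proves the H\"older bound only for $l=1$: it reduces to endpoints via Tsuboi's estimate and Lemma~\ref{l:splitting}, and treats the profile $\tq(j)=F(A_v,j)-F(A_v,j+1)$ in three cases (core; outer with $j'\le 2j$; outer with $j'\ge 2j$, compared against $\xvr$). Finally it recovers the factor $l$ from the chain rule \eqref{eq:kappa-composition}:
$\kappa_\alpha(\sigma_v^l)\le\sum_{r=1}^{l}\kappa_\alpha(\sigma_v)\|D\sigma_v^{r-1}\|^\alpha\le e^{\alpha C_\sigma}\,l\,\kappa_\alpha(\sigma_v)$.

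\textbf{Your route.} You use \eqref{eq:comp} to view $\sigma_v^l|_{I_{v,j}}$ as a single Tsuboi map. You then get the factor $|l|$ directly from the mean value theorem, via $|\lambda_j|\lesssim |l|/\max(A_v,|j|)$ and $|\lambda_j-\lambda_{j-1}|\lesssim |l|\min(A_v^{-2},j^{-2})$.

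\textbf{What each buys.} The paper's reduction keeps the delicate case analysis free of $l$, at the price of needing the uniform $C^0$ bound on every intermediate iterate. Your route avoids the chain rule but carries the shift through every estimate. That is where you correctly use $|l|<A_v$: to keep the ratios in \eqref{eq:1/2} uniformly bounded, and to keep the second derivative of the profile on $[j-1,j+l]$ at most $\lesssim j^{-2}$ when $|j|\ge 2A_v$. Your regimes map onto the paper's. The coarse bound (for $|x-y|\gtrsim L(A_v)$, or spans of more than $|j|$ intervals) plays the role of the paper's comparison with $\xvr$. Your summed second differences reproduce its core and ``$j'\le 2j$'' cases.

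\textbf{Points to fix when writing it up.}
\begin{itemize}
\item The length asymptotics is $L_j\asymp\max(A_v,|j|)^{-(1+\eps)}$, not $\min$. Your computations already use the correct form.
\item Write out the ``analogous power count.'' For a span of $N\le|j|$ outer intervals the quotient is $\lesssim |l|N^{1-\alpha}|j|^{-2+\alpha(1+\eps)}\le|l|\,|j|^{-1+\alpha\eps}$. For longer spans it is $\lesssim(|l|/|j|)\,|j|^{\alpha\eps}$. Both are maximal at $|j|\asymp A_v$, where they equal $(|l|/A_v)A_v^{\alpha\eps}\asymp(|l|/A_v)/L(A_v)^\alpha$.
\item At the core/outer transition, take the threshold $cL(A_v)$ with $c$ small. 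Then any two points closer than this, one of them in the core, both have index $|j|\le 3A_v$, where \eqref{eq:length-core} gives comparable lengths.
\item The interior part of the $C^0$ bound needs \eqref{eq:M}, not just the endpoint values $\lambda_{j-1}$.
\item For the claim that $\sigma_v$ itself is $C^{1+\alpha}$, note that your estimates go through verbatim for $|l|\le A_v$. This covers $l=1$ even when $A_v=1$.
\end{itemize}
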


We conclude this section by the following lemma, that describes more precisely the function~$L(A)$, and its corollary:
\begin{lemma}\label{l:A-to-L}
For any $\eps>0$, we have 
\[
L(A)\asymp A^{-\eps}.
\]
Moreover, in the region $1\le A<A'<2A$, one has 
\[
|\log L(A) - \log L(A')| \asymp |\log A - \log A'|.
\]
\end{lemma}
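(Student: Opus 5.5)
The first claim $L(A)\asymp A^{-\eps}$ is essentially already contained in~\eqref{eq:A-core} and~\eqref{eq:A-outer}. I will make it rigorous by comparing the sum with an integral. The summand $j\mapsto (A^2+j^2)^{-\frac{1+\eps}{2}}$ is even and decreasing in $|j|$, so
\[
\int_{-\infty}^{\infty}\frac{dt}{(A^2+t^2)^{\frac{1+\eps}{2}}}\le L(A)\le \frac{1}{A^{1+\eps}}+\int_{-\infty}^{\infty}\frac{dt}{(A^2+t^2)^{\frac{1+\eps}{2}}}.
\]
Substituting $t=As$, the integral equals $c_\eps A^{-\eps}$, where
\[
c_\eps=\int_{\R}(1+s^2)^{-\frac{1+\eps}{2}}\,ds<\infty.
\]
Since $A\ge 1$, the extra term $A^{-1-\eps}\le A^{-\eps}$ is harmless. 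Hence
\[
c_\eps A^{-\eps}\le L(A)\le (c_\eps+1)A^{-\eps}.
\]

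For the second claim I plan to study the logarithmic derivative of $L$ rather than $L$ itself. I extend $L$ to real $A\ge1$ by the same series and differentiate termwise. The differentiated series converges uniformly on compact sets, since each term is bounded by $C(A^2+j^2)^{-\frac{1+\eps}{2}}$ times something bounded. This gives
\[
A\,L'(A) = -(1+\eps)\sum_j \frac{A^2}{(A^2+j^2)^{\frac{3+\eps}{2}}}.
\]
So $-\frac{A L'(A)}{L(A)}$ is a ratio of two positive sums, and I want to show it is bounded above and below by positive constants depending only on $\eps$.

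The upper bound is immediate, because $\frac{A^2}{A^2+j^2}\le 1$ gives $-\frac{AL'}{L}\le 1+\eps$. For the lower bound I restrict the numerator to the core terms $|j|\le A$. There $\frac{A^2}{A^2+j^2}\ge\frac12$, so the numerator is at least $\frac{1+\eps}{2}\sum_{|j|\le A}L_{A,\eps}(j)$. By~\eqref{eq:A-core} this is $\asymp A^{-\eps}\asymp L(A)$. For the reader's convenience I will verify this directly: there are at least $A$ indices with $|j|\le A$, and each term is at least $(2A^2)^{-\frac{1+\eps}{2}}$. Thus $-\frac{d\log L}{d\log A}\in[c,\,1+\eps]$ for all $A\ge1$, with $c=c(\eps)>0$.

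Integrating this in the variable $\log A$ between $A$ and $A'$ gives
\[
c\,|\log A-\log A'|\le|\log L(A)-\log L(A')|\le(1+\eps)|\log A-\log A'|.
\]
This holds for all $1\le A<A'$, so in particular in the stated region $A'<2A$. The only delicate point is justifying the termwise differentiation, which is routine by uniform convergence. I do not expect any real obstacle; the key step is the lower bound on the logarithmic derivative via the core terms.
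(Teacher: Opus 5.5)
Your proof is correct and is essentially the paper's: differentiate $L$ termwise, show that $-\frac{d\log L}{d\log A}$ lies between two positive constants using the core terms $|j|\le A$ and $L(A)\asymp A^{-\eps}$, and integrate in $\log A$. Your form of the ratio as a weighted average with weights $\frac{A^2}{A^2+j^2}\le 1$ gives the upper bound $1+\eps$ directly, whereas the paper also estimates the outer part of the derivative sum.

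One minor slip: the lower bound $\int_{\R}(A^2+t^2)^{-\frac{1+\eps}{2}}\,dt\le L(A)$ does not follow from evenness and monotonicity alone, since these only give $\sum_{j\ge 0}L_{A,\eps}(j)\ge\int_0^\infty (A^2+t^2)^{-\frac{1+\eps}{2}}\,dt$. So state $L(A)\ge\frac{c_\eps}{2}A^{-\eps}$ instead; this changes nothing downstream.
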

\begin{corollary}
There exists $k_0$ such that for any interval $[A,A']\subset [1,+\infty)$ such that $L(A)$ and $L(A')$ differ at most twice, the interval $[A,A']$ can be split into at most $k_0$ intervals $[A_i,A_{i+1}]$ whose endpoints differ at most twice. In particular, modulo having larger constants, it suffices to prove Proposition~\ref{p:L-A-Ap} under an additional assumption that $\frac{A'}{2}\le A \le A'$.
\end{corollary}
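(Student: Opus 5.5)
The statement to prove is the last one before the cut: the corollary that an interval $[A,A']\subset[1,+\infty)$ with $L(A)$ and $L(A')$ differing at most twice splits into at most $k_0$ pieces whose endpoints differ at most twice, so that Proposition~\ref{p:L-A-Ap} reduces to the case $\frac{A'}{2}\le A\le A'$. I will deduce it directly from Lemma~\ref{l:A-to-L}.

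\textbf{Bounding $A'/A$.} By the first part of Lemma~\ref{l:A-to-L} there is a constant $c=c(\eps)\ge1$ with $c^{-1}A^{-\eps}\le L(A)\le cA^{-\eps}$. If $L(A)\le 2L(A')$, then
\[
c^{-1}A^{-\eps}\le L(A)\le 2L(A')\le 2c\,(A')^{-\eps},
\]
so $(A'/A)^{\eps}\le 2c^2$. Hence $A'/A\le R:=(2c^2)^{1/\eps}$, a constant depending only on $\eps$.

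\textbf{Splitting the interval.} Take $A_i:=\min(2^iA,A')$ for $i=0,1,\dots,k$, where $k=\lceil\log_2 R\rceil$. Consecutive endpoints differ at most twice, and $A_k=A'$. So $k_0:=\lceil\log_2 R\rceil$ works.

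\textbf{Reduction of Proposition~\ref{p:L-A-Ap}.} Write $g:I_v\to I_u$, where $A_u$ and $A_v$ are ordered as $A\le A'$ (the other ordering is symmetric). Using the chain $A_0,\dots,A_k$, I introduce auxiliary intervals $I_{w_i}$ with $A_{w_i}=A_i$. The no-shift maps then factor as compositions $g=g_{k}\circ\dots\circ g_1$ of no-shift maps between consecutive $I_{w_i}$; this follows from property~\ref{i:composition} of the Tsuboi family.

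I will apply~\eqref{eq:kappa-composition} iteratively. For this I need two bounds on each factor:
\begin{enumerate}
\item a uniform bound on $\|Dg_i\|_{C}$, coming from the uniform $C^0$ bound on $\log Dg_i$;
\item comparability of all the lengths $L(A_i)$ with $L(A_v)$, which follows from the first part of Lemma~\ref{l:A-to-L}, since every $A_i$ lies in $[A,RA]$.
\end{enumerate}
These give
\[
\kappa_\alpha(g)\le C\sum_i \frac{|\log L(A_i)-\log L(A_{i+1})|}{L(A_v)^\alpha}.
\]

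To turn this sum back into $|\log L(A_v)-\log L(A_u)|$, I use the second part of Lemma~\ref{l:A-to-L}. By it, each term is comparable to $\log(A_{i+1}/A_i)$, and these telescope to $\log(A'/A)\asymp|\log L(A)-\log L(A')|$. Applying that lemma to the whole interval $[A,A']$ may need a further chain argument if $A'>2A$.

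\textbf{Main obstacle.} The delicate point is the uniform $C^0$ bound on $\log Dg_i$ needed for the composition estimate. I expect to take it from the proof of Proposition~\ref{p:L-A-Ap} in the factor case, or from the Tsuboi estimate~\eqref{eq:M}. The latter applies because adjacent length ratios are bounded.
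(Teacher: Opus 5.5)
Your proposal is correct and matches the argument the paper intends but never writes out (it only proves Lemma~\ref{l:A-to-L}). That argument is: $L(A)\asymp A^{-\eps}$ bounds $A'/A$ by some $R(\eps)$, dyadic splitting gives $k_0=\lceil\log_2 R\rceil$, and the reduction of Proposition~\ref{p:L-A-Ap} follows from the composition property~\eqref{eq:comp} of the Tsuboi family together with~\eqref{eq:kappa-composition}, with the needed $C^0$ bound on $\log Dg_i$ indeed supplied by~\eqref{eq:M}.

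Your final step is simpler than you expect. By~\eqref{eq:dL-dA}, $L$ is strictly decreasing, so $\sum_i|\log L(A_i)-\log L(A_{i+1})|=|\log L(A)-\log L(A')|$ exactly. You need neither the second part of Lemma~\ref{l:A-to-L} nor a further chain argument there.
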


We prove both these statements, as well as Proposition~\ref{p:L-A-Ap}, in Section~\ref{s:technical} below.

\subsection{Nilpotent group actions: concluding the construction}\label{s:concluding-realisation}

This section is devoted to the proof of the realisation statement, Proposition~\ref{p:lower-K}.

\subsubsection{Schreier graph and the function $\lh(g,v)$}

In order to prove Proposition~\ref{p:lower-K}, consider the action of $G$ on an interval $I$ provided by the proof of Proposition~\ref{p:stab-subgr}, with a support interval $J$ of $c$, such that $\Stab(J)=K$. Let $J_0$ be a fundamental domain of the action of $c$ on $J$. We will prove that actually, this action is  topologically conjugate to an action of the class $C^{1+\alpha}$ for any $\alpha<1/\growth(G/K)$.

Indeed, note that for the action constructed in this proof, the interval $I$ minus countably many points is covered by the images of the fundamental domain $\{g(J_0)\mid g\in G\}$, that are adjacent to each other, and are permuted by the action. Take $X_0$ as the coset space $G/K=G/\Stab(J)$, that is naturally identified with the ordred set of the image intervals $\{g(J) \mid g\in G\}$.   We are thus in the setting of applying Pixton--Tsuboi construction. 

To implement it, fix a system $\mF$ of generators of $G$, and for each coset $v\in X_0$ (corresponding to some interval $I_v=g(J)$) we fix one of the elements $h_v$ of the smallest length in $G$ that belongs to this coset; then, we  define $I_{v,j}:=h_v(c^j(J_0))$. The set of (pairwise disjoint) intervals $\{g(J_0) \mid g\in G\}$ is then identified with $X:=X_0\times \Z$, with order on the latter being a lexicographic. Also, as we have fixed the set of generators of $G$, the coset space $X_0=G/K$ becomes the set of vertices of the corresponding Schreier graph, and becomes equipped with the corresponding distance. In particular, we define $\|v\|:=\dist (v, [e])$.

Next, note that a function $\lh:G\times X_0\to \Z$, given by $g(I_{v,j})=I_{g(v),j+\lh(g,v)}$ is then defined. An important remark is that this function is of a polynomial growth in~$\|v\|$.

\begin{lemma}\label{l:ell}
There exists a polynomial $p$ such that for any $g\in \mF$ and any $v\in X_0$ one has $|\lh (g,v)| \le p(\|v\|)$. 
\end{lemma}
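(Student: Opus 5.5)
We compute $\lh(g,v)$ algebraically and then bound it with Lemma~\ref{l:canon-length}. By construction $I_{v,j}=h_v c^j(J_0)$, where $h_v$ is a shortest representative of the coset $v$, so $\|v\|=|h_v|$, the word length in $G$. For a generator $g\in\mF$ we have $g h_v K = h_{g(v)}K$. Hence $k:=h_{g(v)}^{-1} g h_v\in K$. Since $c$ is central,
\[
g(I_{v,j}) = g h_v c^j(J_0) = h_{g(v)}\, k\, c^j(J_0) = h_{g(v)}\, c^j\, k(J_0).
\]
The element $k\in K=\Stab(J)$ preserves $J$ and commutes with $c$, so it permutes the fundamental domains $c^i(J_0)$ of $c$ on $J$. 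We get $k(J_0)=c^{\tau(k)}(J_0)$ for a homomorphism $\tau:K\to\Z$, namely the composition $K\to K/H\simeq\Z$ read relative to the action of $c$. In the construction of Proposition~\ref{p:stab-subgr}, $K$ acts on $J=J_n$ through the quotient $K\to K/H\simeq \Z$, with $c$ acting by a nonzero power $g_n^{s}$. After choosing $J_0$ as a fundamental domain of $g_n$ (or noting that $c$'s fundamental domains are unions of $|s|$ domains of $g_n$), we get $\lh(g,v)=\tau(h_{g(v)}^{-1} g h_v)$, up to a bounded rounding. So it suffices to bound $|\tau(k)|$ polynomially in the word length of $k$, which is at most $2\|v\|+2$.

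The key step is therefore: a homomorphism $\tau:K\to\Z$ defined on a subgroup $K<G$ satisfies $|\tau(k)|\le p(|k|_G)$, where $|k|_G$ is the length in $G$, not in $K$. This is the main obstacle, since $K$ may be distorted in $G$. We handle it with the canonical-form machinery. $K$ is finitely generated, being a subgroup of a finitely generated nilpotent group. Fix generators $k_1,\dots,k_r$ of $K$ and choose a Malcev-type canonical form adapted to $K$. Better, write $\tau$ on $K$ via coordinates: by Lemma~\ref{l:canon-length}, an element $k\in B^G_N(e)$ has canonical exponents $a_{i,j}$ with $|a_{i,j}|\le C N^j\le CN^m$. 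Since $K$ is a subgroup of a torsion-free nilpotent group, the Malcev coordinates of $K$, with respect to a Malcev basis of $K$, are polynomial functions of the Malcev coordinates of $G$ with rational coefficients. This is standard, because multiplication and extraction of roots are polynomial in Malcev coordinates; see~\cite{CMZ}. The homomorphism $\tau$ is a linear function of the first-level coordinates of $K$. Hence $|\tau(k)|\le P(N)$ for a polynomial $P$ of degree bounded in terms of $m$ and the Hirsch length.

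As an alternative, more elementary route, I would induct along the chain $K=K_n\triangleleft\dots\triangleleft K_0=G$. Each $K_{i+1}\triangleleft K_i$ with cyclic quotient is undistorted up to polynomial: an element of $K_{i+1}$ of $K_i$-length $N$ has $K_{i+1}$-length at most polynomial in $N$. Composing these finitely many polynomial bounds gives $|k|_K\le P(|k|_G)$, and then $|\tau(k)|\le \max_i|\tau(k_i)|\cdot|k|_K$. Either way, setting $p(t):=P(2t+2)+\const$ yields $|\lh(g,v)|\le p(\|v\|)$ uniformly over $g\in\mF$.
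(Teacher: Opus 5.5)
Your proposal is correct and follows essentially the paper's route. You express $\lh(g,v)$ through the element $h_{g(v)}^{-1} g h_v\in K=\Stab(J)$, of word length at most $2\|v\|+2$, and then bound the homomorphism $K\to K/H\simeq\Z$ polynomially in $G$-word length using polynomial distortion of subgroups of nilpotent groups. The paper takes that last step directly from Osin's theorem, whereas you derive it from Lemma~\ref{l:canon-length} plus polynomiality of Malcev coordinates, an alternative the paper itself mentions in passing. Your bookkeeping is in fact slightly more careful than the paper's: you use $\tau$, an arbitrary generating set of $K$, and $J_0$ taken as a fundamental domain of the generator $g_n$ of the $K$-action on $J$. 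The paper's generating set, $c$ together with generators of $H$, generates $K$ only when $c$ maps to a generator of $K/H$.
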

\begin{proof}
Let $g\in \mF$. Note that $\lh(g,v)$ is defined by the condition that the composition $g\circ h_v$ sends $J_0$ to $h_{g(v)} (c^{\lh(g,v)}(J_0))$; equivalently, by the condition that 
\[
g':= h_{g(v)}^{-1} \circ g \circ h_v \in \Stab(J)
\]
acts on $J$ as~$c^{\lh(g,v)}$. Now, due to its representation, $g'$ is an element of length at most 
\[
\|g'\|_{\mF} \le \|h_{v}\|_{\mF} + \|g\|_{\mF} + \|h_{g(v)}\|_{\mF}  \le 2(\|v\|+\|g\|_{\mF}) = 2( \|v\|+1).
\]
Now, recall that $H<K$ acts on $J$ trivially by construction; it thus suffices to note that for any element $g'\in K$ of a given length, the power $\lh$ of $c$ in its representation $g'=g'' c^{\lh}, \quad g''\in H$, admits a polynomial upper bound.

A theorem by Osin~\cite[Theorem~2.2]{osin} states that subgroups of finitely generated nilpotent groups are (in the algebraic sense) \emph{polynomially distorted}: fixing a set of generators $\mF_K$ for a subgroup $K<G$, one has for some polynomial $P$ that $\|g\|_{\mF_K} \le P(\|g\|_\mF)$ for every $g\in K$. (Actually, it also can be seen from the arguments from Section~\ref{s:upper-inclusion}.)

Fix the system of generators $\mF_K$ of $K$ to be $c$, joined with the generators of $H$. Then, we have 
\[
P(\|g'\|_{\mF}) \ge \|g'\|_{\mF_K} = \| g'' c^{\lh(g,v)}\|_{\mF_K} \ge |\lh(g,v)|,
\]
and thus 
\[
|\lh(g,v)| \le P( 2(\|v\|+1)) =: p(\|v\|).
\]
\end{proof}

We are now in the setting to apply the Pixton--Tsuboi construction. To do it, it suffices to choose a value $\eps>0$ and a function $A_{\bullet}:X_0\to [1,\infty)$ that defines the values $A_v$ and thus all the lengths $L(v,j)=L_{A_v,\eps}(j)$, given by~\eqref{eq:L-A-eps}, and use the H\"older control estimates from Section~\ref{s:Choice-and-control}. We will do it in the below, in Section~\ref{s:fixing-lengths}, thus proving Proposition~\ref{p:lower-K}.

\subsubsection{Fixing the lengths}\label{s:fixing-lengths}

\begin{proof}[Proof of Proposition~\ref{p:lower-K}]
Let $d=\growth(G/K)$, and $d'=\deg p$, where the polynomial $p$ is given by Lemma~\ref{l:ell}. In order to prove the proposition we will fix a sufficiently small $\eps>0$, make some choice of constants $A_v$ and thus of lengths $L(v,j)$, given by~\eqref{eq:L-A-eps}, and will check that the constructed action is of class $C^{1+\alpha}$.

Note that to do so, it suffices to check only the regularity of the generator elements $g\in \mF$. Also, due to Corollary~\ref{c:split}, it suffices to check that the H\"older constants of their restrictions on intervals~$I_v$ are bounded uniformly in~$v$. 

To do so, we actually first fix the lengths: we fix a sufficiently large constant $C_0>1$ and will choose constants~$A_v$ so that 
\[
L(A_v) = (C_0 + \|v\|)^{-\frac{1}{\alpha}}.
\]

Note that as $\alpha<\frac{1}{d}$, this choice of lengths indeed leads to a convergent series:
\begin{lemma}\label{l:convergence}
The sum $\sum_{v\in X_0} (C_0 + \|v\|)^{-\frac{1}{\alpha}}$ is finite.
\end{lemma}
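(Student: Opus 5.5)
The plan is to group the vertices $v\in X_0=G/K$ by their distance $\|v\|$ to the base vertex $[e]$ in the Schreier graph, and then to use the polynomial growth of $G/K$ given by Theorem~\ref{t.B-G}. By~\eqref{eq:asymp-G/H}, applied with $H=K$, there is a constant $C$ such that
\[
\# B_n^{G/K}(e)\le C n^{d}, \qquad n\ge 1,
\]
where $d=\growth(G/K)=D_G-D_{K;G}$. The sphere $S_n:=\{v\in X_0 \mid \|v\|=n\}$ is contained in this ball, so $\# S_n\le C n^d$ for $n\ge 1$, and $\#S_0=1$.

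The sum then rewrites as
\[
\sum_{v\in X_0}(C_0+\|v\|)^{-\frac1\alpha}=\sum_{n\ge 0}\# S_n\,(C_0+n)^{-\frac1\alpha}\le C_0^{-\frac1\alpha}+C\sum_{n\ge1} n^{d}\,n^{-\frac1\alpha}.
\]
The assumption $\alpha<\frac1d$ gives $\frac1\alpha>d$, but the exponent $d-\frac1\alpha>-1$ is not enough for convergence. So we should not bound the sphere by the ball term by term. Instead we sum by parts, or equivalently group dyadically. For $2^r\le n<2^{r+1}$, the block contributes at most
\[
\# B^{G/K}_{2^{r+1}}(e)\cdot (2^r)^{-\frac1\alpha}\le C' 2^{r(d-\frac1\alpha)}.
\]
This is a geometric series with ratio $2^{d-1/\alpha}<1$, so it converges.

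No serious obstacle is expected. The only point needing care is to use dyadic blocks, or Abel summation against the ball counts, rather than the crude sphere bound. The needed growth estimate comes directly from Theorem~\ref{t.B-G}, which also shows that the growth of $G/K$ does not depend on the generating set up to constants.
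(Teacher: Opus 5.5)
Your proof is correct and is essentially the paper's argument: the paper also groups the vertices dyadically ($2^k\le \|v\|+1<2^{k+1}$), bounds each block by the ball count $\#B^{G/K}_{2^{k+1}}(e)\le C_{G/K}2^{kd}$ from Theorem~\ref{t.B-G} times $2^{-k/\alpha}$, and sums the resulting geometric series with ratio $2^{d-1/\alpha}<1$. Your observation that bounding each sphere by a ball term by term fails when $d-\frac{1}{\alpha}>-1$ is exactly why the dyadic grouping is needed.
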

\begin{proof}
Let us split the sum into parts with $2^{k}\le \|v\| +1 < 2^{k+1}$:
\begin{multline*}
\sum_{v\in X_0} (C_0 + \|v\|)^{-\frac{1}{\alpha}}  = \sum_{k=0}^{\infty} \left( \sum_{2^{k}\le \|v\| +1 < 2^{k+1}} (C_0 + \|v\|)^{-\frac{1}{\alpha}} \right) 
\\
\le 
\sum_{k=0}^{\infty} \left( \#\{v: \|v\|\le 2^{k+1}\}\right) \cdot 2^{-\frac{1}{\alpha}\cdot k} 
\le \sum_{k=0}^{\infty}  C_{G/K} 2^{kd} \cdot 2^{-\frac{k}{\alpha}} = C_{G/K} \cdot \sum_{k=0}^{\infty}  2^{-(\frac{1}{\alpha}-d) \cdot k},
\end{multline*}
and we have obtained an upper bound by a convergent series. 
\end{proof}

Now, for every $v\in X_0$ the restriction of a generator $g\in \mF$ on $I_v$ can be written as 
\[
g = \underbrace{(h_{g(v)} h_{v}^{-1})}_{g_0} \cdot \sigma_v^{\lh(g,v)},
\]
and we will apply Propositions~\ref{p:L-A-Ap} and~\ref{p:sigma-v} to estimate the H\"older constants of the first and the second factor respectively. Namely, as $g$ was a generator, $v$ and $u=g(v)$ are neighbouring vertices in the Schreier graph, hence $\|u\|$ and $\|v\|$ differ at most by~$1$. In particular, if $C_0$ was chosen sufficiently large, the lengths $L(A_v)$ and $L(A_u)$ differ at most twice. Applying Proposition~\ref{p:L-A-Ap}, we get 
\[
\kappa_{\alpha}(g_0, I_v) \le C_{\eps} \cdot \frac{1}{\alpha} \left|\log \frac{C_0+\|u\|}{C_0+\|v\|}\right| \cdot (C_0+\|v\|),
\]
and as $\|v\|-1\le \|u\|\le \|v\|+1$, the absolute value of the logarithm does not exceed $O(\frac{1}{\|v\|})$, and thus the right hand side is bounded uniformly in~$v$.

In order to estimate the H\"older constant related to $\sigma_v^{\lh(g,v)}$, recall that $L(A) \asymp A^{-\eps}$. Hence, as we have $L(A_v)=(C_0 + \|v\|)^{-1/\alpha}$,
\[
A_v \asymp L(A)^{-\frac{1}{\eps}} \asymp (C_0 + \|v\|)^{\frac{1}{\alpha \eps}}.
\]
Choose and fix $\eps>0$ sufficiently small, so that 
\begin{equation}\label{eq:eps}
\frac{1}{\alpha \eps} > d' +1,
\end{equation}
where $d'=\deg p$. Then $|\lh(g,v)|\le p(\|v\|)<A_v$ for all sufficiently large $\|v\|$; choosing $C_0$ to be sufficiently large, we can ensure that the inequality $| \lh(g,v)|<A_v$ holds for all~$v$. Applying Proposition~\ref{p:sigma-v}, we get 
\[
\| D\sigma_v^{\lh(g,v)}  \|_{C(I_v)} \le e^{C_{\sigma}}
\]
and a bound~\eqref{eq:sigma-v-l} for the H\"older constant:
\[
\kappa_{\alpha}(\sigma_v^{\lh(g,v)},I_v) \le C' \cdot \frac{\lh(g,v)/A_v}{ L(A_v)^{\alpha}}  \le C' \frac{p(\|v\|)\cdot  (C_0 + \|v\|)}{A_v} ;
\]
as the numerator is a polynomial of $\|v\|$ of degree $d'+1$, and the denominator has the asymptotics~$\|v\|^{\frac{1}{\alpha \eps}}$, due to~\eqref{eq:eps} the right hand side is bounded uniformly in~$v$.

Finally, 
\[
\kappa_{\alpha}(g, I_v) \le \kappa_{\alpha}(\sigma_v^{\lh(g,v)}, I_v) + \kappa_{\alpha}(g_0, I_v) \cdot \|D\sigma_v^{\lh(g,v)}\|^{\alpha}_{C(I_v)},
\]
and as each of the elements of the right hand side is bounded uniformly in~$v$, so is~$\kappa_{\alpha}(g, I_v)$. The existence of such a uniform bound implies, due to Corollary~\ref{c:split}, that the action of $g$ on all of $I$ is~$C^{1+\alpha}$, thus concluding the proof.
\end{proof}

\begin{remark}
It was stated by K. Parkhe \cite{parkhe} that the critical regularity of a finitely generated torsion free nilpotent group $G$ on the compact interval is greater than $1 + 1/d$,
where $d$ is the exponent of the polynomial growth of $G$. However, as noted by S-h. Kim, N. Matte Bon,  M. de la Salle and M. Triestino~\cite{KMBST},
his proof contains a minor gap: it uses the statement that the spheres of $G$ grow as $n^{d-1}$ in order to prove that the sum of interval lengths 
in the dynamical realization of~$G$ is convergent (see Proposition 2.5 of~\cite{parkhe}), and such an estimate is yet unknown. We would like to note that Parkhe's proof can be easily fixed by summing the lengths in the same way as in Lemma~\ref{l:convergence} above.
\end{remark}

\subsection{Upper bounds for the H\"older constants: technical proofs}\label{s:technical} 

\subsubsection{Auxiliary points estimate lemma}

This section is devoted to the proofs of Propositions~\ref{p:L-A-Ap} and~\ref{p:sigma-v}, as well as of Lemma~\ref{l:A-to-L}. We will be repeatedly using the following easy argument.
\begin{lemma}\label{l:splitting}
Assume that for some map $g$ and for some points $x=x_0,x_1,\dots,x_k=y$ and a constant $c$ one has $|x_j-x_{j-1}|< c|x-y|$. Then 
\[
\frac{|\log Dg(x) - \log Dg(y)|}{|x-y|^{\alpha}} \le c^{\alpha} k \cdot \max_{j=1,\dots,k} \frac{|\log Dg(x_{j}) - \log Dg(x_{j-1})|}{|x_{j}-x_j|^{\alpha}}.
\]
\end{lemma}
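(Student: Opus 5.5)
The plan is to telescope along the chain $x=x_0,x_1,\dots,x_k=y$ and then use the hypothesis on the step lengths to compare each $|x_j-x_{j-1}|^{\alpha}$ with $|x-y|^{\alpha}$. We read the (evidently typo'd) denominator $|x_j-x_j|^{\alpha}$ on the right-hand side as $|x_j-x_{j-1}|^{\alpha}$, and we assume consecutive points are distinct, so that the quotients are defined. Nothing from earlier in the paper is needed; this is a purely elementary estimate.

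First, by the triangle inequality,
\[
|\log Dg(x)-\log Dg(y)| \le \sum_{j=1}^k |\log Dg(x_j)-\log Dg(x_{j-1})|.
\]
Denote by $M$ the maximum on the right-hand side of the lemma, i.e. the maximum over $j$ of the $\alpha$-H\"older quotients on the consecutive pairs. Then each summand is bounded by $M\,|x_j-x_{j-1}|^{\alpha}$.

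Next, insert the hypothesis $|x_j-x_{j-1}|<c|x-y|$. Since $\alpha>0$, the function $t\mapsto t^{\alpha}$ is monotone on $[0,\infty)$, so $|x_j-x_{j-1}|^{\alpha}\le c^{\alpha}|x-y|^{\alpha}$. Summing the $k$ terms gives
\[
|\log Dg(x)-\log Dg(y)|\le k\,c^{\alpha}M\,|x-y|^{\alpha}.
\]
Dividing by $|x-y|^{\alpha}$ (with $x\ne y$) yields exactly the claimed inequality.

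There is no real obstacle here; the only points requiring care are the index typo in the statement and the harmless requirement that the points in the chain be distinct, or at least that vanishing steps be discarded, since such steps contribute zero to the telescoping sum. In later applications the points $x_j$ will be endpoints $x_{v,j}$ or points inside the intervals $I_{v,j}$, chosen so that the number $k$ of steps is uniformly bounded and each step is comparable to $|x-y|$.
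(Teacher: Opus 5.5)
Your proof is correct: the triangle inequality along the chain, bounding each step by the maximal H\"older quotient times $|x_j-x_{j-1}|^{\alpha}$, and then using $|x_j-x_{j-1}|^{\alpha}\le c^{\alpha}|x-y|^{\alpha}$ is the elementary argument the paper intends; the paper calls it an ``easy argument'' and writes out no proof. Your reading of the denominator typo as $|x_j-x_{j-1}|^{\alpha}$ and your handling of zero-length steps are both appropriate.
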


In particular, for a map $g$ of the type~\eqref{eq:g-I-v-j} this allows to reduce checking that $\log Dg$ is $\alpha$-H\"older and estimating its constant into two parts: 
\begin{itemize}
\item Checking that for the restrictions $g|_{I_{v,j}}$ the H\"older constants $C^g_j$ of $\log Dg|_{I_{v,j}}$ are uniformly bounded (that can be done by applying the conclusion~\eqref{eq:M-alpha} from Remark~\ref{r:C-R}), and finding an upper bound $C^g$ for these.
\item Checking that restriction $\log Dg|_{\{x_{v,j}\}}$ satisfies the $\alpha$-H\"older condition, and estimating the corresponding H\"older constant.
\end{itemize}

Indeed, for two points $x,y\in I_v$, $x<y$, if they belong to the same subinterval $I_{v,j}$, then one has
\[
\frac{|\log Dg(x) - \log Dg(y)|}{|x-y|^{\alpha}} \le C^g_j \le C^g
\]
while if they belong to two different ones, $x\in I_{v,j}$, $y\in I_{v,j'}$ it suffices to apply Lemma~\ref{l:splitting} using the endpoints $x_1=x_{v,j+1}$ and $x_2=x_{v,j'}$  (provided that $j'>j+1$).

\subsubsection{H\"older estimates for the $l=0$ case}

\begin{proof}[Proof of Proposition~\ref{p:L-A-Ap}]
We start by checking that the $\alpha$-H\"older constants of restrictions  $\log Dg|_{I_{v,j}}$ are bounded uniformly in~$j$. Indeed, the quotient $A_v/A_u$ is uniformly bounded (by some constant $R$ depending on $\eps$) due to the assumption of $\frac{1}{2}<\frac{L(A_v)}{L(A_u)}<2$. Thus, 
\[
\frac{|I_{v,j}|}{|I_{u,j}|} = \left( \frac{A_u^{2} + j^{2}}{A_v^{2} + j^{2}} \right)^{\frac{1+\eps}{2}}
 \in [\frac{1}{R}, R]
\]
as well, and conclusion~\ref{i:bound-M} of Proposition~\ref{p:Tsuboi} is applicable, as well as Remark~\ref{r:C-R}. We then obtain from~\eqref{eq:M-alpha} that the H\"older constant of $\log Dg|_{I_{v,j}}$ does not exceed 
\[
C_R \cdot \frac{|\log Dg(x_{v,j}) - \log Dg(x_{v,j+1})|}{|x_{v,j} - x_{v,j+1}|^{\alpha}}.
\]
In particular, \emph{if} the function $\log Dg$, restricted to a countable set of endpoints 
\[
\{x_{v,j}\mid j\in \Z\} \cup \{\xvl,\xvr\} =  \bd  \cap I_v, 
\]
is $\alpha$-H\"older, then it is $\alpha$-H\"older on full~$I_v$ as well, with at most $(2C_R+1)$ times larger H\"older constant.

Next, in order to check that, note that $Dg(x_{v,-j})=Dg(x_{v,j})$, and the distances $|x_{v,j}-x_{v,j'}|$ and $|x_{v,-j}-x_{v,-j'}|$ differ at most two times for any $j,j'\ge 0$, if suffices to consider only the half of~$I_v\cap X$, that is, the points $x_{v,j}$ with the index $j\ge 0$. Again applying Lemma~\ref{l:splitting} (with the splitting point $x_1=x_{v,[A_v]}$), we can restrict ourselves to the following two cases for $x=x_{v,j}$, $y=x_{v,j'}$: either both $x,y$ belong to the core part, that is, $0\le j<j'\le [A_v]$, or both belong to the outer part, that is, $j'>j\ge [A_v]$. Moreover, this second case can be split into two sub-cases: either $j\le j'\le 2j$ (so that $j$ and $j'$ are comparable), or $j'>2j$, in which case, as we will see below, we can again apply Lemma~\ref{l:splitting} (with $x_1=\xvr$), reducing the estimate to these with $y=\xvr$.

Let us now implement these estimates. To do so, note that for $j\ge 0$ the function 
\[
q(j):= \log Dg|_{x_{v,j}} = \log  \left( \left( \frac{A_u^{2} + j^{2}}{A_v^{2} + j^{2}} \right)^{\frac{1+\eps}{2}} \right)
\]
is actually a \emph{differentiable} function of the variable $j$, even if this variable can take only integer values. Hence, we can (and will) use Lagrange theorem to estimate its increments. More specifically, denote $F(A,j):={\frac{1+\eps}{2}} \log (A^{2}+j^{2})$, so that $q(j)=F(A_v,j)-F(A_u,j)$. Then, from the differentiability of the function $F$ is \emph{both} variables $A$, $j$, applying Lagrange theorem, we get
\begin{multline}\label{eq:q-incr}
q(j)-q(j') = F(A_v,j)-F(A_u,j)-F(A_v,j')+F(A_u,j') 
\\ = (j-j') (A_v-A_u) \cdot \frac{\partial^2 F}{\partial A \, \partial j} (\xi,\eta),
\end{multline}
for some $\xi$ between $A_v$ and $A_u$ and $\eta$ between $j$ and $j'$. Calculating the derivatives, we get
\[
\frac{\partial F}{\partial A} = (1+\eps)   \frac{A}{A^{2}+j^{2}},
\]
\[
\frac{\partial^2 F}{\partial A \, \partial j} = - (1+\eps)   \frac{2jA}{(A^{2}+j^{2})^2}.
\]

Let us now proceed to the consideration of cases we mentioned above: 

\begin{itemize}
\item \textbf{$j,j'\le [A_v]$ (both points in the core part).} 
\\
Note that once $0<j<2A$,
\[
\left|\frac{\partial^2 F}{\partial A \, \partial j} \right| = (1+\eps)   \frac{2jA}{(A^{2}+j^{2})^2} \le \const \cdot \frac{A^{2}}{A^{4}} = \frac{\const}{A^2}.
\]
At the same time, the lengths of all intervals $I_{v,j}$ in the core domain differ at most twice, hence for $0\le j<j'\le [A_v]$ one has
\[
|x_{v,j}-x_{v,j'}| \ge \frac{1}{2} \cdot \frac{j'-j}{A_v^{1+\eps}};
\]
joining it with~\eqref{eq:q-incr}, we get the estimate
\begin{multline*}
\frac{|q(j)-q(j')|}{|x_{v,j}-x_{v,j'}|^{\alpha}} = \left| \frac{\partial^2 F}{\partial A \, \partial j} (\xi,\eta) \right| \cdot \frac{|A_u-A_v| \cdot |j'-j|}{|x_{v,j}-x_{v,j'}|^{\alpha}}  \le 
\\
\le \const \, \frac{|A_u-A_v|}{A_v^2}\cdot \frac{|j'-j|}{|j'-j|^{\alpha}} \cdot A_v^{\alpha(1+\eps)} = \const \cdot \frac{|A_u-A_v|}{A_v} \cdot A_v^{\alpha(1+\eps)-1} \cdot |j'-j|^{1-\alpha}.
\end{multline*}
The right hand side gets takes a maximal value when the difference $j'-j$ is maximal, thus leading to an upper bound by 
\[
\const \cdot \frac{|A_u-A_v|}{A_v} \cdot A_v^{\alpha \eps} \asymp \frac{|\log A_u - \log A_v|}{L(A_v)^{\alpha}}.
\]
Finally, recall that Lemma~\ref{l:A-to-L} guarantees that $\log \frac{L(A_u)}{L(A_v)}\asymp |\log A_u - \log A_v|$, thus we have obtained exactly the upper bound that is claimed in Proposition~\ref{p:L-A-Ap}.

\item \textbf{$[A_v]\le j < j' \le 2j$ (both points in the outer part, sufficiently close to each other).} 
\\
We have an upper bound for the derivative of the function $F$ at $j\ge A$:
\[
\left|\frac{\partial^2 F}{\partial A \, \partial j} \right| = (1+\eps)   \frac{2jA}{(A^{2}+j^{2})^2} \le \const \cdot \frac{jA}{j^{4}} = \const \cdot \frac{A}{j^{3}}.
\]
At the same time, all the intervals $I_{v,i}$ between $x_{v,j}$ and $x_{v,j'}$ are in this case of comparable lengths; in particular, we have 
\[
|x_{v,j}-x_{v,j'}| \ge \frac{j'-j}{(A_v^2 + (j')^2)^{\frac{1+\eps}{2}}} \ge \frac{j'-j}{8 j^{1+\eps}}.
\]

We again apply an estimate by the Lagrange theorem:
\begin{multline*}
\frac{|q(j)-q(j')|}{|x_{v,j}-x_{v,j'}|^{\alpha}} = \left| \frac{\partial^2 F}{\partial A \, \partial j} (\xi,\eta) \right| \cdot \frac{|A_u-A_v| \cdot |j'-j|}{|x_{v,j}-x_{v,j'}|^{\alpha}}  \le 
\\
\le \const \, \frac{A_v}{j^{3}} \, |A_u-A_v| \frac{|j'-j|}{|j'-j|^{\alpha}} \cdot j^{\alpha(1+\eps)}.
\end{multline*}
For every fixed $j$, the expression in the right hand side takes its maximal value when $j'-j$ does, that is, for $j'=2j$. This upper bound is then
\[
\const \, \frac{A_v^2}{j^{3}} \, \frac{|A_u-A_v|}{A_v} \cdot j^{(1-\alpha)} \cdot j^{\alpha(1+\eps)} = 
\const \, \frac{A_v^2}{j^{2-\alpha\eps}} \, \frac{|A_u-A_v|}{A_v}.
\]
As the power for $j$ in the denominator is positive, $2-\alpha\eps>0$, the right hand side of this upper bound takes the least value for $j=[A_v]$, thus leading to an upper bound by 
\[
\const \, \frac{A_v^2}{A_v^{2}\cdot A_v^{-\alpha\eps}} \, \frac{|A_u-A_v|}{A_v} \asymp \frac{|\log A_u-\log A_v|}{L(A_v)^{\eps}};
\]
again, the numerator of this estimate is comparable to the desired $|\log L(A_u)-\log L(A_v)|$ due to Lemma~\ref{l:A-to-L}.
\item \textbf{$j'\ge 2j$ and $j\ge [A_v]$ (both points in the outer part, sufficiently far away from each other)}. 
\\
Consider first a degenerate version of this case as $j'\to\infty$, corresponding to taking $y=\xvr$. We then have $\log Dg(\xvr)=0$; by Lagrange theorem for $q(j)=F(A_v,j)-F(A_u,j)$, we have that for some $\xi$ between $A_u$ and $A_v$
\[
|q(j)| = \left|(A_v-A_u) \cdot \frac{\partial F}{\partial A}|_{(\xi,j)} \right| \asymp |A_v-A_u| \cdot \frac{A_v}{j^{2}};
\]
at the same time, the corresponding distance can be bounded from below by 
\begin{equation}\label{eq:j-upper}
|x_{v,j}-\xvr| = \sum_{r=j}^{\infty} \frac{1}{(A_v^{2}+j^{2})^{\frac{1+\eps}{2}}} \ge \const \, \int_r^{\infty} \frac{dr}{r^{1+\eps}} \asymp \frac{1}{j^{\eps}}.
\end{equation}
We thus get 
\[
\frac{|\log Dg(x_{v,j})-\log Dg(\xvr)|}{|x_{v,j}-\xvr|^{\alpha}} \le \const \frac{|A_u-A_v|}{A_v} \, \frac{A_v^{2}}{j^{2}}\cdot j^{\alpha \eps}
\]
Again, as $1+\eps>\alpha\eps$, the right hand side takes the maximal value at the minimal value of $j$, that is, $j=[A_v]$, leading to an upper bound
\begin{equation}\label{eq:j-to-end}
\frac{|\log Dg(x_{v,j})-\log Dg(\xvr)|}{|x_{v,j}-\xvr|^{\alpha}} \le  \const \frac{|A_u-A_v|}{A_v} \, \frac{A_v^{2}}{A_v^{2}}\cdot A_v^{\alpha \eps} \asymp \frac{|\log L(A_u)- \log L(A_v)|}{L(A_v)^{\alpha}},
\end{equation}
where we have again applied Lemma~\ref{l:A-to-L} both for the numerator and for the denominator.

Finally, for general $j$ and $j'$ with $j'\ge 2j$, note that the distances $|x_{v,j}-x_{v,j'}|$ and $|x_{v,j}-\xvr|$ are comparable. Indeed, the former distance is bounded from below by 
\[
|x_{v,j}-x_{v,j'}|= \sum_{r=j}^{j'-1} \frac{1}{(A_v^{2}+j^{2})^{\frac{1+\eps}{2}}} \ge \sum_{r=j}^{2j-1} \frac{1}{2\cdot (2j)^{1+\eps}} \ge  \frac{j}{8 j^{1+\eps}} = \frac{1}{8 j^{\eps}};
\]
comparing it to~\eqref{eq:j-upper} provides the desired uniform estimate: there exists $c=c(\eps)$ such that for all $j'\ge 2j$, $j\ge [A_v]$ 
\[
|x_{v,j}-\xvr| \le c \cdot |x_{v,j}-x_{v,j'}|.
\]
Now, an application of Lemma~\ref{l:splitting} with 
\[
x_0=x=x_{v,j}, \quad x_1=\xvr, \quad x_2=y=x_{v,j'}
\] 
allows to extend the upper bound~\eqref{eq:j-to-end} to general $j'\ge 2j$.
\end{itemize}
In all the three cases we have obtained the desired upper bound, thus completing the proof of the proposition.
\end{proof}

\subsubsection{H\"older estimates for the $u=v$ case}

\begin{proof}[Proof of Proposition~\ref{p:sigma-v}] 
We start by establishing the upper bound for the derivatives. Let us establish it first at the set of endpoints $\bd \cap I_v$, where
\begin{equation}\label{eq:D-x-v-j}
\log D \sigma_v^l (x_{v,j}) = \log \frac{|I_{v,j+l}|}{|I_{v,j}|}.
\end{equation}
At such points, if $|j|\le 2A_v$ (core part), the fraction in the right hand side is bounded due to~\eqref{eq:length-core}, and for $|j|>2A_v$, it is bounded due to~\eqref{eq:length-outer}. Next, as the quotient of lengths $\frac{|I_{v,j+1}|}{|I_{v,j}|}$ is uniformly bounded, the estimate~\eqref{eq:D-x-v-j} extends inside each interval $I_{v,j}$ due to the uniform bound on the distortion of the map $\sigma_v|_{I_{v,j}}= \varphi_{I_{v,j-1},I_{v,j}}^{I_{v,j+l-1},I_{v,j+l}}$, see estimate~\eqref{eq:M} in Proposition~\ref{p:Tsuboi}.

Now, note that due to these upper bounds for the derivative, it suffices to prove~\eqref{eq:sigma-v-l} for $l=1$. Indeed, 
assume that we had established
\begin{equation}\label{eq:C-1}
\kappa_{\alpha}(\sigma_v) \le C'_1 \cdot \frac{1/A_v}{ L(A_v)^{\alpha}}.
\end{equation}
Then for a general $l>0$, applying the chain rule~\eqref{eq:kappa-composition}, we get
\[
\kappa_{\alpha}(\sigma_v^l) \le  \sum_{r=1}^{l} \kappa_{\alpha}(\sigma_v) \cdot \| D \sigma_v^{r-1}\|^{\alpha}_{C(I_v)} \le e^{\alpha C_{\sigma}} \cdot l \kappa_{\alpha}(\sigma_v),
\]
and thus the estimate~\eqref{eq:sigma-v-l} is satisfied with $C':=C'_1 \cdot e^{\alpha C_{\sigma}}$.

To obtain the upper bound~\eqref{eq:C-1}, we will proceed in the same way as in the proof of Proposition~\ref{p:L-A-Ap}. In the same way as before, due to Lemma~\ref{l:splitting} it suffices to obtain a bound for $\frac{|\log D \sigma_v(x)-\log D \sigma_v(y)|}{|x-y|^{\alpha}}$ for $x,y\in \bd\cap I_v$. Again in the same way as before, due to the symmetry we restrict the case $x=x_{v,j}$, $y=x_{v,j'}$ to $j,j,'\ge 0$. Denoting 
\[
\tq(j):= \log D\sigma_v (x_{v,j}) = F(A_v,j)-F(A_v,j+1),
\]
we get 
\begin{multline}\label{eq:t-q-incr}
\tq(j)-\tq(j') = F(A_v,j)-F(A_v,j+1)-F(A_v,j')+F(A_v,j'+1) 
\\ = (j-j')  \cdot \frac{\partial^2 F}{\partial j^2 } (A_v,\eta),
\end{multline}
for some $\eta$ between $j$ and $j'$. 
From an explicit computation,
\[
\frac{\partial F}{\partial j} = (1+\eps)   \frac{j}{A^{2}+j^{2}},
\]
\[
\frac{\partial^2 F}{\partial j^2} = - (1+\eps)    \frac{A^2-j^2}{(A^{2}+j^{2})^2}.
\]

Now, we split the general case into three cases: \\
\begin{itemize}
\item \textbf{$0\le j<j' \le [A_v]$ (both points in the core part).} 
\\
In this domain, 
\[
|\frac{\partial^2 F}{\partial j^2} | = (1+\eps)    \frac{|A_v^2-j^2|}{(A_v^{2}+j^{2})^2}. \le \frac{1+\eps}{A_v^2},
\]
hence we have 
\[
|\log D\sigma_v(x_{v,j})-\log D\sigma_v(x_{v,j'})| = |j-j'| \cdot |\frac{\partial^2 F}{\partial j^2 } (A_v,\eta)| \le (1+\eps) \frac{|j-j'|}{A_v^2}.
\]
Thus, 
\[
\frac{|\log D\sigma_v(x_{v,j})-\log D\sigma_v(x_{v,j'})|}{|x_{v,j}-x_{v,j'}|^{\alpha}} \le \const  \frac{|j-j'|/A_v^2}{|j-j'|^{\alpha}/A_v^{\alpha(1+\eps)}}
\]
The value of the right hand side is maximal if $j=0$, $j'=[A_v]$, when it is equal to  
\[
\const \frac{A_v/A_v^2}{ A_v^{\alpha}/A_v^{\alpha(1+\eps)}} \asymp \frac{1}{A_v L(A_v)^{\alpha}},
\]
which is exactly the desired upper bound.
\item  \textbf{$[A_v]\le j < j' \le 2j$ (both points in the outer part, sufficiently close to each other).} 
\\
In this regime
\[
|\frac{\partial^2 F}{\partial j^2} | = (1+\eps)    \frac{|A_v^2-j^2|}{(A_v^{2}+j^{2})^2}. \le \frac{1+\eps}{j^2}
\]
and 
\[
|x_{v,j}-x_{v,j'}| \asymp \frac{|j-j'|}{j^{1+\eps}}.
\]
Thus, 
\[
\frac{|\log D\sigma_v(x_{v,j})-\log D\sigma_v(x_{v,j'})|}{|x_{v,j}-x_{v,j'}|^{\alpha}} \le \const \,\frac{|j-j'|/j^2}{|j-j'|^{\alpha}/j^{\alpha(1+\eps)}}.
\]
For any given $j\ge [A_v]$, the right hand side takes maximal value when $j'-j$ is maximal, that is, for $j'=2j$. In this case, the right hand side is equal to 
\[
 \const \,\frac{j/j^2}{j^{\alpha}/j^{\alpha(1+\eps)}} =\const \frac{1}{j /j^{\alpha\eps}}
\]
In turn, this expression is maximal when $j$ takes the least possible value in the domain, $j=[A_v]$. Then it is equal to
\[
\const \frac{1}{[A_v] /[A_v]^{\alpha\eps}} \asymp \frac{1}{A_v L(A_v)^{\alpha}},
\]
again obtaining exactly the desired estimate.

\item \textbf{$j'\ge 2j$ and $j\ge [A_v]$ (both points in the outer part, sufficiently far away from each other)}. 
\\

Consider first a degenerate version of this case as $j'\to\infty$, corresponding to taking $y=\xvr$. We then have $\log Dg(\xvr)=0$; by Lagrange theorem for $\tq(j)=F(A_v,j)-F(A_v,j+1)$, we have that for some $\eta\in [j,j+1]$ one has
\[
|\tq(j)| = \left|  \frac{\partial F}{\partial j}|_{(A_v,\eta)} \right| \asymp \frac{1}{j}.
\]
Comparing it with a lower bound~\eqref{eq:j-upper} for the distance between the points $x_{v,j}$ and $x_{v,j'}$, we get 
\[
\frac{|\log D\sigma_v(x_{v,j})-\log D\sigma_v(\xvr)|}{|x_{v,j}-\xvr|^{\alpha}} \le \const \frac{1/j}{(1/j^{\eps})^{\alpha}} \le \const \frac{1/A_v}{(1/A_v^{\eps})^{\alpha}} \asymp \frac{1}{A_v L(A_v)^{\alpha}}; 
\]
here we have used that, as $1+\eps>\alpha\eps$, the right hand side takes the maximal value at the minimal value of $j$, that is, $j=[A_v]$.
Finally, the general case of $j'\ge 2j$, $j\ge [A_v]$ is handled by applying Lemma~\ref{l:splitting} (with $x_1=x_v^+$).
\end{itemize}

\end{proof}

\begin{proof}[Proof of Lemma~\ref{l:A-to-L}]
The first part has been already proven earlier by adding the estimates~\eqref{eq:A-core} and~\eqref{eq:A-outer} for the lengths of the core and outer parts respectively. To prove the second one, note that the total length $L(A)$ is actually a differentiable function of~$A$: bringing the derivative under the sign of the summation, we get 
\begin{equation}\label{eq:dL-dA}
- \frac{1}{1+\eps} \frac{d}{dA} L(A) = \sum_{j=-\infty}^{\infty} \frac{A}{(A^2+j^2)^{\frac{3+\eps}{2}}}.
\end{equation}
Note that the sum in the right hand side converges uniformly for $A$ belonging to any compact subset of $[1,+\infty)$.

Now, we have 
\[
\sum_{|j|\le A} A \cdot (A^2+j^2)^{-\frac{3+\eps}{2}} \asymp (2A+1) \cdot A^{-2-\eps} \asymp A^{-1-\eps}
\]
and 
\[
\sum_{|j|> A} A \cdot (A^2+j^2)^{-\frac{3+\eps}{2}} \asymp A \cdot \int_A^{\infty} j^{-2-\eps} \, dj \asymp A^{-1-\eps}.
\]
Adding these together, we see that 
\[
- \frac{1}{1+\eps} \frac{d}{dA} L(A) \asymp A^{-1-\eps},
\]
thus implying 
\[
- \frac{1}{1+\eps} \frac{d}{dA} \log L(A) \asymp \frac{A^{-1-\eps}}{A^{-\eps}} = A^{-1} = \frac{d}{dA} \log A.
\]
Integration over $A$ concludes the proof of the lemma.
\end{proof}

\section{Completing the proofs of the main theorems}\label{s:main-proofs}

In this section, we complete the proofs of our main results: Theorems~\ref{t:A}, \ref{thm crit interval}, \ref{t:half-open} and~\ref{t:circle}.

\begin{proof}[Proof of Theorem~\ref{t:A}]
Let $G$ be a torsion-free nilpotent finitely generated group, and $c\in Z(G)$ its non-trivial central element. Consider any action $\phi:G\to \Diff_+^{1+\alpha}(M)$, for some $\alpha>0$, where $M=[0,1]$ or $(0,1]$, such that $\phi(c)\neq \id$. Then there is a non-trivial support interval $I_c$ of $\phi(c)$; let $K:=\Stab(I_c)$ be the corresponding stabilizer subgroup. Then $K\in \StSub(G,c)$ by Proposition~\ref{p:stab-subgr}, and by Propositions~\ref{thm upper bound},~\ref{p:critical} we have $\alpha<1/\growth(G/K)$.
On the other hand, for any $K\in \StSub(G,c)$ and any $\alpha<1/\growth(G/K)$, by Proposition~\ref{p:lower-K} there exists an action $\phi\in \Class_{[0,1]}^0(G,c)$ of regularity~$C^{1+\alpha}$. Taking the maximum over all possible $K\in\StSub(G,c)$ completes the proof of the theorem for the cases $M=[0,1]$ and $M=(0,1]$. (Note that the growth of $G/K$ is an integer number, so it can take only a finite set of possible values, and the maximum is thus attained.)

It is clear that the actions constructed in Proposition~\ref{p:lower-K} can be defined on $\mathbb{S}^1$ by placing a global fixed point on it. Therefore, one might expect the critical regularity on $\mathbb{S}^1$ to be greater than that on $[0,1]$. However, as an immediate consequence of the following lemma, this is not the case; the critical regularity on both manifolds actually coincides.
\begin{lemma}\label{l:circle}
Let $G \leqslant \Homeo_+(\mathbb{S}^1)$ be a nilpotent torsion-free group, and let $c \in Z(G)\setminus \{e\}$ be a non-trivial central element. The following assertions hold:
\begin{enumerate}
\item\label{i:irrational} If no power of $c$ has fixed points on $\mathbb{S}^1$, then $G \in \StSub(G,c)$.
\item\label{i:rational} Suppose that $c^k$ has fixed points on $\mathbb{S}^1$ for some $k \in \mathbb{Z}$. Then, for any support interval $I_{c^k}$ of~$c^k$, there exists $K \in \StSub(G,c)$ such that
$K \supseteq \Stab(I_{c^k}).$
\end{enumerate}
\end{lemma}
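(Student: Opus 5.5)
For part~\ref{i:irrational}, I plan to use the rotation number. Since $G$ is nilpotent, it preserves a Borel probability measure $\mu$ on $\Sc$: nilpotent groups are amenable, and the rotation number is a homomorphism on groups preserving a probability measure. This gives a homomorphism $\rho:G\to \R/\Z$, which I lift to a homomorphism to~$\R$ via the $\mu$-translation number on the universal cover. More concretely, I consider the central extension $\widetilde G\subset \widetilde{\Homeo}_+(\Sc)$ of lifts, which is again nilpotent, and use Plante's invariant Radon measure on~$\R$ exactly as in the proof of Proposition~\ref{p:stab-subgr}. This yields a translation number homomorphism $\tau:\widetilde G\to\R$.

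If no power of $c$ has a fixed point, then $\rho(c)$ is irrational, because a rational rotation number would give a periodic point of~$c$. Hence $\langle \tau(\tilde c)\rangle$ meets $\tau(\text{lifts of }\langle c\rangle)$ nontrivially, and modulo the $\Z$ of deck translations $\tau(\tilde c)\notin \Q$. I then restrict to $G$ through a set-theoretic section. The image $\rho(G)\subset \R/\Z$ is a finitely generated abelian group containing the infinite-order element $\rho(c)$. Modulo torsion, it is some $\Z^l$ with $\rho(c)$ nonzero, and I pick a coordinate hyperplane $\tH$ missing $\langle\rho(c)\rangle$, exactly as in Proposition~\ref{p:stab-subgr}. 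Setting $H$ to be the preimage of $\tH$ together with the torsion, I get $G/H\simeq\Z$ and $H\cap\langle c\rangle=\{e\}$. Since $c^n\in H$ would force $n\rho(c)$ to be torsion, this is impossible for irrational $\rho(c)$. The chain in condition~\ref{i:chain} is trivial, so $G\in\StSub(G,c)$.

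For part~\ref{i:rational}, suppose $c^k$ has fixed points. I first pass to the finite-index subgroup $G'=\Ker(\rho)$ restricted appropriately, or more simply to the subgroup of elements fixing the set $\Fix(c^k)$ componentwise. Better, I will reduce to the interval. Because $c$ is central, $G$ permutes the finitely many orbits of components of $\Sc\setminus\Fix(c^k)$ and preserves $\Fix(c^k)$. The key point is that, for a nilpotent group, the rotation number is a homomorphism and elements with rational rotation number have periodic points. This should let me find a finite-index subgroup $G_0$ with a global fixed point $p\in\Fix(c^k)$. Cutting at~$p$ gives an action of $G_0$ on $[0,1]$, with $I_{c^k}$ a support interval of the central element $c^k$ of~$G_0$.

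Proposition~\ref{p:stab-subgr} then gives $\Stab_{G_0}(I_{c^k})\in\StSub(G_0,c^k)$, with a chain of $\Z$-quotients up to~$G_0$. It remains to extend this to a chain from $\Stab_G(I_{c^k})$ up to~$G$, and to convert from $c^k$ to~$c$. For the second conversion: if $H\cap\langle c^k\rangle$ is trivial, then $H\cap\langle c\rangle$ is trivial too, by torsion-freeness. For the passage from $G_0$ to~$G$, I would rather avoid finite index altogether. Instead, I will take $K_{(1)}=\Ker\rho$ in the manner of the chain construction, where $\rho$ now takes values in $\R$ modulo torsion. This gives $G/K_{(1)}\simeq\Z^r$, since a torsion-free nilpotent group with rational rotation numbers modulo torsion should act with a fixed point after passing to~$\Ker$. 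The resulting $K\supseteq\Stab(I_{c^k})$ is then read off from the chain.

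The main obstacle is this last step. A finite-index passage does not fit Definition~\ref{def stab subg}, which requires every quotient to be~$\Z$. I expect to need the fact that torsion in $\rho(G)$ can be absorbed because $G$ is torsion-free, which is why the statement only claims $K\supseteq\Stab(I_{c^k})$ rather than equality.
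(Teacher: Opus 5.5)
Your part~\ref{i:irrational} is correct and is essentially the paper's argument. The rotation number is a homomorphism via an invariant probability measure, $\rho(c)$ is irrational and hence of infinite order, and $H$ is the preimage of the torsion together with a coordinate hyperplane missing the image of $c$. The detour through lifts to the universal cover and a ``set-theoretic section'' plays no role and can be dropped.

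Part~\ref{i:rational} has a genuine gap, exactly at the step you flag.

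There need not be any finite-index subgroup with a global fixed point. Fixed points of $c^k$ do not prevent other elements from having irrational rotation number. Take a Denjoy counterexample $g$ and a homeomorphism $c$ supported on the wandering intervals of $g$ and commuting with it. Then $G=\langle g,c\rangle\simeq\Z^2$, and every finite-index subgroup contains some $g^N$ with $N\ge 1$, which has no fixed points.

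Your fallback choice of $K_{(1)}$ fails either way. Take $G=\langle c\rangle$ with $\rho(c)=1/2$ and $c^2\neq\id$.
\begin{itemize}
\item The literal kernel $\Ker\rho=\langle c^2\rangle$ gives the quotient $G/\Ker\rho\simeq\rho(G)\simeq\Z/2$, which has torsion.
\item $\rho^{-1}(T_1)$, with $T_1$ the torsion of $\rho(G)$, gives a $\Z^{r}$-quotient, but here $\rho^{-1}(T_1)=G$, which has no global fixed point.
\end{itemize}
The same example shows that your conversion from $c^k$ to $c$ is incomplete. Since $c$ swaps the two support intervals of $c^2$, we have $c\notin\Stab(I_{c^2})=\langle c^2\rangle$. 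No chain with $\Z$-quotients joins $\langle c^2\rangle$ to $G$. The only admissible $K$ there is $G$ itself, with $H=\{e\}$, strictly larger than the stabiliser.

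The missing idea is how to absorb the finite rotation part into a free abelian quotient without ever cutting the circle. The paper builds the chain of Remark~\ref{rq:Zr} directly on $\Sc$, starting from $K_{(1)}=\rho^{-1}(T_1)$. At each step it uses:
\begin{itemize}
\item $K'_{(i)}=K_{(i)}\cap\Ker\rho$, whose common fixed set $X_i$ is nonempty because it contains the support of an invariant measure;
\item the component $J_i$ of $\Sc\setminus X_i$ containing $I_{c^k}$;
\item an element $g_i$ with $\rho(g_i)$ generating the finite group $\rho(K_{(i)})$, of order $q_i$;
\item the translation numbers of $K'_{(i)}$ on the intervals $g_i^j(J_i)$, taken with respect to pushed-forward Radon invariant measures.
\end{itemize}
A nilpotency argument shows these translation numbers coincide: if they were not all equal for some $f$, the same would hold for $[f,g_i]$, $[[f,g_i],g_i]$, and so on. Hence $\tau_{i,0}$ is invariant under conjugation by $g_i$ and extends to a homomorphism $t_i:K_{(i)}\to\R$, $t_i(f)=\tau_{i,0}(f^{q_i})/q_i$.

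The image of $t_i$ is free abelian, so $K_{(i+1)}:=\Ker t_i$ gives a $\Z^{r_i}$-quotient. It still contains $\Stab(I_{c^k})$ as long as $I_{c^k}\subsetneq J_i$. Once $J_i=I_{c^k}$, one has $t_i(c)\neq 0$ and builds $H$ as in part~\ref{i:irrational}. Termination follows from polynomial growth.

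This division by $q_i$ inside $\R$ is what absorbs the torsion of $\rho$. It is made possible by nilpotency, not by torsion-freeness of $G$. It is also the reason $K$ can be strictly larger than $\Stab(I_{c^k})$.
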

\begin{proof} 
Note first that in this case the rotation number $\rho:G\to \R/\Z$ is a group homomorphism. Indeed, as the group $G$ is nilpotent and is acting on a compact set $M=\Sc$, there exists a measure~$\nu$ that is invariant for all $f\in G$. Next, for any $f\in G$ and any $x\in \Sc$ one has $\rho(f)=\nu([x,f(x))) \mod \Z$, what easily implies that $\rho(gf)=\rho(g)+\rho(f)$ (it suffices to use $y=f(x)$ to calculate $\rho(g)$.

Let us now establish conclusion~\ref{i:irrational}. Consider the image group $\rho(G)$, that is a subgroup of $\Sc=\R/\Z$. It is a finitely generated abelian group, and $\rho(c)\in \rho(G)$ is not a torsion element therein: $c$ has no periodic points, and hence its rotation number is irrational. Take a quotient of $\rho(G)$ by its torsion subgroup~$T$; the result is isomorphic to some~$\Z^k$, and the image of $c$ therein is nontrivial, thus there exists a (rank $k-1$) subgroup $H_1$ such that the quotient $(\rho(G)/T)/H_1\simeq \Z$ and that the image of $c$ in this quotient is non-trivial. We then take $H:=\rho^{-1}(T+H_1)$, thus obtaining $G/H\simeq \Z$ and $H\cap \langle c\rangle=\{e\}$. Thus, the whole group $G$ is a stabilizer subgroup of $c$, where the ascending chain of subgroups appearing in Definition~\ref{def stab subg} is trivial.

Next, let us to establish the conclusion~\ref{i:rational} of the lemma. To do it, we will use Remark~\ref{rq:Zr} and construct the chain~\eqref{eq:Kp-chain}, starting from $K_{(0)}=G$, and proceeding in such a way that on every step~$i$ one has the inclusion $\Stab(I_{c^k})<K_{(i)}$. We start by eliminating the irrational rotation numbers: let $T_1:=\Tor(\rho(G))$ be the torsion subgroup, then $\rho(G)/T_1 \simeq \Z^{r_0}$ is an abelian finitely generated torsion-free group, thus it is isomorphic to some $\Z^{r_0}$. Hence, taking $K_{(1)}:=\rho^{-1}(T_1)$ we ensure $K_{(0)}/K_{(1)}\simeq \Z^{r_0}$ and $\rho(K_{(1)})=T_1$.

Now, for every given $i$ note that $T_i=\rho(K_{(i)})$ is a finite subgroup of $\Sc$, hence it is generated by $\rho(g_i)$ some $g_i\in K_{(i)}$. Also, denote by $K_{(i)}':=K_{(i)}\cap \Ker(\rho)$ the kernel of the rotation number homomorphism, restricted on $K_{(i)}$, and let $X_i$ be the set of common fixed points of the action of~$K_{(i)}'$ (note that $X_i$ contains the support of any invariant measure of this action, thus the set~$X_i$ is nonempty). Let the interval $J_i$ be the closure of the connected component of $\Sc\setminus X_i$ that contains~$I_{c^k}$. Consider its $K_{(i)}$-orbit (see Fig.~\ref{fig:circle-intervals}): let 
\[
J_{i,j}=g_i^j (J_i), \quad j=0,1,\dots, q_i-1, \quad \text{where } q_i:=|T_i|.
\]
\begin{figure}[!h!]
\begin{center}
\includegraphics{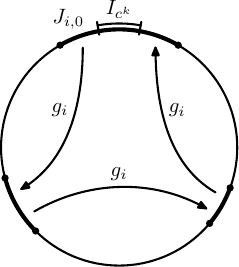}
\end{center}
\caption{Bold points: global fixed points of the action of $K_{(i)}'$; the interval $J_{i,0}$ between such points, containing the support interval $I_{c^k}$, and its orbit under the iterations of the map $g_i$.}\label{fig:circle-intervals}
\end{figure}

Let $\mu_{i,0}$ be a Radon invariant measure for the action of $K_{(i)}'$ on $J_i=J_{i,0}$, and let measures $\mu_{i,j}$ on $J_{i,j}$ be its pushfowards,
\begin{equation}\label{eq:mu-i-j}
\mu_{i,j}=(g_i)_*^j (\mu_{i,0}), \quad j=0,1,\dots, q_i-1.
\end{equation}

Associated with each pair $(I_{i,j}, \mu_{i,j})$, we have a translation morphism $\tau_{i,j}: K_{(i)}' \to \mathbb{R}$; the pushforward construction~\eqref{eq:mu-i-j} implies that
\[
\tau_{i,j}(f) = \tau_{i,j+1}(g f g^{-1}) \; \text{for all} \; j \in \mathbb{Z}/q_i\mathbb{Z}\;\text{and all}\; f \in K_{(i)}'.
\]
In particular, one then has 
\begin{equation}\label{eq:tau-comm}
\tau_{i,j}([f,g]) = \tau_{i,j}(f) + \tau_{i,j} (gf^{-1} g^{-1} ) = \tau_{i,j}(f) - \tau_{i,j-1}(f).
\end{equation}
We claim that~\eqref{eq:tau-comm} implies that 
\begin{equation}\label{eq: equality of transl numbers}
\tau_{i,j}(f) = \tau_{i,j'}(f)\;\text{for all}\; j, j' \in {0, \ldots, q_i-1}\;\text{and all}\;f \in K_{(i)}'.
\end{equation} 
Indeed, if the coordinates of a vector $v=(v_1,v_2,\dots,v_n)$ are not equal, then the same is true for the vector of cyclic differences $(v_1-v_n,v_2-v_1,\dots,v_n-v_{n-1})$ (the sum of differences $v_i-v_{i-1}$ is equal to zero, hence there are both positive and negative coordinates). Thus, if~\eqref{eq: equality of transl numbers} did not hold for some $f\in \Ker(\rho)$, then it would not hold for $f_1:=[f,g]$ as well, nor for $f_2:=[f_1,g]=[[f,g],g]$, etc. We thus would obtain an infinite chain of commutators that are all non-trivial (for $f=e$ the equality~\eqref{eq: equality of transl numbers} is trivially satisfied), and this would be a contradiction with the nilpotence of the group~$G$.

Due to~\eqref{eq: equality of transl numbers}, 
\begin{equation}\label{eq:conj-inv}
\tau_{i,0}(f)=\tau_{i,0}(g_i^{-j} f g_i^j) \quad \forall f\in K_{(i)}', \quad \forall j\in\Z.
\end{equation}
Let us extend the translation number $\tau_{i,0}:K_{(i)}'\to \R$ to a map $t_i:K_{(i)}\to \R$ by letting
\begin{equation}
t_i(f g_i^j):=\tau_{i,0}(f) + \frac{j}{q_i} \tau_{i,0}(g_i^{q_i}) \quad \forall f\in K_{(i)}', \,\, j\in\Z.
\end{equation}
It is easy to see that this extension is well-defined (choosing a different representation of the argument by changing~$j$ by $q_i$ does not change the result); also, due to~\eqref{eq:conj-inv} the extended map $\tau_i$ is still a homomorphism: 
\begin{multline*}
t_i (f g_i^j \cdot f' g_i^{j'}) = t_i (f \cdot (g_i^j f' g_i^{-j} ) \cdot g_i^j g_i^{j'}) = \tau_{i,0} (f) + \tau_{i,0} (g_i^j f' g_i^{-j} ) + \frac{j+j'}{q_i} \tau_{i,0}(g_i^{q_i}) 
\\ 
= \tau_{i,0} (f) + \frac{j}{q_i} \tau_{i,0}(g_i^{q_i}) + \tau_{i,0} (f' ) + \frac{j'}{q_i} \tau_{i,0}(g_i^{q_i}) = t_i (f' g_i^{j'})+ t_i (f' g_i^{j'}).
\end{multline*}
In particular, this extension can be equivalently defined by taking
\begin{equation}\label{eq: transl morphism}
t_i(f) := \frac{\tau_{i,0}(f^{q_i})}{q_i}.
\end{equation}
Having defined $t_i$, we proceed in the following way. If $I_{c^k}$ was a proper subset of $J_i$, we define $K_{(i+1)}:=\Ker t_i$. Then, any map $f\in \Stab(I_{c^k})$ then has a fixed point inside $J_i$, and thus belongs to this kernel, ensuring the inclusion $\Stab(I_{c^k})<K_{(i+1)}$. On the other hand, $K_{(i)}/K_{(i+1)}\simeq \Im t_i$, and as $\Im t_i$ is a subgroup of~$\R$, it is isomorphic to some~$\Z^{r_i}$.

Otherwise, we stop the construction of the chain, letting $K:=K_{(i)}$, and construct the corresponding subgroup $H$. To do so, note that as $I_{c^k}=J_i$, the map $c^k$ has no fixed points inside $J_i$, and hence $t_i(c)\neq 0$. The image $\Im t_i$ is isomorphic to some $\Z^l$; let $\psi:\Im t_i \to \Z^l$ be the corresponding isomorphism, and denote $v_c:=\psi(t_i(c))$. In the same way as in the proof of Proposition~\ref{p:stab-subgr}, $v_c\neq 0$, hence there exists a subgroup~$\tH<\Z^l$ (actually, one of the coordinate hyperplanes), intersecting trivially $\langle v_c \rangle$ and such that $\Z^l/\tH\simeq \Z$. We take $H:=t_i^{-1} (\psi^{-1}(\tH))$; then, $K/H\simeq \Z$ and $H\cap \langle c \rangle =\{e\}$, and the condition~\eqref{i:H} of the stabilizer subgroup is thus verified.

To complete the proof of the conclusion~\ref{i:rational}, note that the above construction of the chain~\eqref{eq:Kp-chain} stops in a finite number of steps. Indeed, if it does not stop after $i$ steps, the degree of polynomial growth of the group $G$ is greater or equal to~$i$, hence the construction stops in at most $\growth (G)$ steps.
\end{proof}

Lemma~\ref{l:circle} allows to conclude the proof of Theorem~\ref{t:A} for the case of~$M=\Sc$. Indeed, let $c\in Z(G)\setminus \{e\}$, and consider any action $\phi\in \Class_{\Sc}(G,c)$. If $c$ acts with an irrational rotation number, we are in the case~\ref{i:irrational} of the lemma, and thus there is a $C^{\infty}$-action $\psi\in\Crit \Class_{[0,1]}^0 (G,c)$ thus both $\Crit \Class_M(G,c)$ and $\Crit \Class_{[0,1]}^0 (G,c)$ are infinite. Otherwise, we are in the case~\ref{i:rational} of the lemma. If the action $\phi$ is of regularity $C^{1+\alpha}$, and $k\in \N$, $K\in\StSub(G,c)$ are as in the conclusion of the lemma, using Proposition~\ref{thm upper bound} we have 
\[
\alpha< \frac{1}{\growth (G/\Stab(I_{c^k}))} \le \frac{1}{\growth (G/K))}.
\]
This implies the inequality
\[
\Crit \Class^0_{[0,1]}(G,c) \geq \Crit \Class_{\Sc}(G,c);
\]
together with the inequality in the other direction (glueing the endpoints of the interval to obtain an action on the circle with a global fixed point) this completes the proof of Theorem~\ref{t:A}.
\end{proof}

\begin{proof}[Proof of Theorems~\ref{thm crit interval},~\ref{t:half-open} and~\ref{t:circle}]
Assume that for some $\alpha>0$ we are given a faithful action $\phi:G\to \Diff_+^{1+\alpha}(M)$, where $M=[0,1]$, $(0,1]$ or $\Sc$. Then for each $c\in Z(G)\setminus \{e\}$ the element $c$ acts non-trivially, thus the upper bound from Theorem~\ref{t:A} implies that 
\[
\alpha < \max_{K\in \StSub(G,c)} \frac{1}{\growth(G/K)}.
\]
Taking a minimum of such upper estimates over all possible $c$ provides the desired upper bound for~$\alpha$, 
\begin{equation}\label{eq:alpha}
\alpha<\left[ \max_{c\in Z(G)\setminus \{e\}} \min \{\growth(G/K) \, : \,  K \in \StSub(G,c)\} \right]^{-1}.
\end{equation}

In the other direction, assume that for $\alpha$ the inequality~\eqref{eq:alpha} holds. Take an arbitrary nontrivial $c_1\in Z(G)\setminus \{e\}$. Then by Theorem~\eqref{t:A} there exists an action $\phi_1:G\to\Diff^{1+\alpha}_+(J_1)$ of the group~$G$ on some interval $J_1$, tangent to the identity at the endpoints of this interval, such that $\phi_1(c_1)\neq\id$. Consider $\Ker \phi_1 \cap Z(G)$. If this intersection is trivial, the action is faithful by Lemma~\ref{l:on-centre}. Otherwise, take any non-trivial $c_2 \in \Ker \phi_1 \cap Z(G)$. Again, by Theorem~\eqref{t:A} there exists an action $\phi_2:G\to\Diff^{1+\alpha}_+(J_2)$ of the group~$G$ on some interval $J_2$, tangent to the identity at the endpoints of this interval, such that $\phi_2(c_2)\neq\id$. We can assume that the intervals $J_1$ and $J_2$ are adjacent to each other, and as the action is tangent to the identity at their common endpoint, the resulting action on $J_1\cup J_2$ is also of the class $C^{1+\alpha}$. 

\begin{figure}[!h!]
\begin{center}
\includegraphics{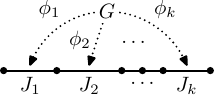}
\end{center}
\caption{Joining the actions $\phi_i$.}\label{fig:joining}
\end{figure}

Now, either $\Ker \phi_1 \cap \Ker \phi_2 \cap Z(G) =\{e\}$, in which case the required action is constructed, or not, in which case we consider any nontrivial element $c_3$ of this intersection and construct the associated action~$\phi_3$ on some interval $J_3$ that we presume to be adjacent to~$J_2$. We continue this process of joining actions (see Fig.~\ref{fig:joining}); while doing so, the rank of the abelian torsion-free group $\left(\bigcap_i \Ker \phi_i\right) \cap Z(G)$ decreases at least by~$1$ on every step. Hence, in a finite number of steps this process stops, providing us with the desired faithful action $\phi$ of the group $G$ on the interval $J=\bigcup_i J_i$.

Finally, an orientation-preserving action on the interval $J=[0,1]$ that is tangent to the identity at its endpoints can be transformed both into an action on the half-open interval $(0,1]$ (by removing the endpoint~$x=0$) and into an action on the circle $\Sc=[0,1]/(0\sim 1)$ by glueing its endpoints.
\end{proof}

\begin{proof}[Proof of Remark~\ref{rq:circle-torsion}]
We will first prove that the set $T$ of torsion elements of the group $G$ is a finite cyclic subgroup; the proof here largely follows the proof of Lemma~\ref{l:circle}, as we actually did not use torsion-freeness of the group $G$ there. Namely, we start by passing from $G$ to $K_{(1)}=\rho^{-1}(T_1)$, where $\rho:G \to \R/\Z$ is the rotation number homomorphism (associated to some $G$-invariant measure on the circle), and $T_1$ is the torsion subgroup of the image $\rho(G)$. Indeed, $\rho(T)\subset T_1$, thus implying $T\subset \rho^{-1}(T_1)$. 

Again in the same way as in the proof of Lemma~\ref{l:circle}, we construct the sequence of the subgroups $K_{(1)}>K_{(2)}>\dots$. Namely, we take $T_i=\rho(K_{(i)})$ the set of possible values of the rotation numbers in $K_{(i)}$, an element $g_i\in K_{(i)}$ such that $\rho(g_i)$ generates $T_i$. For the subgroup $K_{(i)}':=K_{(i)} \cap \Ker \rho$ the set of its common fixed points is non-empty, as it includes the support of any $K'_{(i)}$-invariant measure; we take any interval $J_{i,0}$ that is a connected component of $\Sc\setminus \Fix(K'_{(i)})$, and take a Radon $K'_{(i)}$-invariant measure $\mu_{i,0}$ therein. 

We use this measure to define the translation number $t_i:K'_{(i)}\to \R$ and extend it to $t_i:K_{(i)}\to \R$ using~\eqref{eq:conj-inv} (or, equivalently,~\eqref{eq: transl morphism}), and then define the next subgroup in the chain as $K_{(i+1)}:=\Ker t_i$. Note, that torsion elements have zero translation number, so for every $i$ we have $T\subset K_{(i)}$. This process continues until at some moment $r$ we get $\Fix(K'_{(r)})=\Sc$, thus implying $K'_{(r)}=\{\id\}$, at which moment we get $K_{(r)}=T$ with the rotation number becoming a homomorphism $\rho:T\to T_r$. This implies that $T$ is a cyclic subgroup of $G$.

Now, the set $T$ of torsion elements is invariant under conjugacies, and as a conjugacy by any $g\in G$ preserves the rotation number, it fixes the elements of $T$ pointwise, hence $T$ is contained in the center of~$G$. Finally, the quotient group $G/T$ is torsion-free (otherwise the preimage of a nontrivial torsion element in $G/T$ would be a torsion element in $G\setminus T$). If the action of $G$ is of class $C^{1+\alpha}$, it is $C^{1+\alpha}$-conjugate to an action with the Lebesgue measure that is $T$-invariant (it suffices to average the Lebesgue measure over the action of $T$ to obtain an invariant measure with $C^{\alpha}$-density). This action is thus also of class $C^{1+\alpha}$; it descends to a $C^{1+\alpha}$-action of the quotient $G/T$ on the quotient circle $\Sc/T$. As $G/T$ is torsion-free, together with Theorem~\ref{t:circle} this completes the proof of~\eqref{eq:G/T}.
\end{proof}

 \section{Concluding remarks: other classes of actions}\label{s:examples}
 
 We conclude this work by presenting two examples where critical regularity (for other classes of actions) differs from the one predicted by Theorems~\ref{thm crit interval} and~\ref{t:half-open}.
 
 \subsection{Actions on the open interval}\label{s:open-interval}
 
 Consider the actions on the open interval (or, what is the same for the regularity reasons, on the real line). The following example shows that the critical regularity of a nilpotent group can be higher than for the actions on the closed interval. Recall that the group $N_d$ is the group of $d\times d$ upper-triangular matrices with integer elements and $1$'s on the main diagonal.
 
 \begin{example}\label{ex-N4}
For any $\alpha<1$ there exists $C^{1+\alpha}$-action of the group $N_4$ on the real line. Hence,
\begin{equation}\label{eq:crit-N4}
\Crit_{(0,1)}(N_4)=\Crit_{\R}(N_4) \ge 2.
\end{equation}
 \end{example}
 In particular, the regularity in~\eqref{eq:crit-N4} is strictly larger, than the regularity $\Crit_{[0,1]}(N_4)=1+1/2$, found by E.~Jorquera, A.~Navas, C.~Rivas in~\cite{JNR}.
 
 \begin{proof}
$N_4$ is a subgroup of $\SL_{4}(\Z)$, formed by the matrices of the form
 \[
\left(
\begin{matrix}
1 & a_{12} & a_{13} & a_{14}\\
0 & 1 & a_{23} & a_{24}\\
0 & 0 & 1 & a_{34}\\
0 & 0 & 0 & 1 
\end{matrix}
\right), \quad \forall i<j \quad a_{ij}\in \Z.
 \]
 It has a cyclic center $Z(N_4)=\langle c \rangle$, generated by the matrix $c$ whose only nonzero above-diagonal element is $a_{14}=1$. Also, consider the subgroups
 \[
H<K<G<N_4,
 \]
 defined by 
 \[
G=\{A \in N_4 \mid a_{12}=0\}, \quad K=\{A \in G \mid a_{34}=0\}, \quad H=\{A\in K \mid a_{14}=0\}.
 \]
 Then $K\simeq \Z^4$ is a stabilizer subgroup of an element $c\in G$, as $H\cap \langle c \rangle = \{e\}$ and $K/H \simeq \Z$, with the relative growth $\growth(G/K)=1$. By Proposition~\ref{p:lower-K}, for every $\alpha$ there exists an action $\phi$ of $G$ on~$[0,1]$ by $C^{1+\alpha}$-diffeomorphisms, tangent to the identity at the endpoints, such that the action of~$c$ is non-trivial. 
 
Next, $G\triangleleft N_4$ and $N_4/G\simeq \langle b \rangle$, where for the matrix $b$ its only nonzero above-diagonal element is $a_{12}=1$. Then we can extend $\phi$ to an action of $N_4$ on $\R$ by taking $\phi(b)(x)=x+1$. Namely, we first extend the action of $G$ on every interval $[n,n+1]$ by setting
 \[
\phi(g) (x+n) = \phi(b^{-n} g b^{n}) (x) +n.
 \]
This extension (similar to the extension~\eqref{eq:extending} in the proof of Proposition~\ref{p:stab-subgr}) then satisfies $\phi(bgb^{-1})  = \phi(b) \phi(g) \phi(b)^{-1}$, and we complete the extension by taking $\phi(gb^n) = \phi(g) \phi(b)^n$ for every $g\in G, n\in \Z$. This action is by $C^{1+\alpha}$-diffeomorphisms; as the group~$N_4$ has a cyclic center and its generator $c$ acts non-trivially, this action is faithful.
 
 \end{proof}

 \subsection{Topologically free actions}\label{s:top-free}

One might be not fully satisfied with the construction in the proof of Theorem~\ref{thm crit interval} when different actions, provided by Proposition~\ref{p:lower-K}, are combined by a simple concatenation, as it is possible then that some elements act with an interval of fixed points. Such a behaviour is forbidden for the class of topologically free actions:
\begin{definition}
The action of $G$ on $M$ is \emph{topologically free}, if for any $g\in G\setminus \{e\}$ the set of fixed points of the action of~$g$ has no internal points: $\mathrm{Int}(\Fix(g)) = \emptyset$.
\end{definition}

 \begin{example}\label{ex:N3-twice}
The critical regularity of the class of topologically free actions of $N_3\times N_3$ on $[0,1]$ is at most~$1+1/2$. In particular, it is strictly smaller than $\Crit_{N_3\times N_3}([0,1]) = 2$.
 \end{example}
 
 \begin{proof}
Let $a_1,b_1,c_1; a_2,b_2,c_2$, be the generators of the factors $N_3$ of the group $G=N_3\times N_3$, so that 
\[
[a_i,b_i]=c_i, \quad i=1,2.
\]
Then $Z(G)=\langle c_1,c_2\rangle \simeq \Z^2$; also, we have~$[G,G]=Z(G)$.

The following lemma holds.
\begin{lemma}
Let $\phi:G \to \Homeo_+([0,1])$ be a topologically free action of the group $G=N_3\times N_3$. Then there exist a central element $c\in Z(G)$ and its support interval $I_c$, such that the stabilizer subgroup $\Stab(I_c)$ is abelian. 
\end{lemma}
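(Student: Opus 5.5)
The plan is a descent argument. We start from a support interval of some central element. If its stabilizer is non-abelian, we produce a strictly smaller support interval of another central element, and we use the rank of $Z(G)\simeq\Z^2$ to force termination. The facts we use are $[G,G]=Z(G)=\langle c_1,c_2\rangle$, Plante's translation number homomorphism (as in the proof of Proposition~\ref{p:stab-subgr}), and topological freeness, which says that no nontrivial element acts as the identity on an interval.

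\textbf{Step 1: set-up.} Take $c^{(0)}:=c_1$. By topological freeness $\phi(c_1)\neq\id$, so $c_1$ has a support interval $I_0$. Let $K_0=\Stab(I_0)$. The element $c^{(0)}$ has no fixed points in $I_0^\circ$. Hence, as in Proposition~\ref{p:stab-subgr}, there is a translation number homomorphism $\tau_0:K_0\to\R$ with $\tau_0(c^{(0)})\neq0$, and an element of $K_0$ has a fixed point in $I_0^\circ$ exactly when its translation number vanishes.

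\textbf{Step 2: the descent step.} Suppose $K_i=\Stab(I_i)$ is non-abelian, where $I_i$ is a support interval of the central element $c^{(i)}$.
\begin{enumerate}
\item Since $[K_i,K_i]\subset[G,G]=Z(G)$, there is a nontrivial commutator $z\in Z(G)\cap K_i$. As a commutator, $z$ satisfies $\tau_i(z)=0$, so $z$ has a fixed point in $I_i^\circ$.
\item By topological freeness, $z$ does not act as the identity on $I_i$.
\item Therefore $z$ has a support interval $I_{i+1}\subsetneq I_i$. Set $c^{(i+1)}:=z$.
\end{enumerate}
Next we check that $K_{i+1}:=\Stab(I_{i+1})\subset K_i$. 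If $g(I_{i+1})=I_{i+1}$, then $g(I_i)$ is a support interval of the central element $c^{(i)}$ and it meets $I_i$ in an interior point. Distinct support intervals of the same element have disjoint interiors, so $g(I_i)=I_i$.

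\textbf{Step 3: termination.} Consider the subgroups $Z_i:=Z(G)\cap K_i$. Each $Z_i$ contains $c^{(i)}$, so $1\le\rk Z_i\le 2$.
\begin{enumerate}
\item By Step~2 we have $Z_{i+1}\subset Z_i$.
\item No power $(c^{(i)})^m$ with $m\neq 0$ lies in $Z_{i+1}$. Indeed, $(c^{(i)})^m$ has no fixed points in $I_i^\circ$, and $I_{i+1}$ is a proper subinterval of $I_i$ whose endpoints lie in the interior of $I_i$, so $(c^{(i)})^m$ cannot preserve $I_{i+1}$.
\item By item~2, $Z_{i+1}$ has infinite index in $Z_i$, so $\rk Z_{i+1}<\rk Z_i$.
\end{enumerate}
Hence the descent runs for at most one step, and we reach some $i$ with $K_i$ abelian. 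This gives the desired $c=c^{(i)}$ and $I_c=I_i$.

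\textbf{Main obstacle.} The delicate point is the properness in Step~2. We need to know that a commutator in $K_i$ has zero translation number and hence a fixed point inside $I_i$, and that the support interval of $z$ it produces sits strictly inside $I_i$ with endpoints in the interior. This requires Plante's characterization of elements with fixed points, applied to the action of $K_i$ on $I_i^\circ$, which has no global fixed points because $c^{(i)}$ acts freely there. Topological freeness enters only to rule out $z|_{I_i}=\id$.
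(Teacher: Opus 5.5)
Your proof is correct and follows the paper's argument. You descend via a commutator $z=[g_1,g_2]\in Z(G)\cap\Stab(I_i)$, use topological freeness together with the vanishing translation number to get a proper support subinterval $I_{i+1}\subsetneq I_i$, and stop because $\rk\bigl(Z(G)\cap\Stab(I_i)\bigr)$ strictly drops. Your write-up also supplies justifications the paper leaves implicit (why $\Stab(I_{i+1})\subset\Stab(I_i)$ and why the rank drops) and gives the sharper bound of a single descent step. One small slip: $I_{i+1}\subsetneq I_i$ only guarantees that \emph{at least one} endpoint of $I_{i+1}$ lies in $I_i^\circ$ (the other may coincide with an endpoint of $I_i$), but that is all your Step~3 argument needs.
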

\begin{proof}
As the action is topologically free, in particular, the element $c_1$ acts nontrivially. Take any its support interval~$J$. If $\Stab(J)$ is abelian, we take $c:=c_1$ and $I_c:=J$. Otherwise, there exist two elements $g_1,g_2\in \Stab(J)$ that do not commute. Then, consider $c':=[g_1,g_2]\in Z(G)\cap \Stab(J)$. As the action is topologically free, $\phi(c')$ acts nontrivially on~$J$. On the other hand, as it is a commutator, its action has fixed points inside~$J$, and hence there is a non-trivial support interval $J'\subsetneq J$ of~$c'$.
Again, either $\Stab(J')$ is abelian, or we take two non-commuting $g_1',g_2'\in \Stab(J)$, denote $c'':=[g'_1,g'_2]$ and take a support interval $J''\subsetneq J'$ of~$c''$. 

This procedure stops after a finite number of steps (actually, at most on the third step): one has 
\[
Z(G) \cap \Stab(J) > Z(G) \cap \Stab(J') > Z(G) \cap \Stab(J'') >..., 
\]
with the rank decreasing on each step. Hence, in at most $\rk Z(G)+1=3$ steps we find $c\in Z(G)$ and its support interval $I_c$ with the abelian $\Stab(I_c)$.
\end{proof}

Now, for every abelian group $H<G$ one has $\growth(G/H)\ge 2$. Indeed, without loss of generality we can assume that $Z(G)<H$, as otherwise we can replace $H$ by a larger abelian subgroup $\langle H, Z(G)\rangle$. Now, due to Theorem~\ref{t.B-G}, one can have $\growth(G/H)=1$ only if $d_1^G-d_1^{H;G}=1$, in other words, if the image of $H$ under the map $\varphi_1:G \to G/Z(G)\simeq \Z^4$ is of rank~$3$. Let us show that this is impossible.

Indeed, assume otherwise and consider a function $f:G\times G \to \Z$, defined by 
\[
f(g_1,g_2) = n_1+n_2, \quad \text{where} \,\, [g_1,g_2]=c_1^{n_1} c_2^{n_2}.
\]
This function descends on the quotient by the center for each of the arguments, $f: (G/Z(G))\times (G/Z(G)) \to \Z$, and is a restriction on $G/Z(G)\simeq \Z^4$ of a symplectic form in~$\R^4$. On the other hand, as $H$ is abelian, one has $f(g_1,g_2)=0$ for any $g_1,g_2\in H$. However, a symplectic form in $\R^4$ cannot vanish in restriction on a $3$-dimensional subspace (the span of $H/Z(G)$), and this provides the desired contradiction.

Hence, $\growth(G/\Stab(I_c))\ge 2$, and thus the regularity of the action cannot exceed~$1+1/2$.
 \end{proof}

\begin{small}


\hypersetup{backref=true}

\end{small}

\noindent Victor Kleptsyn. CNRS, Univ Rennes, IRMAR - UMR 6625, F-35000 Rennes, France.

\noindent e-mail: victor.kleptsyn@univ-rennes.fr

\bigskip

\noindent Maximiliano Escayola. School of Mathematics, Korea Institute for Advanced Study (KIAS), Seoul, 02455, Korea.

\noindent e-mail: maxiescayola@kias.re.kr

\end{document}